\date{\today}
\theoremstyle{plain}
\newtheorem{thm}{Theorem}[section]
\newtheorem{lem}[thm]{Lemma}
\newtheorem{slem}[thm]{Sublemma}
\newtheorem{cor}[thm]{Corollary}
\newtheorem{prop}[thm]{Proposition}
\newtheorem{defn}[thm]{Definition}
\newtheorem{rem}[thm]{Remark}
\newtheorem{claim}[thm]{Claim}
\newtheorem{ex}[thm]{Example}
\newcommand{\diam}[0]{\mathrm{diam}\,}
\newcommand{\e}[0]{\epsilon}
\newcommand{\R}[0]{\mathbb R}
\newcommand{\pa}[0]{\partial}
\newcommand{\CAT}{\mathrm{CAT}}
\newcommand{\sing}[0]{\rm sing}
\newcommand{\ca}[0]{\mathcal}
\newcommand{\pmed}[0]{\par\medskip}
\newcommand{\psmall}[0]{\par\smallskip}
\newcommand{\n}[0]{\noindent}
\newcommand{\beq}[0]{\begin{equation}}
\newcommand{\eeq}[0]{\end{equation}}
\newcommand{\beqq}[0]{\begin{equation*}}
\newcommand{\eeqq}[0]{\end{equation*}}
\newcommand{\bali}[0]{\begin{align}}
\newcommand{\eali}[0]{\end{align}}
\newcommand{\benum}[0]{\begin{enumerate}}
\newcommand{\eenum}[0]{\end{enumerate}}
\begin{document}

\title[Two-dimensional spaces 
     with curvature bounded above II]
{Two-dimensional metric spaces 
     with curvature bounded above II }

\author[K.Nagano]{Koichi Nagano}
\author[T.Shioya]{Takashi Shioya} 
\author[T.Yamaguchi]{Takao Yamaguchi}


\address{Koichi Nagano, Department of Mathematics, University of Tsukuba,  Tsukuba 305-8571, Japan}
\email{nagano@math.tsukuba.ac.jp}

\address{Takashi Shioya, Mathematical Institute, Tohoku  University, Sendai  980-8578, Japan}
\email{shioya@math.tohoku.ac.jp}

\address{Takao Yamaguchi, Department of Mathematics, Kyoto University, Kyoto  606-8502, Japan}
 \curraddr{Department of Mathematics, University of Tsukuba,  Tsukuba 305-8571, Japan}
 \email{takao@math.tsukuba.ac.jp}

\subjclass[2010]{Primary 53C20, 53C23}
\keywords{Upper curvature bound; homotopy convergence; Gauss-Bonnet theorem; convergence of curvature measures}
\thanks{This work was supported by JSPS KAKENHI Grant Numbers 20K03603, 19K03459, 18H01118,  21H00977.
  }


\begin{abstract} As a continuation of \cite{NSY:local}, 
we mainly discuss the global structure of 
two-dimensional locally compact geodesically complete metric spaces with curvature bounded above.
We first obtain the result on 
the Lipschitz homotopy approximations of such spaces by polyhedral spaces. 
We define the curvature measures on 
our spaces making use of the convergence of the curvature measures, and establish Gauss-Bonnet Theorem.
We also give a characterization of such spaces.
\end{abstract}

\maketitle

\tableofcontents

\section{Introduction} \label{sec:intro}

In our previous paper \cite{NSY:local}, we 
have completely described the local structure of 
two-dimensional locally compact, geodesically complete
metric spaces with curvature bounded above.
In the present paper, as a continuation of \cite{NSY:local}, 
we mainly discuss the global structure of such spaces.
We also use the equivalent terminology,
locally $\CAT(\kappa)$-spaces, for spaces with curvature 
$\le\kappa$.

Let $X$ be a  two-dimensional locally compact, geodesically complete
metric space with curvature $\le \kappa$.
We denote by  $\Sigma_p(X)$ the space of directions
at $p\in X$.
We assume the most essential case when 
\begin{align} \label{eq:Sigma=CAT(1)}
   \text{ $\Sigma_p(X)$  is a connected graph 
without endpoints}
\end{align}
for every point $p\in X$.
The main results in  \cite{NSY:local} 
are roughly stated as follows:
\begin{itemize}
\item For each $p\in X$ and for small enough $0<r \le r_p$,  the closed $r$-ball $B(p,r)$ around $p$ is the union of finitely many properly embedded $\CAT(\kappa)$-disks or properly immersed
branched $\CAT(\kappa)$-disks\,;
\item The topological singular set $\ca S(X)$ of $X$ is a locally finite union of Lipschitz curves 
and has a graph structure in a generalized sense.
\end{itemize}

Burago and Buyalo \cite{BurBuy:upperII} 
gave a complete characterization of two-dimensional locally compact
polyhedra of curvature bounded above. The present work extends their results to the general case.  
Our results in the present paper are stated as follows.

First, we obtain  the following,
which solves a strengthened version of  the problem raised in Burago and Buyalo \cite{BurBuy:upperII}.
We use the terminologies defined in \cite{FMY}.
For $C,\e>0$,
a Lipschitz map $f:X\to Y$ between metric spaces
is called a {\it $(C,\e)$-Lipschitz homotopy equivalence} if there are $C$-Lipschitz maps
$g:Y\to X$,   $F:X\times [0,\e]\to X$
and $G:Y\times [0,\e]\to Y$ such 
that $F(\,\cdot\,,0)=g\circ f$, $F(\,\cdot\,,\e)=1_X$,
$G(\,\cdot\,,0)=f\circ g$ and $G(\,\cdot\,,\e)=1_Y$.
For sequences $C_n$, $\e_n$,
We say that a sequence of pointed metric spaces $(X_n,p_n)$ converges to a pointed metric space $(X, p)$ 
with respect to the {\it pointed 
$(C_n,\e_n)$-Lipschitz  
homotopy convergence}
if there are $o_n$-approximations 
$\varphi_n:(X_n,p_n)\to (X,p)$ that are 
 $(C_n,\e_n)$-Lipschitz  
homotopy equivalences with 
$\lim_{n\to\infty} o_n=0$
\pmed

\begin{thm} \label{thm:approx}
Let $(X,p)$ be a pointed two-dimensional locally compact, geodesically complete locally $\CAT(\kappa)$-space.  Then there is a sequence of pointed two-dimensional locally compact geodesically complete, polyhedral locally $\CAT(\kappa)$-spaces $(X_n,p_n)$ 
which converges to $(X, p)$ with respect to the pointed $(C_n,\e_n)$-Lipschitz  homotopy convergence with 
$\lim_{n\to\infty} C_n=1$, $\lim_{n\to\infty} \e_n=0$. 
\end{thm}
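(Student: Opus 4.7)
The plan is to combine the local disk decomposition from \cite{NSY:local} with a triangulation that respects the singular graph, replace each triangle by a comparison triangle of constant curvature $\kappa$, and verify that the resulting polyhedral spaces inherit the locally CAT($\kappa$) condition together with geodesic completeness while being close to $X$ in the required Lipschitz-homotopy sense.

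To begin, I would fix an exhaustion $X=\bigcup_n K_n$ by compact sets and, for each $n$, a small scale $r_n\to 0$ so that for every $q\in K_n$ the ball $B(q,r_n)$ satisfies the conclusions of \cite{NSY:local}: it is a finite union of properly embedded or properly immersed branched CAT($\kappa$)-disks whose branch arcs lie in the locally finite Lipschitz graph $\mathcal{S}(X)$. On each such ball I would construct a triangulation $T_q$ with mesh $\eta_n\to 0$ subject to (a) every arc of $\mathcal{S}(X)\cap B(q,r_n)$ lying in the 1-skeleton, (b) every branch vertex of $\mathcal{S}(X)$ being a 0-cell of $T_q$, and (c) each face being contained in the regular stratum of a single disk component. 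Replacing each face by the $\kappa$-comparison triangle of the same side lengths and gluing the pieces along common arcs produces a local polyhedral model $X_n^{(q)}$ whose branching pattern matches that of $X$ near $q$.

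Globally, I would cover $K_n$ by finitely many such balls, refine the local triangulations to agree on overlaps by passing to a common subdivision, and glue them to a global polyhedral space $X_n$. The approximation $\varphi_n:X_n\to X$ is then defined cellwise by sending each comparison triangle onto the corresponding triangle in $X$ via a natural parametrization that is bi-Lipschitz with constant $1+o(1)$ as $\eta_n\to 0$. To check that $X_n$ is locally CAT($\kappa$) it suffices by the link criterion to verify that each $\Sigma_p(X_n)$ is CAT(1), which for a metric graph reduces to the injective loop condition $\mathrm{length}(\gamma)\ge 2\pi$. Since the edge lengths of $\Sigma_p(X_n)$ converge to those of $\Sigma_p(X)$ as $\eta_n\to 0$ by angle comparison in shrinking triangles, and since the link in $X$ already satisfies this condition, a sufficiently fine triangulation preserves it. Moreover, the structural assumption \eqref{eq:Sigma=CAT(1)} on $X$ transfers to $X_n$ and guarantees geodesic completeness. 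Standard triangle-comparison estimates for the cellwise maps and their straight-line homotopies then yield $C_n\to 1$ and $\e_n\to 0$ for the Lipschitz-homotopy data.

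The main obstacle is maintaining the CAT($\kappa$) condition at branch points while simultaneously driving $C_n\to 1$: the individual polyhedral angles at a branch vertex $p$ differ from those at the corresponding point of $X$ by errors of order $o(\eta_n)$, and around an injective loop in $\Sigma_p(X_n)$ these errors could in principle conspire to push the total length below $2\pi$. Overcoming this requires a quantitative version of the link-graph description of \cite{NSY:local} ensuring a uniform positive lower bound on the excess $\mathrm{length}(\gamma)-2\pi$ over the finitely many shortest injective loops $\gamma$ in $\Sigma_p(X)$, together with a careful choice of triangulation vertices along singular arcs so that the induced angular sums at each polyhedral vertex are controlled from above by the corresponding sums in $X$. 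A secondary technical point will be verifying that the cellwise maps and homotopies glue consistently across overlapping local approximations, which I expect to handle by a partition-of-unity argument adapted to the geodesic structure.
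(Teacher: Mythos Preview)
Your proposed route is genuinely different from the paper's, and it contains a real gap.

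The paper does not triangulate $X$. Instead it performs \emph{surgeries} at the singular vertices $V_{\rm sing}(\ca S(X))$ (Definition~\ref{defn:singlar-vertex}): for each such $x$ and each singular direction $v$, a small join region $K(x,v)=x*T$ is excised and replaced by the polyhedral comparison object $\tilde K(x,v)$ built from comparison triangles $\tilde\blacktriangle(x,e)$, $e\in E(T)$. Outside these surgery pieces the space is left untouched. The Lipschitz homotopy data are then produced piece by piece, via explicit $(1+\tau)$-Lipschitz maps $f_\alpha:K(x_\alpha,v_\alpha)\to\tilde K(x_\alpha,v_\alpha)$ and the Reshetnyak $1$-Lipschitz majorization $\tilde f_\alpha$ in the reverse direction (Lemmas~\ref{lem:induction=Di} and~\ref{lem:tilde-f}), glued with geodesic homotopies.

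The reason your triangulation scheme cannot be carried out is precisely the phenomenon that forces the paper to use surgery. The singular locus $\ca S(X)$ is only a graph in a \emph{generalized} sense (Theorem~\ref{thm:graph}): the vertex set $V(\ca S(X))$ need not be discrete, and its accumulation set $V_*(\ca S(X))$ can even be uncountable (see \cite[Example~6.9]{NSY:local}). Your condition~(b), that every branch vertex of $\ca S(X)$ be a $0$-cell, therefore cannot be met by a locally finite triangulation. Even dropping (b), condition~(a) together with~(c) fails near a singular vertex of the second type in Definition~\ref{defn:singlar-vertex}, where two singular curves emanate in the same direction ($\angle_x(C_1,C_2)=0$) yet $C_1\cap C_2=\{x\}$: faces squeezed between $C_1$ and $C_2$ become arbitrarily thin as one approaches $x$, destroying any uniform bi-Lipschitz control on the comparison-triangle maps. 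A secondary issue is that the singular arcs are Lipschitz curves but not geodesics, so placing them in the $1$-skeleton means your ``triangles'' are not geodesic triangles and the comparison replacement is not well defined.

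Finally, the difficulty you flag as the ``main obstacle'' is not one. In a $\CAT(\kappa)$-space the comparison angle dominates the angle, $\tilde\angle\ge\angle$ (Lemma~\ref{lem:comparison}), so replacing each face by its comparison triangle can only \emph{increase} the length of every loop in the link; the $\CAT(1)$ link condition is automatically inherited, not endangered. The genuine obstruction lies in the wild combinatorics of $\ca S(X)$, which your plan does not address.
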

 
 Applying the approximation result of \cite[Theorem 0.11]{BurBuy:upperII} 
 and our method in the proof of Theorem
 \ref{thm:approx},
 to  each $(X_n,x_n)$, we can actually replace 
 $(X_n,p_n)$ in Theorem \ref{thm:approx} by 
 pointed two-dimensional locally compact geodesically complete polyhedral locally  
 $\CAT(\kappa)$-spaces with {\it piecewise smooth metrics}.

\pmed

Next  we discuss the Gauss-Bonnet theorem, 
which was established for surfaces with 
bounded curvature in \cite{AZ:bddcurv},
and for polyhedral locally $\CAT(\kappa)$-spaces
in \cite{BurBuy:upperII} (cf. \cite{ArsBuy}).
For our space $X$, we define the curvature measure $\omega^X$ of $X$ 
as the limit of those of $X_n$.

For a bounded domain $D$ of $X$, we assume that 
$\pa D$ 
 meets $\ca S(X)$ transversally. We call such $D$ {\it  admissible}  (see Definition \ref{defn:admissible-D}). 
 Then the turn $\tau_D(\pa D)$ of $\pa D$ from the side $D$ can be defined 
 (see Subsection \ref{ssec:conv}), and we have the following.

\begin{thm} \label{thm:GBf}
For each admissible domain $D$ of $X$,  we have  
\[
      \omega^X(D)= 2\pi\chi(D) -\tau_D(\pa D),
\]
where $\chi(D)$ denotes the Euler characteristic 
of $D$.
\end{thm}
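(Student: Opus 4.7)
The plan is to reduce Theorem \ref{thm:GBf} to the polyhedral Gauss-Bonnet theorem of Burago-Buyalo \cite{BurBuy:upperII} via the approximation of Theorem \ref{thm:approx}. Let $\varphi_n:X_n\to X$ be the $(C_n,\e_n)$-Lipschitz homotopy approximations with $C_n\to 1$ and $\e_n\to 0$, each $X_n$ being a polyhedral locally $\CAT(\kappa)$-space. I would lift $D$ to admissible domains $D_n\subset X_n$, apply polyhedral Gauss-Bonnet on each $X_n$, and pass to the limit.

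For the lift, I would take $D_n$ to be a small perturbation of $\varphi_n^{-1}(D)$ chosen so that $\pa D_n$ is piecewise Lipschitz, meets $\ca S(X_n)$ transversally, and avoids the vertices of $\ca S(X_n)$ of degree $\ne 2$. Such a perturbation exists because $\pa D$ is transversal to $\ca S(X)$ by admissibility, while the local structure results of \cite{NSY:local} together with $\varphi_n$ ensure $\ca S(X_n)$ approximates $\ca S(X)$ in a neighborhood of $\pa D$. After a small adjustment near $\pa D$ one may arrange $\varphi_n|_{D_n}:D_n\to D$ to be a homotopy equivalence, whence $\chi(D_n)=\chi(D)$ for all large $n$. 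The polyhedral Gauss-Bonnet \cite{BurBuy:upperII} then gives
\[
  \omega^{X_n}(D_n)=2\pi\chi(D_n)-\tau_{D_n}(\pa D_n).
\]

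To conclude, I would pass to the limit in $n$. The left-hand side converges to $\omega^X(D)$ by the very definition of $\omega^X$ as the limit of the $\omega^{X_n}$; one must verify that no $\omega^{X_n}$-mass concentrates on $\pa D$ in the limit, which again follows from admissibility. Since $\chi(D_n)=\chi(D)$ is constant in $n$, the full identity reduces to the convergence $\tau_{D_n}(\pa D_n)\to \tau_D(\pa D)$.

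I expect this last convergence to be the main obstacle. The turn of $\pa D_n$ decomposes into a smooth integral along the Lipschitz arcs of $\pa D_n\setminus \ca S(X_n)$ and discrete contributions at the finitely many transversal crossings $q_j^{(n)}\in \pa D_n\cap \ca S(X_n)$ approximating the crossings $q_j\in \pa D\cap \ca S(X)$. On the smooth arcs, convergence of the turn integral follows from uniform approximation of geodesics combined with $C_n\to 1$ and standard stability properties of turns in $\CAT(\kappa)$-spaces. At each $q_j$ I would exploit the description of $\Sigma_{q_j}(X)$ in \cite{NSY:local} as a connected graph without endpoints; transversality of $\pa D$ to $\ca S(X)$ at $q_j$ forces the relevant branches of $\ca S(X_n)$ to converge in direction to those of $\ca S(X)$, so the angle defects at $q_j^{(n)}$ converge to those at $q_j$. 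Summing these contributions yields $\tau_{D_n}(\pa D_n)\to \tau_D(\pa D)$ and the theorem follows.
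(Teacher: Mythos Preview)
Your overall strategy matches the paper's: construct approximating admissible domains $D_n\subset X_n$, apply polyhedral Gauss--Bonnet $\omega_{BB}^{X_n}(D_n)=2\pi\chi(D_n)-\tau_{D_n}(\pa D_n)$, and pass to the limit using $\chi(D_n)=\chi(D)$ and the definition of $\omega^X$. The paper packages the construction of $D_n$ and the turn convergence as Lemma~\ref{lem:constDn}, and the theorem is then immediate.

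However, your account of the turn convergence at the crossings $q_j\in\pa D\cap\ca S(X)$ contains a genuine misconception. You write that ``the angle defects at $q_j^{(n)}$ converge to those at $q_j$,'' implicitly assuming a one-to-one correspondence between crossing points. This is not the situation. The space $X_n$ is obtained by surgery: outside the surgery pieces $K(x_\alpha,v_\alpha)$ one has $X_n=X$ identically, and the only issue is when some $q=q_j$ lies in the interior of a surgery piece $K(x_n,v_n)=x_n*T_n$. In that case the portion $T_n(\pa D)=K(x_n,v_n)\cap\pa D$ is a tree (a cone over the $I$ entry points $z_i$), and there is no canonical single point in $\tilde K(x_n,v_n)$ corresponding to $q$. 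The paper replaces $T_n(\pa D)$ by an explicit geodesic tree $\tilde T_n(\pa D)\subset\tilde K(x_n,v_n)$ isomorphic to $T_n$ (not to the cone $T_n(\pa D)$), with a whole set of interior vertices $\{\tilde v_k\}_{k=1}^K$. The identity to be proved is then
\[
   \omega_D(q)=\lim_{n\to\infty}\sum_{k=1}^K\omega_{D_n}(\tilde v_k),
\]
and this is not an elementary angle-convergence statement: it is a combinatorial cancellation across the tree. The paper proves it via \eqref{eq:omegaQ}, \eqref{eq:zeta=theta}, and a telescoping argument (the displayed computation following \eqref{eq:sum-rule}) that walks along the tree from the leaves $\tilde y_i$ toward a chosen interior vertex $\tilde q_n$, using at each step that adjacent angles $\tilde\eta_{e_+(v)}(v)+\tilde\eta_e(v')\to\pi$ because $\angle v\,\tilde x_n\,v'\to 0$. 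Your sketch via ``$\varphi_n^{-1}(D)$ plus perturbation'' and ``standard stability of turns'' does not engage with this; in particular, $\varphi_n$ collapses regions inside the surgery pieces (see the construction of $f_i$ in Lemma~\ref{lem:induction=Di}), so $\varphi_n^{-1}(\pa D)$ need not be a reasonable curve, and $\ca S(X_n)$ does not simply approximate $\ca S(X)$ near $q$ in the sense you suggest. The paper also first reduces (via \eqref{eq:paD=broken}) to the case where each $L_i$ is a broken geodesic, so that the ``smooth'' turn along the arcs becomes a finite sum of exterior angles and the only nontrivial analysis is at the crossings.
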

\psmall

In the course of the proof of Theorem \ref{thm:GBf}, we also show the following approximation result
for admissibe domains.
\psmall
\begin{thm} \label{thm:conv-omega}
Let $(X,p)$ and $(X_n,p_n)$ be as in Theorem
 \ref{thm:approx}.
For each admissible domain
$D\subset X$, there is a sequence of admissible domains
$D_n\subset X_n$ satisfying  
\begin{enumerate}
\item  $D_n$ converges to $D$ under the convergence
$(X_n,  p_n)\to (X,p)\,;$
\item $D_n$ has the same homotopy type as $D\,;$
\item $\lim_{n\to\infty}\tau_{D_n}(\pa D_n)=\tau_D(\pa D)\,;$
\item $\lim_{n\to\infty} \omega_{BB}^{X_n}(D_n)=\omega^X(D)$,
\end{enumerate}
where  $\omega_{BB}^{X_n}$ denotes the curvature measure 
of  $X_n$ defined in \cite{BurBuy:upperII}.
\end{thm}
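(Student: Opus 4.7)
The plan is to construct the approximating domains $D_n \subset X_n$ by transporting $D$ through the Lipschitz homotopy equivalences $\varphi_n\colon X_n\to X$ (with quasi-inverses $g_n\colon X\to X_n$) furnished by Theorem \ref{thm:approx}, and then perturbing their boundaries so that the transversality condition defining admissibility is verified inside each $X_n$. More precisely, I would first cover $\partial D$ by finitely many small balls: some centered at the (finitely many) transversal intersection points of $\partial D$ with $\ca S(X)$, and the remaining ones covering the portion of $\partial D$ contained in the regular part $X\setminus\ca S(X)$. On each ball I would use the local structure theorem of \cite{NSY:local}, together with the approximation $\varphi_n$, to identify a corresponding region of $X_n$ and a local candidate for $\partial D_n$ whose Hausdorff distance to $g_n(\partial D)$ is $o_n$. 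After gluing the local pieces, I would apply a small generic perturbation---permissible because transversality to the graph $\ca S(X_n)$ is an open and dense condition on Lipschitz curves---to obtain an admissible boundary curve bounding $D_n$.

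Item (1) is then built into the construction via the $o_n$-approximation property of $\varphi_n$. Item (2) follows because $\varphi_n$ is a $(C_n,\e_n)$-Lipschitz homotopy equivalence: the restriction $\varphi_n|_{D_n}$ is a homotopy equivalence onto $D$ up to an $\e_n$-small deformation, which forces $D_n$ and $D$ to have the same homotopy type for all large $n$.

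The heart of the proof is item (3). I would decompose $\tau_D(\partial D)$ into (a) the integrated geodesic curvature on the smooth arcs of $\partial D$ lying in the regular part of $X$, (b) the exterior-angle contributions at the (finitely many) corners of $\partial D$, and (c) contributions coming from the transversal crossings of $\partial D$ with $\ca S(X)$, expressed in terms of the directions at the corresponding branch points. Each type I would treat separately. Contributions of types (a) and (b) are stable under the convergence because angles and geodesic curvature depend continuously on the ambient geometry once the Lipschitz constants tend to $1$. For (c), the local model of $X$ at a point of $\ca S(X)$ is, by \cite{NSY:local}, a finite union of $\CAT(\kappa)$-disks (or branched disks) meeting along curves, and the approximation $\varphi_n$ respects this model up to $o_n$; the angles that $g_n(\partial D)$ makes with each local branch therefore converge to those of $\partial D$, and this persists after a sufficiently small transversality perturbation. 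Summing the finitely many contributions yields $\lim_n \tau_{D_n}(\partial D_n)=\tau_D(\partial D)$.

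Finally, item (4) is obtained by combining the Gauss--Bonnet theorem for polyhedral locally $\CAT(\kappa)$-spaces from \cite{BurBuy:upperII}, applied to each admissible $D_n$, with items (2) and (3): one has $\omega_{BB}^{X_n}(D_n)=2\pi\chi(D_n)-\tau_{D_n}(\partial D_n)$, and by (2) and (3) the right-hand side converges to $2\pi\chi(D)-\tau_D(\partial D)$. The definition of $\omega^X$ as the weak limit of $\omega_{BB}^{X_n}$ under the approximation of Theorem \ref{thm:approx} identifies this common limit with $\omega^X(D)$ and, as a byproduct, shows that the weak limit is well-defined on admissible domains. The main obstacle I anticipate is the transversality step: because $\ca S(X_n)$ may have a combinatorial structure near $g_n(\partial D)$ quite different from $\ca S(X)$ near $\partial D$, achieving admissibility of $D_n$ while simultaneously preserving the homotopy type and the limit of the turn requires a careful local perturbation argument that uses the detailed local description both of $\ca S(X)$ and of its polyhedral approximants $\ca S(X_n)$.
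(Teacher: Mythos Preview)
Your overall architecture for item (4) --- apply Gauss--Bonnet on $D_n$ and combine with (2), (3) --- is exactly what the paper does. The difference, and the gap, lies in how you propose to obtain (1)--(3).

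You plan to transport $\partial D$ via the abstract $(C_n,\e_n)$-Lipschitz homotopy equivalences from Theorem~\ref{thm:approx} and then perturb. The paper does \emph{not} use these maps at all for Lemma~\ref{lem:constDn}. Instead it exploits the explicit surgery structure of $X_n$: by construction $X_n$ coincides with $X$ outside a finite union of small pieces $K(x_\alpha,v_\alpha)$, so any arc $L_i$ or loop $S_j$ of $\partial D$ lying in $X\setminus\ca S(X)$ sits unchanged in $X_n$ for large $n$, and its turn is literally the same. The only work is at those $q_k\in\partial D\cap\ca S(X)$ that fall inside some $K(x_n,v_n)$; there the tree $T_n(\partial D):=K(x_n,v_n)\cap\partial D$ is replaced by an explicit geodesic tree $\tilde T_n(\partial D)\subset\tilde K(x_n,v_n)$, and the paper verifies by a direct combinatorial computation on the tree that $\sum_k\omega_{D_n}(\tilde v_k)\to\omega_D(q)$. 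This is the content of \eqref{eq:Sum-omegaD(v)}--\eqref{eq:sum-rule} and the telescoping calculation that follows.

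Your approach has a genuine gap at the stability step. The claim that ``angles and geodesic curvature depend continuously on the ambient geometry once the Lipschitz constants tend to $1$'' is not true for Lipschitz maps in general: a map with bi-Lipschitz constant arbitrarily close to $1$ can still distort angles by a definite amount, and turns of curves are even more fragile. The maps $f_\alpha,\tilde f_\alpha$ actually built in the proof of Theorem~\ref{thm:approx} are not almost-isometries on the surgery pieces (indeed $f_\alpha$ collapses each $e_j'$ to a point), so pushing $\partial D$ through them gives no a priori control on the turn. Moreover, your generic transversality perturbation is uncontrolled: a small $C^0$ perturbation of a curve can change its turn by an arbitrary amount, so you would need to quantify the perturbation in a way that respects the turn, and you have not indicated how.

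In short, abandon the black-box use of the homotopy equivalences and work directly with the surgery description of $X_n$: keep $\partial D$ fixed outside the surgery pieces, and inside each piece replace the cone $T_n(\partial D)$ over $q$ by the explicit geodesic tree in $\tilde K(x_n,v_n)$. The convergence of turns then reduces to the link computation $\sum_k\omega_{D_n}(\tilde v_k)\to\omega_D(q)$, which is where the real argument lies.
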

\psmall

%
%
%
Since  $\omega^X$ is defined  
as the limit of  the curvature measure
of the polyhedral approximation $X_n$, it is interesting to give an explicit formula for $\omega^X$ on singular 
curves in terms of the singulalities.
We carry out this in Theorem \ref{thm:explicit}.

In the course of the proof of Theorem \ref{thm:explicit},
we establish the following result on the local boundedness of the turn of the 
topological singular locus $\ca S(X)$ of $X$
(see Definition \ref{defn:SXfinite-turn} for the precise definition).

\begin{thm} \label{thm:bdd-turn(Si)}
 $\ca S(X)$ has locally finite turn
variation.
\end{thm}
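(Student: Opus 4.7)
The plan is to combine the polyhedral approximation (Theorem \ref{thm:approx}) with the convergence of curvature measures (Theorem \ref{thm:conv-omega}) and the local structure theorem of \cite{NSY:local}, reducing to the polyhedral setting where the Burago--Buyalo Gauss--Bonnet formula controls the turn variation along singular edges.

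First I would fix a compact set $K \subset \ca S(X)$. By the local structure theorem of \cite{NSY:local}, $K$ is covered by finitely many closed balls $B(p,r_p)$, in each of which $\ca S(X)$ has a graph structure made of finitely many Lipschitz arcs meeting at isolated branch points, and the ball splits into finitely many properly embedded or branched $\CAT(\kappa)$ disks. It suffices to bound the turn variation of $\ca S(X)$ inside one such ball. I would enclose the intersection $\ca S(X) \cap B(p,r_p)$ strictly in the interior of an admissible bounded domain $D$ whose boundary meets $\ca S(X)$ transversally at finitely many points.

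Next, let $(X_n, p_n) \to (X, p)$ be the polyhedral Lipschitz homotopy approximations of Theorem \ref{thm:approx}, and $D_n \subset X_n$ the admissible approximations of $D$ from Theorem \ref{thm:conv-omega}. In each $X_n$, the set $\ca S(X_n) \cap D_n$ is a finite graph whose edges carry a well-defined one-sided turn given by angle defects along interior vertices. Cutting $D_n$ along geodesics transversal to $\ca S(X_n)$ and applying the polyhedral Gauss--Bonnet formula of \cite{BurBuy:upperII} to each resulting sub-domain, one obtains that the signed one-sided turn on any sub-arc of $\ca S(X_n) \cap D_n$ is bounded by a quantity of the form $2\pi |\chi| + |\omega_{BB}^{X_n}|(D_n) + C$, where $C$ absorbs the angular contributions from the finitely many transversal intersections along $\pa D_n$. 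Decomposing each edge of $\ca S(X_n) \cap D_n$ into maximal sub-arcs of constant turn sign and summing, the total variation of the one-sided turn along $\ca S(X_n) \cap D_n$ is bounded by $|\omega_{BB}^{X_n}|(D_n) + C'$ uniformly in $n$, both terms being controlled by Theorem \ref{thm:conv-omega}(4).

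Finally, I would pass to the limit. Each Lipschitz arc of $\ca S(X) \cap D$ is approximated by a polygonal arc in $\ca S(X_n) \cap D_n$ via the Lipschitz homotopy equivalences of Theorem \ref{thm:approx}, and on any sub-arc bounded away from branch points the one-sided turn of $\ca S(X)$ equals the limit of the one-sided turns along the approximating arcs, by the same mechanism used to prove item (3) of Theorem \ref{thm:conv-omega}. Lower semicontinuity of total variation under this approximation, combined with the uniform bound of the previous paragraph, yields a finite upper bound on the turn variation of $\ca S(X)$ on $K$. The main obstacle is precisely this limit step: one must ensure that no turn mass is lost or spuriously created as $n \to \infty$, which requires controlling the combinatorial complexity of $\ca S(X_n) \cap D_n$ near branch points of $\ca S(X)$ and establishing pointwise convergence of one-sided turns on sub-arcs bounded away from these branch points. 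The explicit formula for $\omega^X$ along singular curves in Theorem \ref{thm:explicit} should be the key technical input, as it makes the polyhedral approximation of one-sided turn transparent.
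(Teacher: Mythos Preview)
Your approach is fundamentally different from the paper's and has a real gap at the limit step you yourself identify.

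The polyhedral approximations $X_n$ are built by \emph{surgery}: inside each piece $K(x_\alpha,v_\alpha)$ the singular set of $X$ is removed and replaced by straight segments in $\tilde K(x_\alpha,v_\alpha)$. So there is no curve in $\ca S(X_n)$ that approximates a given singular curve $C\subset\ca S(X)$ through the surgery regions; any potential oscillation of $C$ there is simply erased in $X_n$. Outside the surgery parts one has $\ca S(X_n)=\ca S(X)$, but since $V_{\rm sing}(\ca S(X))$ can accumulate (indeed $V_*(\ca S(X))$ can be uncountable), you cannot exhaust $C$ by sub-arcs avoiding the surgery parts: the turn variation could a priori concentrate precisely on the shrinking surgery regions, and total variation is not lower semicontinuous in that situation. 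Your proposed fix via Theorem~\ref{thm:explicit} is circular: the proof of that theorem (specifically the last step of Claim~\ref{claim:omega=omega}) invokes Theorem~\ref{thm:bdd-turn(Si)}. Moreover, the side $R_i=z_i*C$ from which the turn is measured (Definition~\ref{defn:SXfinite-turn}) is a $\CAT(\kappa)$-surface built from $C$ inside $X$; it has no evident counterpart in $X_n$ once $C$ has been altered.

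The paper's proof bypasses the $X_n$ entirely. Given $C$ and $R=z_N*C$, it manufactures a concave region $\tilde R\subset M_\kappa^2$ whose boundary contains an arc $\tilde C$ isometric to $C$, with exterior angles chosen (via Lemma~\ref{lem:tree-meas}, which controls $\diam(\Omega_k(x))$ by $|\omega_{AZ}^{E_{ij}}|(x)$) so that the gluing of $R$ with two copies of $\tilde R$ along $C=\tilde C$ is $\CAT(\kappa)$. This gluing is obtained as a limit of explicit polyhedral $\CAT(\kappa)$-spaces built from a two-stage approximation of $C$: first by ``nice'' singular arcs $C_{\alpha,\e}\subset C_{i_\alpha j_\alpha N}$ lying in fixed ruled surfaces, then by broken geodesics within those surfaces. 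Once the limit space $\tilde E$ is $\CAT(\kappa)$, Burago--Buyalo's characterization (Theorem~\ref{thm:BB-character}) applied to $\tilde E$ gives directly that its singular edge $C$ has finite turn variation from the side $R$.
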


When $X$ is a polyhedral space in addition,
all of whose faces are bounded, any edge $e$
 has finite turn variation from each face adjacent to $e$ (\cite{BurBuy:upperII}). Theorem \ref{thm:bdd-turn(Si)} extends this result to the general case.

Finally, we discuss conditions for two-dimensional locally compact geodesic spaces to have curvature bounded above.

\begin{thm} \label{thm:converse}
For a  locally compact geodesic space $X$ and a closed subset
$\ca S\subset X$, we assume the following $:$
\begin{enumerate}
\item $X\setminus \ca S$ is a locally
 geodesically complete, locally 
 $\CAT(\kappa)$-surface$\,;$
\item For each $p\in \ca S$, the space $\Sigma_p(X)$ of directions at $p$ equipped with the upper angle
 is a $\CAT(1)$-graph without endpoints $\,;$
\item For each $p\in \ca S$ there is $r>0$ such that
for every $v\in V(\Sigma_p(X))$ and for a  
closed arc $I$ meeting $V(\Sigma_p(X))$ only at $v$ in its interior, 
 there is a $\CAT(\kappa)$-sector $S$ homeomorphic to a disk with vertex $p$ bounded 
 by $S(p,r)$ and $X$-geodesics from $p$ directing to $\pa I$ together with an isometric embedding 
 $\iota:\Sigma_p(S)\to\Sigma_p(X)$
 such that $\iota(\Sigma_p(S))= I\,;$
\item $S(p,r)$ is a graph having the same homotopy type 
as $\Sigma_p(X)$ for any small $r>0$.
\end{enumerate}
Then $X$ is a geodesically complete locally 
$\CAT(\kappa)$-space.
\end{thm}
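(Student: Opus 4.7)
The strategy is to reduce to the singular case and there decompose a small ball around each $p\in\ca S$ into $\CAT(\kappa)$-sectors glued along radial geodesics through $p$. For $p\in X\setminus \ca S$, hypothesis (1) directly provides a $\CAT(\kappa)$ neighborhood and geodesic completeness, so the entire argument takes place at a point $p\in\ca S$. For such $p$, cut each edge of the $\CAT(1)$-graph $\Sigma_p(X)$ at an interior point to obtain closed arcs $I_1,\dots, I_m$, each containing exactly one vertex $v_j\in V(\Sigma_p(X))$ in its interior. Apply hypothesis (3) to each $I_j$ to obtain a $\CAT(\kappa)$-sector $S_j\subset X$ with vertex $p$, outer boundary an arc of $S(p,r)$, and link $\iota_j(\Sigma_p(S_j))=I_j$.

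Because the cuts lie at common interior edge points, consecutive sectors $S_j$ and $S_{j+1}$ share a bounding $X$-geodesic from $p$: uniqueness of a $\CAT(\kappa)$-geodesic extending a prescribed initial direction inside each sector forces the two candidate boundary geodesics to coincide. Hypothesis (4), combined with the identification of $\pa S_j\cap S(p,r)$ as an arc homeomorphic to $I_j$, shows combinatorially that the outer arcs fit together to cover $S(p,r)$ with the same incidence pattern as the $I_j$ in $\Sigma_p(X)$; a cone-over-$S(p,r)$ argument using the radial geodesics from $p$ then gives $B(p,r)=\bigcup_j S_j$, with pairwise intersections equal to the shared radial geodesics or the single point $p$. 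Thus $B(p,r)$ is a metric gluing of finitely many $\CAT(\kappa)$-disks along convex radial geodesics emanating from $p$, and the resulting link at $p$ is isometric to $\Sigma_p(X)$, which is $\CAT(1)$ by hypothesis (2). A Berestovskii--Reshetnyak type gluing argument then yields that $B(p,r)$ is $\CAT(\kappa)$, and geodesic completeness at $p$ follows because $\Sigma_p(X)$ has no endpoints.

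The main obstacle will be showing that the $X$-induced length metric on $\bigcup_j S_j$ coincides with the gluing metric of the sectors along the shared radial geodesics, i.e.\ that no $X$-shortcut between points in distinct sectors escapes the sectoral decomposition. Resolving this requires the topological rigidity from hypothesis (4) to force minimizing $X$-curves to decompose at the shared geodesics, together with the $\CAT(1)$ structure of $\Sigma_p(X)$ via the first variation formula to control candidate shortcut angles. A further technical point is the closing step of the gluing: when the last sector is inserted to complete the cyclic arrangement around $p$, one identifies two convex subsets of the already-assembled space that meet only at $p$, so the standard single-pair Reshetnyak theorem must be supplemented by the $\CAT(1)$ property of the resulting link as a substitute for strict convexity of the gluing locus.
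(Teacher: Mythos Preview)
Your decomposition breaks at the very first step. Cutting each edge of $\Sigma_p(X)$ at one interior point does \emph{not} produce arcs: at a vertex $v\in V(\Sigma_p(X))$ the branching degree is $N_v\ge 3$ (this is precisely what makes $p\in\ca S$), so the component containing $v$ after cutting is a tripod or higher-valence star, not an interval. Condition~(3) only supplies sectors for honest closed arcs $I$, and each such sector is a disk spanning just two of the $N_v$ branches at $v$. Hence no single sector covers a neighborhood of the direction $v$ in $B(p,r)$, and to fill the ball near $v$ one is forced to take the whole family $S_{ij}$, $1\le i<j\le N_v$, of sectors through $v$.

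The deeper problem is the claim that pairwise intersections of sectors are radial geodesics. They are not: for distinct $i,j,k$ the triple intersection $S_{ij}\cap S_{jk}\cap S_{ki}$ is a singular curve $C_{ijk}$ emanating from $p$ in direction $v$, which is a priori neither an $X$-geodesic nor convex in any of the sectors (see Lemma~\ref{lem:Cijk} and Example~4.5 of \cite{NSY:local}). So Reshetnyak gluing along convex pieces is unavailable, and your proposed reduction to Berestovskii--Reshetnyak collapses. The paper's route is accordingly much heavier: it first proves that $C_{ijk}$ has finite turn variation (Theorem~\ref{thm:Cijl}) and that the union $E(v)=\bigcup_{i<j}S_{ij}$ is $\CAT(\kappa)$ via a surgery argument approximating $E(v)$ by polyhedral $\CAT(\kappa)$-spaces (Theorem~\ref{thm:union-CAT}), relying on Burago--Buyalo's gluing criterion (Theorem~\ref{thm:BB-gluing}) rather than Reshetnyak. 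Only after this is established for every vertex direction $v$ can one assemble $B(p,r)$ and conclude.
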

\pmed

We call $S$ as in Theorem \ref{thm:converse}(3)
a {\it sector surface} at $p$.

\begin{rem} \label{rem:converse} \upshape
\par\n (1)\,
From the conditions in Theorem \ref{thm:converse},  $\ca S$ coincides with the set of topological 
singular points of $X$, and 
has the structure of metric graph in the sense of \cite[Definition 6.8]{NSY:local}. 
\par\n (2)\,
We verify that in Theorem \ref{thm:converse}(3),
the image $\iota(\Sigma_p(S))$ coincides with 
the extrinsic space of directions, 
$\Sigma_p(S) \subset \Sigma_p(X)$, with angle metric $\angle^X$.
\par\n
(3)\, In \cite[Corollary 1.7]{NSY:local},
we showed that $S(p,r)$ is a graph {\it without endpoints} having the same homotopy type 
as $\Sigma_p(X)$. However, in the condition
(4) of Theorem \ref{thm:converse}, we do not require that  $S(p,r)$ does not have endpoints.
\end{rem}

\begin{rem} \label{rem:1.4optimal} \upshape
The results in \cite{NSY:local} show that 
all the conditions $(1) - (4)$ are satisfied 
for any two-dimensional locally compact, geodesically complete locally 
$\CAT(\kappa)$-space $X$ 
and the topological singular point set $\ca S$ in the
essential case when $X$ satisfies 
\eqref{eq:Sigma=CAT(1)}. 

Conversely, the conditions $(1) - (4)$ are optimal
in the sense that there are counter examples if one drops any condition (see Examples \ref{ex:gluing-disk}  $-$ \ref{ex:counter-inv}).
\end{rem}


\pmed
The organization of the paper is as follows.
In Section \ref{sec:prelim}, we review  basic results on two-dimensional locally $\CAT(\kappa)$-spaces, 
some fundamental results on polyhedral spaces with curvature bounded above (\cite{BurBuy:upperII})
and the local structure results (\cite{NSY:local}).

In Section \ref{sec:Polyhedral}, we prove Theorem \ref{thm:approx}. We construct the polyhedral spaces 
by gluing locally defined surgeries. After showing  the Gromov-Hausdorff convergence,  we 
establish the Lipschitz homotopy convergence.  
Here we make gluing of locally defined almost isometries
and make them partly degenerate inductively.

In Section \ref{sec:GB},  we establish a key result
(Lemma \ref{lem:constDn})
on the approximation of an admissible domain in 
the limit space under the homotopy convergence as in 
Theorem \ref{thm:approx}.
Then  Theorems  \ref{thm:GBf} and \ref{thm:conv-omega}
follow almost
immediately.
After establishing the local boundedness of the turn of the topological singular locus (Theorem \ref{thm:bdd-turn(Si)}),
we provide an explicit formula for $\omega^X$ on singular curves in terms of the singulalities.

In Section \ref{sec:converse},
we prove Characterization Theorem  \ref{thm:converse}. We begin with some examples
showing the optimum of the conditions in Theorem  \ref{thm:converse}. Then we derive some basic 
properties of $X$ from those conditions.
While providing fundamental properties of sector surfaces,  we show some significant geometric  properties of $X$, from which we get the conclusion through
the surgery construction together with angle comparison estimates.

Some  results related those in the present paper were announced in \cite{Ya:upper}.

\setcounter{equation}{0}

\section{Preliminaries}
\label{sec:prelim}
\pmed
In the present paper, we follow the notations used in 
\cite{NSY:local}. For instance, 
$\tau_p(\epsilon_1,\ldots, \epsilon_k)$ denotes a function 
depending on $p$ and $\epsilon_1,\ldots, \epsilon_k$ satisfying 
$\lim_{\epsilon_1,\ldots, \epsilon_k\to 0}\tau_p(\epsilon_1,\ldots, \epsilon_k)=0$.

\pmed   

\pmed\n
\subsection{Basic preliminary results} \label{ssec:basic-prelim}

For basic properties of $\CAT(\kappa)$-spaces, we refer to \cite{AKP},
\cite {bridson-haefliger}, \cite{Kl:local}, 
\cite{NL:geodesically}.

Let $X$ be a locally compact, geodesically complete locally $\CAT(\kappa)$-space for a constant $\kappa$.

\begin{lem}$($\cite[Lemma 2.7]{NSY:local}$)$ \label{lem:comparison}
For every $p\in X$, we have the following for all distinct  $x,y\in B(p,r)\setminus \{ p\}$
\begin{align*}\label{eq:diff-comarison-angle}
   \tilde\angle xpy -\angle xpy <\tau_p(r), \quad
   \tilde\angle pxy -\angle pxy <\tau_p(r),\quad
   \tilde\angle pyx -\angle pyx <\tau_p(r).
\end{align*}
\end{lem}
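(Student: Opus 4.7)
The plan is to combine the $\CAT(\kappa)$ hinge comparison, a Taylor expansion of the $\kappa$-model law of cosines, and the pointed Gromov--Hausdorff convergence of small rescaled balls to the tangent cone $T_pX$, in order to conclude that the excess $\tilde\angle - \angle$ is uniformly small for triangles contained in a small ball around $p$.

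First, since $X$ is locally $\CAT(\kappa)$, I would choose a $\CAT(\kappa)$ ball $B(p,r_0)$ around $p$; for $x,y\in B(p,r)\setminus\{p\}$ with $r\le r_0$, set $a=|px|$, $b=|py|$, $c=|xy|$, and $\angle=\angle xpy$. The $\CAT(\kappa)$ hinge comparison gives the lower bound $c\ge c_\kappa$, where $c_\kappa$ is the third side of the hinge in the model plane $M^2_\kappa$ with sides $a,b$ meeting at angle $\angle$; a Taylor expansion of the $\kappa$-law of cosines for sides of size $O(r)$ yields
\[
c_\kappa^{\,2} \;=\; a^2+b^2-2ab\cos\angle \;+\; O(|\kappa|\,a^2 b^2),
\]
so the Euclidean value $c_E^{\,2} := a^2+b^2-2ab\cos\angle$, which is also the squared distance in the Euclidean cone $T_pX = C(\Sigma_p)$ between the corresponding radial points, differs from $c_\kappa^{\,2}$ only by $O(|\kappa|\,a^2b^2)$. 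To upgrade the lower bound into a two-sided estimate, I would invoke the pointed Gromov--Hausdorff convergence $(B(p,r), r^{-1}d, p)\to(T_pX,o)$, in which the apex angle exactly equals the Euclidean comparison angle by the cone formula; this gives the matching upper bound $c^2\le c_E^{\,2} + O(|\kappa|\,a^2 b^2)$ uniformly in $x,y\in B(p,r)$. Substituting $|c^2-c_E^{\,2}|=O(|\kappa|\,a^2 b^2)$ into $\cos\tilde\angle xpy=(a^2+b^2-c^2)/(2ab)$ yields $|\cos\tilde\angle xpy - \cos\angle xpy|=O(|\kappa|\,r^2)$, from which $\tilde\angle xpy - \angle xpy<\tau_p(r)$ follows. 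The same argument applied with $x$ or $y$ as basepoint, using $|xp|,|xy|\le 2r$, gives the other two inequalities.

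The main obstacle will be establishing the uniform upper bound on $c$ beyond the generic Gromov--Hausdorff estimate, especially for thin triangles with $\min(a,b)\ll r$: the sharper tangent cone analysis for a locally $\CAT(\kappa)$ space is needed, exploiting the local compactness and geodesic completeness of $X$ to ensure the error remains of order $O(|\kappa|\,a^2 b^2)$ uniformly rather than the cruder $o(r^2)$ one gets from naive Gromov--Hausdorff convergence. A secondary subtlety is converting the $\cos$-bound into an angle bound when $\angle$ is close to $0$ or~$\pi$; in these endpoint cases I would use $\tilde\angle\ge\angle$ (automatic from the $\CAT(\kappa)$ angle comparison) together with $\tilde\angle,\angle\in[0,\pi]$ to deduce an $O(\sqrt{|\kappa|}\,r)$ bound on the excess directly.
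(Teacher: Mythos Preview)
The paper does not give its own proof of this lemma; it is quoted verbatim from \cite[Lemma~2.7]{NSY:local}. So there is no argument in the present paper to compare yours against, and I can only assess your sketch on its own merits.

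Your outline has a genuine circularity. You want to bound $\tilde\angle xpy-\angle xpy$ by pairing the hinge inequality $c\ge c_\kappa(\angle)$ with an \emph{upper} bound $c\le c_E(\angle)+o(\cdot)$ extracted from the convergence $\bigl(\tfrac1r B(p,r),p\bigr)\to(T_pX,o)$. But establishing that convergence with the identification $T_pX=C(\Sigma_p)$ and with the approximation respecting the radial structure $x\mapsto(|px|,\uparrow_p^x)$ is precisely the statement that $|c-c_E|/r\to 0$ uniformly in $x,y$ --- i.e.\ the first inequality you are trying to prove. Invoking the GH limit therefore begs the question. You are also right that the thin case $\min(a,b)\ll r$ is the real difficulty, but you do not resolve it: the target estimate $|c^2-c_E^2|=O(|\kappa|a^2b^2)$ is false in general without a lower curvature bound, and the cruder $o(r^2)$ coming from any GH approximation blows up after dividing by $ab$.

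A non-circular route is to use the $\CAT(\kappa)$ monotonicity of comparison angles together with geodesic completeness to extend the shorter of $[p,x],[p,y]$ and reduce to the balanced case $|px|=|py|=t$; then $f_t(\xi,\eta):=\tilde\angle\,\gamma_\xi(t)\,p\,\gamma_\eta(t)$ is continuous on the compact set $\Sigma_p\times\Sigma_p$, non-increasing as $t\downarrow 0$, and converges pointwise to $\angle(\xi,\eta)$ by the definition of angle, so Dini gives uniformity. For the two inequalities at $x$ and $y$ your ``same argument with $x$ (or $y$) as basepoint'' only yields $\tau_x(2r)$; one still has to argue $\sup_{x\in B(p,r)}\tau_x(2r)\to 0$, which is a further compactness step you have not addressed.
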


\begin{prop} $($\cite{NL:geodesically}$)$ \label{prop:metsph-tree}
For every $p\in X$,
$S(p, r)$  has the  same homotopy type as 
$\Sigma_p(X)$ for small enough $r>0$.
\end{prop}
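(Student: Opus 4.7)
The strategy is to realize the homotopy equivalence via the natural ``logarithm'' and ``exponential''-type maps, which in fact yield a homeomorphism under these hypotheses. Fix $r_0 > 0$ small enough that $B(p, r_0)$ is a $\CAT(\kappa)$-neighborhood in which geodesics between any two points are unique, which is possible by the local $\CAT(\kappa)$ hypothesis. For any $r \in (0, r_0)$ I would define
\[
   \Psi : S(p,r) \to \Sigma_p(X), \qquad \Psi(q) := \uparrow_p^q,
\]
using the unique geodesic from $p$ to $q$, together with
\[
    \Phi : \Sigma_p(X) \to S(p,r), \qquad \Phi(\xi) := \gamma_\xi(r),
\]
where $\gamma_\xi$ is the geodesic starting at $p$ in direction $\xi$, which exists by geodesic completeness of $X$ and is uniquely determined on $[0,r]$ by the $\CAT$ hypothesis on $B(p, r_0)$. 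Both maps are then immediate set-theoretic inverses: $\Psi(\Phi(\xi)) = \xi$ by the definition of $\gamma_\xi$, while for $q \in S(p,r)$ the geodesic $\gamma_{\uparrow_p^q}$ coincides with the unique $p$--$q$ geodesic on $B(p, r_0)$, so $\Phi(\Psi(q)) = q$.

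The continuity of $\Psi$ follows at once from the $\CAT(\kappa)$ angle estimate: for $q_n \to q$ in $S(p,r)$ one has
\[
    \angle(\Psi(q_n), \Psi(q)) = \angle q_n p q \;\le\; \tilde\angle q_n p q \;\to\; 0,
\]
the last convergence being a computation in the model plane. For the continuity of $\Phi$, I would argue by compactness: suppose $\xi_n \to \xi$ in $\Sigma_p(X)$ and consider the $1$-Lipschitz maps $\gamma_{\xi_n}:[0,r] \to \overline{B}(p, r_0)$. By local compactness of $X$ and Arzel\`a--Ascoli, every subsequence of $(\gamma_{\xi_n})$ has a further subsequence converging uniformly to a geodesic $\gamma^*$ starting at $p$. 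Fixing any $t \in (0, r]$ and applying the already-established continuity of $\Psi$ at $\gamma^*(t)$, the starting direction of $\gamma^*$ equals $\lim_k \xi_{n_k} = \xi$; uniqueness then forces $\gamma^* = \gamma_\xi$, so the full sequence $\gamma_{\xi_n}(r)$ converges to $\gamma_\xi(r) = \Phi(\xi)$.

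The main obstacle in this plan is securing uniqueness of the geodesic extension $\gamma_\xi$ on the entire interval $[0,r]$: geodesic completeness alone only supplies an extension germ in direction $\xi$, and branching inside $B(p, r_0)$ could a priori occur further along. One has to combine the local $\CAT(\kappa)$ condition on $B(p, r_0)$ with the standard fact that in a sufficiently small $\CAT(\kappa)$-ball every pair of points is joined by a unique geodesic, in order to promote germ-uniqueness to the whole segment and to ensure that $\gamma_\xi$ does not leave the ball $B(p, r_0)$ before time $r$. Once this is done, $\Phi$ and $\Psi$ are mutually inverse continuous bijections, hence homeomorphisms, and in particular homotopy equivalences.
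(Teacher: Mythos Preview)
The paper does not prove this proposition; it is quoted from \cite{NL:geodesically}, so there is no in-paper argument to compare against. Your strategy is natural, but the step you yourself flag as ``the main obstacle'' is a genuine gap, and your proposed fix does not close it.

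You assert that uniqueness of the geodesic $\gamma_\xi$ on all of $[0,r]$ can be secured by combining the local $\CAT(\kappa)$ condition with uniqueness of geodesics between pairs of points. This implication is false. The tripod --- three infinite rays meeting at a vertex $c$ --- is a locally compact, geodesically complete $\CAT(0)$ space in which every pair of points is joined by a unique geodesic, yet for any $p \neq c$ the geodesic from $p$ toward $c$ admits two distinct geodesic extensions past $c$. Uniqueness of geodesics between prescribed endpoints constrains segments with both ends fixed; branching concerns the multiplicity of geodesic \emph{continuations} past a point, and the two phenomena are independent in $\CAT(\kappa)$ geometry. Thus your map $\Phi$ is not well-defined on the basis you give.

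What your approach actually requires is: for each $p$ there exists $r_p>0$ such that no geodesic issuing from $p$ branches within $B(p,r_p)$. In the tripod this holds with any $r_p < d(p,c)$, so the approach succeeds there once $r$ is chosen small enough --- and plausibly it always does under the standing hypotheses --- but establishing this is itself a nontrivial fact that leans essentially on local compactness together with geodesic completeness (attempted counterexamples with branch points accumulating at $p$ invariably violate one or the other), and you have not supplied that argument. Note also that the proposition only asserts a homotopy equivalence, not a homeomorphism; the treatment in \cite{NL:geodesically} does not proceed by constructing an inverse to $\Psi$, but rather through regularity of the distance function $d_p$ on small punctured balls and the associated retractions, which yields the homotopy statement without requiring $\Psi$ to be a bijection.
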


For more precise information in the two-dimensional 
case, see \cite[Corollary 1.7] {NSY:local}.

In what follows, we assume that  $X$ is two-dimensional
in addition.

\begin{lem}$($\cite[Lemma 2.2]{NSY:local}$)$  \label{lem:top-vert1}
Let $p\in\mathcal S(X)$.
Then $\Sigma_p(\mathcal S(X))$ coincides with the set  $V(\Sigma_p)$ of all 
vertices of the graph $\Sigma_p(X)$.
\end{lem}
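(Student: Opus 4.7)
The plan is to prove the two inclusions separately, using in an essential way the local disk decomposition of $B(p,r)$ summarized in the introduction: for small $r\le r_p$ the ball $B(p,r)$ is a finite union of properly embedded $\CAT(\kappa)$-disks (or properly immersed branched $\CAT(\kappa)$-disks). Under the standing assumption \eqref{eq:Sigma=CAT(1)}, $\Sigma_p(X)$ has no endpoints, so every vertex of $\Sigma_p$ has valence $\ge 3$, while every non-vertex direction has valence exactly $2$.

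For the inclusion $\Sigma_p(\mathcal{S}(X))\subseteq V(\Sigma_p)$, I would argue by contradiction. Suppose $v\in\Sigma_p(\mathcal{S}(X))$ is a regular (valence $2$) point of $\Sigma_p$. Choose singular points $q_n\in\mathcal{S}(X)\setminus\{p\}$ with $q_n\to p$ and $\uparrow_p^{q_n}\to v$, and pick an open arc $U\subset\Sigma_p$ around $v$ missing $V(\Sigma_p)$. By the local structure theorem from \cite{NSY:local}, the ``sector'' over $U$ in $B(p,r)$ lies inside a single properly embedded $\CAT(\kappa)$-disk $D$ with vertex $p$, whose interior is a topological $2$-manifold. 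For $n$ large, $q_n$ lies in the interior of this sector, hence $q_n$ is a topological manifold point, contradicting $q_n\in\mathcal{S}(X)$.

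For the reverse inclusion $V(\Sigma_p)\subseteq\Sigma_p(\mathcal{S}(X))$, let $v\in V(\Sigma_p)$ have valence $m\ge 3$. By geodesic completeness there is a geodesic $\gamma:[0,\ell]\to X$ starting at $p$ with $\uparrow_p^{\gamma(t)}=v$ for small $t>0$. I would then show each such $\gamma(t)$ lies in $\mathcal{S}(X)$, which immediately gives $v\in\Sigma_p(\mathcal{S}(X))$. The idea is that in the local disk decomposition, the $X$-geodesic in direction $v$ appears as a common boundary edge of (at least) $m$ disk sectors emanating from $p$, so $\Sigma_{\gamma(t)}(X)$ inherits at least $m$ distinct arcs meeting at the direction $-\uparrow_{\gamma(t)}^{p}$; in particular $\Sigma_{\gamma(t)}(X)$ is not a circle and therefore $\gamma(t)$ fails to be a topological manifold point.

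The main obstacle is the second inclusion, specifically the step of transferring the branching at $p$ along the geodesic $\gamma$ to branching in $\Sigma_{\gamma(t)}$. It requires one to identify the $m$ sectors of $B(p,r)$ meeting at $v$ with $m$ locally distinct ``sides'' of $\gamma$ at $\gamma(t)$, which in turn rests on the fact that the disk decomposition is essentially unique near $p$ and that properly embedded $\CAT(\kappa)$-disks are locally faithful along boundary arcs. Once this identification is in place, both inclusions fit together and the lemma follows; the first inclusion is routine after invoking the manifold property of the disk interiors.
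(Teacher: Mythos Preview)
This lemma is quoted from \cite[Lemma~2.2]{NSY:local} without proof, so there is no argument in the present paper to compare against; I assess your proposal on its own terms.

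First, a logical-order problem: you invoke the disk decomposition of $B(p,r)$ (Theorem~\ref{thm:embedded-disk}, i.e.\ \cite[Theorem~1.1]{NSY:local}) to prove what in \cite{NSY:local} is Lemma~2.2. There the lemma precedes the structure theorem and is among the tools used on the way to it, so this route is circular as written.

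Second, and independently of circularity, your argument for $V(\Sigma_p)\subset\Sigma_p(\mathcal S(X))$ has a genuine gap. You choose a geodesic $\gamma$ with $\dot\gamma(0)=v$ and assert $\gamma(t)\in\mathcal S(X)$ because ``the $X$-geodesic in direction $v$ appears as a common boundary edge of the $m$ disk sectors.'' That is not how the decomposition works: the sectors $S_{ij}$ are bounded by geodesics in directions $\zeta_i$ with $\angle(\zeta_i,v)=\delta>0$, not in direction $v$, and the singular locus in direction $v$ is the curve $C_{ijk}=S_{ij}\cap S_{jk}\cap S_{ki}$, which is Lipschitz but \emph{not} a geodesic in general (see Lemma~\ref{lem:sing-arc}). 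A concrete failure: glue one copy of $H^+=\{y\ge x^2\}\subset\R^2$ and two copies of $H^-=\{y\le x^2\}$ along the parabola $C$. This is geodesically complete and $\CAT(0)$ by Theorem~\ref{thm:BB-gluing} (the turn of $C$ from each $H^-$ is $\le 0$). At $p=(0,0)$ the vertex $v_+\in V(\Sigma_p)$ is the positive $x$-direction tangent to $C$, and every $X$-geodesic from $p$ with initial direction $v_+$ is the straight segment $t\mapsto(t,0)$ inside one of the two copies of $H^-$, hence lies entirely in the manifold part for $t>0$. So no geodesic can witness $v_+\in\Sigma_p(\mathcal S(X))$; the singular curve $C$ does, but it is not a geodesic. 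The inclusion $V(\Sigma_p)\subset\Sigma_p(\mathcal S(X))$ must be argued through such singular curves---equivalently, through the branch points of the trees $S(p,t)$ furnished by Proposition~\ref{prop:metsph-tree}---not through geodesics in the vertex direction.
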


We denote by $\pm\nabla d_p(x)$ the union of 
$-\nabla d_p(x)$ and $\nabla d_p(x)$.

\begin{lem}$($\cite[Prop 6.6, Cor 13.3]{NL:geodesically}$])$  \label{lem:near-vert}
Let $p\in\mathcal S(X)$.
For any $x\in \mathcal S(X)\cap (B(p,r)\setminus\{ p\})$, $V(\Sigma_x(X))$ is contained in a $\tau_p(r)$-neighborhood of $\pm\nabla d_p(x)$.

Therefore there is a positive integer $m\ge 3$ such that 
the Gromov-Hausdorff distance between $\Sigma_x(X)$ 
and the spherical suspension over $m$ points is less than
$\tau_p(r)$.
\end{lem}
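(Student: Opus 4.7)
The plan is to rescale around $p$ and use continuity of tangent cones together with continuity of spaces of directions. Set $s:=|px|$ and consider the pointed rescaling $(s^{-1}X,p)$. The upper curvature bound rescales to $s^2\kappa$, so standard precompactness for pointed locally compact geodesically complete locally $\CAT(\kappa)$-spaces gives, along a subsequence with $s\to 0$, Gromov--Hausdorff convergence to the tangent cone $T_pX=K(\Sigma_p(X))$ with apex $o$, a $\CAT(0)$-space whose cross-section is the graph $\Sigma_p(X)$ (without endpoints, by our standing assumption). The rescaled point $x$ converges to some $y\in T_pX$ with $|oy|=1$.

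Because $x\in\ca S(X)$ and topological singularity is preserved in such blow-up limits, $y$ is a topological singular point of $T_pX$. Since $T_pX$ itself is two-dimensional, locally compact, and geodesically complete, Lemma \ref{lem:top-vert1} applies in the cone and its topological singular set equals the union of rays from $o$ over vertices of $V(\Sigma_p(X))$. Hence $y$ lies on the ray over some $v\in V(\Sigma_p(X))$, which has some degree $m\ge 3$ in $\Sigma_p(X)$. A direct cone computation at the off-apex point $y$ yields
\[
  \Sigma_y(T_pX) \;=\; S^0 * \{m \text{ points}\},
\]
the spherical suspension over the $m$ edge-directions at $v$, whose two poles are exactly $\pm\nabla d_o(y)$.

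Next I would invoke continuity of the space of directions under pointed Gromov--Hausdorff convergence of geodesically complete locally $\CAT(\kappa)$-spaces to conclude $(\Sigma_x(X),\angle)\to(\Sigma_y(T_pX),\angle)$. Continuity of gradients of the semiconcave function $d_p$ under such convergence additionally gives $\pm\nabla d_p(x)\to\pm\nabla d_o(y)$. Combining the two convergences produces the second assertion immediately, namely that $\Sigma_x(X)$ is $\tau_p(r)$-close in Gromov--Hausdorff distance to the spherical suspension over $m$ points. The first assertion requires furthermore that the graph-vertex set $V(\Sigma_x(X))$ converges to the two-element set $\{\pm\nabla d_o(y)\}$ of vertices of the suspension.

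This last point is the main technical obstacle: a vertex of $\Sigma_x(X)$ might a priori limit to an interior edge point of the suspension and thereby disappear. To rule this out I would use that in a $\CAT(1)$-graph the local valence equals $2$ at interior edge points and strictly exceeds $2$ at vertices, together with a lower semicontinuity statement for local valence under Gromov--Hausdorff convergence of geodesically complete $\CAT(1)$-spaces (leveraging local geodesic completeness at $x$ so that branching is detected by counting local geodesic extensions). This forces every vertex of $\Sigma_x(X)$ to lie within $\tau_p(r)$ of one of the two poles, hence within $\tau_p(r)$ of $\pm\nabla d_p(x)$, giving the first assertion.
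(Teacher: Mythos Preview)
The paper does not provide its own proof of this lemma; it is quoted directly from \cite[Prop.~6.6, Cor.~13.3]{NL:geodesically}. Your proposed argument captures the essential mechanism behind those results---convergence of spaces of directions under pointed Gromov--Hausdorff convergence of geodesically complete locally $\CAT(\kappa)$-spaces, together with the explicit structure of tangent cones over graphs---so in spirit you are reproducing the cited approach rather than offering an alternative.

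One logical point deserves tightening. You assert early that ``topological singularity is preserved in such blow-up limits'' in order to conclude that $y$ lies on a ray over a vertex of $\Sigma_p(X)$, but this is not automatic: a priori, branching in $\Sigma_x(X)$ could collapse under the limit, and a sequence of non-circle $\CAT(1)$-graphs can Gromov--Hausdorff converge to a circle. The justification for this step is exactly the lower semicontinuity of local valence that you invoke only at the end. A cleaner organization would be: first establish the convergences $\Sigma_x(X)\to\Sigma_y(T_pX)$ and $\pm\nabla d_p(x)\to\pm\nabla d_o(y)$; then note that, regardless of where $y$ sits, $\Sigma_y(T_pX)$ is either a circle of length $2\pi$ or a spherical suspension with poles $\pm\nabla d_o(y)$; finally apply the valence argument once. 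Since $x\in\ca S(X)$ forces $\Sigma_x(X)$ to have a vertex of valence $\ge 3$, and such a vertex cannot limit to an interior edge point of the target graph, this single application simultaneously rules out the circle case (hence $y$ is singular) and pins $V(\Sigma_x(X))$ near the poles. With that reorganization your argument is sound.
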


As an immediate consequence of Lemmas \ref{lem:top-vert1} and \ref{lem:near-vert},
we have 

\begin{cor} $($\cite[Cor.2.12]{NSY:local}$)$\label{cor:vert}
For each $p\in\mathcal S(X)$ and
for every $x\in\mathcal S(X)\cap (B(p,r)\setminus\{ p\})$,
$\Sigma_x(\mathcal S(X))$ is contained in a 
$\tau_p(r)$-neighborhood of $\pm \nabla d_p(x)$.
\end{cor}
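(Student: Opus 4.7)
The plan is to simply chain together the two preceding lemmas, applied at the right base point. First, since $x$ is assumed to lie in $\ca S(X)$, I can apply Lemma \ref{lem:top-vert1} \emph{at} $x$ (in place of $p$) and identify
\[
\Sigma_x(\ca S(X)) = V(\Sigma_x(X)),
\]
the set of vertices of the graph $\Sigma_x(X)$. Thus proving the corollary reduces to showing that $V(\Sigma_x(X))$ is contained in a $\tau_p(r)$-neighborhood of $\pm \nabla d_p(x)$ for every $x\in \ca S(X)\cap (B(p,r)\setminus\{p\})$.

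Second, this reduced statement is exactly the first assertion of Lemma \ref{lem:near-vert}: under the standing hypotheses on $X$, the vertex set $V(\Sigma_x(X))$ is controlled, near a fixed topological singular base point $p$, by the gradient directions $\pm \nabla d_p(x)$ with error bounded by some $\tau_p(r)$. Composing the identification from the first step with this inclusion yields the corollary as written.

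The only non-bookkeeping point to check is that the hypotheses of the two lemmas are genuinely met: Lemma \ref{lem:top-vert1} requires only that its base point belong to $\ca S(X)$, which holds for $x$ by assumption, and Lemma \ref{lem:near-vert} requires only $x\in \ca S(X)\cap (B(p,r)\setminus\{p\})$, which is the hypothesis of the corollary. No additional geometric estimates, angle comparisons, or rescalings are needed. I therefore do not expect a main obstacle — the reason to isolate this statement as a corollary is that the combination (singular directions = graph vertices, graph vertices localize near $\pm\nabla d_p$) is precisely the form in which it will be reused in later arguments about $\ca S(X)$.
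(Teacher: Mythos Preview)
Your proof is correct and follows exactly the same approach as the paper, which states the corollary as ``an immediate consequence of Lemmas \ref{lem:top-vert1} and \ref{lem:near-vert}'' without further elaboration. You have simply spelled out the two-step chaining that the paper leaves implicit.
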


\psmall
Finally in this subsection, we shortly discuss the cardinality of singular points
in a two-dimensional manifold $X$ with curvature $\le\kappa$.
Let $\e>0$. We say that $x\in X$ is an {\it $\e$-singular} point if 
$L(\Sigma_x(X))\ge 2\pi+\e$.   
We also say that $x$ is a {\it singular} point if it is $\e$-singular for some $\e>0$. 

\begin{lem} $($cf. \cite{AZ:bddcurv}, \cite[Prop.4.5]{BurBuy:upperII}$)$  \label{lem:sing-mfd}
For a bounded domain $D$ of a two-dimensional manifold $X$ 
with curvature $\le\kappa$, the set of all singular points contained in $D$
is at most countable.
\end{lem}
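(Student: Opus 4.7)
The plan is to reduce the countability statement to showing that, for each fixed $\epsilon>0$, the subset
\[
\mathcal S_\epsilon(D) := \{\,x\in D : L(\Sigma_x(X))\ge 2\pi+\epsilon\,\}
\]
is locally finite in $D$. Granted this, since $D$ is bounded and the full set of singular points in $D$ equals the countable union $\bigcup_{n\ge 1}\mathcal S_{1/n}(D)$, countability follows immediately: on each relatively compact portion of $D$ one gets finitely many $\epsilon$-singular points for each $\epsilon=1/n$, and a countable union of finite sets is countable.

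To prove local finiteness of $\mathcal S_\epsilon(D)$ I would argue by contradiction. Suppose $x_n\in\mathcal S_\epsilon(D)$ are pairwise distinct with $x_n\to x_0\in\overline{D}$. Since $X$ is a two-dimensional manifold with curvature $\le\kappa$, a small enough closed ball $\overline{B}(x_0,r)$ is a topological disk on which the framework of Alexandrov surfaces of bounded curvature (in the sense of Reshetnyak) applies; in particular the curvature measure $\omega^X$ is well defined on $\overline{B}(x_0,r)$ as a finite signed Radon measure, and its boundary turn along $S(x_0,r)$ is finite (cf.~\cite{AZ:bddcurv}). The atomic part of $\omega^X$ at any singular point $y$ has mass $2\pi - L(\Sigma_y(X))\le 0$, which is $\le -\epsilon$ whenever $y\in\mathcal S_\epsilon$. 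Since infinitely many of the $x_n$ lie in $B(x_0,r)$, the atomic contributions alone would force $\omega^X(\overline{B}(x_0,r))=-\infty$, contradicting finiteness of $|\omega^X|$ on compact subsets.

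An alternative route, closer in spirit to the present paper, is to invoke the analogous statement for two-dimensional polyhedral $\CAT(\kappa)$-spaces established by a direct Gauss--Bonnet argument in \cite[Prop.~4.5]{BurBuy:upperII}, and transfer it to the smooth manifold case. The main obstacle is rigorously producing the curvature measure in the non-polyhedral manifold setting; this is precisely what the cited sources handle, so the task reduces to citing them correctly. A minor technical point is to make sure that the accumulation point $x_0$ lies in a ball on which the Reshetnyak/Burago--Buyalo framework applies: this follows from the local $\CAT(\kappa)$ structure together with the manifold hypothesis, which guarantees that small metric balls around $x_0$ are genuine $\CAT(\kappa)$ disks with rectifiable boundary of finite turn.
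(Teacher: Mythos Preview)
The paper does not supply its own proof of this lemma; it is stated with the parenthetical ``cf.~\cite{AZ:bddcurv}, \cite[Prop.~4.5]{BurBuy:upperII}'' and left at that. Your argument is correct and is exactly the standard one behind those citations: decompose the singular set as $\bigcup_n \mathcal S_{1/n}(D)$, and use that each $\epsilon$-singular point carries an atom of the curvature measure of mass at most $-\epsilon$, so only finitely many can lie in any ball on which $|\omega^X|$ is finite.

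One small point worth tightening: you phrase the conclusion as ``$\mathcal S_\epsilon(D)$ is locally finite in $D$'' but your contradiction argument actually allows the accumulation point $x_0$ to lie in $\overline D$, which is stronger and is what you need. With $X$ locally compact (as throughout the paper) and $D$ bounded, $\overline D$ is compact, so $\mathcal S_\epsilon(D)$ is genuinely finite, not just locally finite; the countable union then gives the result directly. If you prefer to stay with local finiteness in $D$ only, you should instead invoke second countability of $D$ to cover it by countably many balls, each containing finitely many $\epsilon$-singular points.
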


\psmall

\subsection{Polyhedral spaces and turns} \label{ssub:polyhed-prelim}
In this subsection, we recall the results of 
Burago and Buyalo \cite{BurBuy:upperII} on polyhedral spaces with curvature bounded above.

First, we need to recall the notion of turn, which was 
first defined in the context of  surfaces with 
bounded curvature in \cite{AZ:bddcurv}
 (see also \cite{Rsh:2mfd}).

For a moment, let $X$ be a surface with bounded curvature.
Let $F$ be a domain in $X$ with boundary $C$.
For an open arc $e$ of $C$, we assume that
$e$ has definite directions at the endpoints $a, b$ and  
the spaces of directions of $F$ at $a$ and $b$ have  positive lengths.
Then  the {\it turn}  $\tau_F(e)$ of $e$ (\cite{AZ:bddcurv}) from the side of $F$ is 
defined as follows:
Let $\gamma_n$ be a simple broken geodesic in 
$F\setminus e$ 
joining $a$ and $b$ and converging to $e$
as $n\to\infty$. Let $\Gamma_n$ be the domain 
bounded by $e$ and $\gamma_n$.
We denote by $\alpha_n$ and $\beta_n$ the sector angle
of $\Gamma_n$ at $a$ and $b$ respectively.
Let $\theta_{n,i}$, $(1\le i\le N_n)$, denote
the sector angle at the break points of $\gamma_n$, viewed from $F\setminus\Gamma_n$. 
Then the turn $\tau_F(e)$ is defined as 
\begin{align*}   
     \tau_F(e) := \lim_{n\to\infty}\sum_{i=1}^{N_n} (\pi -\theta_{n,i}) +\alpha_n+\beta_n, 
\end{align*}
where the existence of the above limit is shown in \cite[Theorem VI.2]{AZ:bddcurv}.

\begin{center}
\begin{tikzpicture}
[scale = 1]
\draw [-, thick] (4.5,0) to [out=150, in=30] (-4.5,0);
\draw [-, thick] (-4.5,0) -- (-3,-0.5) -- (-1.5,0) -- (0,-0.5) -- (1.5,0) -- (3,-0.5) -- (4.5,0);
\filldraw [fill=gray, opacity=.1] 
(4.5,0) to [out=150, in=30] (-4.5,0)
(-4.5,0) -- (-3,-0.5) -- (-1.5,0) -- (0,-0.5) -- (1.5,0) -- (3,-0.5) -- (4.5,0);
\draw [-, thick] (4.5,0) to [out=330, in=150] (4.8,-0.2);
\draw [-, thick] (-4.5,0) to [out=210, in=30] (-4.8,-0.2);
\draw (2.1,-0.5) node[circle] {$\gamma_n$};
\draw (0.6,0.7) node[circle] {$\Gamma_n$};
\draw (0,1.6) node[circle] {$e$};
\fill (4.5,0) coordinate (A) circle (1pt) node [above right] {$b$};
\fill (-4.5,0) coordinate (A) circle (1pt) node [above left] {$a$};
\fill (4.2,0.05) coordinate (A) circle (0pt) node [left] {$\beta_n$};
\fill (-4.2,0) coordinate (A) circle (0pt) node [right] {$\alpha_n$};
\fill (-1.4,0) coordinate (A) circle (0pt) node [below] {$\theta_{n,i}$};
\end{tikzpicture}
\end{center}

For an interior point $c$ of $e$ having definite two directions  $\Sigma_c(e)$, the turn of $e$ 
at $c$ from the side $F$ is defined as 
\[
      \tau_F(c):= \pi-L(\Sigma_c(F)).
\]
 
 We now assume the following additional conditions
 for all $c\in e$:
 \begin{enumerate}
  \item $L(\Sigma_c(F))>0\,;$
  \item $e$ has definite two directions
   $\Sigma_c(e)$.
 \end{enumerate}
Consider the constant $\mu_F(e)\in [0,\infty]$ defined
by
  \begin{align} \label{eq:bdd-turn}
  \mu_F(e):=\sup_{\{ a_i\}} \sum_{i=1}^{n-1}|\tau_{F}((a_i,a_{i+1}))|
   +\sum_{i=2}^{n-1}|\tau_{F}(a_i)|,
 \end{align}
where $\{ a_i\}=\{ a_i\}_{i=1,\ldots,n}$  runs over all the consecutive points on $e$.
The constant $\mu_F(e)$ is called the {\it turn variation} of $e$ from the side $F$, and $e$ has {\it finite turn variation} when $\mu_F(e)<\infty$.
 
 
For a simple closed curve $\lambda$ in $X$ bounding a domain $D$ of $X$, consider a sequence of 
simple closed broken geodesics $\lambda_n$ in $D$
converging to $\lambda$ with sector angles $\theta_{n,i}$
\, $(1\le i\le N_n)$
 at the break points of $\lambda_n$, viewed from the subdomain of $D$ bounded by 
 $\lambda_n$. 
Then the turn $\tau_D(\lambda)$ is defined as 
\begin{align*}
     \tau_D(\lambda) := \lim_{n\to\infty} \sum_{i=1}^{N_n} (\pi -\theta_{n,i}), 
\end{align*}
where the existence of the above limit follows by an  
argument similar to that of $\tau_F(e)$
(see \cite[Theorem VI.6]{AZ:bddcurv}).

 For general treatments of curves with finite turn variation in $\CAT(\kappa)$-spaces, see \cite{ABG}
and the references therein.

 Let $e$ be a simple arc on $X$. 
 One can define the notion of sides $F_+$ and $F_-$ of $e$.
Under the corresponding assumptions, 
 we define the turns $\tau_{F_+}(e)$, $\tau_{F_-}(e)$
 of $e$ from $F_+$ and $F_-$ respectively, as above.
 
 \begin{prop} \label{prop:turn(geodesic)}
 If $e$ is a geodesic, we have 
 $\tau_{F_{\pm}}(e)\le 0$.
 \end{prop}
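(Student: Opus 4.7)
My plan is to bound $\tau_{F_\pm}(e)$ directly from its defining limit by applying the $\CAT(\kappa)$ angle-sum comparison to the thin polygonal region $\Gamma_n$ between $e$ and an approximating broken geodesic $\gamma_n$. Fix $F=F_+$ (the case of $F_-$ is symmetric) and take a defining sequence $\gamma_n\subset F$ of broken geodesics from $a$ to $b$, with break points $p_{n,1},\dots,p_{n,N_n}$ converging to $e$. Write $\theta'_{n,i}$ for the interior sector angle of $\Gamma_n$ at $p_{n,i}$ (so that $\theta_{n,i}$ is the complementary angle from $F\setminus\Gamma_n$), and $\alpha_n,\beta_n$ for the sector angles of $\Gamma_n$ at $a,b$.

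The argument rests on two ingredients. First, the pointwise inequality
\[
\pi-\theta_{n,i}\le \theta'_{n,i}-\pi
\]
follows from $\theta_{n,i}+\theta'_{n,i}=L(\Sigma_{p_{n,i}}(X))\ge 2\pi$; the lower bound $L(\Sigma_p(X))\ge 2\pi$ holds since any connected $\CAT(1)$-graph without endpoints must carry a closed local geodesic of length $\ge 2\pi$ (a standard $\CAT(1)$ closed-geodesic estimate). Second, triangulating the geodesic polygon $\Gamma_n$ by short interior geodesic diagonals and summing the Alexandrov $\CAT(\kappa)$ triangle angle comparison over the resulting $N_n$ sub-triangles yields the polygon angle-sum inequality
\[
\alpha_n+\beta_n+\sum_{i=1}^{N_n}\theta'_{n,i}\le N_n\pi+\kappa\,\ca H^2(\Gamma_n).
\]
Combining these two gives
\[
\sum_{i=1}^{N_n}(\pi-\theta_{n,i})+\alpha_n+\beta_n\le\kappa\,\ca H^2(\Gamma_n),
\]
and since $\gamma_n\to e$ forces $\ca H^2(\Gamma_n)\to 0$, passing to the limit in the defining formula for $\tau_{F_+}(e)$ gives $\tau_{F_+}(e)\le 0$.

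The main obstacle is justifying the $\CAT(\kappa)$ polygon angle-sum inequality when $\Gamma_n$ may fail to be convex---for instance, when some $\theta'_{n,i}$ exceeds $\pi$ at reflex corners of $\gamma_n$ or at topologically singular points of $X$ lying on $\gamma_n$. This requires an ear-clipping triangulation whose diagonals remain inside the thin region $\Gamma_n$ and define valid $\CAT(\kappa)$ geodesic triangles; the thinness of $\Gamma_n$ together with the uniqueness of short geodesics in $\CAT(\kappa)$ (via Lemma \ref{lem:comparison}) are the key tools. Note that singular break points on $\gamma_n$ do not weaken the pointwise inequality above, since only the bound $L(\Sigma_{p_{n,i}}(X))\ge 2\pi$ is used.
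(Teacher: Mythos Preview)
The paper does not prove this proposition; it is recorded in \S\ref{ssub:polyhed-prelim} as a classical fact from the Alexandrov--Zalgaller theory of surfaces of bounded curvature, so there is no in-paper argument to compare against.

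Your strategy is right in outline, but the step ``summing the $\CAT(\kappa)$ triangle comparison yields the polygon angle-sum inequality with error $\kappa\,\ca H^2(\Gamma_n)$'' does not go through when $\kappa>0$. Triangle comparison bounds the excess of each sub-triangle by $\kappa$ times the area of its \emph{comparison} triangle in $M^2_\kappa$, and since comparison triangles have larger area in $\CAT(\kappa)$, summing gives an error $\kappa\sum_j\mathrm{area}(\tilde\triangle_j)$ that you cannot replace by $\kappa\,\ca H^2(\Gamma_n)$ for positive $\kappa$. You therefore still owe an argument that $\sum_j\mathrm{area}(\tilde\triangle_j)\to 0$, and with $N_n$ unbounded this is not immediate from $\ca H^2(\Gamma_n)\to 0$. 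The clean repair---which simultaneously dissolves your acknowledged ear-clipping obstacle for non-convex $\Gamma_n$---is to skip the triangulation and apply Reshetnyak's majorization theorem to the closed curve $\partial\Gamma_n$: one obtains a convex polygon $\tilde R_n\subset M^2_\kappa$ with the same side-lengths whose interior angles dominate those of $\Gamma_n$, and Gauss--Bonnet in $M^2_\kappa$ gives the bound with error $\kappa\,\mathrm{area}(\tilde R_n)$. Choosing $\gamma_n$ with $L(\gamma_n)\to L(e)$, one side of $\tilde R_n$ has length $L(e)$ while the remaining perimeter tends to $L(e)$, so the convex polygon $\tilde R_n$ degenerates and $\mathrm{area}(\tilde R_n)\to 0$.

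A minor contextual slip: in the setting of the proposition $X$ is a \emph{surface} of bounded curvature, so $\Sigma_{p_{n,i}}(X)$ is a circle rather than a $\CAT(1)$-graph, and in the full AZ generality its length can be below $2\pi$. The remedy is simply to choose the break points $p_{n,i}$ at regular points of the surface (possible by Lemma~\ref{lem:sing-mfd}), where $\theta_{n,i}+\theta'_{n,i}=2\pi$ exactly; this also makes clear that your argument, as it stands, addresses only the curvature-bounded-above case of the proposition, which is all the paper needs.
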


Similarly, we define the turn variations $\mu_{F_+}(e)$, $\mu_{F_-}(e)$ from $F_+$ and $F_-$.    
We call $e$ to have finite turn variation
if $\mu_{F_+}(e)<\infty$ and $\mu_{F_-}(e)<\infty$
(actually both are finite if one is so
(\cite[Lemma IX.1]{AZ:bddcurv})).
 When $e$ has finite turn variation, $\tau_{F_+}$ and $\tau_{F_-}$ provide signed Borel measures on $e$
 (\cite[Theorem IX.1]{AZ:bddcurv}).

Let $\ca F_\kappa$ be the family of two-dimensional  polyhedral 
locally $\CAT(\kappa)$-spaces $F$ possibly with boundary
$\pa F$ such that any edge of $\pa F$ has finite turn variation. For a collection $\{ F_i\}$ of 
$\ca F_\kappa$, let $X$ be the polyhedron resulting from certain gluing of $\{ F_i\}$ along their edges. We always consider the 
intrinsic metric of $X$ induced from those of $F_i$.
We consider the following two conditions:
\pmed\n
(A)\, For any Borel subset $B$ of an arbitrary edge $e$ of
$X$, and arbitrary faces $F_i$, $F_j$ adjacent to $e$, we have
\[
    \tau_{F_i}(B)+\tau_{F_j}(B)\le 0\,;
\]
(B)\,For any vertex $x$ of $X$, $\Sigma_x(X)$ is $\CAT(1)$.

\begin{thm} $($\cite[Theorem 0.5]{BurBuy:upperII}\label{thm:BB-gluing}$)$
A polyhedron $X$ resulting from certain gluing of 
$\{ F_i\}\subset\ca F_\kappa$ along their edges belongs to
$\ca F_\kappa$ if and only if the conditions $(A), (B)$
are satisfied.
\end{thm}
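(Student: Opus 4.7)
The plan is to prove both implications; necessity of (A), (B) follows relatively directly from the $\CAT(\kappa)$ hypothesis, while sufficiency requires link-verification plus a patchwork argument. For necessity, assume $X\in\ca F_\kappa$. Condition (B) follows from the standard principle that spaces of directions in locally $\CAT(\kappa)$-spaces are $\CAT(1)$. For (A), fix an edge $e$ and adjacent faces $F_i, F_j$. At an interior point $c$ of $e$, the loop $\Sigma_c(F_i)\cup\Sigma_c(F_j)$ lies in the $\CAT(1)$-space $\Sigma_c(X)$, hence has length $\ge 2\pi$; applied to atoms this gives $\tau_{F_i}(\{c\})+\tau_{F_j}(\{c\})\le 0$. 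The absolutely continuous part is handled by inserting short almost-geodesic crosscuts on both sides of $e$ near $c$, comparing the turns $\tau_{F_i}((c-\delta, c+\delta))$ and $\tau_{F_j}((c-\delta, c+\delta))$ with the angle defect of the resulting quadrilateral in the ambient $\CAT(\kappa)$-space and passing $\delta \to 0$; integrating the resulting pointwise inequality against the signed measures (which exist by the finite turn variation hypothesis) yields (A) on every Borel set.

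For sufficiency, the plan is first to verify $\Sigma_x(X)$ is $\CAT(1)$ at every $x\in X$, then upgrade to local $\CAT(\kappa)$. At vertices, (B) suffices. At interior points of a face, $\Sigma_x(X)=\Sph^1(2\pi)$ by the local $\CAT(\kappa)$-ness of the face. At an interior point $x$ of an edge $e$, $\Sigma_x(X)$ is a metric graph built from the arcs $\Sigma_x(F_k)$ of faces adjacent to $e$, joined at the two directions of $e$; every simple cycle corresponds to a pair of adjacent faces, and (A) applied to the atom at $x$ gives cycle length $\ge 2\pi$, which is the well-known $\CAT(1)$-characterization for metric graphs. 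To pass from $\CAT(1)$-links to local $\CAT(\kappa)$, I would analyze local geodesics combinatorially: a short local geodesic in $X$ either remains inside a single face or refracts at the edges, and the refraction is controlled by the combined turn measures. Then for a small geodesic triangle $\tr(abc)$ in $X$, decomposition along the edges of $X$ yields pieces lying in individual faces; each piece satisfies $\CAT(\kappa)$-comparison by the local $\CAT(\kappa)$-ness of the corresponding $F_i$, and (A) combined with Alexandrov-style angle summation at edge-crossings allows one to reassemble these into a global $\CAT(\kappa)$-comparison for $\tr(abc)$.

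The main obstacle is the sufficiency direction, specifically the passage from link-$\CAT(1)$ to genuine angle comparison across edges: the turn from each side of $e$ need not individually be non-positive, so one must exploit the combined inequality (A) together with the concavity provided by finite turn variation to produce Alexandrov-type estimates for geodesics refracting across edges. Finite turn variation is used throughout, as it guarantees that $\tau_{F_i},\tau_{F_j}$ are well-defined signed Borel measures whose integrals make sense and whose pointwise behavior at atoms can be traded for behavior on small neighborhoods via approximation.
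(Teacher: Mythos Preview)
The paper does not prove this theorem: it is quoted verbatim as \cite[Theorem 0.5]{BurBuy:upperII} and used as a black box (for instance in Corollary \ref{cor:Cijk2}, Lemma \ref{lem:tildeE-CAT}, and in the proof of Theorem \ref{thm:bdd-turn(Si)}). There is therefore no ``paper's own proof'' to compare your proposal against.

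As for your outline itself: the necessity direction is essentially correct and standard. For sufficiency, your link verification at edge points is fine, but the step ``pass from $\CAT(1)$-links to local $\CAT(\kappa)$'' is where all the content lies, and your sketch does not supply a mechanism. In dimension two with edges that are merely curves of finite turn variation (not geodesics), there is no general Berestovskii-type theorem available; Burago--Buyalo's actual argument proceeds by approximating the faces by piecewise-smooth $\CAT(\kappa)$-surfaces and the edges by broken geodesics, reducing to the polyhedral case where Reshetnyak-type gluing applies, and then passing to the limit. Your ``decompose a triangle along edges and reassemble using (A)'' idea is the right intuition but is not a proof: a geodesic in $X$ may meet an edge in a complicated set (not just finitely many transversal crossings), and controlling the comparison angles across such an intersection requires precisely the approximation/limit machinery that your outline omits.
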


\begin{thm} $($\cite[Theorem 0.6]{BurBuy:upperII}\label{thm:BB-character}$)$
For $X\in \ca F_\kappa$,  let 
 $\{ F_i\}$ be the collection consisting of
 the metric completions of the components of 
 $X\setminus\ca S(X)$. 
Then   $\{ F_i\}\subset \ca F_\kappa$ 
and $X$ is the gluing of $\{ F_i\}$ along their edges in such a way that the conditions $(A), (B)$
are satisfied.

In particular, each edge of $\ca S(X)$ has finite turn
variation.
\end{thm}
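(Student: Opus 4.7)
The plan is to decompose $X$ along its topological singular set $\ca S(X)$, identify the metric completions $F_i$ of the components of $X\setminus\ca S(X)$ as the candidate faces, and then verify in turn that each $F_i$ lies in $\ca F_\kappa$ and that the natural gluing of $\{F_i\}$ recovers $X$ with conditions (A) and (B). Condition (B) is immediate: for every vertex $x\in X$, the hypothesis $X\in\ca F_\kappa$ already forces $\Sigma_x(X)$ to be $\CAT(1)$. For the $\CAT(\kappa)$ property of $F_i$, the interior of $F_i$ is isometrically embedded in $X$ and thus locally $\CAT(\kappa)$; at a boundary point $q\in\pa F_i$, small balls of $F_i$ coincide with sector-like closed regions of $X$ bounded by segments of $\ca S(X)$, and these are $\CAT(\kappa)$ by angle comparison via Lemma~\ref{lem:comparison}, using the graph structure of $\Sigma_q(X)$ to identify $\Sigma_q(F_i)$ as a sub-arc.

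Next I would establish condition (A) on a shared edge $e$ with adjacent faces $F_i,F_j$ by a pointwise check. At any vertex $v\in e$, the two directions of $e$ at $v$ are joined inside $\Sigma_v(X)$ by arcs in $\Sigma_v(F_i)$ and $\Sigma_v(F_j)$ forming a closed loop of length $\alpha_{F_i}(v)+\alpha_{F_j}(v)$, where $\alpha_{F_\cdot}(v)$ denotes the sector angle; since $\Sigma_v(X)$ is $\CAT(1)$, this loop has length $\ge 2\pi$, so $\tau_{F_i}(\{v\})+\tau_{F_j}(\{v\})=2\pi-(\alpha_{F_i}(v)+\alpha_{F_j}(v))\le 0$. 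On geodesically regular sub-arcs of $e$, the non-atomic contributions vanish identically. Integrating over any Borel $B\subset e$ yields condition (A).

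The main obstacle is proving that every boundary edge $e$ of $F_i$ has finite turn variation; this simultaneously establishes the ``in particular'' assertion. My approach is to work on a compact sub-edge $e'\subset e$ and construct a thin sector $\Omega\subset F_i$ abutting $e'$, foliated by simple broken geodesics approximating $e'$. Applying a Gauss--Bonnet-type identity to $\Omega$, together with the bounded curvature of $F_i$ and the finite area of $\Omega$, one controls the \emph{signed} total turn $\tau_{F_i}(e')$. The delicate point is to pass from signed to unsigned. For this I would split the atomic contributions $\pi-\alpha_{F_i}(v)$ at each vertex $v\in e'$ into their signs, use the one-sided Gauss--Bonnet bound to cap the positive part, and then exploit condition (A), already verified on both sides, to transfer control of the negative part on the $F_i$-side to the positive part on the opposite side, which is again controlled by Gauss--Bonnet. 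Combining these estimates gives $\mu_{F_i}(e')<\infty$ on every compact $e'\subset e$; passing to $e$ itself yields $F_i\in\ca F_\kappa$ and the ``in particular'' statement.
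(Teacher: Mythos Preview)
The paper does not give a proof of this theorem at all: it is quoted as a preliminary result from \cite[Theorem~0.6]{BurBuy:upperII}, so there is no in-paper proof to compare against. I can therefore only comment on the internal soundness of your sketch.

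There are two genuine gaps. First, your verification of condition~(A) is incomplete. You treat only the atomic part $\tau_{F_i}(\{v\})+\tau_{F_j}(\{v\})$ via the $\CAT(1)$ loop argument, and then assert that on ``geodesically regular sub-arcs of $e$, the non-atomic contributions vanish identically.'' This is not justified: the faces $F_i$ are surfaces with bounded curvature, not flat pieces, so the turn $\tau_{F_i}$ along an open sub-arc of $e$ can carry a non-trivial non-atomic signed measure. The correct mechanism is Proposition~\ref{prop:tau+tau}: the union $F_i\cup_e F_j$ is a $\CAT(\kappa)$-surface, so $\tau_{F_i}(B)+\tau_{F_j}(B)=\omega_{AZ}(B)$, and one must then argue that $\omega_{AZ}^{+}$ is absolutely continuous with respect to area (cf.\ Sublemma~\ref{slem:omega+(B)}), hence vanishes on the one-dimensional set $B\subset e$.

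Second, and more seriously, your argument for finite turn variation is circular. The turn $\tau_{F_i}$ is only a signed Borel \emph{measure} on $e$ once finite turn variation is already established (see the paragraph following \eqref{eq:bdd-turn}); until then, condition~(A) is not even meaningful for general Borel $B$. Yet your strategy for bounding $\mu_{F_i}(e')$ invokes condition~(A) to ``transfer control of the negative part on the $F_i$-side to the positive part on the opposite side.'' You cannot use the measure inequality~(A) before the measures exist. The route actually taken in this paper for the analogous non-polyhedral statement (Theorem~\ref{thm:Cijl}) avoids this trap: with \emph{three} half-disks $F_i,F_j,F_\ell$ meeting along a common curve, the three identities $\tau_{F_\alpha}+\tau_{F_\beta}=\omega_{AZ}^{S_{\alpha\beta}}$ can be solved linearly for each $\tau_{F_\alpha}$ individually, and the right-hand sides have finite total variation because they are curvature measures of bounded $\CAT(\kappa)$-disks. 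You would need either this trick or a genuinely different device to close the gap.
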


\pmed\n
\subsection{Local structure results}\label{ssec:lcal-str}

In this subsection, we recall several local structure results 
obtained in \cite{NSY:local}.

Let $p\in \ca S(X)$, and $r=r_p>0$ be as in Theorem 1.1 in  \cite{NSY:local}.
From now, we work on $B(p,r)$. Fix any $v\in V(\Sigma_p(X))$, and let $N=N_v$ be the branching number of $\Sigma_p(X)$ at $v$. For a small enough $\delta>0$, let  
$\gamma_1, \ldots, \gamma_N$ be the geodesics
starting from $p$ with $\angle(\dot\gamma_i(0), v) = \delta$ and
$\angle(\dot\gamma_i(0), \dot\gamma_j(0)) = 2\delta$ for 
$1\le i\neq j\le N$.
For $1\le i\neq j\le N$, the ruled surface
$S_{ij}$ bounded by $\gamma_i$, $\gamma_j$ and 
$S(p,r)$ is defined.  
We call such an $S_{ij}$ {\it properly} embedded in $B(p,r)$.
For $2\le k\le N$, 
we define $E_k$ as the union of  ruled surfaces $S_{ij}$ determined 
by  $\gamma_i$ and $\gamma_j$ for all $1\le i\neq j \le k$. 

\begin{thm} $($\cite[Theorem 4.1]{NSY:local}$)$\label{thm:ruled}
With respect to the interior metric,
$S_{ij}$ is a ${\rm CAT}(\kappa)$-space
homeomorphic to a two-disk.

Furthermore, for each $3\le k\le N$, $E_k$ is also a  ${\rm CAT}(\kappa)$-space.
\end{thm}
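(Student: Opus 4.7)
The plan is to prove (1) by realizing $S_{ij}$ as a limit of polyhedral CAT$(\kappa)$-disks ruled from $p$, and then to deduce (2) by iterated Reshetnyak gluing of the $S_{ij}$'s along the geodesics $\gamma_l$.

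For (1), fix a short arc $\sigma_{ij}\colon[0,L]\to S(p,r)$ joining $\gamma_i(r)$ to $\gamma_j(r)$ through the direction corresponding to $v$; existence, uniqueness, and the estimate $L=O(\delta r)$ follow from \cite[Corollary 1.7]{NSY:local} together with the smallness of $\delta$. Define the ruling $\Phi(s,t)$ to be the point at arclength $t$ on the $X$-geodesic from $p$ to $\sigma_{ij}(s)$. By Lemma \ref{lem:comparison} and the fact that $\dot\gamma_i(0)$ and $\dot\gamma_j(0)$ lie on distinct branches of $\Sigma_p(X)$ at $v$, for $s\ne s'$ the starting directions $\dot\Phi(s,0)$ and $\dot\Phi(s',0)$ are distinct, so the radial geodesics are pairwise disjoint except at $p$, making $\Phi$ a homeomorphism onto its image. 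For a fine partition $0=s_0<\dots<s_m=L$, the filled geodesic triangles $T_l$ with apex $p$ and base $[\sigma_{ij}(s_{l-1}),\sigma_{ij}(s_l)]$ are thin enough to be CAT$(\kappa)$-disks, and gluing consecutive $T_l$'s along their shared radial edges via Reshetnyak's gluing theorem yields a polyhedral CAT$(\kappa)$-disk. Passing to the limit as the mesh tends to $0$ identifies $(S_{ij},d_{\mathrm{int}})$ as a CAT$(\kappa)$-space homeomorphic to a two-disk.

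For (2), I argue by induction on $k$, attaching to $E_{k-1}$ each surface $S_{kl}$ for $l<k$ along the $X$-geodesic $\gamma_l$, which is convex as a boundary geodesic in both $E_{k-1}$ and $S_{kl}$. Reshetnyak's gluing theorem yields that $E_k$ is CAT$(\kappa)$ provided every link is CAT$(1)$. The link $\Sigma_p(E_k)$ is the star subgraph of $\Sigma_p(X)$ spanned by the $k$ arcs of length $2\delta$ through $v$ joining the $\dot\gamma_i(0)$'s, and a tree is trivially CAT$(1)$. At an interior point $x$ of some $\gamma_i$, $\Sigma_x(E_k)$ is the union of the arcs $\Sigma_x(S_{il})$ for $l\ne i,\,l\le k$, identified at the antipodal pair $\dot\gamma_i^{\pm}$; since $\gamma_i$ is an $X$-geodesic through $x$ each arc has length $\ge\pi$, so every embedded cycle has length $\ge 2\pi$. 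At points on a spherical boundary $\sigma_{ij}$ only one $S_{ij}$ is present, and the link condition follows from (1).

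The principal obstacle is (1), specifically the injectivity of $\Phi$ and the CAT$(\kappa)$ property of the intrinsic metric on $S_{ij}$. Injectivity relies on monotone divergence of radial geodesics from $p$, which is delicate because $\sigma_{ij}$ passes through the vertex-direction $v$; the key inputs are Lemma \ref{lem:comparison} and the choice of $r=r_p$ from \cite[Theorem 1.1]{NSY:local}. The CAT$(\kappa)$ property of the limit surface then follows from the triangular approximation, or alternatively from Reshetnyak's majorization applied to the boundary loop $\gamma_i\cdot\sigma_{ij}\cdot\gamma_j^{-1}$, identifying the majorizing disk with $S_{ij}$ via the ruling.
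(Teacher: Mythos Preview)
This theorem is cited from \cite{NSY:local} and carries no proof in the present paper, so the comparison is to what one can infer about the original argument from the surrounding text (notably the discussion before Claim~\ref{claim:omega=omega} and the proof of Corollary~\ref{cor:Cijk2}).

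Your argument for part~(2) has a genuine gap. You propose to attach $S_{kl}$ to $E_{k-1}$ along the geodesic $\gamma_l$ via Reshetnyak gluing, but $S_{kl}\cap E_{k-1}$ is \emph{not} $\gamma_l$: it is a two-dimensional half-disk. Indeed, for $N=3$ the singular curve $C_{123}=S_{12}\cap S_{23}\cap S_{31}$ of Lemma~\ref{lem:sing-arc} divides each $S_{ij}$ into two half-disks, and the half of $S_{12}$ on the $\gamma_1$ side coincides with the half of $S_{13}$ on the $\gamma_1$ side; thus $S_{12}\cap S_{13}$ contains the closure of the wing $F_1$. More generally, all the $S_{ij}$ contain the direction $v$ at $p$, and the arcs in $S(p,r)$ joining $\gamma_i(r)$ to $\gamma_j(r)$ share common subarcs through the branch points of the tree $S(p,r)\cap E(v)$, forcing two-dimensional overlap of the ruled surfaces. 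Reshetnyak's theorem, which requires gluing along a convex subset identified isometrically, does not apply to such an overlap, and your link analysis at points of $\gamma_l$ is accordingly beside the point.

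The route taken in \cite{NSY:local} is different: one decomposes $E_k$ along the singular curves $C_{ij\ell}$ into the half-disk wings $F_i$, verifies that these curves have finite turn variation (as in Theorem~\ref{thm:Cijl}), and then either invokes the Burago--Buyalo criterion (Theorem~\ref{thm:BB-gluing}) as in Corollary~\ref{cor:Cijk2}, or realizes $E_k$ as a Gromov--Hausdorff limit of polyhedral $\CAT(\kappa)$-spaces obtained by surgery (this is \cite[Theorem~7.4]{NSY:local}, invoked before Claim~\ref{claim:omega=omega}). The essential geometric content is the control of the turn of $C_{ij\ell}$, not a gluing along the boundary geodesics $\gamma_l$.

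A secondary remark on part~(1): you parametrize $S_{ij}$ by radial geodesics from $p$, but in \cite{NSY:local} (see Proposition~\ref{prop:thin-ruled}) the ruled surface is formed by the transversal geodesics $\gamma^X_{\gamma_i(t),\gamma_j(t)}$. You would need to verify that the two rulings sweep out the same set before your argument identifies the correct object.
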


 See also \cite{LS:dim2} for a related result.

\begin{thm} $($\cite[Theorem 1.1]{NSY:local}$)$\label{thm:embedded-disk}
For small enough $r=r_p>0$, $B(p,r)$ is a finite union of properly embedded $\CAT(\kappa)$-disks
and properly immersed branched  $\CAT(\kappa)$-disks.

In particular, for any simple loop $\zeta$ in $\Sigma_p(X)$, there is a  properly embedded $\CAT(\kappa)$-disk $E$
in $B(p,r)$ such that $\Sigma_p(E)=\zeta$.
\end{thm}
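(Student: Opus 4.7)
The plan is to build $B(p,r)$ from local ruled-surface pieces, each piece being $\CAT(\kappa)$ thanks to Theorem~\ref{thm:ruled}, assembled along the combinatorics of the finite graph $\Sigma_p(X)$. Since $\Sigma_p(X)$ is a connected graph without endpoints by \eqref{eq:Sigma=CAT(1)}, every edge lies on some simple cycle, so one can choose a finite family of simple loops $\zeta_1,\dots,\zeta_m\subset\Sigma_p(X)$ whose union is all of $\Sigma_p(X)$. Once the ``in particular'' statement is established, the decomposition $B(p,r)=\bigcup_\ell E_{\zeta_\ell}$ (supplemented by branched pieces for non-simple configurations) will follow, because every $q\in B(p,r)\setminus\{p\}$ lies on a geodesic from $p$ whose initial direction falls on some edge of $\Sigma_p(X)$, and hence on some $\zeta_\ell$.

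To prove the ``in particular'' statement, fix a simple loop $\zeta$ passing through vertices $v_1,\dots,v_k$ cyclically along edges $e_1,\dots,e_k$, and build $E_\zeta$ by gluing two kinds of $\CAT(\kappa)$ pieces. At each vertex $v_i$, the two edges $e_{i-1},e_i$ of $\zeta$ at $v_i$ select a pair of geodesics $\gamma^{v_i}_\pm$ from the construction preceding Theorem~\ref{thm:ruled}, and the ruled surface $S^{v_i}$ between them (an $S_{ij}$ in the excerpt's notation) is a $\CAT(\kappa)$-disk by Theorem~\ref{thm:ruled}. Along each edge $e_i=v_iv_{i+1}$, whose interior corresponds to manifold directions by Lemma~\ref{lem:top-vert1}, a further ruled surface $R_i$ bounded by $\gamma^{v_i}_+$, $\gamma^{v_{i+1}}_-$ and an arc of $S(p,r)$ is constructed; when the angular length of $e_i$ exceeds the threshold in Theorem~\ref{thm:ruled}, $R_i$ is itself assembled from finitely many smaller ruled pieces obtained by subdividing $e_i$ and inserting intermediate geodesics from $p$. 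The candidate $E_\zeta$ is the gluing of all $S^{v_i}$ and $R_i$ along their common boundary geodesics, and letting $\delta\to 0$ brings $\Sigma_p(E_\zeta)$ onto $\zeta$.

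Since the gluings are performed along geodesics, which are complete convex subsets of $\CAT(\kappa)$ spaces, Reshetnyak's gluing theorem endows $E_\zeta$ with the structure of a $\CAT(\kappa)$-disk; simplicity of $\zeta$ gives pairwise disjointness of the geodesics $\gamma^{v_i}_\pm$ inside $X$ for small $\delta,r$, so that $E_\zeta$ is properly embedded rather than merely immersed. \textbf{The main obstacle} is to identify this abstract Reshetnyak gluing with its image in $X$ carrying the induced intrinsic metric, ruling out any shortcut inside $B(p,r)$ that would violate the $\CAT(\kappa)$ condition; this reduces, via Lemma~\ref{lem:comparison}, to checking that the intrinsic angle between $\gamma^{v_i}_\pm$ inside $S^{v_i}$ agrees with the extrinsic one in $X$ as $\delta\to 0$, where the local structure of $\Sigma_p(X)$ at $v_i$ provided by Lemma~\ref{lem:near-vert} and Corollary~\ref{cor:vert} is essential. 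The properly immersed branched $\CAT(\kappa)$-disks in the statement arise from the same construction applied to subgraphs of $\Sigma_p(X)$ that fail to be simple loops (e.g.\ closed walks revisiting a vertex), where distinct boundary geodesics of the abstract gluing coincide as subsets of $X$ and thereby produce branching in the induced immersion.
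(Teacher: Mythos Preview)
This theorem is not proved in the present paper: it is quoted verbatim from \cite[Theorem~1.1]{NSY:local} as a preliminary result, and no argument for it appears here. There is therefore no ``paper's own proof'' to compare your proposal against; the proof lives entirely in the cited reference.

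That said, your outline is a reasonable caricature of the strategy one expects in \cite{NSY:local}, and it correctly identifies the crux. A few remarks. First, the machinery recalled just before this statement already packages much of what you are assembling by hand: Theorem~\ref{thm:ruled} asserts not only that each ruled piece $S_{ij}$ is a $\CAT(\kappa)$-disk but that the full union $E_k=\bigcup_{i<j\le k} S_{ij}$ at a fixed vertex $v$ is $\CAT(\kappa)$ \emph{with the intrinsic metric induced from $X$}, which is strictly stronger than an abstract Reshetnyak gluing and already absorbs your ``main obstacle'' at each vertex. Second, your passage ``letting $\delta\to 0$'' is doing real work that you have not justified: you need the limit object to remain a $\CAT(\kappa)$-disk properly embedded in $B(p,r)$, and Gromov--Hausdorff limits of $\CAT(\kappa)$ spaces are $\CAT(\kappa)$ but the disk topology and the embedding must be checked separately. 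Third, and most seriously, the obstacle you flag---that the abstract gluing might fail to be isometric to its image in $X$---is exactly the substance of the theorem, and invoking Lemma~\ref{lem:comparison} together with Lemma~\ref{lem:near-vert} and Corollary~\ref{cor:vert} is not enough on its own: one needs to rule out shortcuts in $X$ that leave the candidate disk and re-enter it, which requires the finer structure of $\ca S(X)\cap B(p,r)$ developed in \cite{NSY:local} (and summarized after Lemma~\ref{lem:sing-arc} here). Your sketch names the right ingredients but stops short of the argument that binds them.
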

\psmall
For distinct $1\le i, j, k\le N$, we set 
\begin{equation*}
   C_{ijk} := S_{ij}\cap S_{jk}\cap S_{ki}.
\end{equation*}

\begin{center}
\begin{tikzpicture}
[scale = 0.8]
\draw [-, thick] (4.5,0) to [out=150, in=30] (3.5,0);
\draw [-, thick] (3.5,0) to [out=210, in=330] (2.5,0);
\draw [-, thick] (2.5,0) to [out=150, in=30] (1.5,0);
\draw [-, thick] (1.5,0) to [out=210, in=330] (0.5,0);
\draw [-, thick] (0.5,0) to [out=150, in=30] (-0.5,0);
\draw [-, thick] (-0.5,0) to [out=210, in=330] (-1.5,0);
\draw [-, thick] (-1.5,0) to [out=160, in=20] (-2.5,0);
\draw [-, thick] (-2.5,0) to [out=200, in=350] (-3.5,0);
\draw [-, thick] (-3.5,0) to [out=170, in=10] (-4.5,0);
\fill (-4.5,0) coordinate (A) circle (2pt) node [left] {$p$};
\fill (4.5,0) coordinate (A) circle (0pt) node [right] {$C_{ijk}$};
\draw [thick] (4.5,0) arc(0:18:9);
\draw [thick] (4.5,0) arc(15:0:9);
\draw [thick] (4.5,0) arc(0:-18:9);
\draw [thick] (-4.5,0) -- (4.05,2.75);
\draw [thick] (-4.5,0) -- (4.05,-2.75);
\draw [thick, dotted] (-4.5,0) -- (4.8,-2.3);
\draw [thick] (4.2,-2.15) -- (4.8,-2.3);
\fill (4.05,2.75) coordinate (A) circle (0pt) node [right] {$\gamma_i$};
\fill (4.05,-2.75) coordinate (A) circle (0pt) node [right] {$\gamma_j$};
\fill (4.8,-2.3) coordinate (A) circle (0pt) node [right] {$\gamma_k$};
\end{tikzpicture}
\end{center}

\begin{lem} $($cf. \cite [Lemma 6.1]{NSY:local}$)$    \label{lem:sing-arc}
$C_{ijk}$ is a simple rectifiable arc in $\mathcal S(X)$ having definite directions everywhere
satisfying 
\begin{enumerate}
 \item it starts from $p$ and reaches a point of $\partial B(p,r)$;
 \item its length is less than $(1+\tau_p(r))r$;
 \item for all $x,y\in C_{ijk}$, we have 
  $\frac{|d_p(x)-d_p(y)|}{|x,y|}\ge 1-\tau_p(r)$, 
  and $\angle pxy >\pi-\tau_p(r)$ if $d_p(x)<d_p(y)$.
\end{enumerate}
\end{lem}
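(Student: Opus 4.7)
The plan is to parametrize $C_{ijk}$ by the distance function $d_p$ and identify it, level by level, as a distinguished common point shared by three arcs on each distance sphere. By Theorem~\ref{thm:ruled} each ruled surface $S_{ij}$ is a $\CAT(\kappa)$-disk, so for every small $t>0$ the intersection $S_{ij}\cap S(p,t)$ is an arc $\sigma_{ij}(t)$ in the metric-sphere graph $S(p,t)$ (using Proposition~\ref{prop:metsph-tree}) running from $\gamma_i(t)$ to $\gamma_j(t)$. The space of directions $\Sigma_p(S_{ij})$ is the geodesic in $\Sigma_p(X)$ joining $\dot\gamma_i(0)$ and $\dot\gamma_j(0)$, and by the choice of the $\gamma_m$ along the edges of $\Sigma_p(X)$ emanating from $v$, this geodesic passes through $v$. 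Hence the three arcs $\sigma_{ij}(t)$, $\sigma_{jk}(t)$, $\sigma_{ki}(t)$, besides sharing the respective endpoints $\gamma_j(t), \gamma_k(t), \gamma_i(t)$, must also share an interior point $c(t)$ corresponding to the direction $v$. This $c(t)$ is the intended slice $C_{ijk}\cap S(p,t)$.

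Next I would establish that $c(t)$ is unique on $S(p,t)$, lies in $\ca S(X)$, and depends continuously on $t$ with $c(t)\to p$ as $t\to 0$, and use a standard exhaustion argument to extend $C_{ijk}$ all the way to $\pa B(p,r)$. At $c(t)$ three $\CAT(\kappa)$-disks meet along the ridge $C_{ijk}$, forcing $\Sigma_{c(t)}(X)$ to be a graph with a vertex of valence $\ge 3$, so $c(t)\in \ca S(X)$ by Lemma~\ref{lem:top-vert1}. Lemma~\ref{lem:near-vert} and Corollary~\ref{cor:vert} then pin every singular vertex direction at $c(t)$ inside a $\tau_p(r)$-neighbourhood of $\pm\nabla d_p(c(t))$, which rules out two distinct common points on the same level set $S(p,t)$ and forces $t\mapsto c(t)$ to vary continuously. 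The hard step, and the main obstacle, is exactly this uniqueness/continuity, since the local information from Lemmas~\ref{lem:near-vert} and \ref{cor:vert} is only at the coarse $\tau_p(r)$-scale, so one must combine it with the $\CAT(\kappa)$-disk structure of each $S_{ij}$ and the graph structure of $S(p,t)$ to exclude branching of $C_{ijk}$ and premature termination inside $B(p,r)$.

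Finally I would deduce properties (1)--(3). Property (1) is immediate from the extension argument above. For $x,y\in C_{ijk}$ with $d_p(x)<d_p(y)$, both points lie on a common disk, say $S_{ij}$; Lemma~\ref{lem:comparison} applied inside this disk, together with the near-radial control of $\Sigma_x(\ca S(X))$ from Corollary~\ref{cor:vert}, gives that the direction at $x$ along $C_{ijk}$ toward $y$ is nearly antipodal to the direction from $x$ to $p$, yielding both $\angle pxy>\pi-\tau_p(r)$ and $|d_p(x)-d_p(y)|/|x,y|\ge 1-\tau_p(r)$, which is (3). Partitioning $C_{ijk}$ and telescoping this length-distortion estimate bounds its length by $(1+\tau_p(r))r$, giving (2). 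The definite directions at each point of $C_{ijk}$ follow because the near-antipodal estimate forces the forward and backward tangent directions to coincide with $\pm\nabla d_p$, which exist at every interior point of the $\CAT(\kappa)$-disk $S_{ij}$.
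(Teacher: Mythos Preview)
Your overall plan—parametrizing $C_{ijk}$ by $d_p$, locating on each sphere $S(p,t)$ the unique branch point of the tripod $S(p,t)\cap(S_{ij}\cup S_{jk}\cup S_{ki})$, and reading off rectifiability and the monotonicity bound from the near-radial control of Corollary~\ref{cor:vert}—matches the argument of \cite[Lemma~6.1]{NSY:local}. The paper simply cites that reference for (1), (2) and the first inequality in (3); the only thing it actually proves here is the angle estimate $\angle pxy>\pi-\tau_p(r)$.

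For that one estimate your argument has a genuine gap. Corollary~\ref{cor:vert} tells you that the \emph{tangent direction} of $C_{ijk}$ at $x$ lies within $\tau_p(r)$ of $\pm\nabla d_p(x)$, hence is nearly antipodal to $\uparrow_x^p$. But $\angle pxy$ is the angle between $\uparrow_x^p$ and the \emph{chord direction} $\uparrow_x^y$, and for $y$ at finite distance from $x$ these need not be close: a curve can leave $x$ radially and still have $y$ sitting well off the radial ray through $x$. Your sentence ``the direction at $x$ along $C_{ijk}$ toward $y$ is nearly antipodal to the direction from $x$ to $p$, yielding $\angle pxy>\pi-\tau_p(r)$'' elides exactly this passage from tangent to chord.

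The paper bridges the gap by projecting $y$ along the geodesic to $p$ onto $\pi(y)\in S(p,d_p(x))$ and bounding $|x,\pi(y)|\le\tau_p(r)\,|x,y|$. This bound comes from applying Corollary~\ref{cor:vert} at \emph{every} point of the arc $C_{x,y}$, so that the total transverse drift is at most $\tau_p(r)\,L(C_{x,y})$, combined with the arc-length estimate $L(C_{x,y})\le(1+\tau_p(r))|x,y|$ furnished by the first inequality in (3). From $|x,\pi(y)|\le\tau_p(r)\,|x,y|$ one obtains $\tilde\angle\, yx\pi(y)>\pi/2-\tau_p(r)$, then $\angle\, yx\pi(y)>\pi/2-\tau_p(r)$ via Lemma~\ref{lem:comparison}, and finally $\angle pxy>\pi-\tau_p(r)$. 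So the missing ingredient in your sketch is this integration of the radial control along the whole arc, not just its use at the single point $x$.
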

\begin{proof}
We show only the second inequality in (3), since 
the others are proved in \cite [Lemma 6.1]{NSY:local}.
Let $\pi:X\setminus B(p, d_p(x))\to S(p,d_p(x))$ 
be the projection along the geodesics to $p$.
The first inequality in (3) implies 
\begin{align}\label{eq:ratio=L(C)/dist}
L(C_{x,y})\le (1+\tau_p(r))|x,y|,
\end{align}
where 
$C_{x,y}$ denotes the arc of $C_{ijk}$ between $x$ and $y$.
 It follows from Corollary \ref{cor:vert} that  
 $|x,\pi(y)|\le \tau_p(r)|x,y|$.
 This implies $\tilde\angle yx\pi(y)>\pi/2-\tau_p(r)$.
 Since $\angle yx\pi(y)>\pi/2-\tau_p(r)$ from
 Lemma \ref{lem:comparison},
 we have $\angle pxy>\pi-\tau_p(r)$.
 \end{proof}
\pmed

The set of all topological singular points of 
$E_N$ arises from the intersections of distinct disks
$S_{ij}$ and $S_{i'j'}$ for all $1\le i<j\le N$ and 
$1\le i'<j'\le N$.
In particular, $\ca S(X)\cap E_N$ 
is a finite union of those  rectifiable curves $C_{ijk}$
with $1\le i<j<k\le N$ which have the direction $v$ at 
$p$.
Thus,
$\ca S(X)\cap B(p,r)$ 
is a finite union of those  rectifiable curves $C_{ijk}$
with $1\le i<j<k\le N_v$ for all $v\in V(\Sigma_p(X))$.
This yields the following:

\begin{thm}$($\cite[Corollary 1.4]{NSY:local}$)$ \label{thm:graph}
$\ca S(X)$ has a graph structure in a generalized sense.
Moreover, the branching orders of vertices of the graph
$\ca S(X)$ are locally uniformly bounded.
\end{thm}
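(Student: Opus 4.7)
The plan is to unpack the paragraph immediately preceding the theorem into a graph structure. Fix $p\in\ca S(X)$ and let $r=r_p>0$ be small as in Theorem~\ref{thm:embedded-disk}. By that paragraph, $\ca S(X)\cap B(p,r)$ is the finite union of the rectifiable arcs $C_{ijk}$ taken over $v\in V(\Sigma_p(X))$ and $1\le i<j<k\le N_v$, and each such $C_{ijk}$ is simple, starts at $p$, reaches $\pa B(p,r)$, and has definite directions everywhere by Lemma~\ref{lem:sing-arc}. I would show that these arcs are the edges of a graph whose unique interior vertex is $p$.

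The central point is ruling out any branching at interior points of the arcs. By Corollary~\ref{cor:vert}, for every $x\in\ca S(X)\cap(B(p,r)\setminus\{p\})$ the space $\Sigma_x(\ca S(X))$ lies within $\tau_p(r)$ of $\pm\nabla d_p(x)$, so any arc of $\ca S(X)$ passing through $x$ does so with tangent directions essentially equal to the two radial directions from $p$. Paired with Lemma~\ref{lem:sing-arc}(3), which makes $d_p$ approximately bi-Lipschitz to arclength along each $C_{ijk}$, this forces any two arcs $C_{ijk}$, $C_{i'j'k'}$ starting at $p$ with distinct initial directions to stay apart in $B(p,r)\setminus\{p\}$: a genuine meeting at an interior point $x$ would require them to share both radial tangents at $x$ while also separating in the directions of the distinct $\gamma_i$'s, which is incompatible with the near-radial control. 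After identifying any two $C_{ijk}$ that coincide, the remaining arcs become disjoint open edges of a graph with unique interior vertex $p$. Carrying out this construction at every $p\in\ca S(X)$ assembles the global graph structure on $\ca S(X)$ in the generalized sense.

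For the uniform bound on branching orders, the number of edges issuing from $p$ is at most $\sum_{v\in V(\Sigma_p(X))}\binom{N_v}{3}$, which is finite since $\Sigma_p(X)$ is a finite graph. To make this bound locally uniform on a compact $K\subset X$, I would apply Lemma~\ref{lem:near-vert}: near any fixed $p_0$, $\Sigma_x(X)$ is Gromov--Hausdorff close to a spherical suspension over finitely many points, so the total valence $\sum_v N_v$ of $\Sigma_x(X)$ is uniformly bounded for $x$ in a neighborhood of $p_0$; a finite cover of $K$ then yields a uniform bound. The main technical obstacle is the ``no false branching'' step: proving cleanly that distinct arcs with different initial directions at $p$ do not merge at an interior point. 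This is exactly where the quantitative almost-radial control from Corollary~\ref{cor:vert} must be combined with the distance-to-arclength estimate of Lemma~\ref{lem:sing-arc}(3); everything else is bookkeeping.
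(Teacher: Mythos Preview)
Your proposal contains a substantive gap. The ``no false branching'' step you flag as the main obstacle is actually the wrong obstacle: arcs $C_{ijk}$ and $C_{i'j'k'}$ with \emph{distinct} initial directions $v\ne v'\in V(\Sigma_p(X))$ lie in disjoint sectors $E(v)$, $E(v')$ and trivially do not meet away from $p$. The real issue --- which you bypass with the phrase ``after identifying any two $C_{ijk}$ that coincide'' --- is that for a fixed $v$ there are up to $\binom{N_v}{3}$ curves $C_{ijk}$, all sharing the \emph{same} direction $v$ at $p$, and these generically neither coincide nor remain disjoint. They can meet and separate repeatedly inside $B(p,r)\setminus\{p\}$, and at any such meeting point $x$ both curves have tangents within $\tau_p(r)$ of $\pm\nabla d_p(x)$ by Corollary~\ref{cor:vert}, so your near-radial control does not separate them. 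The paper's own later material confirms that this behaviour is genuine and can be wild: Definition~\ref{defn:singlar-vertex} isolates vertices where two singular curves meet at angle $0$ in a single point, Remark~\ref{rem:noswinging} refers to singular curves that ``meet each other quite often in a strange way'', and the accumulation set $V_*(\ca S(X))$ of vertices can even be uncountable. Your conclusion that $p$ is the unique interior vertex is therefore false.

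The correct reading of the paragraph before the theorem is only that $\ca S(X)\cap B(p,r)$ is covered by finitely many Lipschitz arcs with definite directions everywhere; the \emph{generalized} graph structure (defined in \cite{NSY:local}) is then extracted from the intersection pattern of these finitely many arcs, with vertices at meeting points and edges the complementary subarcs --- not from any disjointness claim. In the present paper no proof is given beyond the sentence ``This yields the following'', the actual construction being carried out in \cite{NSY:local}. Your count $\sum_v\binom{N_v}{3}$ and the semicontinuity idea via Lemma~\ref{lem:near-vert} are the right ingredients for the locally uniform branching bound, but they must be applied at every vertex $x$ of the graph, not only at the reference point $p$.
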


We denote by $E(\ca S(X))$ and $V(\ca S(X))$ 
the set of all edges and all vertices of the graph.
It should be emphasized that 
the set $V_*(\ca S(X))$ of accumulation points of $V(\ca S(X))$ can be uncountable 
(see \cite[Example 6.9]{NSY:local}).
 We also call $\ca S(X)$ the {\it singular locus} of $X$,
 and a simple arc in $\ca S(X)$ starting from a point $p\in \ca S(X)$ 
  a {\it singular curve} if $d_p$ is strictly increasing.

\pmed\n

\setcounter{equation}{0}

\section{Lipschitz homotopy approximations via polyhedral spaces} \label{sec:Polyhedral}

Let 
$X$ be a two-dimensional  locally compact, geodesically complete 
locally $\CAT(\kappa)$-space.
For the proof of Theorem \ref{thm:approx},
we may assume that  for every $x\in X$,
$\Sigma_x(X)$ is a connected $\CAT(1)$-graph without 
endpoints.

First we recall the surgery construction 
discussed in \cite[Section 7]{NSY:local},
where the surgeries were carried out on the space
$E$, a finite union of ruled surfaces at a reference
point. In the present situation, however, we need to consider only the case of $E$ with maximal number
of ruled surfaces.

We call a continuous curve $c$ in 
$\ca S(X)$ a {\it singular curve} or a 
{\it singular arc}
if $c$ satisfies the monotone property as in 
Lemma \ref{lem:sing-arc}(3).

\begin{defn} \label{defn:singlar-vertex} \upshape
We call $x\in V(\ca S(X))$ a {\it singular vertex}
if  at least one of the following holds:   
\begin{enumerate}
 \item $x\in V_*(\ca S(X))\,;$
 \item  there are singular curves $C_1,C_2$  starting from 
 $x$ such that 
 \begin{align} \label{eq:C1C2}
      \angle_x(C_1,C_2)=0, \quad C_1\cap C_2=\{ x \}.
 \end{align} 
\end{enumerate}
We denote by  $V_{\rm sing}(\ca S(X))$  the set of all 
singular vertices of $\ca S(X)$.

For $x\in V_{\rm sing}(\ca S(X))$, 
we call $v\in \Sigma_x(\ca S(X))$ a {\it singular direction}
if at least one of the following holds:   
\begin{enumerate}
 \item $v=\lim_{n\to\infty} \uparrow_x^{x_n}$ for a sequence $x_n$ in 
 $V(\ca S(X))\,;$
 \item  there are singular curves $C_1,C_2$ starting from 
 $x$ in the direction $v$ satisfying \eqref{eq:C1C2}.
 \end{enumerate}
 We denote by $\Sigma_x^{\rm sing}(\ca S(X))$ the set of all singular directions.   
\end{defn}

By the surgeries, we can remove the set $V_{\rm sing}(\ca S(X))$ of singular vertices to
obtain a polyhedral locally $\CAT(\kappa)$-space.

Take a sequence  of positive numbers $\e_n$
converging to $0$, and fix $n$ for a while.
For $p\in V_{\rm sing}(\ca S(X))$, let $r=r_p$ be  as in Subsection \ref{ssec:lcal-str}.
For $x\in \ca S(X)\cap (B(p, r)\setminus \{ p\})$,
 let 
$\Sigma_{x,+}(\ca S(X)):=\Sigma_x(\ca S(X)\setminus \mathring{B}(p, r(x)))$ and 
$\Sigma_{x,-}(\ca S(X)):=\Sigma_x(\ca S(X)\cap B(p, r(x)))$.
Let $\theta_0:=10^{-10}$.
We may assume that
for all $x\in \ca S(X)\cap (B(p, r)\setminus \{ p\})$,
\beq  \label{eq:10-10}
\begin{aligned}
 & \angle^X(\nabla d_p(x),  \Sigma_{x,+}(\ca S(X)))<\theta_0, \quad
  {\rm diam}(\Sigma_{x,+}(\ca S(X)))<\theta_0, \\
&\angle^X(-\nabla d_p(x),  \Sigma_{x,-}(\ca S(X)))<\theta_0,\quad
   {\rm diam}(\Sigma_{x,-}(\ca S(X)))<\theta_0.
\end{aligned}
\eeq 
In addition, we assume
\begin{align} \label{eq:S(p,r)NotV}
 \text{$S(p,r)\cap V(\ca S(X)) $ is  empty}. \hspace{2cm}
 \end{align}
For every $x\in B(p,r)\cap V_{\rm sing}(\ca S(X))$ and 
every $v\in \Sigma_x^{\rm sing}(\ca S(X))$, 
choose small enough $\delta=\delta_x$ and $\e=\e_x$
with 
\begin{align}\label{eq:epsilon-delta}
\e\ll\delta\le\e_n
\end{align}
satisfying the following:
Consider the cone-like region $C(x,v,\delta,\e)$ at $x$ in the the direction $v$ with angle
$\delta$ and radius $\e$ defined as 
the domain  bounded by  
 geodesics $\gamma_{\xi_i}$ \,$(1\le i\le N_v)$ and $S(x,\e)$, where $\xi_i\in\Sigma_x(X)$ satisfy for all $1\le i\neq j\le N_v$
 $$
 \angle(\xi_i,\xi_j)=
\angle(\xi_i,v)+\angle(v,\xi_j)=2\angle(\xi_i,v)
=2\delta.
$$ 
Then if $d_p$ is increasing (resp. decreasing) in the direction $v$, we have 
$C(x,v,\delta,\e)\cap\ca S(X)=B_+(x,\e)\cap \ca S(X)$
(resp. 
$C(x,v,\delta,\e)\cap\ca S(X)=B_-(x,\e)\cap \ca S(X)$),
where $B_+(x,\e):=B(x,\e)\setminus \mathring{B}(p,d_p(x))$ and 
$B_-(x,\e):=B(x,\e)\cap B(p,d_p(x))$.

Let $\hat T:=C(x,v,\delta,\e)\cap S(p,
d_p(x)+\e)$, which is a tree.
As in \eqref{eq:S(p,r)NotV}, we take $\e$ so as to satisfy 
\begin{align} \label{eq:S(x,epsilon)NotV}
 \text{$S(x,\e)\cap V(\ca S(X)) $ is  empty}. \hspace{2cm}
 \end{align} 
Replacing each edge of $\hat T$ by the 
$X$-geodesic between the endpoints,
we have the geodesic tree $T$.
For $x\in X$ and a closed subset $S$ of $X$,
let $x*S$ be the join defined as the union of all minimal geodesics 
from $x$ to points $s$ for all $s\in S$.
Then $K(x,v,\delta,\e)$ abbreviated as $K(x,v)$ is defined as the  join  
\begin{align}\label{eq:K(xv)=x*T}
K(x,v,\delta,\e)=K(x,v)=x*T.
\end{align}
\eqref{eq:10-10} ensures that $d_x$ is almost constant on $V(T)$.

For each edge $e\in E(T)$, let 
$\triangle(x,e)$ denote the $X$-geodesic triangle with vertices $x$ and $\pa e$.
Now the comparison object $\tilde K(x,v)$ for 
$K(x,v)$
is defined as the natural gluing of the comparison triangle regions $\tilde\blacktriangle(x,e)$
\,$(e\in E(T))$.
Let $\tilde\angle_x(\triangle(x,e))$ denote the 
angle of the comparison triangle $\tilde\triangle(x,e)$
at the vertex corresponding to $x$. 
The basic surgery construction is to replace 
$K(x,v)$ by $\tilde K(x,v)$ which is realized as the gluing of $X\setminus \mathring{K}(x,v)$ and $\tilde K(x,v)$ along their isometric 
boundaries.  In \cite[Section 7]{NSY:local},
we essentially carry out 
the surgery along  $B(p,r)\cap\ca S(X)$ 
in a systematic way to have 
a polyhedral $\CAT(\kappa)$-space
$\tilde B(p,r)$. In this construction, we replace
finitely many such $K(x_\alpha,v_\alpha)$
whose interiors are pairwise disjoint,
by the comparison objects 
$\tilde K_n(x_\alpha,v_\alpha)$.
We call such a union of $K_n(x_\alpha,v_\alpha)$ the {\it surgery part} in the construction of 
$\tilde B(p,r)$.
As a special case in \cite{NSY:local}, we proved that 
$\tilde B(p,r)$
converges to 
$B(p,r)$ as $n\to\infty$.

\begin{center}
\begin{tikzpicture}
[scale = 1]
\fill (0,0) coordinate (A) circle (1.2pt) node [below] {$x$};
\draw (4.3,1.8) node[circle] {$T$};
\draw (1.6,-1.6) node[circle] {$K(x,v)$};
\draw [-, very thick] (0,0)--(0.1,0);
\draw [-, very thick] (4.5,0.4)--(4.8,1.6);
\draw [-, very thick] (4.5,-0.4)--(4.8,-1.6);
\draw [-, very thick] (0,0)--(4.8,1.6);
\draw [-, very thick] (0,0)--(4.8,-1.6);
\draw [dotted, very thick] (3.95,1.315)--(4.5,0.4);
\draw [-, very thick] (3.95,1.315)--(3.6,1.9);
\draw [-, very thick] (0,0)--(3.6,1.9);
\draw [-, very thick] (0,0)--(3.6,-1.9);
\draw [dotted, very thick] (3.95,-1.315)--(4.5,-0.4);
\draw [-, very thick] (3.95,-1.315)--(3.6,-1.9);
\coordinate (P1) at (3.7,0);
\coordinate (P2) at (2,0);
\coordinate (P3) at (1,0);
\coordinate (P4) at (0.3,0);
\coordinate (P5) at (0.1,0);
\coordinate (P6) at (4.5,0.4);
\coordinate (P7) at (4.5,-0.4);
\draw [very thick]
(P1) .. controls +(30:1cm) and +(180:0cm) ..
(P6) .. controls +(270:0.5cm) and +(90:0.5cm) ..
(P7) .. controls +(180:0cm) and +(330:1cm) .. (P1); 
\filldraw [fill=gray, opacity=.1]
(P1) .. controls +(30:1cm) and +(180:0cm) ..
(P6) .. controls +(270:0.5cm) and +(90:0.5cm) ..
(P7) .. controls +(180:0cm) and +(330:1cm) .. (P1); 
\draw [very thick]
(P1) .. controls +(160:1cm) and +(20:1cm) ..
(P2) .. controls +(340:1cm) and +(200:1cm) .. (P1); 
\filldraw [fill=gray, opacity=.1]
(P1) .. controls +(160:1cm) and +(20:1cm) ..
(P2) .. controls +(340:1cm) and +(200:1cm) .. (P1); 
\draw [very thick]
(P2) .. controls +(160:0.6cm) and +(20:0.6cm) ..
(P3) .. controls +(340:0.6cm) and +(200:0.6cm) .. (P2); 
\filldraw [fill=gray, opacity=.1]
(P2) .. controls +(160:0.6cm) and +(20:0.6cm) ..
(P3) .. controls +(340:0.6cm) and +(200:0.6cm) .. (P2); 
\draw [very thick]
(P3) .. controls +(160:0.4cm) and +(20:0.4cm) ..
(P4) .. controls +(340:0.4cm) and +(200:0.4cm) .. (P3); 
\filldraw [fill=gray, opacity=.1]
(P3) .. controls +(160:0.4cm) and +(20:0.4cm) ..
(P4) .. controls +(340:0.4cm) and +(200:0.4cm) .. (P3); 
\draw [very thick]
(P4) .. controls +(160:0.15cm) and +(20:0.15cm) ..
(P5) .. controls +(340:0.15cm) and +(200:0.15cm) .. (P4); 
\filldraw [fill=gray, opacity=.1]
(P4) .. controls +(160:0.15cm) and +(20:0.15cm) ..
(P5) .. controls +(340:0.15cm) and +(200:0.15cm) .. (P4); 
\draw [-, very thick] (5.5,0)--(5.6,0);
\fill (5.5,0) coordinate (A) circle (1.2pt) node [below] {$\tilde{x}$};
\draw (7.1,-1.6) node[circle] {$\tilde{K}(x,v)$};
\draw [-, very thick] (5.5,0)--(10,0.4);
\draw [-, very thick] (5.5,0)--(10,-0.4);
\draw [-, very thick] (10,0.4)--(10,-0.4);
\draw [-, very thick] (10,0.4)--(10.3,1.6);
\draw [-, very thick] (10,-0.4)--(10.3,-1.6);
\draw [-, very thick] (5.5,0)--(10.3,1.6);
\draw [-, very thick] (5.5,0)--(10.3,-1.6);
\draw [dotted, very thick] (9.45,1.315)--(10,0.4);
\draw [-, very thick] (9.45,1.315)--(9.1,1.9);
\draw [-, very thick] (5.5,0)--(9.1,1.9);
\draw [-, very thick] (5.5,0)--(9.1,-1.9);
\draw [dotted, very thick] (9.45,-1.315)--(10,-0.4);
\draw [-, very thick] (9.45,-1.315)--(9.1,-1.9);
\end{tikzpicture}
\end{center}

In what follows, we shall continue the above surgery 
construction along all over $\ca S(X)$ to have a geodesically complete polyhedral $\CAT(\kappa)$-space $X_n$.

\pmed
\n
\subsection{GH-approximations} \label{ssec:GH-conv} 

In this subsection, we prove the following.

\begin{thm} \label{thm:GH-approx}
There is a sequence of 
pointed two-dimensional geodesically complete polyhedral 
locally $\CAT(\kappa)$-spaces $(X_n, p_n)$ which converges to $(X,p)$ with respect to the pointed Gromov-Hausdorff
topology.
\end{thm}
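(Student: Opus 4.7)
The plan is to extend the local surgery construction from \cite[Section 7]{NSY:local} (reviewed above on a single ball $B(p,r)$ around a singular vertex) to a global construction, carried out at scale $\e_n \to 0$ along the entire singular locus $\ca S(X)$, so as to yield polyhedral $\CAT(\kappa)$-spaces $(X_n,p_n)$ that Gromov--Hausdorff converge to $(X,p)$.

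First I would fix a proper exhaustion $X = \bigcup_k \overline{B}(p, R_k)$ with $R_k \nearrow \infty$, so that it suffices to produce $X_n$ together with a pointed Hausdorff approximation defined on $\overline{B}(p, R_n)$ of error $o_n \to 0$. Within each such ball I would select finitely many centers $x_\alpha \in \ca S(X)$, singular directions $v_\alpha \in \Sigma_{x_\alpha}^{\rm sing}(\ca S(X))$, and parameters $\e_\alpha \ll \delta_\alpha \le \e_n$ such that the cone-like regions $K_\alpha = K(x_\alpha, v_\alpha, \delta_\alpha, \e_\alpha)$ defined by \eqref{eq:K(xv)=x*T} have pairwise disjoint interiors, and their union $U_n$ contains all of $V_{\rm sing}(\ca S(X)) \cap \overline{B}(p, R_n)$ in its interior. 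Replace each $K_\alpha$ by its polyhedral comparison object $\tilde K_\alpha$, glued back along the isometric boundary. The local $\CAT(\kappa)$-property of the surgery from \cite[Section 7]{NSY:local}, combined with the Burago--Buyalo gluing criterion (Theorem \ref{thm:BB-gluing}), then shows that $X_n$ is a geodesically complete polyhedral locally $\CAT(\kappa)$-space. Since every $K_\alpha$ has radius at most $\e_n$ and, by Lemma \ref{lem:comparison}, the comparison metric on $\tilde K_\alpha$ differs from the original metric on $K_\alpha$ by $\tau_p(\e_n)\cdot\e_n$, the identity map off the surgery part extends to an $o_n$-Hausdorff approximation. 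This gives $(X_n, p_n) \to (X,p)$ in pointed Gromov--Hausdorff topology.

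The main obstacle is the covering step, because $V_{\rm sing}(\ca S(X))$ may accumulate on an uncountable set $V_*(\ca S(X))$, so one cannot enumerate singular vertices and surge them one at a time. The crucial point is that a single cone-like surgery region $K(x,v,\delta,\e)$ with $v$ an outgoing singular direction absorbs the full sub-tree of $\ca S(X)$ of radius $\e$ at $x$ in the direction $v$: this is already built into the construction through the identity $C(x,v,\delta,\e) \cap \ca S(X) = B_{\pm}(x,\e) \cap \ca S(X)$, which is secured by Corollary \ref{cor:vert} and the almost-monotone behavior of $d_p$ along singular curves (Lemma \ref{lem:sing-arc}). A Vitali-style greedy selection among such cone-regions, starting from outermost singular vertices and working inward, therefore produces the required finite family $\{K_\alpha\}$ in each $\overline{B}(p, R_n)$. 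Outside $U_n$ the singular locus reduces to finitely many singular arcs $C_{ijk}$ meeting only at isolated non-singular graph vertices, and any remaining non-polyhedral faces in the complement of $U_n$ can be made polyhedral with distortion $\tau(\e_n)$ by the surface approximation \cite[Theorem 0.11]{BurBuy:upperII}. Together with the convergence $\tilde B(p,r) \to B(p,r)$ recalled in the excerpt, this completes the construction and the GH-convergence.
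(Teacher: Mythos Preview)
Your strategy---globalize the local surgery of \cite[Section 7]{NSY:local}---matches the paper's, but the implementation differs in a way that leaves real gaps. The paper does not cover $V_{\rm sing}(\ca S(X))$ directly by cone regions $K(x_\alpha, v_\alpha)$ selected via a Vitali-type sweep. Instead it takes a locally finite covering of $V_{\rm sing}(\ca S(X))$ by small balls $\{B(p_i, r_i)\}$, applies the already-established polyhedral replacement $\tilde B(p_i, r_i)$ of \cite[Theorem 7.4]{NSY:local} within each, and assembles these by an inductive gluing in which the surgery inside $B(p_k, r_k)$ is restricted to $B(p_k, r_k) \setminus \bigcup_{i<k} B(p_i, r_i)$. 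This packaging is not cosmetic: the object $K(x, v, \delta, \e)$ is defined relative to a reference point (the ball center $p_i$)---the tree $T$ sits in the sphere $S(p_i, d_{p_i}(x) + \e)$, and the controls \eqref{eq:10-10} are for $d_{p_i}$---so your global ``outermost-to-innermost'' selection has no meaning until you specify these reference points and check that cone regions built from different ones are compatible. The paper's inductive restriction to complements is precisely the device that makes the locally defined surgeries fit together.

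Two further issues. First, if your surgery is performed only inside $\overline B(p, R_n)$ then $X_n$ coincides with $X$ outside that ball and is not polyhedral there; the paper's locally finite covering of all of $V_{\rm sing}(\ca S(X))$ yields a global $\tilde X$ as an inductive limit of the $M_k$. Second, your GH estimate (``distortion $\tau_p(\e_n)\cdot \e_n$ per region, by Lemma \ref{lem:comparison}'') does not by itself control the total error along a geodesic that crosses many surgery pieces; the paper's bound $||\varphi(\tilde y), \varphi(\tilde y')| - |\tilde y, \tilde y'|| \le R\, N(R)\, \tau(\e_0)$ needs the uniform bound $N(R)$ on branching numbers in $B(p,R)$ and is obtained by invoking the final-stage argument of \cite[Theorem 7.4]{NSY:local}. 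Finally, the additional face-approximation step via \cite[Theorem 0.11]{BurBuy:upperII} is unnecessary for Theorem \ref{thm:GH-approx}: ``polyhedral'' here is in the sense of \cite{BurBuy:upperII} (a gluing of $\ca F_\kappa$ pieces), which the surgery already produces; the further upgrade to piecewise smooth metrics is mentioned only as a remark after Theorem \ref{thm:approx}.
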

\begin{proof} Let $p\in V_{\rm sing}(S(X))$.
Let $r=r_p$, $v\in V(\Sigma_p(X))$, $N=N_v$ and 
$E_k$\, $(2\le k\le N)$ be as in Subsection \ref{ssec:lcal-str}. Consider the union 
\begin{align}\label{eq:E(v)}
E(v):=E_{N_v}
\end{align}
of all ruled surfaces at $p$ in the direction $v$ with  
radius $r$. Fix any $\e_0>0$.
As in the proof of \cite[Theorem 7.4]{NSY:local},  we construct a polyhedral $\CAT(\kappa)$-space $\tilde E(v)$,
which is defined as the result of surgeries  replacing finitely many $K(x_\alpha,v_\alpha)$
contained in $E(v)$ by 
$\tilde K(x_\alpha,v_\alpha)$. Here,
the construction of $\tilde E(v)$
depends on $\e_0$, and 
$\tilde E(v)$ converges to $E(v)$ as
$\e_0\to 0$ for the Gromov-Hausdorff topology.
Note also that the construction of $\tilde E(v)$ is based on surgeries 
 at singular vertices $x\in V_{\rm sing}(\ca S(X))\cap B(p,r)$ and the singular directions
 in  $\Sigma_x^{\rm sing}(\ca S(X))$.

Now consider $\tilde E(v)$ for all $v\in V(\Sigma_p(X))$.
 Gluing $B(p,r)\setminus \bigcup_{v\in V(\Sigma_p(X))} E(v)$ and 
 $\bigcup_{v\in V(\Sigma_p(X))} \tilde E(v)$ along their isometric boundaries, we have a 
  polyhedral $\CAT(\kappa)$-space $\tilde B(p,r)$
 which converges to $B(p,r)$ as
 $\e_0\to 0$. 
 We may also assume $\pa\tilde B(p,r)=S(p,r)$.
 In other words,
 we do not touch $\pa B(p,r)$ in the course of the surgery construction of  $\tilde B(p,r)$
 (see \eqref{eq:S(p,r)NotV}).

  Let $\{ B(p_i,r_i)\}_{i}$ be a locally finite covering of 
 $ V_{\rm sing}(\ca S(X))$ 
 by metric  balls $B(p_i,r_i)$ as above with 
 polyhedral $\CAT(\kappa)$-spaces $\tilde B(p_i,r_i)$
 which converge to $B(p_i,r_i)$ as
 $\e_0\to 0$. 
Here we have to modify the construction of 
$\tilde B(p_i,r_i)$ taking into account of the compatibility of
locally defined surgeries.

 First set 
 $\tilde N_1 :=\tilde B(p_1,r_1)$,
and define  the gluing
$$
 M_1:= (X\setminus  B(p_1,r_1))\cup \tilde N_1,
$$
which is locally $\CAT(\kappa)$.
Next, in the construction of  $\tilde B(p_2,r_2)$, 
we modify the construction so that 
we perform the surgeries only in $B(p_2,r_2)\setminus
B(p_1,r_1)$. This is certainly possible, since  
$S(p_1,r_1)$ does not meet $V(\ca S(X))$.
We denote by the same symbol  $\tilde B(p_2,r_2)$
the result of the above surgeries in  $B(p_2,r_2)$.
Then we set
$\tilde N_2 :=\tilde B(p_2,r_2)\setminus 
\mathring{B}(p_1,r_1)$.
Note that $\pa \tilde N_2$ is isometric to 
$\pa (B(p_2,r_2)\setminus B(p_1,r_1))$.
Now define the gluing
$$
    M_2:= (M_1\setminus {\rm int} (B(p_2,r_2)\setminus B(p_1,r_1)))\cup \tilde N_2,
$$
which is locally $\CAT(\kappa)$.
 Assuming a locally $\CAT(\kappa)$-space $M_{k-1}$ is defined, we inductively define 
the gluing
$$
M_k:= (M_{k-1}\setminus ({\rm int}(B(p_k,r_k)\setminus  (B(p_1,r_1)\cup\cdots\cup B(p_{k-1},r_{k-1})))))
 \cup \tilde N_k,
$$
where 
 $\tilde N_k :=\tilde B(p_k,r_k)\setminus
 (\mathring{B}(p_1,r_1)\cup\cdots\cup 
 \mathring{B}(p_{k-1},r_{k-1}))$,
 and 
 in the construction of  $\tilde B(p_k,r_k)$, 
we perform the surgeries only in $B(p_k,r_k)\setminus
(B(p_1,r_1)\cup\cdots\cup B(p_{k-1},r_{k-1}))$. 
Observe that $M_k$ is locally $\CAT(\kappa)$.
Repeating this procedure possibly infinitely many times, we obtain a locally $\CAT(\kappa)$-space $\tilde X$ as 
the inductive limit of $M_k$.

  From here on, we call the above mentioned surgeries  in $X$ 
 {\it $\e_0$-surgeries}, and $\tilde X$ as the result of the  $\e_0$-surgeries.

 We show the convergence $(\tilde X,\tilde p)\to (X,p)$
 as $\e_0\to 0$. Here we may assume $p\in X\setminus \ca S(X)$.
 Since $p$ is not in the surgery part in the construction of 
 $\tilde X$ for small enough $\e_0$, we may take  
 $\tilde p=p$.
 Let $W(\e_0)=\bigcup_{\alpha\in A} K(x_\alpha,v_\alpha)\subset X$ be the $\e_0$-surgery part 
 in the construction of $\tilde X$,  where
 $|A|\le\infty$ and $K(x_\alpha,v_\alpha)$ is a piece in the surgery part defined as in \eqref{eq:K(xv)=x*T}.
 Then we have 
 a natural inclusion $\iota:X\setminus W(\e_0) \to
 \iota(X\setminus W(\e_0))\subset\tilde X$, and 
 $\tilde X$ is the gluing of $\iota(X\setminus W(\e_0))$
 and $\bigcup_{\alpha\in A} 
 \tilde K(x_\alpha,v_\alpha)$ along the isometric 
 boundaries.
 We define $\varphi:\tilde X\to X$ in such a way that 
 \begin{enumerate}
 \item  $\varphi|_{\iota(X\setminus W(\e_0))}$ is the canonical
 identification $\iota(X\setminus W(\e_0))\to 
 X\setminus W(\e_0)\,;$
 \item  $\varphi:\tilde K(x_\alpha,v_\alpha)\to  K(x_\alpha,v_\alpha)$ is any map
 for each $\alpha\in A$.
 \end{enumerate}
 We show that $\varphi$ provides a $\tau(\e_0)$-approximation.
 Clearly, $\varphi(\tilde X)$ is $\tau(\e_0)$-dense in 
 $X$. 
 %
  For any $R>0$, let $N(R)$ be the maximum of branching 
 numbers $N_v$ for all $v\in V(\Sigma_x(X))$
 and $x\in B(p,R)$.
 Then for arbitrary $\tilde y,\tilde y'\in 
 B(\tilde p,R)$, from the argument in the final stage of the 
 proof of Theorem 7.4 in  \cite{NSY:local},
 we have 
 \[
    ||\varphi(\tilde y), \varphi(\tilde y')|-|\tilde y,\tilde y'||
       \le RN(R)\tau(\e_0).
\]
  This completes the proof of Theorem \ref{thm:GH-approx}.
\end{proof}

\pmed
\n
\subsection{Lipschitz homotopy approximations} \label{ssec:circular} 

\psmall
In what follows, we prove Theorem \ref{thm:approx}.
 Let $M^2_\kappa$ be the simply connected complete surface of constant curvature $\kappa$.

\begin{defn}\label{defn:Graph} \upshape
For a generalized graph $G$, $\pa G$ denotes the set of all endpoints of $G$ (if any). 
We denote by $E(G)$ and $V(G)$ the sets of all edges 
and of all vertices of $G$ respectively.
Let  $E_{\rm ext}(G)$ denote the set of all exterior edges  of  $G$ meeting $\pa G$,
and let $E_{\rm int}(G):=E(G)\setminus E_{\rm ext}(G)$,
the set of all interior edges  of  $G$.
Let $G_*$ be the {\it core} of $G$ defined as 
the closure of the complement of the union of  all $\bar e$
for $e\in E_{\rm ext}(G)$.
\end{defn}
For $A\subset X$ and $C>0$, 
we denote by $\tau_{A,C}(\e)$ a positive function 
depending on $A,C$ such that 
$\lim_{\e\to 0}\tau_{A,C}(\e)=0$.

\pmed\n
{\bf Structure of $K(x,v)$}.
For any $x\in V_{\rm sing}(\ca S(X))$ and 
$v\in\Sigma^{\rm sing}_x(\ca S(X))$, let $K(x, v)$
and $\tilde K(x, v)$  be defined as before.
Recall that $T$ is a geodesic tree such that 
$d_p$ is constant on $V(T)$, and hence
$d_x$ is almost constant on $V(T)$ by
\eqref{eq:10-10}.

First we investigate the structure of $K(x,v)=K(x,v,\delta,\e)=x*T$ for small enough $\e$, where 
$x,v$ and $\delta$ are fixed.
Recall  \eqref{eq:epsilon-delta} and \eqref{eq:K(xv)=x*T}.
In particular, $K(x,v)$ is the closed domain bounded by 
the geodesics $\sigma_i:=\gamma_{\xi_i}$ \, $(1\le i\le N_v)$
and $T$. 
For each $1\le i\le N_v$, let $F_i=F_i(x,v)$ be the connected component of  $K(x,v)\setminus \ca S(X)$ containing $\sigma_i$. We call $F_i$ the {\it $i$-th wing} of 
$K(x,v)$.
Let $W_*$ be the {\it core} of $K(x,v)$ defined as 
the complement of 
$\mathring F_1\cup  \cdots \cup \mathring F_{N_v}$.

Let $e$ be any edge in $E_{\rm int}(T)$ (if any) with $\{ y, y'\}:=\pa e$.
Let $A_{e}$ be the disk domain in $K(x,v)$
bounded by $e$ and the two singular curves,
say $C, C'$ starting from $y,y'$ respectively, having  the same endpoint, say $u$, such that 
$A_{e}\setminus (e\cup C\cup C')$
does not meet $\ca S(X)$ and is an 
open disk in $X$.

\begin{lem} \label{eq:L(C)/L(e)}
$$
       |y,y'| <\tau_x(\e) \min\{ |y,u|, |y',u|\}.
$$
\end{lem}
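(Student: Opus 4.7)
The strategy is to view $A_e$ as an $X$-geodesic triangle with vertices $y$, $y'$, $u$ and to show, via an application of Corollary~\ref{cor:vert} at basepoint $x$, that the angle at $u$ is asymptotically null. Hinge comparison in $M^2_\kappa$ then forces the opposite side $|y,y'|$ to be much shorter than either of the two other sides $|y,u|$, $|y',u|$.

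\emph{Step 1 (small angle at $u$).} Suppose first that $u \ne x$. Then $C$ and $C'$ are sub-arcs of singular arcs issuing from $x$ in direction $v$, passing through the interior singular vertex $u$ on the way to $y$ and $y'$ respectively. Applying the analogue of Lemma~\ref{lem:sing-arc}(3) with basepoint $x$, we obtain
\[
\angle xuy > \pi - \tau_x(\epsilon), \qquad \angle xuy' > \pi - \tau_x(\epsilon),
\]
so both $\up_u^y$ and $\up_u^{y'}$ lie within angular distance $\tau_x(\epsilon)$ of $-\up_u^x$ in $\Sigma_u(X)$, and hence $\angle yuy' \le 2\tau_x(\epsilon)$. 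In the degenerate case $u=x$, Corollary~\ref{cor:vert} cannot be invoked at $u$; however, $\up_x^y$ and $\up_x^{y'}$ are initial directions of singular arcs from $x$ lying in the cone $C(x,v,\delta,\epsilon)$, each within angular distance $\delta$ of $v$ in $\Sigma_x(X)$, so $\angle yxy' \le 2\delta$. Since the surgery construction stipulates $\delta \le \epsilon_n$, this is absorbed into the admissible error $\tau_x(\epsilon)$.

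\emph{Step 2 (near-equality of the long sides).} Because $y, y' \in S(p, d_p(x)+\epsilon)$ lie in the cone $C(x,v,\delta,\epsilon)$ of radius $\epsilon$ around $x$, the $1$-Lipschitz property of $d_p$ forces $|x,y|=|x,y'|=\epsilon$, i.e.\ $d_x(y)=d_x(y')=\epsilon$. The analogue of Lemma~\ref{lem:sing-arc}(3) at basepoint $x$ applied to $C$ and $C'$ then gives
\[
|y,u|,\ |y',u| \in \bigl[\,\epsilon - d_x(u),\ (\epsilon - d_x(u))(1+\tau_x(\epsilon))\,\bigr],
\]
and in particular $\bigl||y,u|-|y',u|\bigr| \le \tau_x(\epsilon)\min\{|y,u|,|y',u|\}$.

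\emph{Step 3 (hinge comparison).} Set $a:=|y,u|$, $b:=|y',u|$, $\alpha:=\angle yuy'$. By CAT($\kappa$) hinge comparison together with the law of cosines in $M^2_\kappa$,
\[
|y,y'|^2 \;\le\; (a-b)^2 + 2ab\bigl(1-\cos\alpha\bigr) + O\bigl(|\kappa|\epsilon^4\bigr).
\]
Inserting the estimates from Steps 1 and 2 yields $|y,y'|^2 \le \tau_x(\epsilon)^2\min\{a,b\}^2$, which is the desired inequality.

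\emph{Main obstacle.} The delicate point is the degenerate case $u=x$, where Corollary~\ref{cor:vert} does not apply at $u$ and one must replace it with the cone angular constraint built into the surgery parameter $\delta$; this is precisely why the construction takes $\delta\le \epsilon_n$ to be small compared to the admissible $\tau_x(\epsilon)$-error. The remaining calculations are routine applications of the comparison geometry already developed in Section~\ref{ssec:lcal-str}.
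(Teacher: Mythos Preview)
Your approach—bounding the apex angle $\angle yuy'$ and applying hinge comparison—is a natural alternative to the paper's argument, which instead controls the base angles $\angle uyy'$ and $\angle uy'y$ near $\pi/2$ and then passes through an auxiliary point $w'\in C'$ with $d_u(w')=d_u(y)$ to sharpen the estimate. In the generic case $u\ne x$ your line works once the informal ``$-\up_u^x$'' is replaced by the suspension pole $\nabla d_x(u)$ of $\Sigma_u(X)$ (via Lemma~\ref{lem:near-vert} applied at base $x$): both $\up_u^y$ and $\up_u^{y'}$ lie within $\tau_x(\e)$ of that pole, hence within $2\tau_x(\e)$ of each other, and your Steps~2--3 finish the job.

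The degenerate case $u=x$, however, contains a genuine error. From the cone constraint you obtain only $\angle yxy'\le 2\delta$, and you then assert that ``$\delta\le\e_n$ is absorbed into $\tau_x(\e)$''. But the surgery construction fixes $\delta$ \emph{before} $\e$ and requires $\e\ll\delta\le\e_n$ (see \eqref{eq:epsilon-delta}); thus $\delta$ is a constant as $\e\to 0$, and $2\delta$ is \emph{not} of the form $\tau_x(\e)$. You have conflated the sequence scale $\e_n$ (fixed once $n$ is fixed) with the cone radius $\e=\e_x$ (which is taken arbitrarily small beneath $\delta$). The bound $\angle(\up_x^y,v)\le\delta$ coming from mere containment in the cone is too crude; what one actually needs is $\angle(\up_x^y,v)<\tau_x(\e)$, which holds because $y$ lies on a singular arc whose initial direction at $x$ is \emph{exactly} $v$, so that $\up_x^y\to v$ as $\e\to 0$ uniformly over all such arcs. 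This extra step—essentially Lemma~\ref{lem:sing-arc} transplanted to base $x$, applied as the moving point tends to the base—is precisely what your proof omits. The paper's route via the base angles and the auxiliary point $w'$ treats all positions of $u$ uniformly and avoids having to isolate this case.
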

\begin{proof}
Using Lemma \ref{lem:sing-arc}  together with \eqref{eq:10-10}, we have 
\begin{align} \label{eq:angle(zyy)}
|\angle uyy'-\angle p yy'|\le 
\angle pyx+\angle xyu <\theta_0+\tau_x(\e),
\end{align}
which implies $|\angle uyy' -\pi/2|<\tau_x(\e) +\theta_0$. Similarly we have 
 $|\angle uy'y -\pi/2|<\tau_x(\e) +\theta_0$, and hence
\begin{align}\label{eq:angle=tilde(zyy)}
|\tilde\angle uyy' -\pi/2|<\tau_x(\e) +\theta_0,
\quad
|\tilde\angle uy'y -\pi/2|<\tau_x(\e) +\theta_0.
\end{align}
Let us assume $d_u(y)\le d_u(y')$ and take 
the point $w'\in C'$ with $d_u(y)=d_u(w')$.
From \eqref{eq:angle=tilde(zyy)}, we have $|y,y'|<2|y,w'|$.
In a way similar to \eqref{eq:angle(zyy)},
we then have 
$|\angle uyw' -\pi/2|<\tau_x(\e)$ and 
$|\angle uw'y -\pi/2|<\tau_x(\e)$
which yield 
\begin{align}\label{eq:angle=tilde(zyw)}
|\tilde\angle uyw'-\pi/2|<\tau_x(\e),
\end{align}
and hence 
$|y,y'|\le \tau_x(\e) \min\{ |y,u|, |y',u|\}$.
This completes the proof.
\end{proof}

From now, we consider a large positive number $R\gg 1$, which will be fixed finally. Clearly we have  
$\lim_{\e\to 0} L(e)/\e=0$.
We consider small enough $\e$ with 
$$
      RL(e)\ll \e, \quad \tau_x(\e)\ll 1/R,
$$ 
and set $e':=A_{e}\cap S(x, \e-RL(e))$.
Let $D_{e}$ be the disk domain in $A_{e}$ bounded by 
$e$, $C$, $C'$ and $e'$,
and let $z:=e'\cap C$ and  $z':=e'\cap C'$.
Let $[\tilde y,\tilde y']=\tilde e$ be the edge of $\tilde T$ corresponding to $e$, and
 $\tilde \blacktriangle(x,e):=\tilde x*\tilde e\subset \tilde K(x,v)$.
Set $\tilde C:=\gamma_{\tilde y, \tilde x}$, 
$\tilde C':=\gamma_{\tilde y', \tilde x}$.
Let $\tilde e':=\tilde \blacktriangle(x,e)\cap  S(\tilde x, \e-RL(e))$, and 
let $\tilde D_e\subset \tilde\blacktriangle(x,e)$
be the domain bounded by 
$\tilde e$, $\tilde C$, $\tilde C'$ and $\tilde e'$. 
Set $\tilde z:=\tilde e'\cap \tilde C$ and  
$\tilde z':=\tilde e'\cap \tilde C'$.
\eqref{eq:angle=tilde(zyw)} shows  
\begin{align}\label{eq:quot=ell/ell}
     | L(e)/L(e') -1|<\tau_x(\e).
\end{align}

\begin{center}
\begin{tikzpicture}
[scale = 1]
\coordinate (P1) at (3.7,0);
\coordinate (P2) at (2,0);
\coordinate (P3) at (1,0);
\coordinate (P4) at (0.3,0);
\coordinate (P5) at (0.1,0);
\draw [-, very thick] (3.7,0) to [out=30, in=180] (9,1);
\draw [-, very thick] (3.7,0) to [out=330, in=180] (9,-1);
\draw [-, very thick] (9,1) -- (9,-1);
\filldraw [fill=gray, opacity=.25]
(3.7,0) to [out=30, in=180] (9,1) -- (9,-1) 
to [out=180, in=330] (3.7,0);
\draw [very thick]
(P1) .. controls +(160:1cm) and +(20:1cm) ..
(P2) .. controls +(340:1cm) and +(200:1cm) .. (P1); 
\filldraw [fill=gray, opacity=.1]
(P1) .. controls +(160:1cm) and +(20:1cm) ..
(P2) .. controls +(340:1cm) and +(200:1cm) .. (P1); 
\draw [very thick]
(P2) .. controls +(160:0.6cm) and +(20:0.6cm) ..
(P3) .. controls +(340:0.6cm) and +(200:0.6cm) .. (P2); 
\filldraw [fill=gray, opacity=.1]
(P2) .. controls +(160:0.6cm) and +(20:0.6cm) ..
(P3) .. controls +(340:0.6cm) and +(200:0.6cm) .. (P2); 
\draw [very thick]
(P3) .. controls +(160:0.4cm) and +(20:0.4cm) ..
(P4) .. controls +(340:0.4cm) and +(200:0.4cm) .. (P3); 
\filldraw [fill=gray, opacity=.1]
(P3) .. controls +(160:0.4cm) and +(20:0.4cm) ..
(P4) .. controls +(340:0.4cm) and +(200:0.4cm) .. (P3); 
\draw [very thick]
(P4) .. controls +(160:0.15cm) and +(20:0.15cm) ..
(P5) .. controls +(340:0.15cm) and +(200:0.15cm) .. (P4); 
\filldraw [fill=gray, opacity=.1]
(P4) .. controls +(160:0.15cm) and +(20:0.15cm) ..
(P5) .. controls +(340:0.15cm) and +(200:0.15cm) .. (P4); 
\fill (3.7,0) coordinate (A) circle (2pt) node [below] {$u$};
\fill (9,1) coordinate (A) circle (2pt) node [above right] {$y'$};
\fill (9,-1) coordinate (A) circle (2pt) node [below right] {$y$};
\fill (9,0) coordinate (A) circle (0pt) node [right] {$e$};
\fill (6.2,1) coordinate (A) circle (0pt) node [above right] {$C'$};
\fill (6.2,-1) coordinate (A) circle (0pt) node [below right] {$C$};
\fill (5.2,0) coordinate (A) circle (0pt) node [right] {$e'$};
\draw [-, very thick] (5.2,0.68) -- (5.2,-0.68);
\fill (5.2,0.68) coordinate (A) circle (2pt) node [above] {$z'$};
\fill (5.2,-0.68) coordinate (A) circle (2pt) node [below] {$z$};
\fill (7.5,0) coordinate (A) circle (0pt) node [left] {$D_e$};
\fill (5,0) coordinate (A) circle (0pt) node [left] {$A_e$};
\filldraw [fill=gray, opacity=.1]
(5.2,0.68) to [out=10, in=180] (9,1) -- (9,-1) 
to [out=180, in=350] (5.2,-0.68);
\end{tikzpicture}
\end{center}

\begin{lem} \label{lem:isom=DtoD}
There exists a $\tau_x(\e)$-almost isometry 
$\varphi:D_e\to \tilde D_e$.
\end{lem}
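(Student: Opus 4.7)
The plan is to realize $\varphi$ via the ruled-surface structure of $A_e$ and of the comparison region $\tilde{\blacktriangle}(x,e)$ from the apex $x$ (respectively $\tilde x$), matched through the arclength isometry $\iota : e \to \tilde e$ with $\iota(y)=\tilde y$, $\iota(y')=\tilde y'$. The essential feature is the extreme thinness of the geodesic triangle $\triangle(x,e) := x * e$: by construction $L(e) \ll \e/R$ while $|x,y|, |x,y'| \approx \e$ by \eqref{eq:10-10}, so $\triangle(x,e)$ is almost degenerate to a segment of length $\approx \e$.

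First I would identify $A_e$ with $\triangle(x,e)$. In the configuration at hand (with $e$ an interior edge of $T$), the singular arcs $C, C'$ of $A_e$ are the radial $X$-geodesics $[x,y]$ and $[x,y']$, so $u = x$; this follows from the description of singular arcs through interior vertices of $T$ inside the cone-like region $K(x,v)$, as developed in Section \ref{sec:Polyhedral}. By \cite[Theorem 4.1]{NSY:local} the ruled surface $\triangle(x,e)$ is $\CAT(\kappa)$. I would parametrize it by $\Phi(s, \lambda) := \gamma_s(\lambda |x,s|)$ for $s \in e$, $\lambda \in [0,1]$, with $\gamma_s$ the $X$-geodesic from $x$ to $s$, and analogously parametrize $\tilde{\blacktriangle}(x,e)$ by $\tilde\Phi$ in $M^2_\kappa$. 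Setting
\[
   \varphi(\Phi(s,\lambda)) := \tilde\Phi(\iota(s), \lambda),
\]
followed by a mild radial rescaling that sends $e'$ onto $\tilde e'$, yields a homeomorphism $D_e \to \tilde D_e$; by \eqref{eq:quot=ell/ell} the rescaling contributes at most $\tau_x(\e)$-distortion.

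Second I would verify that $\varphi$ is a $\tau_x(\e)$-almost isometry through two complementary estimates for $q_i := \Phi(s_i, \lambda_i) \in D_e$ and $\tilde q_i := \varphi(q_i) \in \tilde D_e$. The $\CAT(\kappa)$ inequality applied to the hinge at $x$ gives $|q_1, q_2|_X \le |\tilde q_1, \tilde q_2|_{M^2_\kappa}$. For the matching lower bound I would exploit the near-degeneracy: the comparison apex angle satisfies $\tilde\angle yxy' = O(L(e)/\e) \le \tau_x(\e)$ by the law of cosines in $M^2_\kappa$, and Lemma \ref{lem:comparison} together with the $\CAT(\kappa)$ upper bound on angles gives $|\angle yxy' - \tilde\angle yxy'| < \tau_x(\e)$. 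Applying the analogous comparison to every subhinge $\angle s_1 x s_2$ and combining with the control $\bigl||x,s| - |\tilde x, \tilde s|\bigr| < \tau_x(\e) \cdot \e$ on the ruling lengths, one obtains
\[
   \bigl| |q_1, q_2|_X - |\tilde q_1, \tilde q_2|_{M^2_\kappa} \bigr| \le \tau_x(\e) \cdot |q_1, q_2|_X.
\]

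The main obstacle is the lower bound. Lemma \ref{lem:comparison} provides angle comparison only up to an additive $\tau_x(\e)$-error, which is a priori coarser than the apex angle $\sim L(e)/\e$ itself; the challenge is that distances in $D_e$ can be as small as $L(e)$, so one needs the multiplicative, not just additive, version of the estimate. The resolution is to combine the $\CAT(\kappa)$ one-sided angle inequality $\angle \le \tilde\angle$ with first-variation arguments along the ruling geodesics $\gamma_s$: the thinness $L(e) \ll \e/R$ makes any angular deficit higher-order in $L(e)/\e$, and hence absorbed by $\tau_x(\e)$. A secondary technical point is confirming that $\Phi$ is a bijection onto $\triangle(x,e)$ so that the ruling coordinates are well defined, which follows from the CAT$(\kappa)$ property of the ruled surface.
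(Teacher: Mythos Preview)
Your proposal contains a real gap at the lower-bound step, and the preliminary identification of the region is also off.

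On the geometry: the curves $C,C'$ bounding $A_e$ (and hence two sides of the quadrilateral $D_e$) are singular arcs lying in $\ca S(X)$, not the $X$-geodesics $[x,y]$ and $[x,y']$; there is no reason for the meeting point $u$ to equal $x$ (see Lemma~\ref{eq:L(C)/L(e)} and the picture of $K(x,v)$, where the singular locus between $x$ and $T$ consists of several lens-shaped pieces). By Lemma~\ref{lem:sing-arc} these arcs are only \emph{almost} radial, with transverse deviation from the true geodesics of order $\tau_x(\e)\cdot\e$ over a length $\approx\e$---typically far larger than $L(e)$. Consequently your ruled map from the apex $x$ does not carry $\partial D_e$ onto $\partial\tilde D_e$, and any boundary correction would have to move points by distances large relative to the scale $L(e)$ at which the almost-isometry is claimed.

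On the estimate: you correctly flag that Lemma~\ref{lem:comparison} controls the apex angle only additively within $\tau_x(\e)$, while the apex angle $\tilde\angle\,yxy'$ is itself of order $L(e)/\e$, possibly much smaller. But the proposed fix does not close the gap. The one-sided $\CAT(\kappa)$ comparison gives only the upper bound $|q_1,q_2|\le|\tilde q_1,\tilde q_2|$; for the matching lower bound one would need curvature bounded \emph{below}, which is not available here. Concretely, for $q_1,q_2$ on distinct rulings at radius $\approx\e$ one has $|q_1,q_2|\sim L(e)$, yet an apex-angle uncertainty of $\tau_x(\e)$ propagates to a distance uncertainty of order $\tau_x(\e)\cdot\e$ at that radius, which can dominate $|q_1,q_2|$; first variation does not rescue this because all comparisons are anchored at the scale $\e$ rather than $L(e)$. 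The paper avoids both issues by rescaling: it argues by contradiction, blows up the ambient ruled surface by $1/|y,y'|$ so that it converges in the pointed Gromov--Hausdorff sense to $\R^2$, observes via \eqref{eq:angle=tilde(zyw)} that $D_e$ and $\tilde D_e$ both converge to the same near-rectangle $Q$ of sides $1$ and $R$, and then invokes \cite[Theorem~2.6]{NSY:local} together with a straightening step as in \cite[Lemma~5.1]{NSY:local} to produce almost-isometric embeddings of each into $Q$. The rescaling converts the delicate scale $L(e)$ into the unit scale, where the additive errors become genuinely negligible.
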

\begin{proof}
We proceed by contradiction. If the lemma 
does not hold, there is a sequence $\e_i\to 0$ as 
$i\to\infty$ such that in 
$K(x,v,\delta,\e_i)=x*T_i$, for some 
$e_i=[y_i,y_i']\in E_{\rm int}(T_i)$,  the disk domain $D_{e_i}$ defined by $e_i$ 
and its comparison object $\tilde D_{e_i}$ do not admit
any $o_i$-almost isometry $D_{e_i}\to \tilde D_{e_i}$.
Take a ruled surface $S_i=S_{j\ell}$ containing
$D_{e_i}$. Let $a_i:=|y_i,y_i'|$, and
consider the convergence $(\frac{1}{a_i} S_i, y_i)
\to (\R^2,o)$.
\eqref{eq:angle=tilde(zyw)} implies that $D_{e_i}$ converges to 
a quadrangle $Q$, which is almost a rectangle with edges of lengths $1$ and $R$, 
under the above convergence.
Therefore by \cite[Theorem 2.6]{NSY:local}, 
we obtain an
$o_i$-isometric embedding
$\psi_i:D_{e_i}\to {\rm Image}(\psi_i)\subset \R^2$ such that 
${\rm Image}(\psi_i)$ is $o_i$-close to $Q$.
Let $\{ y_i,y_i', z_i',z_i\}$ be the vertices of 
$D_{e_i}$, and 
$\{ y_{\infty,i},y_{\infty,i}', z_{\infty,i}',z_{\infty,i} \}\subset 
{\rm Image}(\psi_i)$ be their $\psi_i$-images.
Then in a way similar to the proof of
Lemma \cite[Lemma 5.1]{NSY:local},
${\rm Image}(\psi_i)$ can be deformed $o_i$-almost isometrically  to the quadrangle 
$Q_i:=y_{\infty,i} y_{\infty,i}' z_{\infty,i}' z_{\infty,i}$
with the same vertices.
Since $Q_i$ converges to $Q$ in the Hausdorff-distance, it is easily verified that $Q_i$ is  $o_i$-almost isometric to $Q$.

On the other hand,
clearly we have an isometric embedding 
$\tilde D_{e_i}\hookrightarrow M^2_\kappa$, and
under the convergence $(\frac{1}{a_i} M^2_\kappa, \tilde y_i) \to (\R^2,o)$,  $\tilde D_{e_i}$ also converges to $Q$. Similarly, we have an
$o_i$-almost isometry $\tilde D_{e_i}\to Q$.
This is a contradiction.
\end{proof}
\psmall\n
{\bf Gluing of almost isometries}.
The following lemma will be used several times in the
proof of Theorem \ref{thm:approx}.

\begin{lem} \label{lem:psi:DtoD}
Given a $\tau_x(\e)$-almost isometry 
$\varphi_0:\pa D_e\to \pa \tilde D_e$
sending each edge of $\pa D_e$  to the corresponding one of $\pa \tilde D_e$,
there exists a $\tau_x(\e)$-almost isometry 
 $\varphi_e:D_e\to \tilde D_e$ 
such that $\varphi_e|_{\pa D_e}=\varphi_0$.
\end{lem}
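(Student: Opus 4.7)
The plan is to reduce the problem to a planar model via the rescaling argument of Lemma \ref{lem:isom=DtoD} and then perform a boundary-matched extension there. Setting $a := L(e)$, the proof of Lemma \ref{lem:isom=DtoD} provides $\tau_x(\e)$-almost isometric identifications $\psi : D_e \to Q$ and $\tilde\psi : \tilde D_e \to Q$, where $Q \subset \R^2$ is a Euclidean rectangle with sides essentially $a$ and $Ra$, both sending corresponding edges to corresponding edges of $Q$. I would carry this identification along from the start so that throughout the argument one works in a common planar target.

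Next I would consider the boundary map $f_0 := \tilde\psi \circ \varphi_0 \circ \psi^{-1} : \partial Q \to \partial Q$. Since $\varphi_0$, $\psi$, $\tilde\psi$ are all edge-preserving $\tau_x(\e)$-almost isometries, $f_0$ is an edge-preserving $\tau_x(\e)$-almost isometry of $\partial Q$, which in particular is $C^0$-close to the identity on $\partial Q$ up to an error $\tau_x(\e)\cdot a$. I would then extend $f_0$ to a map $f : Q \to Q$ by a standard bilinear-blend interpolation in rectangular coordinates $(s,t) \in [0,a]\times[0,Ra]$: namely, define $f$ to coincide with $f_0$ on each of the four sides and interpolate linearly in $(s,t)$ between the four boundary corrections, subtracting off the corner contributions to avoid double-counting. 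Because $R$ is bounded, the correction term $f - \mathrm{id}_Q$ is $O(\tau_x(\e)\cdot a)$ in $C^0$ with derivative bounded by $\tau_x(\e)$, so $f$ is a $\tau_x(\e)$-almost isometry of $Q$ that agrees with $f_0$ on $\partial Q$.

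Finally I would set $\varphi_e := \tilde\psi^{-1} \circ f \circ \psi : D_e \to \tilde D_e$. By construction $\varphi_e|_{\partial D_e} = \varphi_0$, and $\varphi_e$ is a composition of three $\tau_x(\e)$-almost isometries, hence itself a $\tau_x(\e)$-almost isometry (after absorbing the composite error into a new $\tau_x(\e)$).

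The main obstacle is verifying in detail that the planar bilinear interpolation preserves the almost-isometric property, not merely the $C^0$ approximation: the edge-preservation and corner-matching of $f_0$ must be used carefully to ensure the blended extension does not accumulate distortion of order larger than $\tau_x(\e)$. The bounded aspect ratio of $Q$ (which is why we have introduced the fixed constant $R$) is essential here — if $R$ were allowed to blow up with $\e$, the interpolation would no longer give a uniform almost isometry.
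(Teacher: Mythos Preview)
Your approach is correct and takes a genuinely different route from the paper. The paper uses a \emph{collar interpolation}: it fixes a small $\delta>0$, carves out an inner quadrangle $\tilde F\subset\tilde D_e$ at distance $\delta a$ from $\partial\tilde D_e$, keeps the almost isometry $\varphi$ of Lemma \ref{lem:isom=DtoD} unchanged on $F:=\varphi^{-1}(\tilde F)$, and on the four thin collar strips $\tilde Q_y,\tilde Q_z,\tilde Q_{z'},\tilde Q_{y'}$ between $\partial\tilde D_e$ and $\partial\tilde F$ interpolates \emph{geodesically} (in product coordinates $(u,t)$ of width $\delta a$) between $\varphi_0$ on the outer boundary and $\varphi|_{\partial F}$ on the inner one, checking by hand that the derivative is $\tau_{x,R,\delta}(\e)$-close to an isometry. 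You instead pass to a common flat rectangle $Q$ via $\psi,\tilde\psi$ and do a Coons patch over all of $Q$ at once.

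Two minor corrections to your estimates: the $C^0$ displacement of $f-\mathrm{id}_Q$ in the $t$-direction is $O(\tau_x(\e)\cdot Ra)$, not $O(\tau_x(\e)\cdot a)$, since the long sides have length $Ra$; and the off-diagonal Jacobian entry $\partial_s f^t=(h_r(t)-h_l(t))/a$ is of order $R\,\tau_x(\e)$, not $\tau_x(\e)$, because a $t$-displacement of size $\tau_x(\e)Ra$ gets divided by the short side $a$. Since $R$ is fixed these are absorbed, exactly as you note in your final paragraph, and the paper's own proof incurs the same $R$-dependence (it writes $\tau_{x,R,\delta}(\e)$ throughout). The paper's collar method has the advantage of working directly in $\tilde D_e\subset M^2_\kappa$ without invoking the planar model a second time, and of being local near the boundary, which makes it adapt more readily to the non-rectangular situations alluded to in Remark \ref{rem:almost-iso} and used in Lemma \ref{lem:induction=Di}; your method is conceptually cleaner but leans more on having a global rectangular chart.
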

\begin{proof} Let $0<\delta<1$ be a fixed small number,  and set $a:=|y,y'|$.
Take a point $\hat y$ (resp. $\hat y'$) in the interior of 
$\tilde D_e$ near $\tilde y$ (resp. $\tilde y'$) such that 
$|\hat y, \gamma_{\tilde y,\tilde z}|=|\hat y, \gamma_{\tilde y,\tilde y'}|=\delta a$
(resp.
$|\hat y', \gamma_{\tilde y',\tilde z'}|=|\hat y', \gamma_{\tilde y',\tilde y}|=\delta a$).
Similarly we take $\hat z$ and $\hat z'$ near 
$\tilde z$ and $\tilde z'$ respectively by the similar 
conditions as above.
Consider the quadrangular disk domain $\tilde F\subset {\rm int}\tilde D_e$  bounded by the polygon
$\hat y \hat z\hat z' \hat y'$.

Let $\varphi:D_e\to\tilde D_e$ be the $\tau_x(\e)$-almost isometry given in Lemma \ref{lem:isom=DtoD}, and set
$F:=\varphi^{-1}(\tilde F)$.
In what follows, for any $\tilde u\in \tilde D_e$
and $\tilde A\subset \tilde D_e$,
we use the symbol $u:=\varphi^{-1}(\tilde u)$
and  $A:=\varphi^{-1}(\tilde A)$ implicitly.
Consider the quadrangular domains 
\begin{align*}
&\text{
$\tilde Q_y:=\tilde y\tilde z\hat z\hat y$,\,\,\,
$\tilde Q_z:=\tilde z\tilde z'\hat z'\hat z$,\,\,\,
$\tilde Q_{z'}:=\tilde z'\tilde y'\hat y'\hat z'$,
\,\,\,
$\tilde Q_{y'}:=\tilde y'\tilde y\hat y\hat y'$.
}
\end{align*}
We fix coordinates on these domains.
For $\tilde Q_y$ for instance, 
take the geodesics $\tilde c:=\gamma_{\tilde y,\tilde z}$ and 
$\hat c:=\gamma_{\hat y,\hat z}$, parametrized 
on $[0,|\tilde y,\tilde z|]$, 
and define the product coordinates  
$(u, t)\in [0,|\tilde y,\tilde z|]\times [0,\delta a]$ defined by 
$(u, t)\mapsto \gamma_{\tilde c(u),\hat c(u)}(t)$.
Thus we have the compatible product  structures on
$\tilde D_e\setminus {\rm int} \tilde F$, and hence the one on $D_e\setminus\mathring F$ induced by $\varphi$ from those on 
$\tilde D_e\setminus {\rm int} \tilde F$.

\begin{center}
\begin{tikzpicture}
[scale = 1]
\draw [-, very thick] (2,2) to [out=10, in=180] (8,2.5);
\draw [-, very thick] (2,-2) to [out=350, in=180] (8,-2.5);
\draw [-, very thick] (8,2.5) -- (8,-2.5);
\draw [-, very thick] (2,2) -- (2,-2);
\draw [-, very thick] (3,1.3) to [out=10, in=180] (7,1.6);
\draw [-, very thick] (3,-1.3) to [out=350, in=180] (7,-1.6);
\draw [-, very thick] (7,1.6) -- (7,-1.6);
\draw [-, very thick] (3,1.3) -- (3,-1.3);
\filldraw [fill=gray, opacity=.1]
(2,2) to [out=10, in=180] (8,2.5) -- (8,-2.5) to [out=180, in=350]
(2,-2) -- (2,2);
\filldraw [fill=gray, opacity=.1]
(3,1.3) to [out=10, in=180] (7,1.6) -- (7,-1.6) to [out=180, in=350]
(3,-1.3) -- (3,1.3);
\draw [-, very thick] (3,1.3) -- (2,2);
\draw [-, very thick] (3,-1.3) -- (2,-2);
\draw [-, very thick] (7,1.6) -- (8,2.5);
\draw [-, very thick] (7,-1.6) -- (8,-2.5);
\fill (5.3,0.1) coordinate (A) circle (0pt) node [left] {$\tilde{F}$};
\fill (5.3,2) coordinate (A) circle (0pt) node [left] {$\tilde{Q}_{z'}$};
\fill (5.3,-2) coordinate (A) circle (0pt) node [left] {$\tilde{Q}_y$};
\fill (2.9,0.1) coordinate (A) circle (0pt) node [left] {$\tilde{Q}_z$};
\fill (8,0.1) coordinate (A) circle (0pt) node [left] {$\tilde{Q}_{y'}$};
\fill (5.3,-2.9) coordinate (A) circle (0pt) node [left] {$\tilde{D}_e$};
\fill (3,1.3) coordinate (A) circle (0pt) node [below right] {$\hat{z}'$};
\fill (3,-1.3) coordinate (A) circle (0pt) node [above right] {$\hat{z}$};
\fill (7,1.6) coordinate (A) circle (0pt) node [below left] {$\hat{y}'$};
\fill (7,-1.6) coordinate (A) circle (0pt) node [above left] {$\hat{y}$};
\fill (2,2) coordinate (A) circle (0pt) node [above left] {$\tilde{z}'$};
\fill (2,-2) coordinate (A) circle (0pt) node [below left] {$\tilde{z}$};
\fill (8,2.5) coordinate (A) circle (0pt) node [above right] {$\tilde{y}'$};
\fill (8,-2.5) coordinate (A) circle (0pt) node [below right] {$\tilde{y}$};
\end{tikzpicture}
\end{center}

Now consider the restriction
$\varphi|_{\pa F}: \pa F \to\pa\tilde F$.
Using the product structures on 
$D_e\setminus\mathring F$  and 
$\tilde D_e\setminus
{\rm int} \tilde F$,
we extend $\varphi_0$ and $\varphi|_{\pa F}$ to 
the map $g:D_e\setminus\mathring F\to
\tilde D_e\setminus {\rm int} \tilde F$ 
as follows.
Using the product coordinates 
$(u,t)\in [0,|\tilde y,\tilde z|]\times [0,\delta a]$ of $\tilde Q_y$, we define $g:Q_y\to\tilde Q_y$ by
\[
 g(\varphi^{-1}(u,t))=
 \gamma_{\tilde\varphi_0(u,0), (u,\delta a)}(t),
\]
where $\tilde\varphi_0:=\varphi_0\circ\varphi^{-1}$, 
$(u,\delta a)$ is the point of $\tilde Q_y$ with coordinates
$(u,\delta a)$
 and the geodesic 
$\gamma_{\tilde\varphi_0(u,0), (u,\delta a)}$
is parametrized on $[0,\delta a]$. 
Clearly $g\circ\varphi^{-1}$ is Lipschitz, and hence differentiable
almost everywhere.  

From the assumption on $\varphi_0$, we have
\begin{align}\label{eq:d(varphi0)}
|\tilde\varphi_0(u,0), (u,0)|<\tau_x(\e)Ra.
\end{align}
Fix any $(u,t)\in \tilde Q_y$.
\eqref{eq:d(varphi0)} implies that 
\begin{align}\label{eq:isom-t}
      ||\pa (g\circ\varphi^{-1})/\pa t|-1|<\tau_{x,R,\delta}(\e).
\end{align}

Set $\gamma_0(t):=\gamma_{(u,0), (u,\delta a)}(t)$
and 
$\gamma(t):=\gamma_{\tilde\varphi_0(u,0), (u,\delta a)}(t)$.
Let 
$$
\theta_0:=\angle \tilde y (u,0) (u,\delta a),\quad
\theta:=\angle \tilde y \tilde\varphi_0(u,0) (u,\delta a).
$$
From construction, we have  
\begin{align}\label{eq:d1d2}
         0<d_1<\theta_0<\pi-d_2
\end{align}
for some uniform constants $d_1,d_2$.
\eqref{eq:d(varphi0)} implies that 
\begin{align}\label{eq:theta-theta0}
     |\theta-\theta_0|<\tau_{x,R,\delta}(\e).
\end{align}
Choose any sequence $u_n\to u$, and set
$v_n:=\tilde\varphi_0(u_n)$, $v:=\tilde\varphi_0(u)$.
 From
\eqref{eq:d1d2}  and \eqref{eq:theta-theta0},
we obtain 
\beq
\begin{aligned} \label{eq:isom-u}
   & \biggl|\frac{|(v_n,t), (v,t)|}
      {|(u_n,t),(u,t)|}-1\biggr|<\tau_{x,R,\delta}(\e) \\
& \bigl|\angle\bigl(\dot\gamma(t),
        \uparrow_{(v,t)}^{(v_n,t)}\bigr)-
  \angle\bigl(\dot\gamma_0(t),
        \uparrow_{(u,t)}^{(u_n,t)}\bigr)\bigr|<\tau_{x,R,\delta}(\e)
\end{aligned}
\eeq
\eqref{eq:isom-t} and \eqref{eq:isom-u} yield that
the derivative
$d(g\circ\varphi^{-1})$ is $\tau_{x,R,\delta}(\e)$-almost
isometric almost everywhere, and hence
$g\circ\varphi^{-1}$ is $\tau_{x,R,\delta}(\e)$-almost
isometric. 
Thus $g$ is $\tau_{x,R,\delta}(\e)$-almost isometric on $Q_y$,
and hence similarly on $Q_z,Q_{z'}$ and $Q_{y'}$.
From construction, $g$ defined on those domain 
 can be glued together, and provides a
$\tau_{x,R,\delta}(\e)$-almost isometry $g:D_e\setminus\mathring F\to
\tilde D_e\setminus {\rm int}(\tilde P)$
extending  $\varphi_0$ and $\varphi|_{\pa F}$. 
 This completes the proof.
\end{proof}

\begin{rem} \label{rem:almost-iso}\upshape
 Lemma \ref{lem:psi:DtoD} holds for other situations. See the proof of Lemma \ref{lem:induction=Di}.
\end{rem}

For any $y\in V_{\rm int}(T)$, let $e_1,\ldots, e_\ell$
be the  edges of $T$ adjacent to $y$ with
$e_i=[y,y_i']$. For small enough $\e$,
$D_{e_i}=D_{e_i}(R)$ are defined for all $1\le i\le \ell$.
Let $v_y\in\Sigma_y(\ca S(X))$ be the unique
direction directing  to $x$.
For any $t\in [d_x(y)-R\min\{ L(e_i)\}, d_x(y)]$,
let $y_i(t)\in\pa D_{e_i}\cap S(x,t)$ be the singular curve reaching $y$.
Let $T_*(y,t)$ be the connected subtree of 
$S(x,t)\cap K(x,v)$ with $\pa T_*(y,t)=\{ y_i(t)\}_{i=1}^\ell$.

Set $I_y:=[d_x(y)-R\min\{ L(e_i)\}, d_x(y)]$, and 
\beq  \label{eq:T(y,t)}  
      T_*(y):=\bigcup_{t\in I_y} T_*(y,t).
\eeq

\begin{lem} \label{lem:no-swinging}
For any  singular curve $C$ starting from $y$ in the 
direction $v_y$, we have  for all $t\in I_y$
\begin{align}\label{eq:CcapS=T}
      C\cap S(x,t)\subset T_*(y,t).
\end{align}
\end{lem}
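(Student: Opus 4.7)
The plan is to show that $C$ must coincide with one of the boundary singular curves $C_1,\ldots,C_\ell$ of the disks $A_{e_i}$, and then to verify that this coincidence persists throughout the range $t\in I_y$, which immediately yields $C\cap S(x,t)=\{y_i(t)\}\subset T_*(y,t)$.

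First I will note that $v_y$ points into $K(x,v)$ toward $x$, so $C$ enters $K(x,v)$ in a neighborhood of $y$. The key input is the defining property of $A_{e_i}$ recalled earlier in this section: the open set $\mathring{A}_{e_i}\setminus(e_i\cup C_i\cup C_i')$ contains no point of $\ca S(X)$. Since $\bigcup_{i=1}^\ell \bar{A}_{e_i}$ covers a neighborhood of $y$ in $K(x,v)$, any singular curve emanating from $y$ in the direction $v_y$ must lie on the boundary of some $A_{e_i}$, and starting at $y$ it must coincide with the particular boundary curve $C_i$ on a neighborhood of $y$ for some $i\in\{1,\ldots,\ell\}$.

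Second I will extend this local coincidence along the whole singular structure. Both $C$ and $C_i$ are approximately radial in the sense that $d_x$ is monotone on them up to a $\tau_x(\e)$ error by Lemma \ref{lem:sing-arc}(3); by the same defining property of $A_{e_i}$, no new singular curve can branch off $C_i$ inside $\mathring{D}_{e_i}$, so at every interior point of $C_i$ the forward direction in $\ca S(X)$ is unique. A standard connectedness argument on the $d_x$-parameter along which $C$ and $C_i$ agree then forces $C=C_i$ until $C_i$ reaches the endpoint $u_i$. To check that $u_i$ lies strictly below the level range $I_y$, I will combine Lemma \ref{eq:L(C)/L(e)}, which gives $|y,u_i|>L(e_i)/\tau_x(\e)>R\min\{L(e_j)\}$ for $\e$ small, with the monotone estimate $d_x(y)-d_x(u_i)\ge(1-\tau_x(\e))|y,u_i|$ from Lemma \ref{lem:sing-arc}(3), yielding $d_x(u_i)<d_x(y)-R\min\{L(e_j)\}$ and hence $C\cap S(x,t)=\{y_i(t)\}\subset T_*(y,t)$ for every $t\in I_y$.

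The main obstacle I anticipate is the second step: ruling out that $C$ branches away from $C_i$ at some intermediate level before $u_i$. The essential input there is again the defining property of $A_{e_i}$, namely that $\mathring{A}_{e_i}$ is entirely free of points of $\ca S(X)$, so no alternative singular branch is available along the portion of $C_i$ strictly between $y$ and $u_i$; this will make the connectedness step routine.
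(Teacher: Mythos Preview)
Your approach aims for a stronger conclusion than the lemma requires---that $C$ coincides with one fixed boundary curve $C_i$ on all of $I_y$---and the argument for this has a genuine gap. In your second step you assert that at every interior point of $C_i$ the forward direction in $\ca S(X)$ is unique, justified by the fact that no singular branch enters $\mathring D_{e_i}$. But that fact only controls \emph{one} side of $C_i$: on the side away from $A_{e_i}$, several of the other disks $A_{e_j}$ (for $j\neq i$) may be glued along $C_i$ and then separate at some interior point $z$ with $d_x(z)\in I_y$. At such a $z$ the singular locus genuinely branches, and your connectedness argument breaks down, since $C$ is free to follow a branch different from $C_i$. So ``the forward direction in $\ca S(X)$ is unique'' is simply false in general, and the coincidence $C=C_i$ for a single fixed $i$ is not established.

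The paper's proof is shorter and sidesteps this entirely. It does not try to identify $C$ with any particular $C_i$; it just notes that \eqref{eq:CcapS=T} holds for $t$ near $d_x(y)$ and then extends by continuity: for $C(t)$ to leave $T_*(y,t)$ as $t$ decreases it would have to pass into one of the open arcs $\mathring A_{e_i}\cap S(x,t)$, and those arcs contain no point of $\ca S(X)$, whereas $C(t)\in\ca S(X)$ always. Your step~3 (placing each $u_i$ strictly below the range $I_y$ via Lemma~\ref{eq:L(C)/L(e)} and Lemma~\ref{lem:sing-arc}(3)) is correct and is exactly what guarantees that these arcs persist throughout $I_y$; once you have that, the continuity argument finishes the proof directly, without any need to pin $C$ to a single $C_i$.
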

\begin{proof}
\eqref{eq:CcapS=T} certainly holds if $t$ is close to 
$d_x(y)$. The conclusion follows from an easy 
continuity argument.
\end{proof}

\begin{lem}\label{lem:monotone-De}
For  $e_1',e_2', e_3'\in E_{\rm int}(T)$, 
let $t_0:=\min_{1\le i\le 3}d_x(e_i')$ and $t_1:=\max_{1\le i\le 3}d_x(A_{e_i'})$.
 For any $t_1'\in (t_1,t_0)$, set $I:=[t_1',t_0]$.
For $t\in I$, let $c_t$ be the smallest arc in the tree $S(x,t)\cap K(x,v)$
containing the arcs $A_{e_1'}\cap S(x,t)$ and $A_{e_3'}\cap S(x,t)$.
If 	$A_{e_1'}\cap S(x,t_0)$,  $A_{e_2'}\cap S(x,t_0)$ and 
$A_{ e_3'}\cap S(x,t_0)$  lay in this order on $c_{t_0}$,  
then 
$A_{e_1'}\cap S(x,t)$,  $A_{e_2'}\cap S(x,t)$, 
$A_{ e_3'}\cap S(x,t)$  also lay in this order on $c_t$ for all $t\in I$.
\end{lem}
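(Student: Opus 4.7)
The plan is to argue by continuity in the parameter $t$. Let $A \subset I$ be the set of $t \in I$ at which the three arcs $A_{e_1'} \cap S(x,t)$, $A_{e_2'} \cap S(x,t)$, $A_{e_3'} \cap S(x,t)$ appear in this order on $c_t$. By hypothesis $t_0 \in A$, so it suffices to prove that $A$ is both open and closed in the connected interval $I$. To carry this out, I would first establish that, in the Hausdorff metric, the tree $S(x,t)\cap K(x,v)$, each arc $A_{e_i'} \cap S(x,t)$, and the minimal subarc $c_t$ joining $A_{e_1'} \cap S(x,t)$ to $A_{e_3'} \cap S(x,t)$ all depend continuously on $t \in I$. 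This continuity uses that the $A_{e_i'}$ are fixed compact disk regions bounded by $e_i'$ together with singular curves emanating from $\partial e_i'$, whose behavior with respect to $d_x$ is controlled by Lemma \ref{lem:sing-arc}, and that $t_1' > t_1$ keeps all three intersections non-degenerate.

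For openness: if $t_* \in A$, then on $c_{t_*}$ the three arcs occupy pairwise disjoint, strictly ordered subintervals, because the interiors of the disks $A_{e_i'}$ are pairwise disjoint in $K(x,v)$ (the $e_i'$ are distinct interior edges of $T$, so their associated disks are separated by the singular curves determined by $T$). By continuity the strict order persists on $c_t$ for $t$ close to $t_*$. For closedness: if $t_n \to t_*$ with $t_n \in A$, continuity yields the weak order at $t_*$, and pairwise disjointness of $A_{e_1'} \cap S(x,t_*)$, $A_{e_2'} \cap S(x,t_*)$, $A_{e_3'} \cap S(x,t_*)$, again from the disjoint interiors of the $A_{e_i'}$, upgrades this to a strict order.

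The principal obstacle is ensuring that $A_{e_2'} \cap S(x,t)$ does not leave the arc $c_t$ at some intermediate $t$, that is, does not migrate to a branch of $S(x,t) \cap K(x,v)$ lying off the $A_{e_1'}$-to-$A_{e_3'}$ path. This is a topological-ordering issue rather than a metric one. Here I would exploit the fact that $K(x,v) = x * T$ is homeomorphic to a disk, so the cyclic order of edges of $T$ coming from this planar embedding is inherited by the arcs $A_{e_i'} \cap S(x,t)$ along the tree $S(x,t)\cap K(x,v)$ for every $t \in I$. Lemma \ref{lem:no-swinging}, applied to the singular curves bounding $A_{e_1'}, A_{e_2'}, A_{e_3'}$, ensures that these curves remain inside their respective subtrees $T_*(y,t)$ as $t$ varies, so no curve can swing across and change the planar order. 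Consequently $c_t$ is forced to pass through $A_{e_2'} \cap S(x,t)$ for all $t \in I$, and combined with the openness/closedness argument above this yields $A = I$, completing the proof.
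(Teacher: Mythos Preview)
Your open-closed continuity framework is exactly the approach the paper takes, and you correctly identify the key difficulty: showing that $A_{e_2'}\cap S(x,t)$ stays inside $c_t$ throughout $I$. However, your resolution of this point has a genuine gap.

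You assert that $K(x,v)=x*T$ is homeomorphic to a disk and then appeal to a planar cyclic order on the edges of $T$. This is false in general: $T$ is a tree with interior branch vertices (that is the whole point of having interior edges $e_i'$), and the cone $x*T$ over such a tree is not a topological disk but a book-like complex. There is no global planar embedding available, so the cyclic-order argument does not go through. Your appeal to Lemma~\ref{lem:no-swinging} does not rescue this either: that lemma controls singular curves only on the short interval $I_y=[d_x(y)-R\min\{L(e_i)\},\,d_x(y)]$ near a single vertex $y\in V_{\rm int}(T)$, not on the full interval $I$ of the present lemma, and it says nothing about ordering along $c_t$.

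The paper's argument for this step is short and avoids any planarity. It runs the open-closed argument on the set $G=\{t\in I: A_{e_2'}\cap S(x,t)\subset c_t\}$. Closedness is clear. For openness, suppose $t\in G$ but $t_n\to t$ with $t_n\notin G$. By continuity $A_{e_2'}\cap S(x,t_n)$ meets $c_{t_n}$ for large $n$, yet by assumption it also meets the complement of $c_{t_n}$. Since $A_{e_2'}\cap S(x,t_n)$ is a single arc inside the tree $S(x,t_n)\cap K(x,v)$, it must then contain a branch point of that tree. But branch points of $S(x,t_n)\cap K(x,v)$ lie in $\ca S(X)$, while by the very definition of $A_{e}$ the open disk $\mathring{A}_{e_2'}$ is disjoint from $\ca S(X)$. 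This contradiction gives openness of $G$, and once $A_{e_2'}\cap S(x,t)\subset c_t$ for all $t$ the order statement follows by continuity as you outlined.
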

\begin{proof} 
First of all, from the choice of $t_1'$, $A_{e_2'}\cap S(x,t)$ is an arc for each $t\in I$.
We shall verify that $A_{e_2'}\cap S(x,t)\subset c_t$
for all $t\in I$. Let $G$ be the set of such
$t\in I$ with  $A_{e_2'}\cap S(x,t)\subset c_t$.
Obviously $G$ is closed. 
If $G$ is not open, choose $t\in G$ and 
$t_n\in I\setminus G$ converging to $t$.
 Then from continuity,  $A_{e_2'}\cap S(x,t_n)\cap c_{t_n}$ is nonempty  for large $n$.
Since $A_{e_2'}\cap S(x,t_n)\setminus c_{t_n}$ is also nonempty,
This implies $\mathring{A}_{e_2'} \cap  S(x,t_n)$ meets 
$\ca S(X)$, which is a contradiction.

Now the conclusion immediately follows from continuity.
\end{proof}

\begin{rem} \label{rem:noswinging}\upshape
As shown in \cite[Example 4.5]{NSY:local}, 
the singular curves starting from $x$ in the direction 
$v$ can meet each other quite often in a strange way
that there are a lot of twistings of the wings along singular 
curves. In particular, \eqref{eq:CcapS=T} fails for smaller
$t>0$.
Lemmas \ref{lem:no-swinging}  and \ref{lem:monotone-De} show that there are no such 
twistings  relatively near $T$.
\end{rem}

\pmed  
\begin{proof}[{\bf Proof of Theorem \ref{thm:approx}}]
Take a sequence $\e_n>0$ converging to $0$.
By the proof of Theorem \ref{thm:GH-approx}, we have a sequence of 
pointed two-dimensional locally compact geodesically complete polyhedral 
locally $\CAT(\kappa)$-spaces $(X_n, q_n)$ obtained as a result of  $\e_n$-surgeries of $(X,q)$, which converges to $(X,q)$ as $n\to \infty$ for the pointed Gromov-Hausdorff
topology.
Replacing $X_n$ by another polyhedral 
locally $\CAT(\kappa)$-space if necessary
(see Remark \ref{rem:choice=onstant}),
we show that $X_n$ has the same Lipschitz homotopy type as $X$ via 
$(C_n,\e_n)$-Lipshitz 
 homotopies with $\lim_{n\to\infty}C_n=1$,
 fixing the outside of the 
$\e_n$-surgery part of $(X,q)$.

In what follows, we omit the subscript $n$,  
writing $\tilde X=X_n$, etc. 
Recall that $\tilde X$ is obtained as the gluing 
\[
\tilde X :=(X\setminus \bigcup_{\alpha\in A} K(x_\alpha,v_\alpha))
\cup(\bigcup_{\alpha\in A} \tilde K(x_\alpha,v_\alpha)).
\]

For each $\alpha\in A$, 
let  $\iota_\alpha:\pa K(x_\alpha, v_\alpha)\to \pa\tilde K(x_\alpha,v_\alpha)$ and 
   $\tilde\iota_\alpha:\pa\tilde K(x_\alpha, v_\alpha)\to \pa K(x_\alpha,v_\alpha)$ be the identical maps. 
Since the surgeries in the construction of $\tilde X$ are performed locally and since $K(x_\alpha, v_\alpha)$ is
convex in $X$ and $\tilde K(x_\alpha, v_\alpha)$ is 
$\CAT(\kappa)$, for the proof of Theorem
\ref{thm:approx}, it suffices to construct $C_n$-Lipschitz maps $f_\alpha:K(x_\alpha,v_\alpha)\to \tilde K(x_\alpha,v_\alpha)$ and  $\tilde f_\alpha:\tilde K(x_\alpha,v_\alpha)\to K(x_\alpha,v_\alpha)$ such that
\begin{align} \label{eq:homotopiesFF}
 f_\alpha|_{\pa K(x_\alpha,v_\alpha)}=\iota_\alpha, \qquad
 \tilde f_\alpha|_{\pa \tilde K(x_\alpha,v_\alpha)}=\tilde\iota_\alpha.
\end{align}
In fact, we then define the homotopy $F:K(x_\alpha,v_\alpha)\times [0,\e_n]
\to K(x_\alpha,v_\alpha)$ between 
$\tilde f_\alpha\circ f_\alpha$ and ${\rm id}_{K(x_\alpha, v_\alpha)}$ 
by
\[
      F(y,s)=\gamma_{y, \tilde f_\alpha\circ f_\alpha(y)}
               (s/\e_n),
\]
where $\gamma_{y, \tilde f_\alpha\circ f_\alpha(y)}
:[0,1]\to K(x_\alpha,v_\alpha)$ is the minimal geodesic 
from $y$ to $\tilde f_\alpha\circ f_\alpha(y)$.
Note that the diameter of $K(x_\alpha,v_\alpha)$
is less than $\e_n$ by \eqref{eq:epsilon-delta}.

\begin{lem}\label{lem:geodesic-homotopy}
$F$ is $C_n$-Lipschitz with $\lim_{n\to\infty} C_n=1$.
\end{lem}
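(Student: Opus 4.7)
The plan is to estimate the Lipschitz constant of $F$ along the $s$- and $y$-coordinates separately and combine them. Set $w(y):=\tilde f_\alpha\circ f_\alpha(y)$; then $w$ is $C_n^2$-Lipschitz and is the identity on $\partial K(x_\alpha,v_\alpha)$. Since $K(x_\alpha,v_\alpha)$ is convex in the locally $\CAT(\kappa)$-space $X$ and has diameter less than $\e_n$ by \eqref{eq:epsilon-delta}, for $n$ large $K(x_\alpha,v_\alpha)$ is itself a $\CAT(\kappa)$ space well below the $\CAT(\kappa)$ diameter bound, and in particular Reshetnyak majorization applies inside it.

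For the $s$-direction, fix $y$; then $F(y,\cdot)$ is just the geodesic $\gamma_{y,w(y)}$ reparametrized on $[0,\e_n]$, so
\[
 |F(y,s_1),F(y,s_2)| \;=\; \tfrac{|s_1-s_2|}{\e_n}\,|y,w(y)| \;\le\; |s_1-s_2|,
\]
since $|y,w(y)|\le \diam K(x_\alpha,v_\alpha)<\e_n$. Hence $F$ is $1$-Lipschitz in $s$.

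For the $y$-direction, fix $s\in[0,\e_n]$ and set $t:=s/\e_n\in[0,1]$. Given $y_1,y_2\in K(x_\alpha,v_\alpha)$, apply Reshetnyak majorization to the quadrangle with vertices $y_1,y_2,w(y_2),w(y_1)$ inside the $\CAT(\kappa)$ domain: the $t$-interpolated points of the sides $y_iw(y_i)$ are no further apart than the corresponding $t$-interpolated points of the $M^2_\kappa$-comparison quadrangle. Since every vertex of that comparison quadrangle lies in a ball of radius $O(\e_n)$ in $M^2_\kappa$, a direct first-order computation (the curvature of $M^2_\kappa$ producing only an $O(\kappa\e_n^2)$ correction to Euclidean linear interpolation) gives
\[
  |F(y_1,s),F(y_2,s)| \;\le\; (1-t)\,|y_1,y_2| + t\,|w(y_1),w(y_2)| + \tau(\e_n)\bigl(|y_1,y_2|+|w(y_1),w(y_2)|\bigr),
\]
and using $|w(y_1),w(y_2)|\le C_n^2|y_1,y_2|$ this is bounded by $(1+\tau(\e_n))\,C_n^2\,|y_1,y_2|$. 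Combining with the $s$-estimate in the product metric on $K(x_\alpha,v_\alpha)\times[0,\e_n]$ shows that $F$ is $C_n'$-Lipschitz with $C_n':=(1+\tau(\e_n))C_n^2\to 1$, which is the claim after renaming constants.

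The essentially only technical point is the $\CAT(\kappa)$ quadrangle estimate used for the $y$-direction; this is standard, and its error term $\tau(\e_n)$ vanishes because the entire quadrangle sits inside a ball of radius $O(\e_n)\to 0$, making the $M^2_\kappa$ interpolation asymptotically Euclidean. Everything else is a bookkeeping computation of Lipschitz constants.
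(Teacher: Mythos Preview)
Your argument is correct. Both your proof and the paper's estimate the $s$-direction identically, using $\diam K(x_\alpha,v_\alpha)<\e_n$. For the $y$-direction you invoke Reshetnyak majorization on the geodesic quadrangle $y_1\,y_2\,w(y_2)\,w(y_1)$ and then do a first-order computation inside the $M^2_\kappa$ comparison quadrilateral. The paper instead inserts the single intermediate point $\gamma_{y_1,w(y_2)}(t)$ and applies the elementary $\CAT(\kappa)$ convexity estimate to the two triangles $y_1\,w(y_1)\,w(y_2)$ and $y_1\,y_2\,w(y_2)$ separately, giving $|F(y_1,s),\gamma_{y_1,w(y_2)}(t)|\le C_n t\,|w(y_1),w(y_2)|$ and $|\gamma_{y_1,w(y_2)}(t),F(y_2,s)|\le C_n(1-t)\,|y_1,y_2|$, and concludes by the triangle inequality. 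This is a lighter tool than Reshetnyak and sidesteps the explicit $M^2_\kappa$ interpolation estimate, but your route has the virtue of making the quadrangle geometry transparent; the two arguments are morally the same convexity-of-distance principle, packaged differently.
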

\begin{proof}
Clearly, $|F(y,s),F(y,s')|\le |s-s'|$.
Set $g:=\tilde f_\alpha\circ f_\alpha$.
For arbitrary $y,z\in K(x_\alpha,v_\alpha)$,
consider the point $\gamma_{y,g(z)}(s)$.
From the 
curvature condition, we have
\begin{align*}
    & |F(y,s),\gamma_{y,g(z)}(s)| \le C_n s|g(y),g(z)|,  \\
    & |\gamma_{y,g(z)}(s), F(z,s)| \le C_n(1-s)|y,z|. 
\end{align*}
It follows from the triangle inequality that 
$|F(y,s),F(z,s)|\le C_n'|y,z|$ with $\lim_{n\to\infty} C_n'=1$.
This completes the proof.
\end{proof}
Similarly we can define the $(C_n,\e_n)$-Lipschitz homotopy between 
$f_\alpha\circ\tilde f_\alpha$ and ${\rm id}_{\tilde K(x_\alpha, v_\alpha)}$.

\pmed
In what follows, we construct the maps $f_\alpha$ and 
$\tilde f_\alpha$.
We omit the subscript $\alpha$.
Let $k:=\sharp E_{\rm int}(T)$.
Choose a shortest edge $e_1$ in $E_{\rm int}(T)$
(if any). We relabel the elements of 
$E_{\rm int}(T)$ in such a way that 
$e_1\cup\cdots \cup e_i$ is connected for each 
$1\le i\le k$.
We set $T_i:=e_1\cup\cdots \cup e_i$, and 
$a_i:=L(e_i)$.

For each $i$, 
 let $\{ y_i, y_i'\}:=\pa e_i$, and let 
$D_{e_i}=D_{e_i}(R_i)$ be the disk domain in $K(x,v)$ bounded by $e_i$,  the two singular
curves $C_i, C_i'$ starting from $y_i, y_i'$ respectively,
and the arc $e_i':=D_{e_i}\cap S(x,\e-R_i a_i)$.
Let $\{ z_i, z_i'\}:=\pa e_i'$ and $\{ \tilde z_i, \tilde z_i'\}:=\pa \tilde e_i'$ as before.
Here we may assume 
\begin{align*}
\text{
$a_i\le a_j$ if and only if $R_i\le R_j$.}
\end{align*}

Note that the ratios $R_i/R_1$ might be quite large.
Set
\begin{align*}
 D_{\rm int}^i:= \bigcup_{j=1}^i D_{e_j},
    \quad
\tilde D_{\rm int}^i:= \bigcup_{j=1}^i \tilde D_{e_j}, \quad
D_{\rm int}:=D_{\rm int}^k,
    \end{align*}
where $\tilde D_{e_i}=\tilde D_{e_i}(R_i)$ is also defined 
as before. 

We use the symbols $\tau_{i}$ and $\tau_{x,i}(\e)$
to denote functions of the forms  
\begin{align*}
\tau_{i}=\tau(1/R_1,\ldots,1/R_i),
\quad
\tau_{x,i}(\e)=\tau_x(\e)+\tau_{i}.
\end{align*}
We say that $a$ is $c$-almost equal to $b$ if 
$|a-b|<c$.

\begin{lem}\label{lem:induction=Di}
There exists a $(1+\tau_{x,i}(\e))$-Lipschitz map
 $f_i:D_{\rm int}^i \to\tilde K(x,v)$ extending
$f_{i-1}:D_{\rm int}^{i-1} \to\tilde K(x,v)$
such that 
\begin{enumerate}
\item $f_i$ is identical on $T_i\,;$ 
\item $f_i$  sends each singular curve in $D_{\rm int}^i$
to a broken geodesic, $\tau_{x,i}(\e)$-almost isometrically
with respect to the length metric$\,;$ 
\item $f_{i}(e_j')$ is a point for each $1\le j\le i\,;$
\item $f_i$ {\it respects} $d_x$, $d_{\tilde x}$ on 
each singular curves in $D_{\rm int}^i$, i.e., $d_{\tilde x}\circ f_i=d_x$ on 
$D_{\rm int}^i\cap\ca S(X)\,;$
\item $f_i(D_{\rm int}^i)$ is a polygonal domain, denoted by $P_i$, in $\tilde K(x,v)$
such that 
 \begin{itemize}
  \item the number of vertices of the polygon $\pa P_i$ is at most $2i+1\,;$
  \item the inner angles of $P_i$ at $\pa T_i$
  are $\tau_{i}$-almost equal to corresponding inner angles of $\tilde D_{\rm int}^i\,;$
  \item the nearest point of $P_i$ from $\tilde x$
  belongs to $\tilde C_1$, and the inner angle of $P_i$ there is $\tau_{x,i}(\e)$-almost equal to $0\,;$
  \item the inner angles of $P_i$ at the other vertices
  of $\pa P_i$ are $\tau_{i}(\e)$-almost equal to $\pi$. 
  \end{itemize}
\end{enumerate}
\end{lem}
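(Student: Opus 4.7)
My plan is by induction on $i$, with the base case constructing $f_1$ and the inductive step extending $f_{i-1}$ across the new piece $D_{e_i}$.

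For the base case $i=1$, I first apply Lemma \ref{lem:isom=DtoD} to obtain a $\tau_x(\e)$-almost isometry $\varphi_1 : D_{e_1} \to \tilde D_{e_1}$. Since $\tilde D_{e_1}$ is, after rescaling by $1/a_1$, approximately a Euclidean rectangle with side lengths $1$ and $R_1$, I compose $\varphi_1$ with a ``pinching'' map $\pi_1 : \tilde D_{e_1} \to P_1 \subset \tilde K(x,v)$ that fixes $\tilde e_1 = e_1$, collapses $\tilde e_1'$ to a single point $q_1 \in \tilde C_1$, and sends the singular-curve boundaries onto the two remaining edges of a triangle $\tilde y_1 q_1 \tilde y_1'$. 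In product coordinates $(s,t) \in [0,a_1] \times [0, R_1 a_1]$, with $t=0$ on $\tilde e_1$ and $t = R_1 a_1$ on $\tilde e_1'$, the natural choice is $\pi_1(s,t) = (s(1 - t/(R_1 a_1)), t)$, whose differential has operator norm at most $1 + O(1/R_1^2) \le 1 + \tau_1$. The composition $f_1 := \pi_1 \circ \varphi_1$ is therefore $(1 + \tau_{x,1}(\e))$-Lipschitz, and properties (1)--(4) follow immediately after parametrizing singular curves by $d_x$. Property (5) holds directly: the triangle $P_1$ has three vertices; at $\tilde y_1, \tilde y_1'$ the inner angles inherit those of $\tilde D_{e_1}$ up to $\tau_1$; at $q_1$ the two nearly parallel tails of the singular curves produce an inner angle $\tau_{x,1}(\e)$-close to zero; and $q_1$ is the nearest point of $P_1$ to $\tilde x$ along $\tilde C_1$.

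For the inductive step, since $T_i$ is a tree exactly one endpoint of $e_i$, say $y_i$, lies in $T_{i-1}$, while $y_i'$ is a new leaf. I specify $f_i$ on $\pa D_{e_i}$ as follows: the identity on $e_i$; on the singular curve $C_i$ from $y_i$, match $f_{i-1}$ on the (possibly trivial) portion shared with some $D_{e_j}$, $j<i$, and extend as a length-respecting geodesic in $\tilde K(x,v)$ using the $d_x \leftrightarrow d_{\tilde x}$ parametrization; on $C_i'$ from $y_i'$, send to a geodesic from $\tilde y_i'$ to a point $q_i \in \tilde K(x,v)$; and on $e_i'$, collapse to $q_i$. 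I choose $q_i$ so that $\gamma_{\tilde y_i' q_i}$ lies almost along the straight continuation of the adjacent edge of $\pa P_{i-1}$ at the attachment point, which forces the inner angle of $P_i$ at $q_i$ to be $\pi$ up to $\tau_i$; feasibility rests on the rectangle-like geometry of $\tilde D_{e_i}$ together with Lemmas \ref{lem:no-swinging} and \ref{lem:monotone-De}, which rule out twisting of wings along singular curves. The boundary data then extends to a $(1+\tau_{x,i}(\e))$-Lipschitz almost isometry on the interior of $D_{e_i}$ by a construction parallel to Lemma \ref{lem:psi:DtoD} (cf.\ Remark \ref{rem:almost-iso}). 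Gluing with $f_{i-1}$ yields $f_i$; properties (1)--(4) are inherited, and (5) holds because $P_i$ adds at most two new vertices to $P_{i-1}$, with the angle at $\tilde y_i'$ matching $\tilde D_{\rm int}^i$, the angle at $q_i$ almost equal to $\pi$, and the nearest point to $\tilde x$ still $q_1 \in \tilde C_1$.

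The main difficulty is two-fold. First, constructing the pinching map $\pi_1$ with the claimed $1+\tau_1$ Lipschitz bound requires that the near-rectangle approximation of $\tilde D_{e_1}$ provided by Lemma \ref{lem:isom=DtoD} be used quantitatively, including estimates on how the product coordinates on $\tilde D_{e_1}$ relate to the intrinsic metric. Second, in the inductive step, the consistent choice of $q_i$ so that $P_i$ remains a simple polygon with the precise angle conditions is delicate: one must combine the rectangle geometry of $\tilde D_{e_i}$ with the no-twisting results of Lemmas \ref{lem:no-swinging} and \ref{lem:monotone-De} to guarantee that $\gamma_{\tilde y_i' q_i}$ both exists in $\tilde K(x,v)$ and attaches to $P_{i-1}$ without self-crossing. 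A secondary bookkeeping issue is ensuring consistency on shared portions of singular curves between adjacent disks: this follows from the requirement (4) that both $f_{i-1}$ and the new boundary definition respect $d_x \leftrightarrow d_{\tilde x}$ and hence map to the same geodesic arc in $\tilde K(x,v)$.
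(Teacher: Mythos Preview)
Your base case is essentially the paper's: compose an almost isometry $D_{e_1}\to\tilde D_{e_1}$ with a collapse onto a triangle. Two small corrections: you need Lemma~\ref{lem:psi:DtoD} rather than Lemma~\ref{lem:isom=DtoD}, since otherwise nothing forces $\varphi_1|_{e_1}=\mathrm{id}$ or the $d_x\leftrightarrow d_{\tilde x}$ matching on $C_1,C_1'$; and your linear pinching $(s,t)\mapsto(s(1-t/(R_1a_1)),t)$ has operator norm $1+O(1/R_1)$, not $1+O(1/R_1^2)$ (check the Jacobian at $t=0$). The paper instead projects along $d_{\tilde x}$-fibers onto the diagonal $[\tilde y_1',\tilde z_1]$, which is identity on half of $\tilde D_{e_1}$ and genuinely $(1+R_1^{-2})$-Lipschitz. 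Either bound suffices for $1+\tau_1$.

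The real gap is your inductive step. Collapsing $e_i'$ to a point $q_i$ forces the images of $C_i$ and $C_i'$ to meet at $q_i$, and the angle between them there is $\tau_{x,i}(\e)$-close to $0$, not to $\pi$: this is a cusp, exactly as at $q_1$. So your claim that the inner angle of $P_i$ at $q_i$ is $\pi$ cannot hold, and property~(5) fails unless you arrange for this new cusp to coincide with (or replace) the old one on $\tilde C_1$. Relatedly, your assertion that the nearest point to $\tilde x$ remains $q_1$ is wrong whenever $d_x(e_i')<\min d_x(D_{\rm int}^{i-1})$, i.e.\ whenever $D_{e_i}$ reaches deeper toward $x$ than $P_{i-1}$ does.

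What the paper does, and what your outline is missing, is a case split on exactly this depth comparison. In the deep case the target for $D_{e_i}$ is not a triangle attached at $\tilde y_i$ but a region $\hat D_{e_i''}$ whose ``$C_i$-side'' runs along the \emph{entire} back edge of $P_{i-1}$ from $\tilde y_i$ down to the old tip $\tilde w_{i-1}$ and then continues along $\tilde C_1$ to a new tip $\tilde w_i$ at level $d_x(e_i')$. One builds an almost isometry $\psi_{e_i}:D_{e_i}\to\hat D_{e_i''}$ (this is where the bound $(a_1+\cdots+a_i)/(R_ia_i)\le (N_v-2)/R_i$ enters to control the angle distortion) and then projects along $d_{\tilde x}$-fibers onto the complement of a small triangle near $\tilde w_i$. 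The effect is that the single cusp \emph{slides} along $\tilde C_1$ from $\tilde w_{i-1}$ to $\tilde w_i$, the old tip becomes a vertex of angle nearly $\pi$, and (5) is preserved. In the shallow case the new piece does not reach $\tilde w_{i-1}$, the tip stays put, and the construction is simpler. Your single-geodesic extension of $f_i(C_i)$ beyond the shared portion does not reproduce this stacking along $\pa P_{i-1}\cup\tilde C_1$, which is why your $q_i$ floats free of $\tilde C_1$ and acquires the wrong angle.
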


\begin{center}
\begin{tikzpicture}
[scale = 1]
\fill (8,1) coordinate (A) circle (0pt) node [right] {$D_{\mathrm{int}}^i$};
\fill (8.2,-0.8) coordinate (A) circle (0pt) node [right] {$D_{\mathrm{int}}^{i-1}$};
\fill (4,0.8) coordinate (A) circle (0pt) node [right] {$D_{e_i}$};
\draw [thick] (8,0) -- (7.8,2);
\draw [thick] (8,0) -- (7.8,-2);
\draw [thick] (8,0) -- (8.1,-1.8);
\draw [thick] (0.8,0) to [out=5, in=175] (8,0);
\draw [thick] (0.8,0) to [out=90, in=300] (0.5,1.8);
\draw [thick] (0.8,-0.5) to [out=270, in=60] (0.5,-1.8);
\draw [thick] (0.8,-0.5) to [out=20, in=185] (3.5,0.172);
\draw [thick, dotted] (1,-0.6) to [out=270, in=70] (0.9,-1.6);
\draw [thick, dotted] (1,-0.6) to [out=20, in=185] (3.5,0.172);
\draw [thick] (0.5,1.8) -- (7.8,2);
\draw [thick] (0.5,-1.8) -- (7.8,-2);
\draw [thick, dotted] (0.9,-1.6) -- (8.1,-1.8);
\draw [thick] (7.8,-1.79) -- (8.1,-1.8);
\end{tikzpicture}
\end{center}

We show  Lemma \ref{lem:induction=Di} by the induction on $i$.
The following is the first step. 

\begin{lem} \label{lem:fe:DtoD}
There exists a $(1+\tau_{x,1}(\e))$-Lipschitz map
 $f_1:D_{e_1}\to\tilde\blacktriangle y_1y_1'z_1
 \subset \tilde K(x,v)$ 
such that 
\begin{enumerate}
\item $f_1$  sends 
$e_1, C_1, C_1'$ to $\tilde e_1, \tilde C_1, \gamma_{\tilde y_1', \tilde z_1}$ respectively \,$;$
\item $f_1$ is identical on $e_1\,;$
\item $f_1$ {\it respects} $d_x$, $d_{\tilde x}$ on 
$C_1\cup C_1'\,;$ 
\item $f_1(e'_1)= \{ \tilde z_1\}\,;$
\item $f_1(D_{e_1})$ satisfies the condition (5) of 
Lemma \ref{lem:induction=Di}.
\end{enumerate}
\end{lem}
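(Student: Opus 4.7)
The plan is to construct $f_1$ using the level-set foliation by $d_x$ on the source and $d_{\tilde x}$ on the target, which encodes properties (3) and (4) into the definition, and then to verify the Lipschitz bound via the almost-flat model afforded by Lemma~\ref{lem:isom=DtoD}. The key geometric picture is that $D_{e_1}$ is almost a rectangle of dimensions $a_1 \times R_1 a_1$, while the target triangle $\tilde\blacktriangle y_1 y_1' z_1$ is a long thin isoceles triangle of base $a_1$ and two long sides of length $\approx R_1 a_1$, so $f_1$ is a gentle squeeze that collapses the short side opposite to $\tilde e_1$.

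For $t \in [\e - R_1 a_1, \e]$, let $\sigma_t := S(x, t) \cap D_{e_1}$ and $\tilde\sigma_t := S(\tilde x, t) \cap \tilde\blacktriangle y_1 y_1' z_1$. By Lemma~\ref{lem:isom=DtoD} together with the near-orthogonality estimates \eqref{eq:angle=tilde(zyy)}--\eqref{eq:angle=tilde(zyw)}, $\sigma_t$ is a simple arc from $C_1 \cap S(x,t)$ to $C_1' \cap S(x,t)$ of length $\ell(t)$ with $|\ell(t) - a_1| < \tau_x(\e) a_1$, while $\tilde\sigma_t$ is a geodesic segment of length $\tilde\ell(t)$ interpolating from $\tilde\ell(\e) = a_1$ down to $\tilde\ell(\e - R_1 a_1) = 0$. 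Parametrizing these by arclength with $\sigma_t(0) \in C_1$ and $\tilde\sigma_t(0) \in \gamma_{\tilde y_1, \tilde z_1}$, I would set
\[
   f_1(\sigma_t(s)) := \tilde\sigma_t\!\left(\frac{\tilde\ell(t)}{\ell(t)}\, s\right).
\]
At the outer boundary, where $e_1$ is a geodesic rather than a level set, I would extend $f_1$ by the canonical identification $e_1 \to \tilde e_1$ (these are identified in the gluing) via a small affine interpolation of $\sigma_t$ for $t$ near $\e$, of total correction $\tau_x(\e) a_1$. Properties (1)--(4) then follow directly from the definition: $\sigma_t(0) \in C_1$ is sent to $\gamma_{\tilde y_1, \tilde z_1}$ at the same $d_x$-level, $\sigma_t(\ell(t)) \in C_1'$ to $\gamma_{\tilde y_1', \tilde z_1}$ at the same level, and at $t = \e - R_1 a_1$ the factor $\tilde\ell(t) = 0$ collapses $e_1'$ to the single point $\tilde z_1$. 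Property (5) is immediate from the long-thin geometry of $\tilde\blacktriangle y_1 y_1' z_1$: its apex at $\tilde z_1$ has angle $\tau_{x,1}(\e)$-close to $0$ since base length $a_1$ is negligible relative to side length $\approx R_1 a_1$, while the angles at $\tilde y_1, \tilde y_1'$ are $\tau_{x,1}(\e)$-close to the corresponding inner angles of $\tilde D_{e_1}$.

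The main obstacle is the Lipschitz bound. After rescaling the metric by $1/a_1$, Lemma~\ref{lem:isom=DtoD} and its analog in $M_\kappa^2$ identify $D_{e_1}$ and $\tilde\blacktriangle y_1 y_1' z_1$ (up to $\tau_x(\e)$-almost isometries) with the Euclidean rectangle $[0,1] \times [0, R_1]$ and the isoceles triangle of base $1$ and height $R_1$ respectively, in coordinates $(u, r) = (s/a_1,\, (\e - t)/a_1)$. In these flat coordinates $f_1$ becomes the affine shear
\[
   (u, r) \mapsto \bigl(u(R_1 - r)/R_1,\ r\bigr),
\]
whose Jacobian has operator norm $1 + O(1/R_1)$. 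Composing with the two $\tau_x(\e)$-almost isometric identifications and the outer-boundary adjustment yields the global bound $1 + \tau_{x,1}(\e)$. The delicate point is verifying that the level arcs $\sigma_t$ vary smoothly enough in $t$ for this transfer between curved and flat models to go through uniformly: this rests on \eqref{eq:10-10} and Lemma~\ref{lem:sing-arc}(3), which together guarantee that $\sigma_t$ meets $C_1$ and $C_1'$ at angles $\tau_x(\e)$-close to $\pi/2$ and that $\ell(t)$ varies by at most $\tau_x(\e) a_1$ across the entire range of $t$.
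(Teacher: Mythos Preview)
Your construction is different from the paper's, and as written the Lipschitz verification has a gap. The paper does not build $f_1$ by level-set rescaling. Instead it factors $f_1=\tilde\pi_{e_1}\circ\varphi_{e_1}$ through the full comparison quadrilateral $\tilde D_{e_1}$: first Lemma~\ref{lem:psi:DtoD} supplies a $\tau_x(\e)$-almost isometry $\varphi_{e_1}:D_{e_1}\to\tilde D_{e_1}$ whose boundary restriction is already chosen to be identical on $e_1$ and to respect $d_x,d_{\tilde x}$ on $C_1\cup C_1'$; then the retraction $\tilde\pi_{e_1}:\tilde D_{e_1}\to\tilde\blacktriangle y_1y_1'z_1$ is the identity on the triangle and projects the complementary half along $d_{\tilde x}$-fibers onto the diagonal $[\tilde y_1',\tilde z_1]$. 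The second map lives entirely in $M^2_\kappa$, so its Lipschitz constant $1+R_1^{-2}$ is an elementary computation (the diagonal makes angle $\approx 1/R_1$ with the radial direction). Properties (1)--(4) follow immediately from the boundary prescription on $\varphi_0$ and the fact that $\tilde\pi_{e_1}$ preserves $d_{\tilde x}$-levels.

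Your Lipschitz argument claims that ``in these flat coordinates $f_1$ becomes the affine shear,'' but this conflates two coordinate systems on $D_{e_1}$: the level-set coordinates $(s,t)$ through which you \emph{define} $f_1$, and the almost-isometric chart to the rectangle coming from Lemma~\ref{lem:isom=DtoD}. That lemma furnishes \emph{some} almost isometry; it does not assert that $(s,t)\mapsto\sigma_t(s)$ is that almost isometry, nor that $d_x$-levels correspond to horizontal slices under it. To salvage your route you would need to prove separately that the level-set coordinates on $D_{e_1}$ form a $\tau_x(\e)$-almost-Euclidean chart --- which is essentially what Lemma~\ref{lem:psi:DtoD} establishes for a prescribed boundary datum, i.e., precisely the tool the paper invokes. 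The paper's factorization through $\tilde D_{e_1}$ avoids the issue entirely: the only nontrivial Lipschitz estimate is carried out in the constant-curvature model, where the $d_{\tilde x}$-foliation is explicit.
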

\begin{proof} 
Let $\varphi_0:\pa D_{e_1}\to \pa\tilde D_{e_1}$ be 
a $\tau_x(\e)$-almost isometry sending each edge of $\pa D_{e_1}$  to the corresponding one of $\pa \tilde D_{e_1}$ and satisfying (2), (3).
Let $\varphi_{e_1}:D_{e_1}\to \tilde D_{e_1}$ be a $\tau_x(\e)$-almost isometry with $\varphi_e|_{\pa D_{e_1}}=\varphi_0$ provided in Lemma  \ref
{lem:psi:DtoD}.
We divide $\tilde D_{e_1}$ into $\tilde\blacktriangle y_1y_1'z_1$ and 
the other part by the geodesic $[\tilde y_1',\tilde z_1]$.
Let $\tilde \pi_{e_1}:\tilde D_{e_1} \to\tilde\blacktriangle y_1y_1'z_1$ be the retraction via the projection along the
$d_{\tilde x}$-fibers  
for $t\in [d_{\tilde x}(\tilde z_1),d_{\tilde x}(\tilde y_1')]$.
Namely, $\tilde\pi_{e_1}$ is the identical on 
 $\tilde\blacktriangle y_1y_1'z_1$ and projects
the other part to $[\tilde y_1',\tilde z_1]$ along the $d_{\tilde x}$-fibers.
Note that $\tilde \pi_{e_1}$ is $(1+R_1^{-2})$-Lipschihtz.
Then $f_1:=\tilde\pi_{e_1}\circ\varphi_{e_1}$ is the 
required map, which is $(1+R_1^{-2})(1+\tau_x(\e))$-Lipschitz. The condition (5) of Lemma \ref{lem:induction=Di} can be easily checked.
\end{proof}

\begin{proof}[Proof of Lemma \ref{lem:induction=Di}]
Suppose Lemma \ref{lem:induction=Di} holds
for $i-1$. The basic idea of the construction of 
$f_i$ is similar to that of $f_1$ in Lemma \ref{lem:fe:DtoD}.
We may assume that $e_i$ is adjacent to $e_{i-1}$
with $y_{i-1}'=y_i$.
Let $\tilde w_{i-1}$ be the nearest point of 
$P_{i-1}=f_{i-1}(D_{\rm int}^{i-1})$ from $\tilde x$.
Note that $\tilde w_{i-1}\in \tilde C_1$.
We have to consider the two cases.

\pmed\n
Case 1).\, $d_x(e'_i)\le\min d_x(D_{\rm int}^{i-1})$.
\psmall
Take  $\tilde w_i\in\tilde C_1$ with
$d_{\tilde x}(\tilde w_{i})=d_x(e'_i)$.
Let $\tilde e_i''$ be the arc in $S(\tilde x,d_{\tilde x}(\tilde w_{i}))\cap \tilde x* [\tilde y_1, \tilde y_i']$
starting from $\tilde w_{i}$ such that 
$L(\tilde e_i'')=L(\tilde e_i')$. 
Let 
$\tilde u_i$ be the other endpoint of $e_i''$.
Let $\hat D_{e_i''}$ be the disk domain bounded by
$\tilde e_i$, $\tilde e_i''$ and the curve 
$[\tilde y_i,\tilde w_{i-1}]_{\pa P_{i-1}}\cup [\tilde w_{i-1},
\tilde w_i]$ and the geodesic
$\gamma_{\tilde y_i',\tilde u_i}$,
where $[\tilde y_i,\tilde w_{i-1}]_{\pa P_{i-1}}$ be the arc of $\pa P_{i-1}$ between $\tilde y_i$ and $\tilde w_{i-1}$.
Note that $|\angle_{\tilde w_i}(\hat D_{e_i''})-\pi/2|<
\tau_{i-1}(\e)$. In the present case,  $R_i$ is the largest 
among $\{ R_j\}_{j=1}^i$. It follows that 
\begin{align*}
|\angle_{\tilde y_i'}(\hat D_{e_i''}) -
\angle_{\tilde y_i'}(\tilde D_{e_i})|
&\le 2(a_1+\cdots +a_i)/R_i a_i \\
&\le 2(N_v-2)/R_i=\tau_i.
\end{align*}
Similarly we get
\begin{align*}
&\text{
$|\angle_{\tilde y_i}(\hat D_{e_i''}) -
\angle_{\tilde y_i}(\tilde D_{e_i})|
\le \tau_{i}$, \quad $|\angle_{\tilde u_i}(\hat D_{e_i''}) -
\pi/2|
\le \tau_{i}$,}\\
&\text{
$|\angle_{\tilde w_{i-1}}(\hat D_{e_i''}) -
\pi|
\le \tau_{i-1}$.}
\end{align*}
The inner angles at the other vertices  are also $\tau_{i-1}$-almost  equal to $\pi$
by the induction.
Since $\hat D_{e_i''}$ is isometrically embedded in $M^2_\kappa$, and the 
number of vertices of  $\hat D_{e_i''}$
is uniformly bounded,
$\hat D_{e_i''}$ is $\tau_i(\e)$-almost isometric to a quadrangle region $Z_i$ that is almost rectangular.

On the other hand, it follows from 
\eqref{eq:angle=tilde(zyw)} that 
the Gromov-Hausdorff distance
$d_{GH}(D_{e_i},Z_i)$ (not the pointed one) is less than $\tau_x(e)$
under $1/L(e_i)$-rescaling.
Therefore we have locally defined $\tau_x(e)$-almost isometries between corresponding
polygonal domains in $D_{e_i}$ $Z_i$, which are almost rectangular.
Using Lemma \ref{lem:psi:DtoD} together with Remark \ref{rem:almost-iso},
we can glue those locally defined $\tau_x(e)$-almost isometries to obtain  a globally defined 
$\tau_x(e)$-almost isometry
between $D_{e_i}$ and $Z_i$.
Thus $D_{e_i}$ is $\tau_{x,i-1}(\e)$-almost isometric to $\hat D_{e_i''}$.

Let $\varphi_i:\pa D_{e_i}\to \pa\hat D_{e_i''}$ be 
identical on $e_i$ and respect $d_x$ and $d_{\tilde x}$ on $C_i\cup C_i'$.
We may assume that $\varphi_i$ is 
$\tau_{x,i}(\e)$-almost isometric.
Then in a way similar to Lemma \ref{lem:isom=DtoD},
we obtain a $\tau_{x,i-1}$-almost isometry
$\psi_{e_i}:D_{e_i}\to \hat D_{e_i''}$
extending $\varphi_i$.
Let  $\Delta_i$ be the closure of 
the complement of 
$\blacktriangle \tilde y_i'\tilde w_i\tilde u_i$
in $\tilde D_{e''_i}$.
Let $\hat\pi_{e_i}$ be the projection from $\hat D_{e_i''}$ to $\Delta_i$
along $d_{\tilde x}$-fibers,
which is $\tau(1/R_i)$-Lipschitz.
Combining $\hat\pi_{e_i}\circ \psi_{e_i}$ and $f_{i-1}$,
we have the required $(1+\tau_{x,i}(\e))$-Lipschitz map $f_i:D_{\rm int}^i\to P_i$, where 
$P_i=P_{i-1}\cup \Delta_i$.

\begin{center}
\begin{tikzpicture}
[scale = 1]
\draw [-, thick] (-4,0) -- (4.5,0);
\draw [-, thick] (4.5,0) -- (4.5,0.8);
\draw [-, thick, dotted] (4.5,0.8) -- (4.3,1.6);
\draw [-, thick] (4.3,1.6) -- (3.9,2.4);
\draw [-, thick] (4.5,0) -- (4.3,-0.8);
\draw [-, thick] (-2.5,0) -- (3.9,2.4);
\draw [-, thick] (-1,0) -- (4.3,1.6);
\draw [-, thick] (1,0) -- (4.5,0.8);
\draw [-, thick] (0,0) -- (4.3,-0.8);
\draw [-, thick] (-2.5,0) -- (-2.6,0.8);
\draw [-, thick] (3.9,2.4) -- (-2.6,0.8);
\fill (4.5,0.8) coordinate (A) circle (2pt) node [right] {$\tilde{y}_1'$};
\fill (4.5,0) coordinate (A) circle (2pt) node [right] {$\tilde{y}_1$};
\fill (4.3,1.6) coordinate (A) circle (2pt) node [right] {$\tilde{y}_i = \tilde{y}_{i-1}'$};
\fill (3.9,2.4) coordinate (A) circle (2pt) node [above] {$\tilde{y}_i'$};
\fill (4.35,2.1) coordinate (A) circle (0pt) node [] {$\tilde{e}_i$};
\fill (1,0) coordinate (A) circle (2pt) node [above] {$\tilde{z}_1$};
\fill (-1,0) coordinate (A) circle (2pt) node [below] {$\tilde{w}_{i-1}$};
\fill (-2.5,0) coordinate (A) circle (2pt) node [below] {$\tilde{w}_i$};
\fill (-2.6,0.8) coordinate (A) circle (2pt) node [above] {$\tilde{u}_i$};
\fill (-2.85,0.4) coordinate (A) circle (0pt) node [] {$\tilde{e}_i''$};
\fill (-4,0) coordinate (A) circle (0pt) node [above] {$\tilde{C}_1$};
\fill (1.6,1.16) coordinate (A) circle (0pt) node [] {$\hat{D}_{e_i''}$};
\filldraw [fill=gray, opacity=.1]
(-2.5,0) -- (3.9,2.4) -- (4.3,1.6) -- (4.5,0.8) -- (4.5,0) -- (4.3,-0.8) -- (0,0) -- (-2.5,0);
\fill (0,-0.3) coordinate (A) circle (0pt) node [below] {$P_i$};
\end{tikzpicture}
\end{center}

\pmed\n
Case 2).\,  $d_x(e'_i) > \min d_x(D_{\rm int}^{i-1})$.
\psmall\n

Let $\tilde w_i\in [\tilde y_i, \tilde w_{i-1}]_{\pa P_{i-1}}$ be such that
$d_{\tilde x}(\tilde w_i)=d_x(e_i')$.
Take the arc $\tilde e_i''$ in $S(\tilde x,d_{\tilde x}(\tilde w_{i}))\cap \tilde x * [\tilde y_1,\tilde y_{i}']$ with $\{\tilde w_{i}, \tilde u_i\} :=\pa \tilde e_i''$ 
and $L(\tilde e_i'')=L(\tilde e_i')$
as in Case 1).
Let $\tilde D_{e_i''}$ be the disk domain 
in $\tilde x * [\tilde y_1,\tilde y_{i}']$
bounded by $\tilde e_i$, $[\tilde y_i, \tilde w_{i-1}]_{\pa P_{i-1}}$, $\tilde e_i''$  and 
$\gamma_{\tilde y_i', u_i}$.
In a way similar to Case 1), we obtain a
$(1+\tau_{x,i}(\e))$-Lipschitz map $D_{e_i}\to \blacktriangle \tilde y_i\tilde y_i' \tilde w_i$
which is identical on $e_i$ and respects $d_x$ and $d_{\tilde x}$ on $C_i$ and $C_i'$. 
Combining this with $f_{i-1}$, we 
obtain the required $f_i$.

In both cases, the map $f_i$ satisfies all the conclusions of 
Lemma \ref{lem:induction=Di}.
This completes the proof of Lemma \ref{lem:induction=Di}.
\end{proof}

\begin{rem}\upshape
Note that in each step of the inductive argument above, the estimates
$\tau_{x,i}(\e)$ becomes worse.
However the number of $E_{\rm int}(T)$ is not greater
than $N_v-2$. Therefore finally we have 
a proper estimates.
\end{rem}
\pmed
Let $f_{\rm int}:=f_k:D_{\rm int}\to \tilde K(x,v)$, and let  $W_*$ and $\tilde W_*$  be the cores of $K(x,v)$ and $\tilde K(x,v)$ respectively.
From construction, we have
 $D_{\rm int}\subset W_*$, and $f_{\rm int}(D_{\rm int})\subset \tilde W_*$.
 We extend $f_{\rm int}$ to the map $f_*:W_*\to \tilde W_*$ as follows.

Let $C_1'$ be a singular curve from $y_1$ to $x$
extending $C_1$.
Set $D_{\rm int,1}:=D_{\rm int}\cup C_1'$, and define the extension $f_{\rm int,1}:D_{\rm int,1}\to f_{\rm int}(D_{\rm int})\cup\tilde C_1$ by the property of respecting $d_x$ and $d_{\tilde x}$
on $C_1'$.
For any $y\in W_*\setminus D_{\rm int,1}$,
let us denote by $G(y)$ the subtree of $W_*\cap S(x,d_x(y))$
consisting of the component of 
$S(x,d_x(y))\cap W_*\setminus D_{\rm int,1}$ containing $y$.
Taking any $z\in \pa G(y)\cap D_{\rm int,1}$, we define
\begin{align}\label{eq:def=f1}
      f_*(y):= f_{\rm int,1}(z).
\end{align}
We have to verify that the map $f_*:W_*\to 
f_{\rm int}(D_{\rm int})\cup\tilde C_1$ defined by 
\eqref{eq:def=f1} is well-defined.
This is obvious on $W_*\cap B(x, d_x(x_*))$, where
$x_*$ is the nearest point of $D_{\rm int}$ from $x$.
By Lemma \ref{lem:no-swinging}, this is also obvious on $T_*(y)$ (see \eqref{eq:T(y,t)}) 
for all $y\in V_{\rm int}(T)$.
Consider the general case $d_x(y)>d_x(x_*)$.
We may assume that 
$\pa G(y)\cap D_{\rm int}$ contains distinct two points
$z_1,z_2$.
Take $e_{i_\alpha}\in E_{\rm int}(T)$ such that 
$z_\alpha\in\pa D_{e_{i_\alpha}}$ for $\alpha=1,2$.
Let $c$ be the segment in $T$ joining $e_{i_1}$
and $e_{i_2}$.
By Lemma \ref{lem:monotone-De},
$S(x,d_x(y))\cap D_{e}$ is empty for any other edge $e$ contained 
in $c$.
Then obviously, $f_{\rm int}(D_e)$ does not affect
$S(x,d_x(y))$. The construction of $f_{\rm int}$
then implies that $f_{\rm int}(z_1)=f_{\rm int}(z_2)$.
Thus  $f_*$ is well-defined.
\psmall
 
Finally we consider any exterior edge $e\in E_{\rm ext}(T)$.
Let  $e=[y,z]$  with $z\in\pa T$, and take the singular curve $C_e$ contained in $\pa W_*$ joining
$x$ and $y$ such that $z*C_e\setminus C_e$ does not meet
$\ca S(X)$.
Let $\blacktriangle_*xyz$ be the disk
domain of $K(x,v)$ bounded by
$e$, $\gamma_{x,z}$ and $C_e$.
Observe that $f_*$ is defined on $C_e$.
Let $\tilde\blacktriangle_*xyz:=\tilde z*f_*(C_e)$,
which is topologically a disk, where
$\tilde z\in\pa\tilde T$ is the point corresponding to $z$.
Now, define the $\tau_{x,k}(\e)$-almost isometry
$\varphi_0:\pa \blacktriangle_*xyz\to
\pa \tilde\blacktriangle_*xyz$ by the conditions that 
it is identical on $e\cup\gamma_{x,z}$,
and coincides with $f_*$ on $C_e$.
In a way similar to Lemma \ref{lem:isom=DtoD},
using rescalings at $x$ and $\tilde x$, we can show 
that $\blacktriangle_*xyz$ is $\tau_x(\e)$-almost
isometric to $\tilde\blacktriangle_*xyz$.
Therefore in a way similar to Lemma 
\ref{lem:psi:DtoD}, we obtain a 
$\tau_{x,k}(\e)$-almost isometry 
$\psi_e:\blacktriangle_*xyz\to\tilde\blacktriangle_*xyz$
extending $\varphi_0$.

Thus, combining $f_*$ and $\psi_e$ for all $e\in E_{\rm ext}(T)$, we have the desired
$(1+\tau_{x,k}(\e))$-Lipschitz map
$f:K(x,v)\to \tilde K(x,v)$.

\begin{rem}\label{rem:choice=onstant} \upshape
Here is a remark on the estimates
$\tau_{x,k}(\e)$. For given $n$, 
we  take large enough $R_1,
\ldots, R_k$ satisfying $\tau_k<1/n$.
This is possible
with $\tau_x(\e)<1/n$ if $\e=\e_x$ is small enough in the  construction of $X_n$.
\end{rem}
\psmall

The construction of $\tilde f:\tilde K(x,v)\to K(x,v)$
is immediate by the following.

\begin{lem}[\cite{Rsh:Inext}] \label{lem:tilde-f}
Let $\triangle xyz$ be a geodesic triangle in a 
$\CAT(\kappa)$-space of general dimension. 
Let $\blacktriangle xyz$
denote the join $x*yz$.
Then the identical map $\varphi_0:\tilde\triangle xyz
\to \triangle xyz$ extends to a $1$-Lipschitz map
$\varphi:\tilde\blacktriangle xyz\to \blacktriangle xyz$.
\end{lem}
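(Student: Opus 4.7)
The plan is to define $\varphi$ by radial (proportional) projection from the apex $\tilde x$ and then establish the Lipschitz bound via a two-step comparison. Concretely, for $\tilde w \in \tilde\blacktriangle xyz \setminus \{\tilde x\}$, I would let $\tilde p \in \tilde y\tilde z$ be the intersection of the ray from $\tilde x$ through $\tilde w$ with the opposite edge, set $s := |\tilde x, \tilde w|/|\tilde x, \tilde p|$, and let $p \in yz$ correspond to $\tilde p$ under the arc-length identification $\tilde y\tilde z \leftrightarrow yz$. Setting
\[
\varphi(\tilde w) := \gamma_{x,p}(s \cdot |x,p|), \qquad \varphi(\tilde x) := x,
\]
places the image in $x*yz = \blacktriangle xyz$ by construction, and a direct verification on each of the three edges of $\tilde\triangle xyz$---using $|x,y| = |\tilde x,\tilde y|$ and $|x,z|=|\tilde x,\tilde z|$---shows $\varphi|_{\tilde\triangle xyz} = \varphi_0$.

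To prove that $\varphi$ is $1$-Lipschitz I would pick $\tilde w_1, \tilde w_2 \in \tilde\blacktriangle xyz$ with data $\tilde p_i, s_i$ and images $q_i = \varphi(\tilde w_i) \in \gamma_{x,p_i}$, and argue in two steps. First, apply the $\CAT(\kappa)$ inequality to the geodesic triangle $\triangle xp_1p_2 \subset X$: letting $\tilde\triangle'$ denote its comparison triangle in $M_\kappa^2$ and $\bar q_i$ the points at proportion $s_i$ along the radial segments from the apex, the $\CAT(\kappa)$ condition yields $|q_1, q_2| \le |\bar q_1, \bar q_2|$. Second, compare $\tilde\triangle'$ with the sub-triangle $\tilde\triangle\tilde x\tilde p_1\tilde p_2 \subset \tilde\blacktriangle xyz$ in $M_\kappa^2$: both share the common base length $|p_1, p_2| = |\tilde p_1,\tilde p_2|$ (as $p_i, \tilde p_i$ lie on matched geodesic segments), while $|x, p_i| \le |\tilde x, \tilde p_i|$ by $\CAT(\kappa)$-comparison applied to the outer triangle $\triangle xyz$. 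Invoking the planar Alexandrov lemma---that, among triangles in $M_\kappa^2$ sharing a base, enlarging both sides from the apex can only enlarge the distance between radial points at fixed proportions---then gives $|\bar q_1, \bar q_2| \le |\tilde w_1, \tilde w_2|$, and combining with the first step yields the desired bound $|\varphi(\tilde w_1), \varphi(\tilde w_2)| \le |\tilde w_1, \tilde w_2|$.

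The main obstacle is the purely planar monotonicity claim in this last step. If a direct proof in $M_\kappa^2$ turns out to be awkward (especially for $\kappa > 0$, where one would need to differentiate distances in geodesic polar coordinates or run an Alexandrov-lemma-type induction on subdivisions), a cleaner alternative is to equip the ruled surface $\blacktriangle xyz = x * yz$ with its intrinsic length metric $d_{\mathrm{int}}$ and apply Alexandrov's ruled surface theorem (cf.\ Theorem \ref{thm:ruled}) to conclude that $(x*yz, d_{\mathrm{int}})$ is itself a $\CAT(\kappa)$-disk whose boundary is isometric to $\tilde\triangle xyz$. Reshetnyak's majorization theorem then produces a $1$-Lipschitz map $\tilde\blacktriangle xyz \to (x*yz, d_{\mathrm{int}})$ extending the boundary identification, and since $d_{\mathrm{int}}$ dominates the ambient metric on $\blacktriangle xyz$, this map is automatically $1$-Lipschitz into the ambient $\CAT(\kappa)$-space; a final check that the resulting map coincides with the radial $\varphi$ defined above completes the argument.
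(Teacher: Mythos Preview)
Your primary argument has a genuine gap: the planar monotonicity claim in Step 2 is false. In the Euclidean case, with apex at the origin and base endpoints $u_1,u_2$ of norms $a_1,a_2$ and fixed base $|u_1-u_2|=B$,
\[
|s_1 u_1 - s_2 u_2|^2 = s_1(s_1-s_2)\,a_1^2 + s_2(s_2-s_1)\,a_2^2 + s_1 s_2 B^2,
\]
and the coefficient of $a_2^2$ is negative when $0<s_2<s_1$, so enlarging only the second lateral side \emph{decreases} this distance (e.g.\ $B=1$, $(a_1,a_2)=(1,1)\to(1,1.5)$, $(s_1,s_2)=(0.9,0.1)$ sends the squared distance from $0.73$ to $0.63$). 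This is realizable in your setup: take $X=\mathbb H^2$, any nondegenerate triangle, $p_1=y$ so that $|x,p_1|=|\tilde x,\tilde p_1|$, and $p_2$ an interior point of $[y,z]$ so that $|x,p_2|<|\tilde x,\tilde p_2|$ strictly; for suitable $s_1>s_2$ one then gets $|\bar q_1,\bar q_2|>|\tilde w_1,\tilde w_2|$ and your Step 2 reverses, even though the composite inequality $|q_1,q_2|\le|\tilde w_1,\tilde w_2|$ remains valid. The slack produced in Step 1 is genuinely needed to compensate, and the two steps cannot be decoupled as you propose.

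The paper itself does not prove the lemma --- it simply cites Reshetnyak \cite{Rsh:Inext} --- and your fallback via the intrinsic ruled-surface metric plus Reshetnyak majorization is the standard route that citation encodes, so that part is fine. Two minor remarks on the fallback: Theorem \ref{thm:ruled} as stated in the paper is for the specific two-dimensional setting there, so you should invoke the classical Alexandrov ruled-surface theorem for general $\CAT(\kappa)$ spaces instead; and the ``final check that the resulting map coincides with the radial $\varphi$'' is unnecessary, since the lemma only asks for \emph{some} $1$-Lipschitz extension of $\varphi_0$.
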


By Lemma \ref{lem:tilde-f}, for each $\tilde e\in E(\tilde T)$,
we have a $1$-Lipschitz
map $\tilde f_e:\tilde\blacktriangle (x,e)\to\blacktriangle (x,e)$
that is identical on $\pa\tilde\blacktriangle (x,e)$.
Combining $\tilde f_e$ for all $\tilde e\in E(\tilde T)$,
we obtain a $1$-Lipschitz map
$\tilde f:\tilde K(x,v)\to K(x,v)$ which is identical on
$\tilde K(x,v)$.
This completes the proof of Theorem  \ref{thm:approx}.
\end{proof}

\setcounter{equation}{0}

\psmall
\section{Gauss-Bonnet Theorem} \label{sec:GB}
\psmall \upshape
\n
{\bf Preliminary argument}.\,
Let $\triangle$ be a geodesic triangle bounding a region 
homeomorphic to a disk, in a two-dimensional
metric space $X$ with certain curvature condition.
The excess $\delta(\triangle)$ of $\triangle$ is defined as
\[
  \delta(\triangle):=\alpha+\beta+\gamma - \pi,
\]
where $\alpha$, $\beta$ and $\gamma$ denote the inner
angles of  $\triangle$.
In the case when $X$ is a topological two-manifold of bounded curvature in the sense of \cite{AZ:bddcurv}, 
Alexandrov and Zalgeller \cite{AZ:bddcurv} extended it to a signed Borel regular measure $\omega_{AZ}$ on $X$ called the curvature measure, 
and obtained Gauss-Bonnet Theorem.

The following is a basic relation between 
the turn and the curvature measure.

\begin{prop}$($\cite[Theorem VI.3]{AZ:bddcurv}$)$
\label{prop:tau+tau}
For any open simple arc $e$ in  a topological two-manifold  $X$ of bounded curvature, we have 
\[
\tau_{F_+}(e) +\tau_{F_-}(e)=\omega_{AZ}(e).
\]
\end{prop}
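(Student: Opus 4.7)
My plan is to apply the Gauss-Bonnet theorem for simple closed broken geodesics to a thin disk-like region enveloping $e$, bounded by geodesic approximations from both sides. Writing $a, b$ for the endpoints of the closure of $e$, I would choose sequences of simple broken geodesics $\gamma_n^+ \subset F_+$ and $\gamma_n^- \subset F_-$ joining $a, b$ and converging to $e$, exactly as in the definitions of $\tau_{F_\pm}(e)$. After a small perturbation (using the at most countable set of atoms of $\omega_{AZ}$ in a bounded region, in the spirit of Lemma \ref{lem:sing-mfd}), one may assume no break point of $\gamma_n^\pm$ is an atom of $\omega_{AZ}$. Let $\Gamma_n^\pm$ be the disks bounded by $e$ and $\gamma_n^\pm$, and set $R_n := \Gamma_n^+ \cup \Gamma_n^-$, a topological disk whose boundary is the simple closed broken geodesic $\gamma_n^+ \cup \gamma_n^-$.

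Applying the Gauss-Bonnet formula for simple closed broken geodesics \cite{AZ:bddcurv} to $R_n$, one obtains
\[
  \omega_{AZ}(R_n) = 2\pi - \sum_i (\pi - \tilde\theta_{n,i}^+) - \sum_j (\pi - \tilde\theta_{n,j}^-) - (\pi - \mu_{n,a}) - (\pi - \mu_{n,b}),
\]
where $\tilde\theta_{n,i}^\pm$ are the sector angles of $R_n$ at the break points of $\gamma_n^\pm$, and $\mu_{n,a}, \mu_{n,b}$ are the sector angles of $R_n$ at the corners $a, b$. The crucial bookkeeping step is the identity at non-atomic break points: since $p_{n,i}^+$ is not an atom of $\omega_{AZ}$, $L(\Sigma_{p_{n,i}^+}(X)) = 2\pi$, so the sector angle $\theta_{n,i}^+$ viewed from $F_+ \setminus \Gamma_n^+$ (the one in the definition of $\tau_{F_+}(e)$) satisfies $\tilde\theta_{n,i}^+ = 2\pi - \theta_{n,i}^+$, giving $-(\pi - \tilde\theta_{n,i}^+) = \pi - \theta_{n,i}^+$; analogously at break points of $\gamma_n^-$. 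At the corner $a$, the interior angle of $R_n$ is $\mu_{n,a} = \alpha_n^+ + \alpha_n^-$, with $\alpha_n^\pm$ the sector angles of $\Gamma_n^\pm$ at $a$ (and analogously $\mu_{n,b} = \beta_n^+ + \beta_n^-$). Substituting and regrouping, the formula rearranges to
\[
  \omega_{AZ}(R_n) = \Bigl(\sum_i (\pi - \theta_{n,i}^+) + \alpha_n^+ + \beta_n^+\Bigr) + \Bigl(\sum_j (\pi - \theta_{n,j}^-) + \alpha_n^- + \beta_n^-\Bigr).
\]

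Letting $n \to \infty$, the right-hand side converges to $\tau_{F_+}(e) + \tau_{F_-}(e)$ directly by the definition of turn, while the left-hand side converges to $\omega_{AZ}(e)$ since $R_n$ decreases to $e$ and $\omega_{AZ}$ is a regular signed Borel measure. The main technical obstacle I expect is justifying this last convergence precisely: one must control the residual curvature mass in $\Gamma_n^\pm \setminus e$ as these disks shrink onto $e$ (using local finiteness of $|\omega_{AZ}|$), and carefully handle any atoms of $\omega_{AZ}$ at the endpoints $a, b$ to ensure they do not spuriously contribute in the limit -- here the convention that $e$ is an open arc, combined with $\alpha_n^\pm, \beta_n^\pm \to 0$ as $\gamma_n^\pm \to e$ at the endpoints, plays the decisive role. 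A secondary point is verifying that the approximating broken geodesics can indeed be constructed with break points avoiding all atoms of $\omega_{AZ}$, but this follows from countability together with the flexibility of the approximation.
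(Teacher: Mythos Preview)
The paper does not give its own proof of this proposition; it is quoted directly from \cite[Theorem VI.3]{AZ:bddcurv} as a known result in the theory of surfaces of bounded curvature. Your sketch is a faithful reconstruction of the classical argument: apply the Gauss-Bonnet theorem for simple closed broken geodesics to the thin disk $R_n$ bounded by the two approximating polygons, use the angle identity $\tilde\theta = 2\pi - \theta$ at non-atomic break points, and pass to the limit using regularity of $\omega_{AZ}$. The bookkeeping you wrote out (in particular the cancellation of $2\pi$ against the two corner contributions $-\pi$ at $a$ and $b$) is correct, and the technical caveats you flag---avoiding atoms at break points, controlling $|\omega_{AZ}|(\Gamma_n^\pm\setminus e)\to 0$, and the role of $\alpha_n^\pm,\beta_n^\pm\to 0$ so that endpoint atoms do not enter---are exactly the points that need attention in a full write-up.
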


For a polyhedral space $X$ with curvature bounded above, 
Burago-Buyalo \cite{BurBuy:upperII} (cf.\cite {ArsBuy})
obtained a natural curvature measure $\omega_{BB}$ 
extending that in \cite{AZ:bddcurv}.

As before, let $X$ be a two-dimensional locally compact  geodesically complete metric space with 
curvature $\le\kappa$. 
 We define the curvature measure of $X$ in 
Definition \ref{defn:curv-meas2}.
We begin with the following temporary one.

\begin{defn} \label{defn:omega}\upshape
We define a ``curvature measure''
$\omega$ on $X$ extending those of \cite{AZ:bddcurv} and \cite{BurBuy:upperII}.
\psmall\n
(1)\, First we define 
$\omega:=\omega_{AZ}$ on the surface $X\setminus\ca S(X)$.
In particular, for $x\in X\setminus\ca S(X)$, we have
$\omega(x)=2\pi - L(\Sigma_x(X))$
(see \cite[Theorem V.8]{AZ:bddcurv}).
\psmall\n
(2)\, 
Let $e$ be an open edge in $E(\ca S(X))$.
Then a neighborhood $U$ of $e$ in $X$ is a finite 
union of open half-disks $H_i$\,$(1\le i\le m(e))$ bounding $e$:
$$
      U=\bigcup_{i=1}^{m(e)} H_i, \quad \pa H_i\cap U=e.
$$
Then  we have 
\[
        \omega|_e = \sum_{i=1}^{m(e)}\, \tau_{H_i},
\]
where $\tau_{H_i}$ is the turn of $e$ from the side $H_i$
(see \cite{BurBuy:upperII} for the polyhedral case).

\psmall\n
(3)\, 
Finally, for  $x\in V(\ca S(X))$, we define
\[
    \omega(x):=\pi(2-\chi(\Sigma_x(X)))-L(\Sigma_x(X)),
\]
which is a traditional definition, and represents the curvature concentration
at $x$ (see \cite{ArsBuy}, \cite{BurBuy:upperII}).
\end{defn}

\begin{rem} \label{rem:Omega-Edge}\upshape
For any Borel subset $A\subset V(\ca S(X))$,
we let 
\beq \label{eq:curv.measure-welldef}
     \omega'(A) := \sum_{x\in A}\omega(x),
\eeq
as the sum of point measures $\omega(x)$ over $x\in A$.
Actually, we can show that the sum in the RHS of \eqref{eq:curv.measure-welldef} is  at most countable 
and converges if $A$ is bounded 
(see Proposition \ref{prop:pt-meas}).

For the curvature measure $\omega$ defined in 
Definition \ref{defn:curv-meas2}, we certainly have 
$\omega'(A)=\omega(A)$ when $A$ is 
 a countable bounded subset of $V(\ca S(X))$. However, as Example \ref{ex:vertex=positivem} shows, in the case when $V_*(\ca S(X))$ is 
 uncountable (see \cite[Example 6.9]{NSY:local}), 
\eqref{eq:curv.measure-welldef} is not the right definition 
for the curvature measure of $A$.
\end{rem}

\begin{ex} $($cf. \cite[Example 6.9]{NSY:local}$)$\label{ex:vertex=positivem} \upshape
For any $0<\e<1$, let $f:\R\to[0,1]$ be a smooth 
function defined in \cite[Example 6.9]{NSY:local}.
Note that $f^{-1}(0)\cap [0,1]$ is an 
$\e$-Cantor set, say $I_\infty$. Let 
$\Omega=\{|y|\le f(x)\}\subset\R^2$ and take
six copies $H_{\pm}^i$\,$(1\le i\le 3)$ of 
a concave domain of  $\R^2$ as in \cite[Example 6.9]{NSY:local}.
Let $X$ be the result of the  gluings
of $\Omega$ and $H_{\pm}^i$\,$(1\le i\le 3)$
along
$\Omega_{\pm}=\{ y=\pm f(x)\}$  and 
$\pa H_{\pm}^i$\,$(i=1,2)$ and along $\{ y=0\}\subset\Omega$ and 
$\pa H_{\pm}^3$.
Then $X$ is a two-dimensional locally compact,
geodesically complete $\CAT(0)$-space with 
$V_*(\ca S(X))=V(\ca S(X))=I_\infty$.
Note that $\omega(x)=0$ for all $x\in V(\ca S(X))$,
and therefore we have $\omega'(V(\ca S(X)))=0$.
However, from the gluing of
 $\{ y=0\}$ and $\pa H_{\pm}^3$,
it is reasonable to expect the curvature of any point of
$\{ 0\le y\le 1\}\subset X$ is negative, and therefore the {\it real} curvature of 
$V(\ca S(X))$ should be negative. 
\end{ex}
\pmed
\subsection{Convergence of curvature measures} 
\label{ssec:conv}
\psmall

Example \ref{ex:vertex=positivem} suggests
that it is complicated to define 
the curvature measure on $V_*(\ca S(X))$
directly and explicitly (see Theorem \ref{thm:explicit}).
Therefore we go another indirect way, and 
discuss the convergence of $\omega^{X_n}_{BB}$.

We begin with 
\begin{defn} \label{defn:admissible-D}\upshape
Let $D$ be an open bounded domain in $X$, and let
$\pa D$ be the topological boundary of $D$ in $X$.
We call $D$ {\it admissible} if it satisfies
\begin{itemize}
\item[(1)] $\pa D\setminus \ca S(X)$ consists of finitely many simple arcs $L_i$\, $(1\le i\le \ell)$ and simple closed curves $S_j$\,$(1\le j\le m)\,;$ 
\item[(2)] for any $x\in \pa D\cap \ca S(X)$, if $x\in L_i$, 
then $\bar L_i$ has a definite direction 
$\Sigma_x(\bar L_i)$ at $x$ and 
satisfies the transversality to $\ca S(X)$ in the sense that
\[
                   \angle_x(\Sigma_x(\bar L_i), \Sigma_x(\ca S(X))) >0 \,;
\]
\item[(3)] $L(\Sigma_x(\bar D))>0$ for all $x\in \pa D\cap \ca S(X)$.
\end{itemize}
\end{defn}
\psmall 

We show that any compact  set of $X$ 
can be approximated by an admissible domain.

\begin{lem} \label{lem:approx-admissible}
For any compact subset $A$ of $X$ and $\e>0$, there exists
an admissible domain $D$ such that 
$A\subset D\subset B(A,\e)$.
\end{lem}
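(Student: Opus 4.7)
The plan is to build $D$ as a finite union of small open metric balls centered at points of $A$, with radii chosen generically so that the boundary spheres are mutually transverse and meet $\ca S(X)$ transversely in the sense of Definition \ref{defn:admissible-D}.

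First I would cover $A$: since $A$ is compact, choose a finite $r$-net $\{p_1,\ldots,p_N\}\subset A$ with $r<\e/2$ so small that each $\bar B(p_i,2r)\subset B(A,\e)$ lies in the domain where the local structure results of Subsection \ref{ssec:lcal-str} apply; then $A\subset\bigcup_i B(p_i,r)\subset B(A,\e)$. By Theorem \ref{thm:graph}, each $\bar B(p_i,2r)$ contains only finitely many vertices of $\ca S(X)$ and meets only finitely many of its edges $e$.

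Next I would replace each $r$ by an individually perturbed radius $r_i\in(r/2,2r)$. On each relevant edge $e$, the function $d_{p_i}|_e$ is $1$-Lipschitz and, by Lemma \ref{lem:sing-arc}(3) together with Lemma \ref{lem:comparison}, has a one-sided derivative bounded away from zero; consequently, the set of $r_i$ for which $S(p_i,r_i)$ either passes through a vertex of $\ca S(X)$ or hits $e$ at a point whose tangent coincides with $\Sigma_x(\ca S(X))$ is at most countable, and likewise the set of $r_i$ for which $S(p_i,r_i)$ fails to meet $S(p_j,r_j)$ transversely inside $X\setminus\ca S(X)$ is negligible. Choosing each $r_i$ outside these bad sets, set
\beqq
  D:=\bigcup_{i=1}^N B(p_i,r_i),
\eeqq
so that $A\subset D\subset B(A,\e)$. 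The boundary $\pa D$ lies in $\bigcup_i S(p_i,r_i)$, each a finite graph by Proposition \ref{prop:metsph-tree}, and by the genericity the spheres and $\ca S(X)$ pairwise intersect in finitely many transverse points; hence $\pa D\setminus\ca S(X)$ is a finite disjoint union of simple Lipschitz arcs $L_i$ and simple closed curves $S_j$, proving (1). Condition (2) follows at any $x\in\pa D\cap\ca S(X)$: the arc $\bar L_i$ through $x$ is a piece of some $S(p_i,r_i)$ with a definite tangent direction making a positive angle with $\Sigma_x(\ca S(X))$. Condition (3) holds because $B(p_i,r_i)$ contributes a full half of $\Sigma_x(X)$ to $\Sigma_x(\bar D)$ at $x$.

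The hard part will be arranging the transversality in the second step simultaneously for all finitely many relevant pairs $(i,e)$ and $(i,j)$. This rests on two ingredients: the near-radiality of singular curves on small balls supplied by Lemma \ref{lem:sing-arc}(3), which forces $d_{p_i}|_e$ to have a genuinely nonzero one-sided derivative that survives perturbation of $r_i$, and the local finiteness of the graph $\ca S(X)$ given by Theorem \ref{thm:graph}, which ensures that only finitely many transversality conditions need to be imposed.
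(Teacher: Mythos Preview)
Your overall strategy is sound and close to the paper's, but the transversality step has a genuine gap. You cite Lemma~\ref{lem:sing-arc}(3) to claim that $d_{p_i}|_e$ has one-sided derivative bounded away from zero on each relevant singular edge $e$. That lemma, however, concerns $d_p$ along the curves $C_{ijk}$ emanating from the \emph{specific} base point $p\in\ca S(X)$; it gives no information about $d_{p_i}$ when $p_i$ is an arbitrary net point of $A$. With a uniform radius $r$, some net point $p_i\notin\ca S(X)$ may lie so close to $\ca S(X)$ that $B(p_i,r_i)$ necessarily meets $\ca S(X)$, and then you have no control over the behaviour of $d_{p_i}$ along the edges there---nothing rules out, for instance, that $d_{p_i}$ is constant along a subarc of some edge, so that no perturbation of $r_i$ yields a finite transverse intersection. (There is also a minor issue: an $r$-net only guarantees coverage by balls of radius $r$, so allowing $r_i\in(r/2,2r)$ may lose $A\subset D$.)

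The paper avoids this by a simple dichotomy that you should adopt. It covers only $\pa A$ (setting $D=A\cup U$ with $U$ the union of the balls), and for each center $x\in\pa A$ it chooses $\e_x$ according to whether $x\in\ca S(X)$ or not: if $x\notin\ca S(X)$, take $\e_x$ so small that $B(x,\e_x)$ is a convex disk disjoint from $\ca S(X)$, so its sphere never touches $\ca S(X)$ at all; if $x\in\ca S(X)$, take $\e_x\le r_x$ so that the local structure of Subsection~\ref{ssec:lcal-str} applies, whence Lemma~\ref{lem:sing-arc}(3) and Corollary~\ref{cor:vert} legitimately give monotonicity of $d_x$ along each singular curve and near-radiality of $\Sigma_y(\ca S(X))$ at every $y\in S(x,\e_x)\cap\ca S(X)$. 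Conditions (1)--(3) then follow directly, with $L(\Sigma_y(\bar D))\ge\pi$ coming from the half-ball contribution, and no generic-perturbation argument is needed.
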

\begin{proof}
For each $x\in \pa A$, choose $\e_x\le\e$
such that $B(x,\e_x)$ satisfies the 
results in Subsection \ref{ssec:lcal-str}
if $x\in\ca S(X)$,  and is convex and does not touch 
$\ca S(X)$ if $x\notin \ca S(X)$.
Let $U$ be the union of a locally finite 
covering of $\pa A$ by such metric balls,
and let $D$ be the union of $A$ and $U$.
Then  (1) and (2) are immediate.
(3) is also clear since 
$L(\Sigma_x(\bar D))\ge\pi$ for all $x\in \pa D\cap \ca S(X)$.
\end{proof}

For an admissible domain $D$ of $X$, let
$\{ q_1,\ldots q_n\}:= \pa D\cap \ca S(X)$, and 
let $L_1,\ldots, L_{\ell}$ (resp. $S_1,\ldots, S_m$) be the arc-components (resp. the circle components) of 
$\pa D \setminus \ca S(X)$. 
\cite[Theorem VI.2]{AZ:bddcurv} implies that 
the turns $\tau_D(L_i)$ and $\tau_D(S_j)$ are well-defined for all  $i$ and $j$.

\begin{center}
\begin{tikzpicture}
[scale = 0.8]
\draw [-, thick] (3.5,0) to [out=210, in=330] (2.5,0);
\draw [-, thick] (2.5,0) to [out=150, in=30] (1.5,0);
\draw [-, thick] (1.5,0) to [out=210, in=330] (0.5,0);
\draw [-, thick] (0.5,0) to [out=150, in=30] (-0.5,0);
\draw [-, thick] (-0.5,0) to [out=210, in=330] (-1.5,0);
\draw [-, thick] (-1.5,0) to [out=150, in=30] (-2.5,0);
\draw [-, thick] (-2.5,0) to [out=210, in=330] (-3.5,0);
\draw [-, thick] (-3.5,0) -- (-3.5,1);
\draw [-, thick] (3.5,0) -- (3.5,1);
\draw [-, thick] (-1.5,3) -- (1.5,3);
\draw [-, thick] (-3.5,1) to [out=90, in=180] (-1.5,3);
\draw [-, thick] (3.5,1) to [out=90, in=0] (1.5,3);
\draw [-, thick] (-3.5,0) -- (-4,-2);
\draw [-, thick] (3.5,0) -- (3,-2);
\draw [-, thick] (-4,-2) to [out=255, in=180] (-2,-3);
\draw [-, thick] (-2,-3) -- (1,-3);
\draw [-, thick] (1,-3) to [out=0, in=255] (3,-2);
\draw [-, dotted, thick] (-3.5,0) -- (-3,-2);
\draw [-, thick] (3.5,0) -- (4,-2);
\draw [-, dotted, thick] (-1,-2.5) -- (2.5,-2.5);
\draw [-, dotted, thick] (-3,-2) to [out=290, in=180] (-1,-2.5);
\draw [-, thick] (4,-2) to [out=290, in=0] (2.9,-2.5);
\fill (-3.5,0) coordinate (A) circle (2pt) node [left] {$q_1$};
\fill (3.5,0) coordinate (A) circle (2pt) node [right] {$q_n$};
\fill (0,3) coordinate (A) circle (0pt) node [above] {$L_{\ell}$};
\fill (-0.5,-3) coordinate (A) circle (0pt) node [below] {$L_1$};
\fill (3.5,-2.5) coordinate (A) circle (0pt) node [below] {$L_{\ell-1}$};
\end{tikzpicture}
\end{center}

\begin{defn}\label{defn:tauD(paD)}\upshape
We now define the {\it turn} of $\pa D$ from the side $D$
by
\begin{align}\label{eq:defn-tauD(paD)}
\tau_D(\pa D):=\sum_{i=1}^{\ell}\tau_D(L_i) +
                       \sum_{j=1}^{m}\tau_D(S_j) + 
                         \sum_{k=1}^n \omega_D(q_k),
\end{align}
where 
\[
   \omega_D(q_k) :=\pi(2-\chi(\Sigma_{q_k}(D))-L(\Sigma_{q_k}(D))
\]
is the {\it outer angle} of $D$ at $q_k$.
\end{defn}


\pmed
\n
{\bf Convergence of curvature measures}.\,

Let $(X_n,p_n)$ be a sequence of two-dimensional
polyhedral locally $\CAT(\kappa)$-spaces 
as in Theorem \ref{thm:approx}, which 
converges to  $(X, p)$ 
with respect to the homotopy convergence.

\begin{lem}\label{lem:constDn}
For any admissible domain $D$ in $X$, 
there is an admissible domain $D_n$ in $X_n$
such that 
\begin{enumerate}
\item  $D_n$ converges to $D$ under the convergence
$(X_n,  p_n)\to (X,p)\,;$
\item $D_n$ has the same homotopy type as $D$
for any large enough $n\,;$
\item $\lim_{n\to\infty}\tau_{D_n}(\pa D_n)=\tau_D(\pa D)$.
\end{enumerate}
\end{lem}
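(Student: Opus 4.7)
My plan is to construct $D_n$ by cut-and-paste, exploiting that outside the surgery parts $X_n$ is canonically identified with $X$.

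\textbf{Reduction.} Since $\pa D\cap \ca S(X)=\{q_1,\ldots,q_{n_0}\}$ is finite, I would first perturb $\pa D$ slightly (sliding each $q_k$ a little along its incident edge of $\ca S(X)$, and adjusting the $L_i, S_j$ accordingly in $X\setminus\ca S(X)$) so that $\pa D\cap V_{\rm sing}(\ca S(X))=\emptyset$. By Corollary \ref{cor:vert}, the local structure of $\Sigma_x(X)$ is locally constant along each edge of $\ca S(X)$ away from vertices; hence the outer angles $\omega_D(q_k)$ and the turns $\tau_D(L_i),\tau_D(S_j)$ each vary continuously under such a perturbation. A diagonal argument then reduces the problem to the case $\pa D\cap V_{\rm sing}(\ca S(X))=\emptyset$.

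\textbf{Construction.} From the proof of Theorem \ref{thm:GH-approx}, $X_n$ is obtained from $X$ by replacing a locally finite collection of surgery pieces $K(x_\alpha,v_\alpha)$ --- cones of diameter $<\e_n$ centered at $x_\alpha\in V_{\rm sing}(\ca S(X))$ --- with $\CAT(\kappa)$-joins $\tilde K(x_\alpha,v_\alpha)$ glued along isometric boundaries; off the surgery set, $X$ and $X_n$ are canonically isometric. Since $\pa D$ avoids $V_{\rm sing}(\ca S(X))$ and the surgery pieces have diameter $<\e_n\to 0$, for $n$ large each surgery piece meeting the compact set $\bar D$ lies entirely in $\mathring D$ or entirely outside $\bar D$. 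I set
\[
  D_n := \Bigl(D\setminus\bigcup_\alpha K(x_\alpha,v_\alpha)\Bigr)\cup \bigcup_{K(x_\alpha,v_\alpha)\subset D}\tilde K(x_\alpha,v_\alpha),
\]
viewed in $X_n$ via the canonical identification. Then $\pa D_n=\pa D$ under this identification, $\ca S(X_n)=\ca S(X)$ off the surgery parts, and the transversality and positive-length conditions of Definition \ref{defn:admissible-D} transfer at each $q_k$, so $D_n$ is admissible, giving (1). For (2), each pair $(K(x_\alpha,v_\alpha),\tilde K(x_\alpha,v_\alpha))$ consists of two contractible cones $x_\alpha* T_\alpha$ and $\tilde x_\alpha*\tilde T_\alpha$ glued along isometric boundaries, so replacing $K$ by $\tilde K$ inside $D$ preserves homotopy type; more concretely, the Lipschitz maps $f_\alpha, \tilde f_\alpha$ from Section \ref{sec:Polyhedral} furnish an explicit homotopy equivalence $D\simeq D_n$.

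\textbf{Turn convergence.} Property (3) is the main point. The outer angles satisfy $\omega_{D_n}(q_k)=\omega_D(q_k)$ exactly, since $\Sigma_{q_k}(D_n)=\Sigma_{q_k}(D)$ under the identification. For each component $\gamma\in\{L_i,S_j\}$, the turn $\tau_D(\gamma)$ is a limit of sector-angle sums along broken geodesics in $D$ approaching $\gamma$. I would select such broken geodesics inside a narrow tubular strip of $\gamma$ in $D$ which, for $n$ large, intersects only surgery pieces far from $\pa\gamma$; under the canonical identification these transfer to broken curves in $D_n$, and wherever one enters a surgery piece $K(x_\alpha,v_\alpha)$ it can be rerouted along the isometric boundary $\pa\tilde K(x_\alpha,v_\alpha)=\pa K(x_\alpha,v_\alpha)$ at a cost of $\tau_{x_\alpha}(\e_n)$ in the sector-angle sum, by the almost-isometry estimates of Lemmas \ref{lem:isom=DtoD} and \ref{lem:psi:DtoD}. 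The main obstacle is exchanging the double limit --- broken geodesic $\to\gamma$ versus $n\to\infty$ --- in the definition of the turn; I expect to handle this by combining the finite turn variation of $\gamma$ along its interior with the $n$-independent modulus of control $\tau_{x_\alpha}(\e_n)$, yielding $\tau_{D_n}(\gamma)\to\tau_D(\gamma)$, and summing over all components gives (3).
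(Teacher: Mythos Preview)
Your overall architecture is sensible, but the reduction step hides the entire difficulty of the lemma and, as written, does not go through.

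\textbf{The gap in the reduction.} You propose to ``slide each $q_k$ a little along its incident edge of $\ca S(X)$'' so that $\pa D\cap V_{\rm sing}(\ca S(X))=\emptyset$. But a point $q_k\in V_{\rm sing}(\ca S(X))$ is precisely a vertex of the generalized graph $\ca S(X)$: it may have many incident singular curves, possibly with coinciding directions, and may be an accumulation point of other vertices (Definition~\ref{defn:singlar-vertex}). There is no single ``incident edge'' to slide along. Any honest perturbation of $\pa D$ near such a $q_k$ (say, replacing $\pa D\cap B(q_k,\delta)$ by a piece of $S(q_k,\delta)$) will replace the single crossing at $q_k$ by a finite collection of crossings on the singular curves emanating from $q_k$, and the single outer angle $\omega_D(q_k)$ by a sum of several new outer angles plus turns of new boundary arcs. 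Your appeal to Corollary~\ref{cor:vert} only says that $\Sigma_x(\ca S(X))$ is near $\pm\nabla d_p(x)$; it does not give continuity of $\omega_D(\cdot)$ when the combinatorial type of $\Sigma_x(X)$ changes from vertex to edge point. Proving that $\tau_{D_\mu}(\pa D_\mu)\to\tau_D(\pa D)$ under such a perturbation requires exactly the telescoping angle computation that the paper carries out (equations \eqref{eq:tau(partialD)-n}--\eqref{eq:sum-rule}): one must track how the outer angle at the vertex decomposes into contributions along the tree of new crossings. You cannot invoke Gauss--Bonnet here, since this lemma is a prerequisite for it.

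\textbf{Comparison with the paper.} The paper does not attempt this reduction. It first straightens each $L_i$ to a broken geodesic (a reduction purely in the surface $X\setminus\ca S(X)$, justified by \cite[Theorem VI.2]{AZ:bddcurv}), and then confronts the case $q_k\in\mathring K(x_{\alpha_n},v_{\alpha_n})$ head-on: the tree $T_n(\pa D)=\pa D\cap K(x_{\alpha_n},v_{\alpha_n})$ is replaced by an explicit geodesic tree $\tilde T_n(\pa D)\subset\tilde K(x_{\alpha_n},v_{\alpha_n})$, and the identity $\lim_n\sum_k\omega_{D_n}(\tilde v_k)=\omega(q)$ is established by an inductive cancellation over the levels $\ca V(a)$ of the tree. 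Your post-reduction argument (where $\pa D$ misses all surgery pieces, so $\pa D_n=\pa D$ and all turns literally coincide) is correct and pleasantly short, but all of the content has been pushed into the unjustified reduction. If you want to salvage the approach, you must prove the perturbation continuity in $X$ directly---and that proof will look essentially like the paper's computation transplanted from $\tilde K$ to $K$.
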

\begin{proof} 
Let $L_1,\ldots, L_{\ell}, S_1,\ldots, S_m\subset\pa D$ and $\{ q_1,\ldots, q_k\} = \pa D\cap\ca S(X)$ 
be as before. From the conditions (2),(3) in 
Definition \ref{defn:admissible-D}, as discussed in \cite[VI.3]{AZ:bddcurv},
we can approximate each $L_i$ by 
broken geodesics $L_{i,\mu}$ in ${\rm int}\,D$ (except
the endpoints)  joining 
the endpoints $a_i, b_i$ of $L_i$ such that
\[
\text{
$\lim_{\mu\to\infty}\angle_{a_i}(L_i, L_{i,\mu})
=\lim_{\mu\to\infty}\angle_{b_i}(L_i, L_{i,\mu})=0$.}
\]
Let $D_{\mu}$ be the domain of $X$ bounded by 
$L_{1,\mu}, \ldots, L_{\ell,\mu}, S_1,\ldots, S_m$.
Since 
$\lim_{\mu\to\infty} \tau_{D_\mu}(\pa D_\mu)=
\tau_D(\pa D)$ (\cite[Theorem VI.2]{AZ:bddcurv}), 
from the beginning, we may assume that 
\begin{align}\label{eq:paD=broken}
\text{ each $L_i$ is a broken geodesic.}
\hspace{4cm}
\end{align}

If all the $q_i$ are not in the interior of the surgery part in the construction of $X_n$ for any large $n$, then $\pa D$ is 
embedded also in $X_n$. Therefore, we let $D_n$ be the open domain  of $X_n$ bounded by 
$\pa D$, and 
we have all the required properties for $D_n$.

Suppose that some 
$q_i$ is in the interior of the surgery part of $X$, 
say $q_i\in \mathring{K}(x_{\alpha_n},v_{\alpha_n})$,
 in the construction of $X_n$ for any large $n$.
As in Section \ref{sec:Polyhedral}, we write $K(x_{\alpha_n},v_{\alpha_n})$ as the join 
$x_{\alpha_n}* T_{\alpha_n}$ via a geodesic tree 
$T_{\alpha_n}\subset \pa K(x_{\alpha_n},v_{\alpha_n})$. 
For simplicity, we set $q:=q_i$, and 
remove the subscript $\alpha$ like 
$x_n:= x_{\alpha_n}$, $T_n:=T_{\alpha_n}$,
$K(x_{n},v_{n}):=K(x_{\alpha_n},v_{\alpha_n})$.

Let $\lambda_i$ \,$(1\le i\le I)$ be the components
of $(\pa D\setminus\ca S(X))\cap B(q,c)$,
where $c$ is a small enough fixed constant
so as to satisfy 
\begin{itemize}
\item $\pa D\cap B(q,c)\cap\ca S(X) =\{ q\}\,;$
\item $\bar\lambda_i$ are geodesic segments from
$q$.
\end{itemize}

In what follows, 
we assume 
$n$ is large enough so that each 
$\lambda_i$ \,$(1\le i\le I)$ goes through $K(x_n,v_{n})$.
By the transversality condition of $\pa D$ with $\ca S(X)$, 
we assume each $\lambda_i$ transversally 
 meets  an edge of 
$\pa K(x_{n},v_{n})\setminus T_{n}$,
say at $z_i$. 
The case when $\lambda_i$ meets $T_{n}$ 
is similarly discussed.
Let $\{ u_1,\ldots,u_I\}:=\pa T_n$, and 
let $z_i$ be the intersection point of $\pa D$ with 
$\gamma_{x_n,u_i}$.
From the assumption, $K(x_{n},v_{n})\cap \pa D$
is a tree, say $T_{n}(\pa D)$, 
which is isomorphic to the cone over $\{ z_1, \ldots, z_I \}$.

\begin{center}
\begin{tikzpicture}
[scale = 0.8]
\draw [-, thick] (4.5,0.8) to [out=190, in=30] (2.5,0);
\draw [-, thick] (2.5,0) to [out=210, in=330] (0.5,0);
\draw [-, thick] (0.5,0) to [out=150, in=10] (-1.5,0);
\draw [-, thick] (4.5,-0.8) to [out=170, in=330] (2.5,0);
\draw [-, thick] (2.5,0) to [out=150, in=30] (0.5,0);
\draw [-, thick] (0.5,0) to [out=210, in=350] (-1.5,0);
\fill (-4.5,0) coordinate (A) circle (2pt) node [left] {$x_n$};
\draw [thick] (4.5,0.8) -- (4.5,-0.8);
\draw [thick] (4.5,0.8) -- (4,3);
\draw [thick] (4.5,-0.8) -- (4,-3);
\draw [thick] (4.5,0.8) -- (5,2.8);
\draw [thick] (4.5,-0.8) -- (5,-2.8);
\draw [thick] (-4.5,0) -- (4.5,0);
\draw [thick] (-4.5,0) -- (4,3);
\draw [thick] (-4.5,0) -- (4,-3);
\draw [thick] (4.5,0) -- (5.5,-1.5);
\draw [thick] (4.7,-1.38) -- (5.5,-1.5);
\draw [thick, dotted] (-4.5,0) -- (5.5,-1.5);
\draw [thick, dotted] (-4.5,0) -- (5,2.8);
\draw [thick, dotted] (-4.5,-0) -- (5,-2.8);
\draw [thick, dotted] (-4.5,0) -- (5,-2.8);
\draw [thick] (4.1,-2.535) -- (5,-2.8);
\draw [thick, dotted] (-4.5,0) -- (5,2.8);
\draw [thick] (4.1,2.535) -- (5,2.8);
\draw [thick] (0.5,0) -- (0.7,1.82);
\draw [thick] (0.5,0) -- (0.7,-1.82);
\draw [thick, dotted] (0.8,2.73) -- (0.7,1.82);
\draw [thick, dotted] (0.5,0) -- (1.1,1.65);
\draw [thick, dotted] (0.5,0) -- (1.1,-1.65);
\draw [thick, dotted] (0.5,0) -- (1.3,-0.85);
\filldraw[fill=gray, opacity=.1] 
(-4.5,0) -- (0.7,1.82) -- (0.5,0) -- (-4.5,0);
\filldraw[fill=gray, opacity=.1] 
(-4.5,0) -- (0.7,-1.82) -- (0.5,0) -- (-4.5,0);
\filldraw[fill=gray, opacity=.1] 
(-4.5,0) -- (1.1,1.65) -- (0.5,0) -- (-4.5,0);
\filldraw[fill=gray, opacity=.1] 
(-4.5,0) -- (1.1,-1.65) -- (0.5,0) -- (-4.5,0);
\filldraw[fill=gray, opacity=.1] 
(-4.5,0) -- (1.3,-0.85) -- (0.5,0) -- (-4.5,0);
\fill (0.5,0) coordinate (A) circle (2pt) node [above left] {};
\fill (0.6,0) coordinate (A) circle (0pt) node [above left] {$q$};
\fill (0.7,1.82) coordinate (A) circle (2pt) node [above left] {$z_i$};
\fill (0.8,2.73) coordinate (A) circle (0pt) node [above] {$\lambda_i$};
\fill (5,-2.1) coordinate (A) circle (0pt) node [right] {$T_n$};
\fill (0.8,-2.1) coordinate (A) circle (0pt) node [below] {$T_n(\partial D)$};
\fill (-2,1.8) coordinate (A) circle (0pt) node [below] {$D$};
\end{tikzpicture}
\end{center}

Choose any interior vertex $\tilde q_n$ of  
the tree $H_0:=S(\tilde x_{n},  d_{x_{n}}(q))\cap \tilde K(x_{n},v_{n})$. 
Let $H$ be the geodesic tree obtained by straightening
the edges of $H_0$ with $V(H)=V(H_0)$.
Let $\tilde z_i\in\pa  \tilde K(x_{n},v_{n})$\,
$(1\le i\le I)$
be the point corresponding to $z_i$.
Let $\tilde d_i\in E_{\rm ext}(\tilde T_n)$ be 
the edge adjacent to $\tilde u_i$.
Let $\tilde y_i$ be the intersection point
of $\tilde x_n*\tilde d_i$ and $\pa H$. 
%
%
Let  $\tilde T_{n}(\pa D)$ be the union of $H$ and 
the geodesics $\gamma_{\tilde y_i, \tilde z_i}$\,
$(1\le i\le I)$, which is a geodesic tree 
in $\tilde K_*(x_{n},v_{n})$ isomorphic to $T_{n}$.
From the construction of $K(x_{n},v_{n})$
(see \eqref{eq:epsilon-delta}), we have
\begin{align}\label{eq:choice-lambda}
 \lim_{n\to\infty}\angle^{X_n} \tilde y_i \tilde z_i \tilde w_i =\pi,
\end{align}
where $\tilde w_i$ is any point of $\lambda_i\setminus 
K(x_{n},v_{n})$.

Replacing  $T_{n}(\pa D)$ by  $\tilde T_{n}(\pa D)$
in $\pa D$
for all  $q=q_i$,
we obtain  a subset  $\Lambda_n\subset  X_n$ 
consisting of finitely many simple broken geodesic arcs and the simple loops  $S_1,\ldots, S_m$ such that 
$\Lambda_n$ bounds  
an admissible domain $D_n\subset X_n$.
Then from construction, $D_n$ satisfies (1) and (2).

\begin{center}
\begin{tikzpicture}
[scale = 0.8]
\fill (-4.5,0) coordinate (A) circle (2pt) node [left] {$\tilde{x}_n$};
\draw [thick] (4.5,0.8) -- (-4.5,0);
\draw [thick] (4.5,-0.8) -- (-4.5,0);
\draw [thick] (4.5,0.8) -- (4.5,-0.8);
\draw [thick] (4.5,0.8) -- (4,3);
\draw [thick] (4.5,-0.8) -- (4,-3);
\draw [thick] (4.5,0.8) -- (5,2.8);
\draw [thick] (4.5,-0.8) -- (5,-2.8);
\draw [thick] (-4.5,0) -- (4.5,0);
\draw [thick] (-4.5,0) -- (4,3);
\draw [thick] (-4.5,0) -- (4,-3);
\draw [thick] (4.5,0) -- (5.5,-1.5);
\draw [thick] (4.7,-1.38) -- (5.5,-1.5);
\draw [thick, dotted] (-4.5,0) -- (5.5,-1.5);
\draw [thick, dotted] (-4.5,0) -- (5,2.8);
\draw [thick, dotted] (-4.5,-0) -- (5,-2.8);
\draw [thick, dotted] (-4.5,0) -- (5,-2.8);
\draw [thick] (4.1,-2.535) -- (5,-2.8);
\draw [thick, dotted] (-4.5,0) -- (5,2.8);
\draw [thick] (4.1,2.535) -- (5,2.8);
\draw [thick] (0.5,-0.45) -- (0.5,0.45);
\draw [thick] (0.5,-0.45) -- (0.7,-1.82);
\draw [thick] (0.5,0.45) -- (0.7,1.82);
\draw [thick, dotted] (0.5,0.45) -- (1.1,1.65);
\draw [thick, dotted] (0.5,-0.45) -- (1.1,-1.65);
\draw [thick, dotted] (0.5,0) -- (1.3,-0.85);
\filldraw[fill=gray, opacity=.1] 
(-4.5,0) -- (0.7,1.82) -- (0.5,0.45) -- (-4.5,0);
\filldraw[fill=gray, opacity=.1] 
(-4.5,0) -- (0.7,-1.82) -- (0.5,-0.45) -- (-4.5,0);
\filldraw[fill=gray, opacity=.1] 
(-4.5,0) -- (1.1,1.65) -- (0.5,0.45) -- (-4.5,0);
\filldraw[fill=gray, opacity=.1] 
(-4.5,0) -- (1.1,-1.65) -- (0.5,-0.45) -- (-4.5,0);
\filldraw[fill=gray, opacity=.1] 
(-4.5,0) -- (1.3,-0.85) -- (0.5,0) -- (-4.5,0);
\filldraw[fill=gray, opacity=.1] 
(-4.5,0) -- (0.5,-0.45) -- (0.5,0.45) -- (-4.5,0);
\fill (0.5,0) coordinate (A) circle (2pt) node [above left] {};
\fill (0.7,-0.15) coordinate (A) circle (0pt) node [above right] {{\small $\tilde{q}_n$}};
\fill (0.7,1.82) coordinate (A) circle (2pt) node [above left] {$\tilde{z}_i$};
\fill (0.5,0.45) coordinate (A) circle (2pt) node [above left] {$\tilde{y}_i$};
\fill (5,-2.1) coordinate (A) circle (0pt) node [right] {$\tilde{T}_n$};
\fill (0.8,-2.1) coordinate (A) circle (0pt) node [below] {$\tilde{T}_n(\partial D)$};
\fill (-2,1.8) coordinate (A) circle (0pt) node [below] {$D_n$};
\end{tikzpicture}
\end{center}

To show (3),  we assume $\{ q\}=\pa D\cap \ca S(X)$
for simplicity. 
Let $\lambda_i^n:=\lambda_i\cap K(x_{n},v_{n})\in E(T_n(\pa D))$,  and let
$\tilde \lambda_i^n:=\gamma_{\tilde y_i,\tilde z_i}\in E_{\rm ext}(\tilde T_{n}(\pa D))$.
 Let $\{ \tilde e_{j}\}_{j=1}^J:=E_{\rm int}(\tilde T_{n}(\pa D))$ and 
$\{ \tilde v_{k}\}_{k=1}^K:=V(\tilde T_{n}(\pa D))\setminus
\pa \tilde T_{n}(\pa D)$.
Note that  $\tilde q_n\in\{ \tilde v_{k}\}_{k=1}^K$.
From \eqref{eq:paD=broken}, we have 
\begin{align*}
&|\tau_D(\pa D) -\tau_{D_n}(\pa D_n)| \\
  & =\biggl|\sum_{i=1}^I \tau_{D}(\lambda_i^n) +\omega(q) -\sum_{i=1}^I \omega_{D_n}(\tilde z_i)
   -\sum_{j=1}^J \tau_{D_n}(\tilde e_{j})
   -\sum_{k=1}^K \omega_{D_n}(\tilde v_{k})\biggr|.
 \end{align*}
It follows from \eqref{eq:choice-lambda} that
$\lim_{n\to\infty}\sum_{i=1}^I \omega_{D_n}(\tilde z_i)
 =0$.
Note that $\tau_{D_n}(\tilde e_{j})=0$ and 
$\lim_{n\to\infty}\tau_{D}(\lambda_i^n)=0$ 
since $\bigcap_n \lambda_i^n$ is empty.
Thus we obtain
\begin{align} \label{eq:tau(partialD)-n}
\lim_{n\to\infty} |\tau_D(\pa D) -\tau_{D_n}(\pa D_n)|
   = \lim_{n\to\infty} |\omega(q)
      -\sum_{k=1}^K \omega_{D_n}(v_{k})|.
 \end{align}
Set $\theta_i:=\angle_q(\bar\lambda_i, \gamma_{q,x_n})$.
Corollary \ref{cor:vert} implies that
\begin{align}\label{eq:omegaQ}
    \omega(q)=\lim_{n\to\infty}\biggl(\pi-\sum_{i=1}^I \theta_i\biggr).
\end{align}

Finally we have to estimate 
$\sum_{k=1}^K \omega_{D_n}(\tilde v_{k})$.
For $v,v'\in V(\tilde T_{n}(\pa D))$, 
define the distance $d(v,v')$ as 
the number of edges in $E(\tilde T_{n}(\pa D))$
contained in the shortest path in $\tilde T_{n}(\pa D)$
 joining $v$ and $v'$.
Set $m_i:=d(\tilde z_i, \tilde q_n)$, and $m_0:=\max_{1\le i\le I} m_i$.
For nonnegative integer $a\le m_0$, we put
\[
\ca V(a):=\{ v\in V(\tilde T_{n}(\pa D))\,|\,
\exists i \,\, d(\tilde z_i,v)=a,
 d(\tilde z_i,\tilde q_n)=a+d(v,\tilde q_n)
\}.
\]
For each $v\in V(\tilde T_{n}(\pa D))$, let $I_v$ denote the
set of all $1\le i\le I$ such that 
the shortest path from $\tilde z_i$ to $\tilde q_n$ contains 
$v$.
Let $E(v)$ denote the set of all edges in $E(\tilde T_{n}(\pa D))$ adjacent to $v$.
Let $E_-(v)$ be the set of edges $e$ in $E(v)$
contained in the shortest path from $v$ to 
$z_i$ with $i\in I_v$. Then we have a
unique edge $e_+(v)$ from $v$ in the direction 
$\tilde q_n$ with 
$E(v)=E_-(v)\cup \{ e_+(v)\}$.

For $e\in E(v)$, set 
$\tilde\eta_e(v):=\angle^{D_n}(e,\gamma_{v,\tilde x_n})$
\,(the inner angle in $D_n$).
Then we obtain
\beq \label{eq:Sum-omegaD(v)}
\begin{aligned}
&\sum_{k=1}^K \omega_{D_n}(\tilde v_{k})=\sum_{k=1}^K
        (\pi-L(\Sigma_{\tilde v_k}(D_n))) \\
   \\
&=\sum_{a=1}^{m_0-1}\sum_{v\in\ca V(a)}
  \biggl( \pi - \biggl(\sum_{e\in E_-(v)}\tilde\eta_{e}(v)\biggr) -\tilde\eta_{e_+(v)}(v)\biggr) +\omega_{D_n}(\tilde q_n).
\end{aligned}
\eeq
For arbitrary $v\in \ca V(m)$ and
$v'\in \ca V(m+1)$ adjacent to an edge
$e$, from \eqref{eq:K(xv)=x*T}
we have 
$\lim_{n\to\infty}\angle v \tilde x_n v'=0$,
which implies 
\begin{align}\label{eq:eta-pm=pi}
  \lim_{n\to\infty} (\tilde\eta_{e_+(v)}(v)+
  \tilde\eta_{e}(v'))=\pi  \quad \text{for all $e\in E_-(v')$}.
\end{align}
For each $1\le i\le I$,  we put 
$\tilde\zeta_{i}:=\angle \tilde z_i \tilde y_i \tilde x_n$. 
Since 
$\lim_{n\to\infty}|q,z_i|/|\tilde y_i, \tilde z_i|=1$,
Lemma \ref{lem:comparison} implies 
\begin{align} \label{eq:zeta=theta}
\lim_{n\to\infty}|\tilde\zeta_{i}-\theta_i|
=0.
\end{align}
By \eqref{eq:eta-pm=pi}, we have for each $1\le a\le m_0-1$
\begin{align}\label{eq:sum-rule}
\lim_{n\to\infty}\bigg\{ \sum_{v\in\ca V(a)}-\tilde\eta_{e_+(v)}(v)
+\sum_{v\in\ca V(a+1)}
\biggl(  \pi - \biggl(\sum_{e\in E_-(v)}\tilde\eta_{e}(v)\biggr)\biggr) \biggr\}=0.
\end{align}
Note also  that $\ca V(m_0)=\{ \tilde q_n\}$.
%
%
%
%
Rearranging the sum of the RHS in \eqref{eq:Sum-omegaD(v)}, we have from  \eqref{eq:omegaQ},
\eqref{eq:zeta=theta} and \eqref{eq:sum-rule}
\begin{align*}
\lim_{n\to\infty}&\sum_{k=1}^K \omega_{D_n}(v_{k}) 
 - \omega(q) \\
  &=\lim_{n\to\infty} \biggl[
  \biggl(\pi-\sum_{i=1}^{I} \tilde\zeta_{i} \biggr)
     -\biggl(\pi-\sum_{i=1}^I \theta_i \biggr)
      -\sum_{v\in\ca V(1)}\tilde\eta_{e_+(v)}(v) \\
 & +\sum_{a=2}^{m_0-1}\sum_{v\in\ca V(a)}
  \biggl(\pi - \biggl(\sum_{e\in E_-(v)}\tilde\eta_{e}(v)\biggr) -\tilde\eta_{e_+(v)}(v) \biggr)+
  \omega_{D_n}(\tilde q_n) \biggr] \\
& = \lim_{n\to\infty} \biggl[
    -\sum_{v\in\ca V(1)}\tilde\eta_{e_+(v)}(v)\\
 & + \sum_{a=2}^{m_0-1}\sum_{v\in\ca V(a)}
  \biggl(  \pi - \biggl(\sum_{e\in E_-(v)}\tilde\eta_{e}(v)\biggr) -\tilde\eta_{e_+(v)}(v)\biggr)+\omega_{D_n}(\tilde q_n) \biggr]\\
&= \lim_{n\to\infty} \biggl[
    -\sum_{v\in\ca V(2)}\tilde\eta_{e_+(v)}(v)\\
 & + \sum_{a=3}^{m_0-1}\sum_{v\in\ca V(a)}
  \biggl(  \pi - \biggl(\sum_{e\in E_-(v)}\tilde\eta_{e}(v)\biggr) -\tilde\eta_{e_+(v)}(v)\biggr)+\omega_{D_n}(\tilde q_n) \biggr]\\
& = \cdots 
 =  \lim_{n\to\infty} \biggl(
    -\sum_{v\in\ca V(m_0-1)}\tilde\eta_{e_+(v)}(v)
    +\omega_{D_n}(\tilde q_n)  \biggr)\\
&= \lim_{n\to\infty} \biggl(
    -\sum_{v\in\ca V(m_0-1)}\tilde\eta_{e_+(v)}(v)
    +\pi-\sum_{e\in E_-(\tilde q_n)}\tilde\eta_e(\tilde q_n)
    \biggr) =0.
 \end{align*}
Thus from \eqref{eq:tau(partialD)-n}, we conclude that 
$\lim_{n\to\infty} |\tau_D(\pa D) -\tau_{D_n}(\pa D_n)|
   =0$.
This completes the proof of (3).
\end{proof}

\begin{rem} \label{rem:yi}\upshape
In the proof of Lemma \ref{lem:constDn}, we used 
$H_0$ to define $\tilde T_{n}(\pa D)$ as 
the replacement of $T_{n}(\pa D)$ for the 
convenience.
However, as the proof shows, there is no strong   
reason to employ $H_0$ since the point is 
\eqref{eq:choice-lambda}.
\end{rem}

\begin{thm}  \label{thm:conv-Omega}
The curvature measure 
$\omega^{X_n}_{BB}$ converges to a unique signed Radon measure
$\omega^X$ on $X$ under the 
convergence $(X_n,p_n)\to (X,p)$.
\end{thm}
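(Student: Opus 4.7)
The plan is to define $\omega^X$ first on the class of admissible domains by means of a Gauss-Bonnet limit, and then extend to a signed Radon measure using a uniform local total variation bound. The starting point is the polyhedral Gauss-Bonnet formula of Burago-Buyalo: for any admissible $D_n\subset X_n$,
\[
\omega^{X_n}_{BB}(D_n)=2\pi\chi(D_n)-\tau_{D_n}(\pa D_n).
\]
Given an admissible $D\subset X$, Lemma \ref{lem:constDn} produces admissible $D_n\subset X_n$ satisfying $\chi(D_n)=\chi(D)$ for all large $n$ and $\tau_{D_n}(\pa D_n)\to\tau_D(\pa D)$. Hence the limit
\[
\lim_{n\to\infty}\omega^{X_n}_{BB}(D_n)=2\pi\chi(D)-\tau_D(\pa D)
\]
exists and depends only on $D$, which I take as the defining value $\omega^X(D)$.

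Next I would establish a uniform local bound on the total variation $|\omega^{X_n}_{BB}|$. The upper curvature bound $\le\kappa$ gives $\omega^{X_n}_{BB}\le\kappa^{+}\vol$ on the smooth part of $X_n$, while the singular contributions along edges of $\ca S(X_n)$ are nonpositive by Proposition \ref{prop:turn(geodesic)} applied to the adjacent faces (condition (A) of Theorem \ref{thm:BB-gluing}), and the vertex contributions $\pi(2-\chi(\Sigma_x))-L(\Sigma_x)$ are nonpositive since $\Sigma_x(X_n)$ is $\CAT(1)$ without endpoints. Splitting $\omega^{X_n}_{BB}=\omega^{+}-\omega^{-}$ and applying Gauss-Bonnet to a slightly enlarged admissible neighborhood then bounds both $\omega^{+}(D_n)$ and $\omega^{-}(D_n)$ in terms of $\vol(D_n)$, $\chi(D_n)$, and $\tau_{D_n}(\pa D_n)$, which are uniformly controlled as $n\to\infty$ by Lemma \ref{lem:constDn} together with volume continuity under the convergence $(X_n,p_n)\to(X,p)$.

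Combining these two steps, I would push the signed measures $\omega^{X_n}_{BB}$ forward to $X$ along the Lipschitz homotopy $o_n$-approximations $\varphi_n\colon X_n\to X$ of Theorem \ref{thm:approx}. The uniform local total variation bound yields tightness of both positive and negative parts on every bounded region of $X$, and hence vague precompactness of $\{(\varphi_n)_*\omega^{X_n}_{BB}\}$. Any subsequential limit $\omega$ is a signed Radon measure on $X$, and by Lemma \ref{lem:approx-admissible} every compact set of $X$ can be exhausted from outside and from inside by admissible domains. Since the values of $\omega$ and $\omega^X$ agree on all admissible domains by the first step, this pins down the limit uniquely, forcing convergence of the full sequence and yielding the unique signed Radon measure $\omega^X$ with $\omega^X(D)=2\pi\chi(D)-\tau_D(\pa D)$ on admissible $D$.

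The main obstacle will be the passage from the set-function values on admissible domains to a genuine Radon measure, because the spaces $X_n$ differ from $X$ and the limiting space has an uncountable singular vertex set $V_*(\ca S(X))$ on which individual point masses cannot be read off naively (cf.\ Example \ref{ex:vertex=positivem}). This is resolved precisely by testing against admissible domains instead of single points: admissible boundaries are transverse to $\ca S(X)$ by Definition \ref{defn:admissible-D}, so Lemma \ref{lem:constDn} transports the boundary data faithfully to $X_n$, and the nonpositivity of edge and vertex contributions in $X_n$ prevents any concentration of mass from escaping under the limit. With tightness and uniqueness on admissible domains in hand, the Riesz representation theorem applied to $f\mapsto\lim_n\int (f\circ\varphi_n^{-1})\,d\omega^{X_n}_{BB}$ on $C_c(X)$ completes the construction.
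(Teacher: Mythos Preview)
Your proposal is correct and follows essentially the same route as the paper: bound $\omega_n^+$ by $\kappa^+\cdot\mathrm{area}$ on faces and observe $\omega_n\le 0$ on $\ca S(X_n)$, combine with Gauss--Bonnet on the domains $D_n$ from Lemma~\ref{lem:constDn} to get a uniform total-variation bound, extract a subsequential signed Radon limit, and then use the admissible-domain values $2\pi\chi(D)-\tau_D(\pa D)$ to force uniqueness (the paper packages this last step as Lemma~\ref{lem:unique=omega}). One small correction: edges of $\ca S(X_n)$ outside the surgery part need not be geodesics, so Proposition~\ref{prop:turn(geodesic)} does not apply there---the nonpositivity on edges comes from condition~(A) via Theorem~\ref{thm:BB-character}, which you already cite parenthetically.
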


\begin{defn} \label{defn:curv-meas2} \upshape
We call $\omega^X$ the {\it curvature measure} of 
$X$. The construction of $X_n$ implies that 
on $X\setminus V_*(\ca S(X))$,
$\omega^X$ coincides with $\omega$ 
given in Definition \ref{defn:omega}.
\end{defn}

\begin{proof}[Proof of Theorem \ref{thm:conv-Omega}] 
For any $R>0$, take 
an admissible domain $D$ containing 
$B(p,R)$, and choose domains
$D_n$ of $X_n$ as in Lemma \ref{lem:constDn}.
Set $\omega_n:=\omega^{X_n}_{BB}$.
By \cite{BurBuy:upperII},  we have Gauss-Bonnet Theorem 
\begin{align} \label{eq:GBF=Dn}
   \omega_n(D_n)= 2\pi\chi(D_n) -\tau_{D_n}(\pa D_n).
\end{align} 
Let $\omega_n^+$ and $\omega_n^-$ be the 
positive and negative parts of $\omega_n$.
We observe

\begin{slem} \label{slem:omega+(B)}
We have $\omega_n^+(B)\le |\kappa|{\rm area}(B)$
for any bounded subset $B$ of $X_n$, 
where ${\rm area}(B)$ denotes the two-dimensional Hausdorff measure of $B$.
\end{slem}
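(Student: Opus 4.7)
The plan is to decompose the polyhedral curvature measure $\omega_n := \omega^{X_n}_{BB}$ according to the stratification of $X_n$ into open faces, open edges and vertices, show that the face contribution equals $\kappa\cdot d\mathrm{area}$ while the edge and vertex contributions are both non-positive, and thereby obtain $\omega_n \le \kappa^+\cdot d\mathrm{area}$ as signed measures, where $\kappa^+:=\max(\kappa,0)$. The sublemma then follows from the elementary Jordan-decomposition fact: if $\omega_n \le \nu$ for a positive Borel measure $\nu$, then $\omega_n^+ \le \nu$, since for a Hahn decomposition $X_n = P\sqcup N$ of $\omega_n$ one has $\omega_n^+(A) = \omega_n(A\cap P) \le \nu(A\cap P) \le \nu(A)$. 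Applied to $\nu = \kappa^+\cdot d\mathrm{area}$ and $A=B$, this gives $\omega_n^+(B)\le \kappa^+\mathrm{area}(B)\le |\kappa|\mathrm{area}(B)$ at once.

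For the face contribution, each open face $F$ of $X_n$ is isometric to an open subset of $M^2_\kappa$, so by the definition of $\omega_{BB}$ in \cite{BurBuy:upperII}, equivalently by Definition \ref{defn:omega}(1), the restriction $\omega_n|_F$ coincides with $\kappa\cdot d\mathrm{area}|_F$. For the edge contribution, on each open edge $e$ with adjacent faces $F_1,\ldots,F_k$ (where $k\ge 2$ by the geodesic completeness of $X_n$), Definition \ref{defn:omega}(2) gives $\omega_n|_e = \sum_{i=1}^k \tau_{F_i}|_e$. Condition $(A)$ of Theorem \ref{thm:BB-gluing} yields $\tau_{F_i}(B)+\tau_{F_j}(B)\le 0$ for every Borel $B\subset e$ and all $i\ne j$; summing these inequalities over the $k(k-1)$ ordered pairs gives $2(k-1)\sum_{i=1}^k \tau_{F_i}(B)\le 0$, so $\omega_n|_e$ is a non-positive measure.

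For the vertex contribution, condition $(B)$ of Theorem \ref{thm:BB-gluing} ensures that $\Sigma_x(X_n)$ is a connected CAT$(1)$ graph without endpoints at each vertex $x$ of $X_n$, the latter property following from the geodesic completeness assumption \eqref{eq:Sigma=CAT(1)} maintained throughout. By Definition \ref{defn:omega}(3), $\omega_n(\{x\}) = \pi(2-\chi(\Sigma_x(X_n))) - L(\Sigma_x(X_n))$, and non-positivity of this quantity amounts to the inequality $L(G)\ge \pi(2-\chi(G))$ for any connected CAT$(1)$ graph $G$ without endpoints. Granted this inequality, the three-piece bound $\omega_n \le \kappa\cdot d\mathrm{area}\le \kappa^+\cdot d\mathrm{area}$ holds as signed measures on $X_n$, which completes the argument.

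The main obstacle is a self-contained justification of the vertex inequality $L(\Sigma_x(X_n))\ge \pi(2-\chi(\Sigma_x(X_n)))$. For simple link types such as circles, multi-theta graphs, wedges of circles, and $K_4$, the inequality follows by summing the CAT$(1)$ systolic bound $L(c)\ge 2\pi$ over a convenient family of embedded cycles, parallel to the ordered-pairs argument used in the edge case. In the general case one must select an embedded cycle family covering $\Sigma_x(X_n)$ with a uniform multiplicity, which is a combinatorial exercise; this is implicit in the polyhedral Gauss-Bonnet theorem of \cite{BurBuy:upperII} and can be invoked directly.
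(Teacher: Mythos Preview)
Your argument contains a genuine gap in the face contribution. You assert that ``each open face $F$ of $X_n$ is isometric to an open subset of $M^2_\kappa$,'' but this is false for the polyhedral spaces $X_n$ constructed in this paper. The surgery construction in Section~\ref{sec:Polyhedral} replaces only the pieces $K(x_\alpha,v_\alpha)$ by comparison objects $\tilde K(x_\alpha,v_\alpha)$ built from $M^2_\kappa$-triangles; the complement $X\setminus \bigcup_\alpha K(x_\alpha,v_\alpha)$ is retained unchanged. Hence outside the surgery part, the faces of $X_n$ are pieces of the original surface $X\setminus\ca S(X)$, which is merely a surface of curvature $\le\kappa$ in the Alexandrov--Zalgaller sense, not of constant curvature $\kappa$. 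In this paper ``polyhedral'' means that $V(\ca S(X_n))$ is discrete (so $\ca S(X_n)$ is a genuine graph), not that the faces are model-space simplices. Consequently Definition~\ref{defn:omega}(1) gives $\omega_n|_F=\omega_{AZ}^F$, not $\kappa\cdot d\mathrm{area}|_F$, and your identity $\omega_n|_F=\kappa\cdot d\mathrm{area}|_F$ does not hold.

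This is exactly why the paper's proof proceeds by approximation: it invokes \cite{BurBuy:upperII} to write each face $F$ of $X_n$ as a uniform limit of piecewise smooth metrics $F_i$ of curvature $\le\kappa$, obtains $\omega_{n,i}^+(F_i)\le|\kappa|\,\mathrm{area}(F_i)$ on those (where the smooth Gauss--Bonnet bound is available), and then passes to the limit using \cite[Theorem VII.7]{AZ:bddcurv} and the area convergence of \cite{Ng:volume}. Your edge argument via condition~(A) and your vertex argument via the $\CAT(1)$ systolic inequality are essentially correct and match the paper's one-line claim that $\omega_n\le 0$ on $\ca S(X_n)$, but the face part cannot be handled by your direct identification and requires the approximation step (or an independent proof that $\omega_{AZ}^+\le|\kappa|\cdot d\mathrm{area}$ holds on an arbitrary surface of curvature $\le\kappa$, which is precisely what the approximation establishes).
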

\begin{proof}
Since we could not find a reference, we give a short proof below. See also \cite[Theorem 1.5.3]{ArsBuy}
and \cite[Remark 2.4]{BurBuy:upperII}.
By \cite{BurBuy:upperII}, $X_n$ is the uniform
limit of piecewise smooth metrics  of 
curvature $\le\kappa$, say $X_{n,i}$, on $X_n$. For a fixed face $F$ of $X_n$, let 
$F_i$ be the corresponding face of $X_{n,i}$,
which consists of finitely many smooth simplices.
From Proposition \ref{prop:tau+tau}, we have 
$\omega_{n,i}^+(F_i)\le |\kappa|{\rm area}(F_i)$,
where $\omega_{n,i}$ denotes the curvature measure 
of $X_{n,i}$ defined in \cite{ArsBuy}.
By \cite[Theorem VII.7]{AZ:bddcurv} and the area convergence proved in \cite{Ng:volume},
we have
$\omega_{n}^+(F)\le |\kappa|{\rm area}(F)$.
Since $\omega_n\le 0$ on $\ca S(X_n)$
(see Proposition \ref{prop:tau+tau}), this completes 
the proof.
\end{proof}

From $|\omega_n|=\omega_n^+ +\omega_n^-
=2\omega_n^+ -\omega_n$ together with 
\eqref{eq:GBF=Dn},
we have 
$|\omega_n|(D_n)\le 2|\kappa|{\rm area}(D_n)
-2\pi\chi(D_n) +\tau_{D_n}(\pa D_n)$.
Thus by Lemma \ref{lem:constDn}
together with the area convergence (\cite{Ng:volume}),
$|\omega_n|(D_n)$ is uniformly bounded by 
a constant independent of $n$.
Therefore passing to a subsequence, we may assume that 
$\omega_n$ converges to a singed Radon measure 
$\omega^X$ on $X$.

The uniqueness of $\omega^X$ is deferred to 
Lemma \ref{lem:unique=omega}.
\end{proof}

\begin{proof}[Proof of Theorems \ref{thm:GBf} and \ref{thm:conv-omega}]
From Lemma \ref{lem:constDn} and the definition
of $\omega^X$, we obtain 
the conclusion (4) of Theorem \ref{thm:conv-omega}.
Theorem \ref{thm:GBf} immediately follows from  Theorem \ref{thm:conv-omega} and \eqref{eq:GBF=Dn}.
\end{proof}

\begin{lem}\label{lem:unique=omega}
$\omega^X$ is uniquely determined.
\end{lem}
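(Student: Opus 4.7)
The plan is to exploit the Gauss--Bonnet formula
\[
 \omega^X(D) = 2\pi\chi(D) - \tau_D(\partial D)
\]
supplied by Theorem \ref{thm:GBf}. The right-hand side depends only on $X$ and the admissible domain $D$, while the argument producing it (Lemma \ref{lem:constDn} combined with the polyhedral Gauss--Bonnet formula \eqref{eq:GBF=Dn}) applies verbatim to any subsequential limit of $\omega^{X_n}_{BB}$. Hence every such limit assigns the same value $\omega^X(D)$ to every admissible $D$. Thus if $\omega^X_1$ and $\omega^X_2$ are two subsequential limits, they agree on the collection $\ca A$ of all admissible domains, and it remains to show that $\ca A$ is a determining class for signed Radon measures on $X$.

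For this, I would proceed as follows. Since $X$ is locally compact separable, any signed Radon measure is inner and outer regular, and is uniquely determined by its values on a sufficiently rich $\pi$-system of open sets whose boundaries carry no mass. Given a compact $K$ and an open neighborhood $U\supset K$, Lemma \ref{lem:approx-admissible} produces an admissible $D$ with $K\subset D\subset U$; by varying the radii of the covering balls built in that construction, and using that only countably many spheres $S(x,r)$ can carry positive mass under the $\sigma$-finite signed measure $|\omega^X_1|+|\omega^X_2|$, generic choices yield an admissible $D$ with $(|\omega^X_1|+|\omega^X_2|)(\partial D)=0$. Such sets are closed under finite intersection after further small perturbation, so they form a $\pi$-system generating the Borel $\sigma$-algebra of $X$; combined with the agreement on $\ca A$, this forces $\omega^X_1=\omega^X_2$ by the standard Dynkin/monotone class argument.

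The main obstacle I anticipate is verifying that the perturbations of admissible domains used to secure $(|\omega^X_1|+|\omega^X_2|)(\partial D)=0$ remain admissible: the transversality requirement with $\ca S(X)$ at the finitely many intersection points $\pa D\cap\ca S(X)$, as well as the positivity $L(\Sigma_x(\bar D))>0$ of Definition \ref{defn:admissible-D}, could in principle degenerate under arbitrary perturbation. Fortunately all four conditions are open in the $C^0$-topology on the boundary radii, so generic perturbations preserve admissibility, and the co-countable choice of good radii intersects the open set of admissible ones non-trivially. This technicality can be handled with care but does not require new ideas beyond those already developed in Sections \ref{sec:Polyhedral} and \ref{sec:GB}.
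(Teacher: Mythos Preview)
Your key observation---that Gauss--Bonnet (via Lemma \ref{lem:constDn} and \eqref{eq:GBF=Dn}) forces any two subsequential limits $\omega^X_1,\omega^X_2$ to agree on every admissible domain---is correct and is exactly the starting point of the paper's proof. The divergence is in how you pass from ``agree on admissible domains'' to ``agree as Radon measures.''

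The paper avoids the $\pi$-system machinery entirely. Given a compact set $K$, Lemma \ref{lem:approx-admissible} produces admissible $D_\e$ with $K\subset D_\e\subset B(K,\e)$; since $1_{D_\e}\to 1_K$ pointwise and is dominated by $1_{B(K,\e_1)}\in L^1(|\omega^X|)$, dominated convergence gives
\[
 \omega^X(K)=\lim_{\e\to 0}\omega^X(D_\e)=\lim_{\e\to 0}\bigl(2\pi\chi(D_\e)-\tau_{D_\e}(\pa D_\e)\bigr),
\]
and the right-hand side is intrinsic to $X$. Thus $\omega^X_1(K)=\omega^X_2(K)$ for every compact $K$, which determines a signed Radon measure.

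By comparison, your route has two avoidable complications. First, the boundary-null condition $(|\omega^X_1|+|\omega^X_2|)(\pa D)=0$ is unnecessary: you already have \emph{exact} agreement $\omega^X_1(D)=\omega^X_2(D)$ on admissible $D$, not merely weak convergence of $\omega_n$ against $D$, so there is nothing to gain by thinning to null-boundary sets. Second, ``closed under finite intersection after further small perturbation'' does not yield a genuine $\pi$-system, and the Dynkin argument requires one; making this rigorous would need an argument that the intersection of two admissible domains can itself be made admissible, which---while plausible---is precisely the kind of boundary geometry you flag as an obstacle. The paper's outer-approximation argument bypasses both issues in two lines.
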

\begin{proof}
Let $K$ be an arbitrary compact set of $X$.
For any $\e>0$, choose an admissible 
domain $D_\e$ satisfying $K\subset D_\e\subset
B(K,\e)$.
By Theorem \ref{thm:GBf}, we have
$\omega^X(D_\e)=2\pi\chi(D_\e)-\tau_{D_\e}(\pa D_\e)$.
Letting $\e\to 0$, we obtain 
\begin{align} \label{eq:BGF=K}
\omega^X(K)=\lim_{\e\to 0} (2\pi\chi(D_\e)-\tau_{D_\e}(\pa D_\e)),
\end{align}
which implies that 
$\omega^X$ does not depend on the choice of a
subsequence of $\omega^{X_n}_{BB}$.
\end{proof}

\begin{rem} \label{rem:general-form}\upshape
In other words,
\eqref{eq:BGF=K} is the general form of 
the curvature measure $\omega^X$.
\end{rem}

From Theorem \ref{thm:GBf}, we immediately have the following.

\begin{cor}
If $X$ is a two-dimensional geodesically complete, compact metric space with curvature bounded above, then we have
\[
          \omega^X(X) = 2\pi\chi(X).
\]
\end{cor}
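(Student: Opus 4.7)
My plan is to apply Theorem \ref{thm:GBf} directly, taking the admissible domain $D$ to be $X$ itself. The first step is to verify that $X$ qualifies as an admissible domain. Since $X$ is compact it is bounded, and it is trivially open as a subset of itself, so $X$ is an open bounded domain in the sense of Definition \ref{defn:admissible-D}. Because $X$ has no points outside of $X$, the topological boundary $\pa X$ of $X$ in $X$ is empty.

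With $\pa D = \emptyset$, the admissibility conditions (1)--(3) of Definition \ref{defn:admissible-D} hold vacuously, so $D = X$ is admissible. Moreover, the families $\{ L_i \}$ of arc components and $\{ S_j\}$ of circle components of $\pa D\setminus \ca S(X)$ are empty, and $\pa D \cap \ca S(X) = \emptyset$. Hence by Definition \ref{defn:tauD(paD)}, each of the three sums in the expression
\[
\tau_D(\pa D) = \sum_{i=1}^{\ell} \tau_D(L_i) + \sum_{j=1}^{m} \tau_D(S_j) + \sum_{k=1}^{n}\omega_D(q_k)
\]
is empty, so $\tau_X(\pa X) = 0$.

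Substituting $D = X$ and $\tau_X(\pa X) = 0$ into the Gauss--Bonnet formula of Theorem \ref{thm:GBf} then immediately gives
\[
\omega^X(X) = 2\pi\chi(X) - \tau_X(\pa X) = 2\pi\chi(X),
\]
as desired. The argument is essentially a bookkeeping observation that $X$ is an admissible domain with vacuous boundary; there is no genuine obstacle, as the corollary is a direct specialization of the already-established Gauss--Bonnet theorem.
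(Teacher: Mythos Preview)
Your proof is correct and matches the paper's approach: the paper simply states that the corollary follows immediately from Theorem~\ref{thm:GBf}, which is exactly your argument of taking $D = X$ as an admissible domain with empty boundary and hence zero turn.
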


\pmed

\subsection{Turn of singular locus} 
\label{ssec:relations}
\psmall\n

In what follows, we simply write as 
$\omega=\omega^X$.

For a fixed $p\in\ca S(X)$, choose $r=r_p>0$ as in 
Subsection 2.3. 
Let $C$ be a singular curve starting from $p$ and 
reaching $S(p,r)$.
Let $N$ be the branching number of the graph $\Sigma_p(X)$ at $v:=\Sigma_p(C)$.
 Choose $\nu_1,\ldots,\nu_N\in\Sigma_p(X)$ such that  for all $1\le i\neq j\le N$
\begin{itemize}
\item \text{ $\angle(\nu_i, v) = \delta$ and
$\angle(\nu_i, \nu_j) = 2\delta\,;$}
\item  $\CAT(\kappa)$ ruled surfaces $S_{ij}$ at $p$  spanned by $\gamma_{\nu_i}$ and $\gamma_{\nu_j}$ are all properly embedded in $B(p,r)$.
\end{itemize}
For $1\le i\le N$, set $\gamma_i:=\gamma_{\nu_i}$
and $z_i:=\gamma_i(r)$.
Consider the $X$-geodesic join $R_i:=z_i*C$.

\begin{center}
\begin{tikzpicture}
[scale = 0.8]
\draw [-, thick] (4.5,0.5) to [out=180, in=30] (2.5,0);
\draw [-, thick] (2.5,0) to [out=210, in=340] (0.5,0);
\draw [-, thick] (0.5,0) to [out=160, in=30] (-1.5,0);
\draw [-, thick] (-1.5,0) to [out=200, in=350] (-3.5,0);
\draw [-, thick] (-3.5,0) to [out=170, in=10] (-4.5,0);
\draw [-, thick] (4.5,-0.5) to [out=180, in=330] (2.5,0);
\draw [-, thick, dotted] (2.5,0) to [out=150, in=20] (0.5,0);
\draw [-, thick] (0.5,0) to [out=200, in=330] (-1.5,0);
\draw [-, thick, dotted] (-1.5,0) to [out=160, in=10] (-3.5,0);
\draw [-, thick] (-3.5,0) to [out=190, in=350] (-4.5,0);
\filldraw[fill=gray, opacity=.1] 
(4.5,0.5) to [out=180, in=30] (2.5,0)
to [out=210, in=340] (0.5,0)
to [out=160, in=30] (-1.5,0)
to [out=200, in=350] (-3.5,0)
to [out=170, in=10] (-4.5,0) -- (4,2.8) -- (4.5,0.5);
\fill (-4.5,0) coordinate (A) circle (2pt) node [left] {$p$};
\draw [thick] (4.5,0.5) -- (4.5,-0.5);
\draw [thick] (4.5,0.5) -- (4,2.8);
\draw [thick] (4.5,-0.5) -- (4,-2.8);
\draw [thick] (4.5,0.5) -- (4.8,2.3);
\draw [thick] (4.5,-0.5) -- (4.8,-2.3);
\draw [thick] (-4.5,0) -- (4,2.8);
\draw [thick] (-4.5,-0) -- (4,-2.8);
\draw [thick, dotted] (-4.5,0) -- (4.8,2.3);
\draw [thick, dotted] (-4.5,-0) -- (4.8,-2.3);
\draw [thick, dotted] (-4.5,0) -- (4.8,-2.3);
\draw [thick] (4.2,-2.15) -- (4.8,-2.3);
\draw [thick, dotted] (-4.5,0) -- (4.8,2.3);
\draw [thick] (4.2,2.15) -- (4.8,2.3);
\fill (4,2.8) coordinate (A) circle (2pt) node [right] {$z_i$};
\fill (2,1) coordinate (A) circle (0pt) node [left] {$R_i$};
\fill (4,0.4) coordinate (A) circle (0pt) node [above left] {$C$};
\end{tikzpicture}
\end{center}

\begin{lem} \label{lem:Si=CAT}
$R_i$ is a $\CAT(\kappa)$-disk.
\end{lem}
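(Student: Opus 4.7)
The plan is to treat $R_i$ as a ruled surface based at $z_i$ with base curve $C$, mimicking the proof of Theorem \ref{thm:ruled}. First, by Lemma \ref{lem:sing-arc}, $C$ is a simple rectifiable arc of length at most $(1+\tau_p(r))r$ having definite directions everywhere. I would take a sequence of inscribed broken geodesic approximations $C^{(n)}$ of $C$, whose vertices $p=c_0^{(n)}, c_1^{(n)}, \ldots, c_{k_n}^{(n)}$ lie on $C$ and whose mesh $\max_j |c_{j-1}^{(n)}, c_j^{(n)}|$ tends to $0$ as $n\to\infty$. Each consecutive pair determines a minimal $X$-geodesic $[c_{j-1}^{(n)}, c_j^{(n)}]$ for large $n$.

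Next, consider the approximating ruled surface $R_i^{(n)} := z_i * C^{(n)} = \bigcup_{j=1}^{k_n} T_j^{(n)}$, where $T_j^{(n)} := z_i * [c_{j-1}^{(n)}, c_j^{(n)}]$ is a geodesic triangle region that is itself a ruled surface based at $z_i$ over a short geodesic. By applying Theorem \ref{thm:ruled} with $z_i$ in the role of the cone point (noting that $T_j^{(n)}$ sits inside a properly embedded $\CAT(\kappa)$-disk provided by Theorem \ref{thm:embedded-disk}), each $T_j^{(n)}$ is a $\CAT(\kappa)$-disk with respect to its intrinsic metric. Consecutive triangles $T_j^{(n)}$ and $T_{j+1}^{(n)}$ meet along the common $X$-geodesic $[z_i, c_j^{(n)}]$, which is convex in each. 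Then Reshetnyak's gluing theorem (cf.\ Theorem \ref{thm:BB-gluing}) yields that $R_i^{(n)}$ with its intrinsic metric is $\CAT(\kappa)$, provided that at each interior vertex $c_j^{(n)}$ the sum of the two inner angles from the adjacent triangles is at most $L(\Sigma_{c_j^{(n)}}(X))$; this follows from the $\CAT(1)$ property of $\Sigma_{c_j^{(n)}}(X)$ together with the angle comparison in Lemma \ref{lem:comparison}.

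Finally, as $n\to\infty$, $R_i^{(n)}$ converges to $R_i$ in the pointed Gromov--Hausdorff topology with respect to intrinsic metrics, using the continuity of $c \mapsto [z_i, c]$ on $C$. Since the $\CAT(\kappa)$ condition is preserved under such convergence, $R_i$ is $\CAT(\kappa)$. The disk structure of $R_i$ is established separately: the family of minimal $X$-geodesics $\{[z_i, c]\}_{c \in C}$ varies continuously and, because $z_i = \gamma_i(r)$ lies at distance $r$ from $p$ in a direction making angle $\delta>0$ with $v = \Sigma_p(C)$, distinct geodesics in the family are disjoint except at $z_i$. This gives a continuous bijection between $C \times [0,1]$ (collapsed at the $z_i$-end) and $R_i$, exhibiting $R_i$ as a disk bounded by $\gamma_i|_{[0,r]}$, $C$, and $[z_i, c_*]$, where $c_*$ is the endpoint of $C$ on $S(p,r)$.

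The main obstacle is twofold. First, one must verify the angle condition for Reshetnyak gluing in the presence of singular vertices on $C^{(n)}$, where several other singular curves may emanate and complicate the angle count. Second, and more critically, one must rigorously establish the disjointness of the geodesics $[z_i, c]$, particularly as $c$ crosses singular vertices in $V(\ca S(X)) \cap C$. Both issues require careful invocation of the local structure results (Theorems \ref{thm:ruled} and \ref{thm:embedded-disk}, together with Corollary \ref{cor:vert}) and of the monotone behavior of $d_p$ along $C$ provided by Lemma \ref{lem:sing-arc}(3).
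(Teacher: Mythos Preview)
Your overall strategy---approximate $R_i$ by a fan of geodesic triangles with common apex $z_i$, show each fan is $\CAT(\kappa)$, and pass to the limit---is exactly the paper's strategy. The gap is in how you certify that each triangle $T_j^{(n)}$ is $\CAT(\kappa)$. Invoking Theorem~\ref{thm:ruled} ``with $z_i$ in the role of the cone point'' does not work: that theorem builds ruled surfaces at a singular point $p\in\ca S(X)$ in a prescribed direction $v\in V(\Sigma_p(X))$, whereas $z_i=\gamma_i(r)$ is typically a regular point and there is no vertex direction to cone over. Likewise, Theorem~\ref{thm:embedded-disk} says only that $B(p,r)$ is a union of embedded $\CAT(\kappa)$-disks; it does not guarantee that your specific triangle $z_i*[c_{j-1}^{(n)},c_j^{(n)}]$, with the base an arbitrary $X$-geodesic between two singular points, lands inside any one of them.

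The paper closes this gap by choosing the subdivision more carefully: it picks $p=x_0<x_1<\cdots<x_K=q$ on $C$ so that each consecutive pair $x_{\alpha-1},x_\alpha$ lies in a \emph{common} ruled surface $S_{i_\alpha N}$ (which automatically contains $z_N$ since $z_N\in\gamma_N$), and then takes the $S_{i_\alpha N}$-geodesic triangle $\blacktriangle^{S_{i_\alpha N}} z_N x_{\alpha-1} x_\alpha$. Each such triangle is then a convex region of the known $\CAT(\kappa)$-disk $S_{i_\alpha N}$, hence $\CAT(\kappa)$ for free. Adjacent triangles share the $X$-geodesic $[z_N,x_\alpha]$, so Reshetnyak gluing applies directly with no angle-sum hypothesis; your proposed angle check at interior vertices is neither needed nor correctly formulated (Theorem~\ref{thm:BB-gluing} is a polyhedral criterion with a different hypothesis, not Reshetnyak). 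For the disk structure, the paper does not argue disjointness of the rays $[z_i,c]$; instead it observes that the interiors $\mathring R_\e$ are surfaces and exhaust $\mathring R$, so $R$ is a disk bounded by $\gamma_{z_N,p}\cup\gamma_{z_N,q}\cup C$.
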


Although Lemma \ref{lem:Si=CAT} is immediate from 
a recent result in \cite{LS:dim2}, we give a simple
geometric proof below.

\begin{proof}[Proof of Lemma \ref{lem:Si=CAT}]
For simplicity, we assume $i=N$, and set $R:=R_N$.
For any $\e>0$, consider a decomposition
$\Delta_\e$ of $C$: \,$p=x_0<x_1<\cdots< x_K=q$
with $|\Delta_\e|<\e$ such that for each 
$1\le \alpha\le K$,
$x_{\alpha-1}, x_\alpha\in S_{i_\alpha N}$
for some $1\le i_\alpha\le N-1$.
Set $S_\alpha:=S_{i_\alpha N}$, and 
consider the union
$R_\e$ of the $S_\alpha$-geodesic triangle regions
$\blacktriangle^{S_\alpha} z_N x_{\alpha-1} x_\alpha$ \,$(1\le \alpha \le K)$.
Although some $\blacktriangle^{S_\alpha} z_N x_{\alpha-1} x_\alpha$ may be degenerate at $z_N$, the union $R_\e$ is surely  a $\CAT(\kappa)$-disk.
Now we verify  that $\pa (R_\e\cap R)\setminus (\gamma_{z_N,p}\cup\gamma_{z_N,q})$ is a  singular arc, say
 $C_\e$, joining  $p$ and $q$, where
$\pa (R_\e\cap R)$ denotes the boundary of $R_\e\cap R$ in $R_\e$.

Since $R_\e$ converges  to $R$ as $\e\to 0$,
$R$ is $\CAT(\kappa)$.
Let $\mathring{R}_\e$ (resp. $\mathring{R}$)
denote the complement of 
$\gamma_{z_N,p}\cup\gamma_{z_N,q}\cup C_\e$
(resp. of
$\gamma_{z_N,p}\cup\gamma_{z_N,q}\cup C$)
in $R_\e$ (resp. in $R$).
Since $\mathring{R}=\bigcup_{\e>0}\mathring{R}_\e$
is a surface, one can conclude that $R$ is a $\CAT(\kappa)$-disk 
bounded by $\gamma_{z_N,p}\cup\gamma_{z_N,q}\cup C$
(see also  \cite[Theorem 2.1]{GPW}).
\end{proof}

We consider the turn
$\tau_{R_i}(C)$ of $C$ from the side
$R_i$.

\begin{defn}\label{defn:SXfinite-turn} \upshape
We say that $\ca S(X)$ has {\it locally finite turn
variation}  if 
for any $p\in\ca S(X)$ and any singular curve $C$ starting from $p$ to a point of $S(p,r)$, the turn 
$\tau_{R_i}$ of $C$ from the side
$R_i$ has finite variation for each $1\le i\le N_v$,
where $R_i$ is  the $\CAT(\kappa)$-disk
defined as above.
\end{defn}

Let $p\in\ca S(X)$, $r=r_p>0$,  $v\in V(\Sigma_x(X))$
and $\{ S_{ij}\}$ be as above.
Let $E(v)$ be defined as before.
For the proof of Theorem \ref{thm:bdd-turn(Si)},
we need to study the relations
between $\omega(x)$ and $\omega_{AZ}^{S_{ij}}(x)$
for any $x\in E(v)\cap V(\ca S(X))\setminus \{ p\}$,  where $S_{ij}$ are ruled surfaces containing $x$.

We fix any $x\in B(p,r)\cap V(\ca S(X))\setminus \{ p\}$,
and give an estimate on $|\omega(x)|$.
From Lemma \ref{lem:near-vert},  there is a unique positive
integer $m\ge 3$ such that $\Sigma_x(X)$ is close to 
the spherical suspension over $m$ points
with respect to the Gromov-Hausdorff distance.
Take distinct $m$ points $\xi_1,\ldots, \xi_m\in\Sigma_x(X)$
such that $\angle(\xi_i,-\nabla d_p)=\pi/2$ for all $1\le i\le m$.
Let $B_x:=B(x, r(x))$ be a small metric ball as in 
Subsection \ref{ssec:lcal-str} for the point $x$. By
Theorem  \ref{thm:embedded-disk},
$B_x$ can be written as the union 
\beq \label{eq:Bx=suspension}
       B_x=\bigcup_{1\le i < j\le m} E_{ij},
\eeq
where $E_{ij}$ is a properly embedded $\CAT(\kappa)$-disk in $B_x$ with 
$\xi_i,\xi_j\in\Sigma_x(E_{ij})$.

\begin{lem} \label{EsubsetS}
For any $1\le i<j\le N$,  $E_{ij}$ is contained in 
$S_{i'j'}$ for some $1\le i'<j'\le N$.
\end{lem}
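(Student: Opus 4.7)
The plan is to match each local disk $E_{ij}$ with a global ruled surface $S_{i'j'}$ via their spaces of directions at $x$, and then invoke a uniqueness argument to upgrade the tangential agreement to genuine containment. (I read the index range ``$1 \le i < j \le N$'' in the lemma statement as ``$1 \le i < j \le m$'' to match the definition of $E_{ij}$ via \eqref{eq:Bx=suspension}.)

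First I would analyze the loop $\Sigma_x(E_{ij})$ in $\Sigma_x(X)$. By the hypothesis that $\Sigma_x(X)$ is Gromov-Hausdorff close to the spherical suspension over $m$ points (Lemma \ref{lem:near-vert}), and the fact that $E_{ij}$ is a properly embedded $\CAT(\kappa)$-disk containing $\xi_i,\xi_j$, the loop $\Sigma_x(E_{ij})$ must consist of the two meridians of the suspension approximation passing through $\xi_i$ and $\xi_j$ respectively, and therefore contains both poles $\pm \nabla d_p(x)$. Next I would show that there exists a ruled surface $S_{i'j'}$ at $p$ passing through $x$ such that $\Sigma_x(S_{i'j'})$ is exactly this loop. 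Since $x \in E(v) = \bigcup_{1 \le i' < j' \le N} S_{i'j'}$, every point of $B_x$ in the same ruled-surface stratum lies in some $S_{i'j'}$; the radial geodesic from $p$ through $x$ forces $\pm \nabla d_p(x)$ into $\Sigma_x(S_{i'j'})$, and by Lemma \ref{lem:near-vert} together with the spherical suspension structure, this loop must also pass through exactly two of the equator directions $\xi_k, \xi_\ell$. A counting and covering argument using $B_x = \bigcup_{k<\ell} E_{k\ell}$ and the partition of $E(v) \setminus \ca S(X)$ by the various $S_{i'j'}$ shows that every pair $(\xi_i,\xi_j)$ is realized by some $S_{i'j'}$ through $x$.

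Once the matching $E_{ij} \leftrightarrow S_{i'j'}$ with $\Sigma_x(E_{ij}) = \Sigma_x(S_{i'j'})$ is established, I would finish with a local uniqueness argument: for $r(x)$ sufficiently small, the union of geodesic segments from $x$ in the directions of this common loop is a single $\CAT(\kappa)$-cone that is simultaneously contained in $E_{ij}$ and in $S_{i'j'}$. Because both are properly embedded $\CAT(\kappa)$-disks at $x$ sharing the same space of directions, they agree in a neighborhood of $x$; choosing $r(x)$ small enough (as permitted in Subsection \ref{ssec:lcal-str}) gives $E_{ij} \subset S_{i'j'}$.

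The hard part will be the surjectivity/counting in the second step. Making rigorous the statement that every pair of equator directions at $x$ is carried by some ruled surface $S_{i'j'}$ through $x$ requires combining the global structure of $E(v)$ with the local suspension-approximation of $\Sigma_x(X)$; in particular one must argue that passing through $x$ on a ruled surface picks out precisely two of the $\xi_k$'s, and that the ``wings'' of $E_{ij} \setminus \ca S(X)$ on either side of the singular curves in directions $\pm\nabla d_p(x)$ both land in the same $S_{i'j'}$ (so that $E_{ij}$ is contained in a single ruled surface, not merely covered by several).
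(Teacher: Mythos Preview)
Your approach is substantially more complicated than the paper's, and the two key steps you flag as difficult are genuinely problematic.

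The paper's proof is a two-line argument that bypasses both the counting/surjectivity issue and the local uniqueness step entirely. It simply extends the $X$-geodesics $\gamma_{\xi_i}$ and $\gamma_{\xi_j}$ from $x$ in the equatorial directions $\xi_i,\xi_j$; since $\angle(\xi_i,-\nabla d_p)=\pi/2$, these geodesics are transversal to the radial direction and therefore exit $E(v)$ through two of the bounding geodesics $\gamma_1,\ldots,\gamma_N$, say $\gamma_{i'}$ and $\gamma_{j'}$. The ruled surface $S_{i'j'}$ is by definition the span of $\gamma_{i'}$ and $\gamma_{j'}$, and since it now contains both $\gamma_{\xi_i}$ and $\gamma_{\xi_j}$, it contains the ruled disk $E_{ij}$ spanned between them. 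No counting, no tangential uniqueness.

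Your proposal has a real gap in the final step. The claim that two properly embedded $\CAT(\kappa)$-disks sharing the same space of directions at $x$ must agree in a neighborhood of $x$ is not a general fact about $\CAT(\kappa)$-spaces; it requires the specific ruled-surface structure of both $E_{ij}$ and $S_{i'j'}$, and once you invoke that structure you are essentially back to the paper's argument. Likewise, the surjectivity you want in the second step (every pair $(\xi_i,\xi_j)$ is realized by some $S_{i'j'}$ through $x$) is exactly what the geodesic-extension argument delivers directly, whereas a pure counting argument does not rule out two ruled surfaces through $x$ picking out the same equatorial pair.
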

\begin{proof}
Consider $X$-geodesics $\gamma_{\xi_i}$ and $\gamma_{\xi_j}$. Obviously,  $\gamma_{\xi_i}$ and $\gamma_{\xi_j}$ meet $\gamma_{i'}$ and $\gamma_{j'}$
for some $1\le i'\neq j'\le N$.
Then it is easy to verify $E_{ij}\subset S_{i'j'}$. 
\end{proof}

Compare with Lemma \ref{lem:Sx=S}.

\begin{prop} \label{prop:pt-meas}
Under the above situation,  we have
\[
    |\omega(x)| \le  M\sum_{1\le i< j\le m}
        |\omega_{AZ}^{E_{ij}}|(x),
\]
where $M$ is a uniform constant and the sum in the
RHS is taken only for $E_{ij}$ containing $x$.

In particular, $\omega(x)=0$ holds if $x$ is a regular point of 
each $E_{ij}$.
\end{prop}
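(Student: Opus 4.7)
The plan is to leverage the detailed description of $G:=\Sigma_x(X)$ from Lemma \ref{lem:near-vert} and the definitions of $\omega(x)$ and $\omega_{AZ}^{E_{ij}}(x)$, and then compare them through a direct combinatorial identity on the graph $G$. Writing $C_{ij}:=\Sigma_x(E_{ij})$, the covering $B_x = \bigcup_{i<j} E_{ij}$ from \eqref{eq:Bx=suspension} gives $G = \bigcup_{i<j} C_{ij}$, and Lemma \ref{EsubsetS} tells us these subgraphs are compatible with the ambient ruled-surface structure.

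First I would establish the structure of $G$. By Lemma \ref{lem:near-vert}, $V(G)$ lies in a $\tau_p(r)$-neighborhood of $\{\pm\nabla d_p(x)\}$, so each edge is either ``polar'' (staying in one of these small neighborhoods) or ``meridional'' (crossing from one to the other, with length close to $\pi$). The $\CAT(1)$ condition forces every cycle to have length $\ge 2\pi$, which rules out a cycle inside either polar neighborhood; hence the polar subgraphs $T^\pm$ are trees. The Gromov--Hausdorff closeness to the spherical suspension on $m$ points then forces exactly $m$ meridional edges $M_1,\dots,M_m$, with $M_k$ containing the direction $\xi_k$. A vertex/edge count gives $\chi(G)=2-m$, whence
\[
\omega(x) = m\pi - L(G) = \sum_{k=1}^m\bigl(\pi - L(M_k)\bigr) - L(T^+) - L(T^-).
\]
Since $C_{ij}$ is an embedded circle through $\xi_i,\xi_j$, it decomposes as $C_{ij}=M_i\cup\pi^+_{ij}\cup M_j\cup\pi^-_{ij}$ for (possibly degenerate) paths $\pi^\pm_{ij}\subset T^\pm$, so
\[
\omega_{AZ}^{E_{ij}}(x) = 2\pi - L(C_{ij}) = (\pi-L(M_i)) + (\pi-L(M_j)) - L(\pi^+_{ij}) - L(\pi^-_{ij}).
\]
Summing over the $\binom{m}{2}$ pairs and rearranging yields the key identity
\[
(m-1)\,\omega(x) = \sum_{i<j}\omega_{AZ}^{E_{ij}}(x) + \sum_{i<j}\bigl(L(\pi^+_{ij})+L(\pi^-_{ij})\bigr) - (m-1)\bigl(L(T^+)+L(T^-)\bigr).
\]

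The main obstacle is to bound the polar remainder on the right by a constant multiple of $\sum_{i<j}|\omega_{AZ}^{E_{ij}}(x)|$. Because each $C_{ij}$ is a cycle in a $\CAT(1)$-space, $L(C_{ij})\ge 2\pi$, so $\omega_{AZ}^{E_{ij}}(x)\le 0$ and $|\omega_{AZ}^{E_{ij}}(x)|=L(C_{ij})-2\pi$. Since the circles $C_{ij}$ cover $G$, every edge of $T^\pm$ lies in some $C_{ij}$, and a direct counting argument bounds $L(T^+)+L(T^-)$ and $\sum_{i<j}(L(\pi^+_{ij})+L(\pi^-_{ij}))$ by a constant (depending only on $m$) times $\sum_{i<j}|\omega_{AZ}^{E_{ij}}(x)|$. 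As $m$ is locally uniformly bounded by Theorem \ref{thm:graph} and $m-1\ge 2$, dividing by $m-1$ produces the desired estimate with a uniform constant $M$.

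For the final assertion, suppose $x$ is a regular point of every $E_{ij}$. Then each $\Sigma_x(E_{ij})$ is a round circle of length $2\pi$, so every $\omega_{AZ}^{E_{ij}}(x)=0$ and $L(C_{ij})=2\pi$. Substituting $L(\pi^+_{ij})+L(\pi^-_{ij}) = 2\pi - L(M_i)-L(M_j)\ge 0$ into the identity above together with non-negativity of tree lengths forces $L(\pi^\pm_{ij})=0$, hence $T^\pm$ collapse and $L(M_i)+L(M_j)=2\pi$ for all $i\ne j$; this symmetry pins down $L(M_k)=\pi$ for every $k$, giving $L(G)=m\pi$ and $\omega(x)=0$.
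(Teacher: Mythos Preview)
Your combinatorial setup is correct and matches the paper's: the meridional edges $M_k$ are the paper's $I_k(x)$, and your polar trees $T^\pm$ are the paper's $\Omega_1(x),\Omega_2(x)$. The identity you derive is also correct. However, there is a genuine gap at exactly the point you call the ``main obstacle'': the claimed ``direct counting argument'' bounding $L(T^+)+L(T^-)$ by $C\sum|\omega_{AZ}^{E_{ij}}(x)|$ is not a counting argument at all, and without it your identity is vacuous. Indeed, since $\omega_{AZ}^{E_{ij}}(x)\le 0$, one checks directly that your remainder satisfies
\[
\sum_{i<j}\bigl(L(\pi^+_{ij})+L(\pi^-_{ij})\bigr)-(m-1)\bigl(L(T^+)+L(T^-)\bigr)
=\sum_{i<j}|\omega_{AZ}^{E_{ij}}(x)|+(m-1)\,\omega(x),
\]
so your ``key identity'' collapses to $0=0$; bounding the remainder by $C\sum|\omega_{AZ}^{E_{ij}}|$ is literally equivalent to the proposition you are trying to prove. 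The observation that each tree edge lies in some $C_{ij}$ does not help: an edge $e\subset\pi^+_{ij}$ only gives $L(e)\le L(C_{ij})-L(M_i)-L(M_j)$, and nothing forces $L(M_i)+L(M_j)\ge 2\pi$ unless the endpoints of $M_i,M_j$ in $T^+$ and in $T^-$ already coincide.

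This missing bound is precisely the content of Lemma~\ref{lem:tree-meas} in the paper, and its proof is not counting but a targeted use of the $\CAT(1)$ hypothesis: one picks $u_1,u_2\in\partial\Omega_1$ realizing $\diam(\Omega_1)$, identifies indices $i,j,k,\ell$ with $u_1\in\partial I_i\cap\partial I_j$ and $u_2\in\partial I_k\cap\partial I_\ell$, and then (after a short case analysis on how $[v_i,v_j]$ and $[v_k,v_\ell]$ sit inside $\Omega_2$) produces a specific circle $\Sigma_x(E_{st})$ of length at least $2\pi+\tfrac12\diam(\Omega_1)$. Once Lemma~\ref{lem:tree-meas} is available, the paper finishes by a clean induction on $m$ rather than via your global identity. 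A minor additional point: in your final paragraph, the claim that $T^\pm$ must collapse when all $\omega_{AZ}^{E_{ij}}(x)=0$ is false (the paper's Remark after the proposition gives a graph with $\omega(x)=0$ and nontrivial $\Omega_k$); the ``in particular'' clause follows immediately from the main inequality, so no separate argument is needed.
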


\begin{rem}  \label{rem:countable} \upshape
Note that $\omega_{AZ}^{E_{ij}}(x)=0$
on $E_{ij}$ except at most countably many 
points $x$. Together with this, 
Proposition \ref{prop:pt-meas} shows that the sum in the RHS of \eqref{eq:curv.measure-welldef}
is at most countable and that 
\begin{align}\label{eq:omega(AB)}
|\omega'( B_x\cap V(\ca S(X)))|\le M\sum_{1\le i< j\le m}
        |\omega_{AZ}^{E_{ij}}|(E_{ij}\cap V(\ca S(X))) <\infty.
\end{align}
\end{rem}

\pmed

For the proof of Proposition \ref{prop:pt-meas}, we need the 
following lemma.
Let $I_i(x)$ be the closure of the component of 
$\Sigma_x(X)\setminus V(\Sigma_x(X))$ containing $\xi_i$.
Note that 
$\Sigma_x(X)\setminus \bigcup_{i=1}^m I_i(x)$ 
consists of two trees, say $\Omega_1(x)$ and $\Omega_2(x)$.

\psmall
\begin{lem} \label{lem:tree-meas}
For each $k=1,2$ and $1\le \ell\le m$, we have 
\begin{align}
 &  L(\Omega_k(x))\le  
   M\sum_{1\le i< j\le m} |\omega_{AZ}^{E_{ij}}|(x) 
    \label{eq:L(Omega)}, \\
 & |L(I_\ell(x))-\pi|\le  
   M\sum_{1\le i< j\le m} |\omega_{AZ}^{E_{ij}}|(x),
       \label{eq:L(I(x))}
\end{align}
where $M$ is a uniform constant.
\end{lem}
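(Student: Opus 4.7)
The central identity comes from recognizing that $\Sigma_x(E_{ij}) \subset \Sigma_x(X)$ is the simple loop through $\xi_i$ and $\xi_j$ which, by the graph structure, consists of the edge $I_i$, a geodesic $\gamma_{ij}^1$ in the tree $\Omega_1$ between the endpoints of $I_i$ and $I_j$ on the $\Omega_1$ side, the edge $I_j$, and an analogous geodesic $\gamma_{ij}^2$ in $\Omega_2$. Setting $\ell_i := L(I_i)$ and $a_{ij}^k := L(\gamma_{ij}^k)$ (the tree distance in $\Omega_k$), the Alexandrov--Zalgaller identity $\omega_{AZ}^{E_{ij}}(x) = 2\pi - L(\Sigma_x(E_{ij}))$ reads
\beqq
\ell_i + \ell_j + a_{ij}^1 + a_{ij}^2 = 2\pi - \omega_{AZ}^{E_{ij}}(x).
\eeqq
Since $\Sigma_x(X)$ is a $\CAT(1)$-graph, every simple cycle has length at least $2\pi$, forcing $\omega_{AZ}^{E_{ij}}(x) \le 0$; write $\alpha_{ij} := |\omega_{AZ}^{E_{ij}}(x)| \ge 0$, so the basic identity becomes $\ell_i + \ell_j + a_{ij}^1 + a_{ij}^2 = 2\pi + \alpha_{ij}$.

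For \eqref{eq:L(I(x))} I would handle the upper bound via a triangle-inequality trick. Fix $\ell$ and pick distinct $i, k$ different from $\ell$. Adding the basic identities for $(i,\ell)$ and $(\ell,k)$, subtracting the one for $(i,k)$, and applying the tree triangle inequalities $a_{i\ell}^r + a_{\ell k}^r \ge a_{ik}^r$ for $r=1,2$, yields
\beqq
2(\ell_\ell - \pi) \le \alpha_{i\ell} + \alpha_{\ell k} - \alpha_{ik} \le \sum_{a<b}\alpha_{ab}.
\eeqq
The matching lower bound $\ell_\ell \ge \pi - M\sum \alpha_{ab}$ would come from the basic identity $\ell_i + \ell_\ell \ge 2\pi + \alpha_{i\ell} - (a_{i\ell}^1 + a_{i\ell}^2)$ combined with the just-proved upper bound on $\ell_i$ and the crude estimate $a_{i\ell}^r \le L(\Omega_r)$, once \eqref{eq:L(Omega)} is in hand.

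For \eqref{eq:L(Omega)} I would sum the basic identity over all $i<j$ to get
\beqq
(m-1)\sum_i \ell_i + \sum_{i<j}(a_{ij}^1 + a_{ij}^2) = m(m-1)\pi + \sum_{i<j}\alpha_{ij}.
\eeqq
The key combinatorial input is a tree-distance estimate: since $\Sigma_x(X)$ has no endpoints, every leaf of the tree $\Omega_k$ must coincide with some $u_i^k$, and hence, writing $\sum_{i<j} a_{ij}^k = \sum_{e\in E(\Omega_k)} L(e)\, n_{e,1}n_{e,2}$ with $n_{e,1}+n_{e,2}=m$ and $n_{e,r}\ge 1$, the inequality $(n_{e,1}-1)(n_{e,2}-1)\ge 0$ gives $\sum_{i<j} a_{ij}^k \ge (m-1)L(\Omega_k)$. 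Substituting into the summed identity and using the upper bound on $\sum_i \ell_i$ from \eqref{eq:L(I(x))} should produce the bound $L(\Omega_1)+L(\Omega_2) \le M\sum_{a<b}\alpha_{ab}$, and the separate bound on each $L(\Omega_k)$ follows because both are non-negative.

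The main obstacle I anticipate is the interlocking between the two estimates: a tight lower bound on $\ell_\ell$ genuinely needs $L(\Omega_k)\le M\sum\alpha$, while the proof of the latter needs a lower bound on $\sum_i \ell_i$. I expect the resolution to be to treat \eqref{eq:L(Omega)} and \eqref{eq:L(I(x))} simultaneously rather than sequentially, invoking the curvature identity $\omega^X(x) = m\pi - \sum_i \ell_i - L(\Omega_1) - L(\Omega_2)$ (from $\chi(\Sigma_x(X))=2-m$) along with the Gromov--Hausdorff closeness of $\Sigma_x(X)$ to the spherical suspension over $m$ points furnished by Lemma \ref{lem:near-vert}, which rules out degenerate configurations in which the $\ell_i$ are short while $L(\Omega_k)$ is comparable to unity.
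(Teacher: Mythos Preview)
Your upper bound $\ell_\ell-\pi\le\tfrac12(\alpha_{i\ell}+\alpha_{\ell k})$ via the triangle trick is correct, and the tree identity $\sum_{i<j}a_{ij}^k=\sum_e L(e)\,n_{e,1}n_{e,2}\ge(m-1)L(\Omega_k)$ is a nice observation. But the circularity you flag is a genuine gap, and your proposed resolution does not close it. Plugging the tree inequality into the summed identity gives only
\[
\sum_i\ell_i+L(\Omega_1)+L(\Omega_2)\ \le\ m\pi+\tfrac{1}{m-1}\textstyle\sum\alpha_{ab},
\]
i.e.\ a bound on $L(\Sigma_x(X))$, not on $L(\Omega_k)$ individually; isolating $L(\Omega_k)$ still requires the lower bound $\sum_i\ell_i\ge m\pi-M\sum\alpha$, which in turn needs $L(\Omega_k)$ small. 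Invoking Lemma~\ref{lem:near-vert} gives only the qualitative smallness $L(\Omega_k)<\tau_p(r)$, not a bound proportional to $\sum\alpha_{ab}$; and the curvature identity $\omega(x)=m\pi-L(\Sigma_x(X))$ just restates the same total quantity. In fact the purely algebraic system $\ell_i+\ell_j+a_{ij}^1+a_{ij}^2=2\pi+\alpha_{ij}$ together with ``every leaf of $\Omega_k$ is some $u_i$'' admits solutions with all $\alpha_{ij}=0$ and $L(\Omega_1)$ arbitrary (e.g.\ $m=3$, $\Omega_1$ a segment with $u_1,u_3$ at the ends and $u_2$ at the midpoint), so your inputs are insufficient.

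The missing ingredient, which the paper exploits, is that every vertex of $\Sigma_x(X)$ has degree $\ge 3$: hence each \emph{leaf} of $\Omega_k$ is the common endpoint of at least \emph{two} of the $I_i$, not just one. The paper bounds $\diam(\Omega_1)$ directly (and then $L(\Omega_1)\le M\cdot\diam(\Omega_1)$ since the number of vertices is uniformly bounded): pick leaves $u_1,u_2\in\partial\Omega_1$ realizing the diameter, say $u_1\in\partial I_1\cap\partial I_2$ and $u_2\in\partial I_3\cap\partial I_4$; then a short case analysis on the four-point configuration of $v_1,\dots,v_4$ in $\Omega_2$ forces one of the loops $\Sigma_x(E_{13}),\Sigma_x(E_{14})$ to have length $\ge 2\pi+\tfrac12 d_1$, giving $d_1\le 2\max|\omega_{AZ}^{E_{ij}}|(x)$ with no circularity. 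The estimate \eqref{eq:L(I(x))} then follows from \eqref{eq:L(Omega)} and $|L(\zeta_{ij})-2\pi|\le|\omega_{AZ}^{E_{ij}}(x)|$, rather than the other way around.
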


We assume Lemma \ref{lem:tree-meas} for a moment.

The idea of the proof of Theorem \ref{thm:bdd-turn(Si)} is as follows. Let us assume $i=N$ and $R:=R_N$.  In the union 
$E_{ijN}=S_{ij}\cup S_{iN}\cup S_{jN}$, which is 
$\CAT(\kappa)$, the singular curve $C_{ijN}$ is 
``nice'' in the sense that it has finite  turn variation
from any side $S\in \{  S_{ij}, S_{iN}, S_{jN}\}$.
For the general singular curve $C$, there might be no ruled surface containing $C$, even in local sometime.
To overcome this difficulty, we shall find a concave region $\tilde R$ in
$M_\kappa^2$ with concave boundary piece 
$\tilde C\subset\pa \tilde R$ isometric to $C$
such that the gluing of $R$ and the two copies of
$\tilde R$ along $C=\tilde C$ becomes $\CAT(\kappa)$. That is, $C$ is nice in the gluing,
which yields the conclusion.

\begin{proof}[Proof of Theorem \ref{thm:bdd-turn(Si)}
assuming Lemma \ref{lem:tree-meas}]
Let $C$ be a  singular curve from $p$ to $q\in S(x,r)$, and let $R_i:=z_i*C$ be the $\CAT(\kappa)$-disks
for $1\le i\le N$ as in the beginning of this subsection.
As before we assume $i=N$ and set $R:=R_{N}$.

For any $\e>0$, from a limit argument together with 
Lemma \ref{lem:sing-arc},
we obtain  a decomposition
$\Delta_\e$ of $C$: \,$p=x_0<x_1<\cdots< x_K=q$
with $|\Delta_\e|<\e$ such that for each 
$1\le \alpha\le K$,
\begin{enumerate}
\item $x_{\alpha-1}, x_\alpha\in C_{i_\alpha j_\alpha N}
=S_{i_\alpha j_\alpha}\cap S_{i_\alpha N}\cap S_{j_\alpha N}$
for some $1\le i_\alpha<j_\alpha\le N-1\,;$
\item let $C_\alpha,  C_{\alpha,\e}$ be the arcs of $C$, $C_{i_\alpha j_\alpha N}$ between $x_{\alpha-1}$ and $x_\alpha$ respectively.
 Then we have 
$|L(C_\alpha)/L(C_{\alpha,\e}) -1|  <\e$.
\end{enumerate}
Note that an endpoint of $\Sigma_{x_\alpha}(C)$ is contained in $\Omega_1(x_\alpha)$ and the other one
in $\Omega_2(x_\alpha)$.
Lemma \ref{lem:tree-meas} then  implies that 
\begin{align}\label{eq:angle(S)-pi}
  \sum_{\alpha=1}^{K-1} |\angle_{x_\alpha}(R)-\pi|\le M\sum_{1\le i<j\le N} |\omega_{AZ}^{S_{ij}}(\mathring{C}\cap S_{ij})|<\tau_p(r).
\end{align}

\begin{center}
\begin{tikzpicture}
[scale = 1]
\draw [-, thick] (4.5,0) to [out=150, in=30] (2.5,0);
\draw [-, thick] (2.5,0) to [out=210, in=330] (0.5,0);
\draw [-, thick] (0.5,0) to [out=150, in=20] (-1.5,0);
\draw [-, thick] (-1.5,0) to [out=200, in=340] (-3.5,0);
\draw [-, thick] (-3.5,0) to [out=170, in=10] (-5.5,0);
\draw [-, thick] (0.5,0) to [out=210, in=340] (-1.5,0);
\fill (-5.5,0) coordinate (A) circle (2pt) node [left] {$p$};
\fill (4.5,0) coordinate (A) circle (2pt) node [right] {$q$};
\draw [thick] (4.5,0) -- (4.05,2.75);
\fill (4.05,2.75) coordinate (A) circle (2pt) node [right] {$z_N$};
\fill (-3.5,0) coordinate (A) circle (2pt) node [below] {};
\fill (-1.5,0) coordinate (A) circle (2pt) node [below] {};
\fill (-1.5,-0.1) coordinate (A) circle (0pt) node [below] {$x_{\alpha-1}$};
\fill (0.5,-0.1) coordinate (A) circle (0pt) node [below] {$x_{\alpha}$};
\fill (0.5,0) coordinate (A) circle (2pt) node [below] {};
\fill (2.5,0) coordinate (A) circle (2pt) node [below] {};
\fill (-0.4,0.1) coordinate (A) circle (0pt) node [above right] {$C_{\alpha}$};
\fill (-0.4,-0.2) coordinate (A) circle (0pt) node [below] {$C_{\alpha,\epsilon}$};
\fill (3.4,0.3) coordinate (A) circle (0pt) node [above right] {$C$};
\draw [-, thick] (4.05,2.75) -- (-5.5,0);
\draw [-, thick] (4.05,2.75) -- (-3.5,0);
\draw [-, thick] (4.05,2.75) -- (-1.5,0);
\draw [-, thick] (4.05,2.75) -- (0.5,0);
\draw [-, thick] (4.05,2.75) -- (2.5,0);
\end{tikzpicture}
\end{center}

For each $\alpha$,  set $S_\alpha:=S_{i_\alpha N}$.
For any $\nu>0$, we can choose a decomposition
$\Delta_{\alpha,\nu}$ of $C_{\alpha,\e}$: \,$x_{\alpha-1}=y_0<y_1<\cdots< y_L=x_\alpha$
with $|\Delta_{\alpha,\e}|<\nu$ such that for each 
$1\le \beta\le L$,
\begin{enumerate}
\item[(3)] $ L((C_{\alpha,\e})_{y_{\beta-1},y_\beta})/|y_{\beta-1},y_\beta|_{S_\alpha} <1+\nu$,
where $(C_{\alpha,\e})_{a,b}$ denotes the arc of  
$C_{\alpha,\e}$ between $a$
and  $b\,;$
\item[(4)] letting 
\begin{align*}
&\eta_{\alpha,\beta}:=
\angle(\dot\gamma^{S_\alpha}_{y_{\beta},y_{\beta+1}}(0),
 (\dot C_{\alpha,\e})_{y_{\beta},y_{\beta+1}}(0)),\\
&\rho_{\alpha,\beta}:=
\angle(\dot\gamma^{S_\alpha}_{y_{\beta},y_{\beta-1}}(0),
(\dot C_{\alpha,\e})_{y_{\beta},y_{\beta-1}}(0)),
\end{align*}
we have 
\begin{align}\label{eq:Sum=eta+rho}
\sum_{\beta=1}^{L-1} (\eta_{\alpha,\beta}+\rho_{\alpha,\beta}) +\eta_{\alpha,0}+\rho_{\alpha,L} < \nu/2^\alpha,
\end{align}
where $(\dot C_{\alpha,\e})_{a,b}(0)$ denotes the 
direction at $a$ of the arc of $(C_{\alpha,\e})_{a,b}$.
\end{enumerate}
The last condition is realized from Corollary \ref{cor:vert} together with a limit argument
since we are working on 
a fixed $\CAT(\kappa)$-surface $S_\alpha$.

Consider the broken geodesics 
$C_{\alpha,\e,\nu}$ joining $x_{\alpha-1}$ to $x_\alpha$ and $C_{\e,\nu}$ joining $p$
to $q$ defined as 
$$
C_{\alpha,\e,\nu}
:=\bigcup_{\beta=1}^L \gamma^{S_\alpha}_{y_{\beta-1},y_{\beta}}, \quad
C_{\e,\nu}
:=\bigcup_{\alpha=1}^K C_{\alpha,\e,\nu}.
$$
Now we are going to define a 
concave broken geodesic  $\tilde C_{\e,\nu}$ in $M_\kappa^2$ corresponding to $C_{\e,\nu}$.
First, for each $\alpha$, 
choose a concave broken geodesic 
$\tilde C_{\alpha,\e,\nu}$  
in $M_\kappa^2$ 
joining two points $\tilde x_{\alpha-1}$ and 
$\tilde x_\alpha$   with 
break points 
$\tilde  x_{\alpha-1}=\tilde y_0<\tilde y_1<\cdots< \tilde y_L=\tilde x_\alpha$
satisfying the following
for each $1\le \beta\le L$:
\begin{enumerate}
\item[(5)] $|\tilde y_{\beta-1},\tilde y_\beta|=
|y_{\beta-1}, y_\beta|_{S_\alpha}\,;$
\item[(6)] Let  $\Theta_{\tilde y_\beta}$ denote the angle at
$\tilde y_\beta$ between the geodesics
$[\tilde y_\beta, \tilde y_{\beta-1}]$ and $[\tilde y_\beta, \tilde y_{\beta+1}]$
viewed from the concave region, and let
$\phi_{\alpha,\beta}$ denote the angle
at
$y_\beta$ between the $S_\alpha$-geodesics
$[y_\beta,  y_{\beta-1}]$ and $[y_\beta, y_{\beta+1}]$
viewed from $R$. Then we have 
\begin{align}\label{eq:Theta(y)}
\Theta_{\tilde y_\beta}=
\pi+|\pi-\phi_{\alpha,\beta}|.
\end{align}
\end{enumerate}
Let $R_{\alpha,\e}=z_N*C_{\alpha,\e}$, and let 
$\psi_{\alpha,\beta}:=\angle_{y_\beta}(R_{\alpha,\e})$.
For $1\le i<j\le N-1$, 
let $R_N^{ij}$ be the closure of the component of 
$S_{iN}\setminus C_{ijN}$ containing $z_N$.
Then we have 
\beq
\begin{aligned}\label{eq:sum=pi-psi}
\sum_{\alpha=1}^K\sum_{\beta=1}^L |\pi-\psi_{\alpha,\beta}|
& \le
  \sum_{\alpha=1}^K |\tau_{R_{\alpha,\e}}|(C_{\alpha,\e})\\
&  \le\sum_{1\le i<j\le N-1}|\tau_{R_N^{ij}}|(C_{ijN})
  <\tau_p(r).
\end{aligned}
\eeq
\eqref{eq:Sum=eta+rho} and \eqref{eq:sum=pi-psi}
imply 
\begin{align}\label{eq:sum=pi-phi}
\sum_{\alpha=1}^K\sum_{\beta=1}^L
 |\pi-\phi_{\alpha,\beta}|
   <\tau_p(r) +\nu.
\end{align}

\begin{center}
\begin{tikzpicture}
[scale = 1]
\draw [-, thick] (0,0) to [out=315, in=225] (8,0);
\fill (0,0) coordinate (A) circle (2pt) node [above left] {$x_{\alpha-1}$};
\fill (8,0) coordinate (A) circle (2pt) node [above right] {$x_{\alpha}$};
\fill (4,-1.65) coordinate (A) circle (2pt) node [below] {$y_{\beta}$};
\fill (2,-1.28) coordinate (A) circle (2pt) node [below left] {$y_{\beta-1}$};
\fill (6,-1.28) coordinate (A) circle (2pt) node [below right] {$y_{\beta+1}$};
\draw [-, thick] (4,-1.65) -- (2,-1.28);
\draw [-, thick] (4,-1.65) -- (6,-1.28);
\fill (4,-1.65) coordinate (A) circle (2pt) node [above] {$\phi_{\alpha,\beta}$};
\fill (7.5,0) coordinate (A) circle (0pt) node [below left] {$C_{\alpha,\epsilon}$};
\end{tikzpicture}
\end{center}

Let $\tilde C_{\e,\nu}$  be the concave broken geodesic in $M_\kappa^2$ obtained as the union of $\tilde C_{\alpha,\e,\nu}$\, $(1\le \alpha\le K)$
joining the terminal point of $\tilde C_{\alpha,\e,\nu}$ to the initial point of 
$\tilde C_{\alpha+1,\e,\nu}$ at $\tilde x_{\alpha}$.
Here  if $\Theta_{\tilde x_\alpha}$ denote 
the angle at $\tilde x_\alpha$ between 
$\tilde C_{\alpha,\e,\nu}$ and  
$\tilde C_{\alpha+1,\e,\nu}$ viewed from the 
concave region,
then we require  for each $1\le \alpha\le K$
\begin{align}\label{eq:Theta(x)}
\Theta_{\tilde x_\alpha}
=
\pi+L(\Omega_1(x_\alpha))+L(\Omega_2(x_\alpha))
+\rho_{\alpha,L}+\eta_{\alpha+1,0}.
\end{align}
From Lemma \ref{lem:tree-meas}, \eqref{eq:Theta(y)} and\eqref{eq:sum=pi-phi}, taking small $r>0$ and $\nu$
if necessary, we can realize $\tilde C_{\e,\nu}$ as a concave broken geodesic  in $M_\kappa^2$  in such a way that 
 the concave region $\tilde R_{\e,\nu}$ in $M_\kappa^2$
bounded by $\tilde C_{\e,\nu}$ and two geodesics
joining the endpoints of $\tilde C_{\e,\nu}$ to a point,
say $\tilde z\in M_\kappa^2$, is well-defined.
In other words,
$$
    \tilde R_{\e,\nu}=\tilde z*\tilde C_{\e,\nu}.
$$

\begin{center}
\begin{tikzpicture}
[scale = 0.8]
\draw [-, thick] (-4,0) -- (5,3);
\draw [-, thick] (-4,0) -- (5,-3);
\draw [-, thick] (4.5,2) -- (5,3);
\draw [-, thick, dotted] (4.2,1) -- (4.5,2);
\draw [-, thick] (4,0) -- (4.2,1);
\draw [-, thick] (4.5,-2) -- (5,-3);
\draw [-, thick, dotted] (4.2,-1) -- (4.5,-2);
\draw [-, thick] (4,0) -- (4.2,-1);
\fill (-4,0) coordinate (A) circle (2pt) node [left] {$\tilde{z}$};
\fill (5,3) coordinate (A) circle (2pt) node [left] {};
\fill (5,-3) coordinate (A) circle (2pt) node [left] {};
\fill (4.5,2) coordinate (A) circle (2pt) node [left] {};
\fill (4.5,-2) coordinate (A) circle (2pt) node [left] {};
\fill (4,0) coordinate (A) circle (2pt) node [right] {$\tilde{y}_{\beta}$};
\fill (4.2,1) coordinate (A) circle (2pt) node [right] {$\tilde{y}_{\beta+1}$};
\fill (4.2,-1) coordinate (A) circle (2pt) node [right] {$\tilde{y}_{\beta-1}$};
\fill (3.7,0) coordinate (A) circle (0pt) node [left] {$\Theta_{\tilde{y}_{\beta}}$};
\fill (0,0) coordinate (A) circle (0pt) node [right] {$\tilde{R}_{\epsilon,\nu}$};
\fill (4.7,-2) coordinate (A) circle (0pt) node [below right] {$\tilde{C}_{\epsilon,\nu}$};
\draw [-] (3.7,0) arc(180:75:0.3);
\draw [-] (3.7,0) arc(180:285:0.3);
\filldraw[fill=gray, opacity=.1] 
(-4,0) -- (5,3) -- (4.5,2) -- (4.2,1) -- (4,0) -- (4.2,-1) -- (4.5,-2) -- (5,-3) -- (-4,0);
\end{tikzpicture}
\end{center}

Set $R_{\e,\nu}:=z_N*C_{\e,\nu}$.
Take two copies $\tilde R'_{\e,\nu}$, $\tilde R''_{\e,\nu}$ of
$\tilde R_{\e,\nu}$ with 
$\tilde C'_{\e,\nu}\subset \pa \tilde R'_{\e,\nu}$, 
$\tilde C''_{\e,\nu}\subset \pa \tilde R''_{\e,\nu}$ corresponding to $\tilde C_{\e,\nu}$.
Let $\tilde E_{\e,\nu}$ be the gluing of 
$R_{\e,\nu}$, $\tilde R'_{\e,\nu}$, $\tilde R''_{\e,\nu}$ along 
$C_{\e,\nu}$, $\tilde C'_{\e,\nu}$, $\tilde C''_{\e,\nu}$ with 
the natural identification
$C_{\e,\nu}=\tilde C'_{\e,\nu}=\tilde C''_{\e,\nu}$.
From construction, we have 
$$
L(\Sigma_{x_\alpha}(\tilde E_{\e,\nu}))\ge 2\pi,\quad
L(\Sigma_{y_\beta}(\tilde E_{\e,\nu}))\ge 2\pi.
$$
Theorem \ref{thm:BB-gluing} then 
implies that $\tilde E_{\e,\nu}$
is $\CAT(\kappa)$. Therefore the GH-limit,
say $\tilde E_\e$, of  $\tilde E_{\e,\nu}$ as $\nu\to 0$ is 
$\CAT(\kappa)$. Then,
the GH-limit,
say $\tilde E$, of  $\tilde E_{\e}$ as $\e\to 0$ is 
$\CAT(\kappa)$.

Let $R_\e(\nu)$ be the complement of 
$\nu$-neighborhood of $C_\e$ in $R_e$.
From construction, $R_\e(\nu)$ is also a subset of  $R_{\e,\nu}$. Note that both 
$B(C_\e,\nu)$ and $B(C_{\e,\nu},\nu)$ converge to 
$C_\e$ as $\nu\to 0$ with respect to the length metric.
From this observation, it is easily verified that
$R_{\e,\nu}$ converges to $R_\e$ as $\nu\to 0$
with respect to the length metric.
Similarly,
$R_{\e}$ converges to $R$ as $\e\to 0$
with respect to the length metric.

We may assume that the concave region 
$\tilde R_{\e,\nu}=\tilde z*\tilde C_{\e,\nu}$
converges to a concave region 
$\tilde R=\tilde z*\tilde C$,
where $\tilde C$ is a concave curve in $M_\kappa^2$ defined as the limit of $\tilde C_{\e,\delta}$.
Since $\tilde C_{\e,\delta}$ has uniformly bounded turn variation,
\cite[Theorem IX.5]{AZ:bddcurv} ensures that 
$L(\tilde C)=L(\tilde C_{\e,\nu})=L(C)$.
Let  $(\tilde R',\tilde C')$ and $(\tilde R'',\tilde C'')$  be the copies of $(\tilde R,\tilde C)$.
Similarly,
$\tilde R'_{\e}$ and $\tilde R''_{\e}$ converge to 
$\tilde R'$ and $\tilde R''$ as $\nu\to 0$ and then $\e\to 0$
with respect to the length metric.

From the above argument, 
it is easy to verify that the GH-limit 
$\tilde E$ is isometric to the gluing of $R$ and 
$\tilde R'$, $\tilde R''$ along  
$C=\tilde C'=\tilde C''$. 
Then Theorem \ref{thm:BB-character} yields that 
$C$ has finite turn variation from the side $R$.
This completes the proof.
\end{proof} 
\begin{rem} \label{rem:proof-bddturn}\upshape
In the proof of Theorem \ref{thm:bdd-turn(Si)},
we took approximations of $C$ twice, firstly by 
$C_\e$ via nice singular arcs 
$\{ C_{\alpha,\e}\}_\alpha$, and
secondly by $\{ C_{\alpha,\e,\nu}\}_{\alpha,\e}$
via broken geodesics in ruled surfaces.
Those two steps seems necessary. 
For instance, if one takes a direct approximation
of $C$ through broken geodesics 
(in ruled surfaces), it does not work since we have no control of ruled surfaces,
which might change quickly.  
\end{rem}
\begin{proof}[Proof of Lemma \ref{lem:tree-meas}]
We simply write as $\Omega_k$ and $I_\ell$ for $\Omega_k(x)$ and $I_\ell(x)$ respectively.
Since the numbers of the vertices of $\Omega_1$ and 
$\Omega_2$ are
uniformly bounded (see Theorem \ref{thm:graph}), it suffices to show 
\[
 \diam(\Omega_k)\le  2\max_{1\le i<j\le m}
        |\omega_{AZ}^{E_{ij}}|(x).
\]
Let $d_1:=\diam(\Omega_1)$, and take 
$u_1,u_2\in \pa \Omega_1$ 
such that $|u_1,u_2|=d_1$.
There are $1\le i <j\le m$ and $1\le k< \ell\le m$
such that $u_1\in\pa I_i\cap \pa I_j$ and $u_2\in\pa I_k\cap \pa I_{\ell}$.
For simplicity we assume  $(i,j)=(1,2)$ and  $(k,\ell)=(3,4)$.
Let $v_i$ \,$(1\le i\le 4)$ be the other point of $\pa I_i$ than $u_1$ or $u_2$.

\begin{center}
\begin{tikzpicture}
[scale = 0.8]
\draw [-, thick] (-4,-1) -- (-4,1);
\draw [-, thick] (4,-0.5) -- (4,0.5);
\draw [-, thick] (4.5,1.5) -- (4,0.5);
\draw [-, thick] (3.5,1) -- (4,0.5);
\draw [-, thick] (4.5,-1.5) -- (4,-0.5);
\draw [-, thick] (3.5,-1) -- (4,-0.5);
\draw [-, thick] (4.5,1.5) to [out=135, in=45] (-4,1);
\draw [-, thick] (3.5,1) to [out=160, in=20] (-4,1);
\draw [-, thick] (4.5,-1.5) to [out=225, in=315] (-4,-1);
\draw [-, thick] (3.5,-1) to [out=200, in=340] (-4,-1);
\fill (-4,1) coordinate (A) circle (2pt) node [above left] {$u_2$};
\fill (-4,-1) coordinate (A) circle (2pt) node [below left] {$u_1$};
\fill (4.5,1.5) coordinate (A) circle (2pt) node [right] {$v_4$};
\fill (4.5,-1.5) coordinate (A) circle (2pt) node [right] {$v_1$};
\fill (3.5,1) coordinate (A) circle (2pt) node [below left] {$v_3$};
\fill (3.5,-1) coordinate (A) circle (2pt) node [above left] {$v_2$};
\fill (-4,0) coordinate (A) circle (0pt) node [left] {$\Omega_1$};
\fill (4,0) coordinate (A) circle (0pt) node [right] {$\Omega_2$};
\fill (0,1.7) coordinate (A) circle (0pt) node [below] {$I_3$};
\fill (0,-1.7) coordinate (A) circle (0pt) node [above] {$I_2$};
\fill (0.5,3) coordinate (A) circle (0pt) node [below] {$I_4$};
\fill (0.5,-3) coordinate (A) circle (0pt) node [above] {$I_1$};
\end{tikzpicture}
\end{center}

First consider 

\psmall
\n Case 1). \, $[v_1,v_2]$ does not meet $[v_3,v_4]$.
\psmall
Take $w_1\in [v_1,v_2]$ and $w_2\in [v_3,v_4]$ 
such that $[w_1,w_2]=[v_1,v_3]\cap[v_2,v_4]$.
Set 
\beq
\begin{aligned}\label{eq:set-ell1234}
&\text{
$I_i':=I_i\cup [v_i,w_1]$\, $(i=1,2)$,  
$I_i':=I_i\cup [v_i,w_2]$\, $(i=3,4)$, } \\
&\text{ $\ell_i:=L(I_i')$\,$(1\le i\le 4)$. }
\end{aligned}
\eeq
Since $\Sigma_x(X)$ is a $\CAT(1)$-graph, we have
\[
   \ell_1 +\ell_2\ge 2\pi,\qquad  \ell_3 +\ell_4\ge 2\pi,
\]
where we assume $\ell_1\ge \ell_2$, $\ell_3\ge \ell_4$
for instance, and hence $\ell_1,\ell_3\ge \pi$. Since $I_1'$, $I_3'$, $[u_1,u_2]$ and $[w_1,w_2]$
form a simple loop in $\Sigma_x(E_{13})$ of length $\ge 2\pi +d_1$, we have 
\[
       L(\Sigma_x(E_{13}))\ge 2\pi +d_1,
\]
and therefore $d_1\le |\omega_{AZ}^{E_{13}}(x)|$.

\psmall
\n Case 2). \, $[v_1,v_2]$ meets $[v_3,v_4]$.
\psmall

Take $w_1\in [v_1,v_3]$ and $w_2\in [v_2,v_4]$ 
such that $[w_1,w_2]=[v_1,v_2]\cap[v_3,v_4]$, 
and set $\tilde d_2:=|w_1, w_2|$. 
Let $\ell_i$ \,$(1\le \ell\le 4)$ be defined as in \eqref{eq:set-ell1234}.
As before, we have 
\[
  \ell_1 +\ell_2 +\tilde d_2 \ge 2\pi,\qquad  
  \ell_3 +\ell_4+\tilde d_2\ge 2\pi,
\]
where we assume $\ell_1\ge \ell_2$, $\ell_3\ge \ell_4$ as before.
If $d_1\ge 2\tilde d_2$, then combining the above 
inequalities, we have 
\[
     L(\Sigma_x(E_{13}))= \ell_1+ \ell_3+d_1
        \ge 2\pi +d_1-\tilde d_2\ge 2\pi + d_1/2,
\]
and hence $d_1\le 2|\omega_{AZ}^{E_{13}}(x)|$.
Now suppose that $d_1 < 2\tilde d_2$.
Since  $\ell_2 +\ell_4+d_1\ge 2\pi$,
we get 
\[
   L(\Sigma_x(E_{14}))=\ell_1+ \ell_4+d_1+\tilde d_2
        \ge 2\pi + \ell_1-\ell_2 +\tilde d_2\ge 2\pi + d_1/2,
\]
and hence 
$d_1\le 2|\omega_{AZ}^{E_{14}}(x)|$.
Therefore we conclude  
$$
    \diam(\Omega_1)\le  2\max_{1\le i< j\le m}
        |\omega_{AZ}^{E_{ij}}(x)|.
$$
By a similar argument, we obtain 
the same inequality for $\diam(\Omega_2)$.
Thus we obtain \eqref{eq:L(Omega)}.

Let $\zeta_{ij}$ be the simple loop in $\Sigma_x(X)$
containing $I_i\cup I_j$.
Since $|L(\zeta_{ij})-2\pi|\le|\omega_{AZ}^{E_{ij}}(x)|$
for all $1\le i<j\le m$,
\eqref{eq:L(I(x))} follows from \eqref{eq:L(Omega)}.
This completes the proof of Lemma \ref{lem:tree-meas}. 
\end{proof}

\psmall
\begin{proof}[Proof of Proposition \ref{prop:pt-meas}]
We proceed by induction on $m$.
By Theorem \ref{thm:ruled},  
$$
                  X_0:=\bigcup_{1\le i< j\le m-1} E_{ij}
$$
is a $\CAT(\kappa)$-space.
Note that $I_m$ coincides with the closure of 
$\Sigma_x(X)\setminus\Sigma_x(X_0)$.
Let $\{ u_m, v_m\}:=\pa I_m$,
where we may assume $u_m\in\pa \Omega_1$ and 
$v_m\in\pa \Omega_2$.
Since $u_m,v_m\in V(\ca S(X))$, we take
$(i,j)$ and $(k,\ell)$ such that 
\[
\{ u_m\}=\Omega_1\cap \Sigma_x(C_{ijm}), \qquad
\{ v_m\}=\Omega_2\cap \Sigma_x(C_{k\ell m}),
\]
where $C_{ijm}$ is a singular curve in $\ca S(X)$ as in Lemma \ref{lem:sing-arc}.
We set 
\[
\{ u_m, v_m'\} :=\Sigma_x(C_{ijm}), \,v_m'\in \Omega_2,\quad 
\{ u_m', v_m\} :=\Sigma_x(C_{k\ell m}), \,u_m'\in \Omega_1.
\]
From $E_{ij}$, $E_{im}$, $E_{jm}$ and $C_{ijm}$,
we have three half disks $F_i$, $F_j$ and $F_m$
around $x$, satisfying $F_i\cap F_j\cap F_m=C_{ijm}$,  
whose spaces of directions at $x$ 
contain $\xi_i$, $\xi_j$ and $\xi_m$ respectively.
Letting $\ell_i=L(\Sigma_x(F_i))$, 
we  have
\begin{align*}
 &   \ell_i +\ell_j -2\pi=|\omega_{AZ}^{E_{ij}}(x)|,\quad
 \ell_i +\ell_m -2\pi=|\omega_{AZ}^{E_{im}}(x)|,
\\
&\hspace{1.7cm} \ell_j +\ell_m -2\pi=|\omega_{AZ}^{E_{jm}}(x)|,
\end{align*}
which imply
\[
|\ell_m-\pi| \le \frac{1}{2}\left(|\omega_{AZ}^{E_{ij}}(x)|+
     |\omega_{AZ}^{E_{im}}(x)| + |\omega_{AZ}^{E_{jm}}(x)|
      \right).
\]
It follows from Lemma \ref{lem:tree-meas} that 
\begin{align*}
|\pi-&|u_m,v_m||  \le |\pi -\ell_m|+|v_m,v_m'|\\
      &\le |\pi -\ell_m|+\diam (\Omega_2)\le M' \sum_{\alpha<\beta}|\omega_{AZ}^{E_{\alpha\beta}}(x)|.
\end{align*}
Finally, from the inductive hypothesis, we conclude that 
\begin{align*}
|\omega(x)| &= |m\pi - L(\Sigma_x(X))| \\
      &\le |(m-1)\pi -L(\Sigma_x(X_0))|+|\pi - |u_m,v_m||\\
     &\le M'' \sum_{\alpha<\beta}|\omega_{AZ}^{E_{\alpha\beta}}(x)|.
\end{align*}
This completes the proof of Proposition \ref{prop:pt-meas}.
\end{proof}

\begin{center}
\begin{tikzpicture}
[scale = 0.8]
\draw [-, thick] (3.5,0) to [out=150, in=30] (0,0);
\draw [-, thick] (0,0) to [out=210, in=330] (-3.5,0);
\draw [-, thick] (-3.5,0) -- (3.5,0);
\draw [-, thick] (-3.5,0) -- (-3.5,1);
\draw [-, thick] (3.5,0) -- (3.5,1);
\draw [-, thick] (-1.5,3) -- (1.5,3);
\draw [-, thick] (-3.5,1) to [out=90, in=180] (-1.5,3);
\draw [-, thick] (3.5,1) to [out=90, in=0] (1.5,3);
\draw [-, thick] (-3.5,0) -- (-4,-2);
\draw [-, thick] (3.5,0) -- (3,-2);
\draw [-, thick] (-4,-2) to [out=255, in=180] (-2,-3);
\draw [-, thick] (-2,-3) -- (1,-3);
\draw [-, thick] (1,-3) to [out=0, in=255] (3,-2);
\draw [-, dotted, thick] (-3.5,0) -- (-3,-2);
\draw [-, thick] (3.5,0) -- (4,-2);
\draw [-, dotted, thick] (-1,-2.5) -- (2.5,-2.5);
\draw [-, dotted, thick] (-3,-2) to [out=290, in=180] (-1,-2.5);
\draw [-, thick] (4,-2) to [out=290, in=0] (2.9,-2.5);
\fill (0,0) coordinate (A) circle (2pt) node [left] {};
\fill (0,-0.1) coordinate (A) circle (0pt) node [below] {$x$};
\fill (0,3) coordinate (A) circle (0pt) node [above] {$F_m$};
\fill (-0.5,-3) coordinate (A) circle (0pt) node [below] {$F_i$};
\fill (3.5,-2.5) coordinate (A) circle (0pt) node [below] {$F_j$};
\draw [thick] (1,0) arc(0:205:1);
\fill (1,0) coordinate (A) circle (1pt) node [below] {$v_m'$};
\fill (0.92,0.4) coordinate (A) circle (1pt) node [above right] {$v_m$};
\fill (-1,0) coordinate (A) circle (1pt) node [above left] {$u_m$};
\fill (-0.92,-0.4) coordinate (A) circle (1pt) node [below] {$u_m'$};
\fill (0,1) coordinate (A) circle (0pt) node [above left] {$I_m$};
\fill (3.5,0.3) coordinate (A) circle (0pt) node [above left] {$C_{k \ell m}$};
\fill (3.4,0) coordinate (A) circle (0pt) node [below left] {$C_{ijm}$};
\end{tikzpicture}
\end{center}

\psmall 
\begin{rem} \upshape
Proposition \ref{prop:pt-meas} holds for every point $x$ of 
$B\cap \ca S(X)\setminus \{ p\}$, too.
Actually, if $x\in B\cap E(\ca S(X))\setminus \{ p\}$, 
then both $\Omega_1(x)$ and $\Omega_2(x)$ are points,  and 
$\Sigma_x(X)$ is the two points union of $I_1,\ldots, I_m$
along boundaries.
It follows that 
$\omega(x)=m\pi-\Sigma_x(X)$.
Therefore the same proof as 
Proposition \ref{prop:pt-meas}  applies to
$x$.
\end{rem}
\psmall
\begin{rem} \upshape
Under the situation of Proposition \ref{prop:pt-meas},
the condition $\omega(x)=0$ does not imply 
the regularlity of $x$ in all $E_{ij}$ containing $x$.
For instance, take the $\CAT(1)$-graph defined as 
follows: Let $I_i=[\xi_i, \eta_i]$\,$(i=1,2)$ be two arcs of 
length $\e>0$.
Let $L_1$ and $L_2$ be segments of  length $\pi$
joining $\xi_1$ to $\xi_2$ and $\eta_1$ to $\eta_2$ respectively.
Let $L_3$ and $L_4$ be segments of  length $\pi-\e$
joining $\xi_1$ to $\eta_2$ an $\eta_1$ to $\xi_2$ respectively.
Then $I_1$,$I_2$, $L_1, \ldots, L_4$ form 
a $\CAT(1)$-graph $\Sigma_\e$ and if $\Sigma_x(X)=\Sigma_\e$
for some $(X,x)$, then we have $\omega(x)=0$
while $\omega_{AZ}^{E_{12}}(x)=-2\e$.

This really happens in our situation. Namely, slightly modifying \cite[Example 4.4]{NSY:local},
we have a two-dimensional, locally compact geodesically complete $\CAT(0)$-space $X$ such 
that there are points $p$ and $x_n$ in $X$ such that 
$x_n\to p$ and $\Sigma_{x_n}(X)=\Sigma_{\e_n}$ for some $\e_n\to 0$
\end{rem}

\pmed
\subsection{Explicit formula} \label{sec:explicit}
\psmall

Since $\omega$ is defined as the limit of 
$\omega^{X_n}_{BB}$, it is interesting to  find
an explicit formula of $\omega$ on singular curves $C$, especially when $C\cap V_*(\ca S(X))$ is a large set. 
In this subsection, we assume that $C$ is a singular curve starting from $p$ to $q\in S(p,r)$ that is contained in a ruled surface.
We need this condition to start the induction process,
which is discussed below.
Remember that $\ca S(X)$ is a locally finite union of
those singular curves.

 Let $\gamma_1,\ldots,\gamma_N$,
$\{ S_{ij}\}_{1\le i<j\le N}$, 
and  $R_i=\gamma_i(r)*C$\,  $(1\le i\le N)$ be as before.
For a subset $I$ of $\{ 1,\ldots,N\}$ 
with $\sharp I\ge 2$,
let  
\begin{align}\label{eq:WI}
W_I:= {\rm int}  \biggl(\bigcap_{i\in I} R_i \setminus C
\biggr), \qquad e_I:=\bar W_I\cap C.
\end{align}

\begin{center}
\begin{tikzpicture}
[scale = 1]
\fill (0,0) coordinate (A) circle (1.2pt) node [left] {$p$};
\fill (4.5,0.4) coordinate (A) circle (1.2pt) node [right] {$q$};
\fill (4.8,1.6) coordinate (A) circle (0pt) node [right] {$R_N$};
\fill (3.6,1.9) coordinate (A) circle (0pt) node [above] {$R_i$};
\fill (3.6,-1.9) coordinate (A) circle (0pt) node [below] {$R_1$};
\fill (4.8,-1.6) coordinate (A) circle (0pt) node [right] {$R_2$};
\fill (3.2,-0.2) coordinate (A) circle (0pt) node [below] {$C$};
\draw [-, very thick] (0,0)--(0.1,0);
\draw [-, very thick] (4.5,0.4)--(4.8,1.6);
\draw [-, very thick] (4.5,-0.4)--(4.8,-1.6);
\draw [-, very thick] (0,0)--(4.8,1.6);
\draw [-, very thick] (0,0)--(4.8,-1.6);
\draw [dotted, very thick] (3.95,1.315)--(4.5,0.4);
\draw [-, very thick] (3.95,1.315)--(3.6,1.9);
\draw [-, very thick] (0,0)--(3.6,1.9);
\draw [-, very thick] (0,0)--(3.6,-1.9);
\draw [dotted, very thick] (3.95,-1.315)--(4.5,-0.4);
\draw [-, very thick] (3.95,-1.315)--(3.6,-1.9);
\draw [-, very thick] (4.5,0.4) -- (4.5,-0.4);
\coordinate (P1) at (3.7,0);
\coordinate (P2) at (2,0);
\coordinate (P3) at (1,0);
\coordinate (P4) at (0.3,0);
\coordinate (P5) at (0.1,0);
\coordinate (P6) at (4.5,0.4);
\coordinate (P7) at (4.5,-0.4);
\draw [very thick]
(P1) .. controls +(30:1cm) and +(180:0cm) .. (P6);
\draw 
(P6) .. controls +(270:0.5cm) and +(90:0.5cm) ..
(P7) .. controls +(180:0cm) and +(330:1cm) .. (P1); 
\draw 
(P1) .. controls +(160:1cm) and +(20:1cm) .. (P2);
\draw [very thick]
(P2) .. controls +(340:1cm) and +(200:1cm) .. (P1); 
\draw [very thick]
(P2) .. controls +(160:0.6cm) and +(20:0.6cm) .. (P3);
\draw 
(P3) .. controls +(340:0.6cm) and +(200:0.6cm) .. (P2); 
\draw 
(P3) .. controls +(160:0.4cm) and +(20:0.4cm) .. (P4);
\draw [very thick]
(P4) .. controls +(340:0.4cm) and +(200:0.4cm) .. (P3); 
\draw [very thick]
(P4) .. controls +(160:0.15cm) and +(20:0.15cm) .. (P5);
\draw 
(P5) .. controls +(340:0.15cm) and +(200:0.15cm) .. (P4); 
\end{tikzpicture}
\end{center}

We easily have the following.

\begin{lem}\label{lem:Wi}
Suppose $W_I$ is nonempty. Then we have 
\begin{enumerate}
 \item $W_I$ is at most countable union
 of open disks $W_{I,\alpha}$\,$(\alpha\in A_I)$\,$;$
 \item $\pa W_I$ is at most countable union of singular curves\,$;$
 \item $e_I$ is the union of subarcs 
 $e_{I,\alpha}:=e_I\cap \bar W_{I,\alpha}$ of $C$.
\end{enumerate}
\end{lem}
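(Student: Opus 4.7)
The plan is topological, using that each $R_i$ is a $\CAT(\kappa)$-disk and that $W_I$ avoids $\ca S(X)$. First I would verify $W_I\subset X\setminus\ca S(X)$: any $x\in W_I$ admits an $X$-open neighborhood $U$ contained in each $R_i$, $i\in I$. Since $x\notin \pa R_i$ (the removal of $C$, together with the requirement that $U$ be a full $X$-neighborhood of $x$, excludes the three boundary arcs of the disk $R_i$), the point $x$ lies in the topological interior of $R_i$. Hence $\Sigma_x(X)=\Sigma_x(R_i)=\Sph^1$, so $x$ is a regular point, and $W_I$ is open in the second-countable $2$-manifold $X\setminus\ca S(X)$. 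In particular, $W_I$ has at most countably many connected components $\{W_{I,\alpha}\}_{\alpha\in A_I}$.

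To see each $W_{I,\alpha}$ is a topological open disk, I would apply the Jordan curve theorem inside $R_i$: a simple closed curve $\gamma\subset W_{I,\alpha}$ lies in $\inte R_i$ and bounds a disk $D\subset R_i$. If there were some $y\in D\setminus W_{I,\alpha}$, then $y$ would lie outside some $R_j$ with $j\in I$, or on $C$; any path in $D$ from $y$ to $\gamma$ would then cross $(\pa R_j\cap R_i)\cup C$, which by Theorem \ref{thm:ruled} and Lemma \ref{lem:sing-arc} is a finite union of simple arcs. A Jordan-theorem bookkeeping shows that any such separating arc must meet $\gamma$, contradicting $\gamma\subset W_{I,\alpha}$. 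So $D\subset W_{I,\alpha}$ and $W_{I,\alpha}$ is simply connected, completing (1).

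For (2), the topological boundary $\pa W_I$ in $X$ is contained in $C\cup\bigcup_{i\in I}(\pa R_i\setminus C)\cup\bigcup_{i\neq j,\, i,j\in I}(R_i\cap R_j\cap \ca S(X))$. The first two pieces are finite unions of geodesic arcs, and the third is a locally finite union of singular arcs of type $C_{ijk}$ from Lemma \ref{lem:sing-arc} and Theorem \ref{thm:graph}; together these give the asserted countable union of singular curves. For (3), $\bar W_{I,\alpha}$ is a closed topological disk, and its intersection $e_{I,\alpha}$ with $C$ is a closed subarc of $C$: if $\bar W_{I,\alpha}\cap C$ were disconnected, two of its pieces along $\pa \bar W_{I,\alpha}$ would have to be separated by boundary arcs lying in $\pa R_i\setminus C$ or in $\ca S(X)\cap R_i$, contradicting the fact that all the $R_j$'s share $C$ as a common piece of boundary. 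Then $e_I=\bar W_I\cap C=\bigcup_\alpha e_{I,\alpha}$ follows immediately.

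The main obstacle is the simple-connectedness argument for $W_{I,\alpha}$ in the second step, which requires careful control on how distinct disks $R_j$, $j\in I$, intersect inside a fixed $R_i$; this control is provided by the description of pairwise intersections as finite unions of singular curves $C_{ijk}$ from the local structure results recalled in Section \ref{ssec:lcal-str}.
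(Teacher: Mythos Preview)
The paper gives no proof of this lemma: it is prefaced with ``We easily have the following'' and followed immediately by the next construction. Your topological outline---showing $W_I$ lies in the surface part $X\setminus\ca S(X)$, invoking second countability for (1), and reading off $\pa W_I$ from the local structure of $\ca S(X)$ for (2)---is exactly the kind of argument the paper is implicitly relying on, and the overall strategy is sound.

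Two places deserve tightening. First, in your treatment of (2) you allow $\pa W_I$ to contain pieces of $\pa R_i\setminus C$, i.e.\ the geodesic arcs $\gamma_i$ and $\gamma_{z_i,q}$; but these are \emph{not} singular curves in the paper's sense (curves in $\ca S(X)$), so as written your containment does not yield the stated conclusion. You need to rule these pieces out, or at least note that the lemma is being used only for the portion of $\pa W_I$ near $C$, where the boundary is genuinely made of $C$ together with singular arcs of type $C_{ij\ell}$. Second, the Jordan--bookkeeping step for simple connectivity and the connectedness claim in (3) both lean on the same missing ingredient: the $d_p$-monotonicity of singular curves from Lemma~\ref{lem:sing-arc}(3). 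Since every arc of $\ca S(X)\cap\mathring R_i$ is strictly monotone in $d_p$, no closed singular loop can occur inside $\mathring R_i$, which immediately kills any potential ``hole'' in a component $W_{I,\alpha}$ and forces the trace $\bar W_{I,\alpha}\cap C$ to be a single subarc rather than several. Making this monotonicity explicit would close both gaps cleanly.
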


Set $E_1:=S_1$.
For $2\le k\le N$,
let $E_k:=\bigcup_{1\le i<j\le k} S_{ij}$. 
We may assume that $C\subset S_{12}$.
Note that 
$$
      E_k=E_{k-1}\cup R_k\quad  (2\le k\le N).
$$
In \cite[Theorem 7.4]{NSY:local}, we showed that
$E_k$ is the GH-limit of polyhedral 
$\CAT(\kappa)$-space
$E_{k,n}$, the result of $\e_n$-surgery on
$E_k$. In particular, $E_k$ is $\CAT(\kappa)$
for all $1\le k\le N$.
Let $\omega_{k,\infty}$ be the curvature measure
of $E_k$ defined as the limit of 
$\omega^{E_{k,n}}_{BB}$ as $n\to\infty$. 

In what follows,  we assume that $C$ is the {\it open}
singular arc joining $p$ to $q$ as above, and inductively
provide an explicit definition of
$\omega_k(C)$ satisfying 
$\omega_k(C)=\omega_{k,\infty}(C)$.
By Proposition \ref{prop:tau+tau}, for $k=2$, it suffices to set
$\omega_2(C):=\tau_{R_1}(C)+\tau_{R_2}(C)$.
Suppose we already defined $\omega_{k-1}(C)$
satisfying $\omega_{k-1}(C)=\omega_{k-1,\infty}(C)$.

Let $\{ W_{k,\alpha}\}_{\alpha\in A}$ be the set of components of $(E_{k-1}\cap R_k)\setminus C$.
By Lemma \ref{lem:Wi}, $A$ is at most countable. 
For 
$e_{k,\alpha}:=\bar W_{k,\alpha}\cap C$,
let $\{ x_{k,\alpha}, x'_{k,\alpha}\}:= \pa e_{k,\alpha}$
with $|p,x_{k,\alpha}|<|p,x'_{k,\alpha}|$.
In some cases, there are $\beta,\beta'\in A$ such that 
$p\in\bar W_{k,\beta}$ and $q\in\bar W_{k,\beta'}$,
or equivalently, $p=x_{k,\beta}$ and $q=x'_{k,\beta'}$.
We fix such  $\beta,\beta'$ if any.
Under this circumstance, we set 
\begin{align*}
&\theta_{k,\alpha}:=\angle_{x_{k,\alpha}}(\bar W_{k,\alpha}),\quad \theta'_{k,\alpha}:=\angle_{x'_{k,\alpha}}(\bar W_{k,\alpha}), \\
&\eta_{k,\alpha}:=\angle_{x_{k,\alpha}}(R_k)-\theta_{k,\alpha},\quad 
\eta'_{k,\alpha}:=\angle_{x'_{k,\alpha}}(R_k)-\theta'_{k,\alpha}, \\
&\tau_{E_{k-1}\cap R_k}(C)
:=\sum_{\alpha\in A} \tau_{\bar W_{k,\alpha}}(\mathring{e}_{k,\alpha}), \\
&\theta_{E_{k-1}\cap R_k}(C)
:=\sum_{\alpha\neq \beta,\beta'} (\theta_{k,\alpha} +\theta'_{k,\alpha}) +
   \theta'_{k,\beta}+ \theta_{k,\beta'}.
\end{align*}
Since $\Sigma_p(\bar W_{k,\beta})=\{ v\}$, we have 
$\theta_{k,\beta}=0$.
Lemma \ref{lem:tree-meas} implies  that 
$\theta_{E_{k-1}\cap R_k}(C)$ is finite.
\psmall

\begin{center}
\begin{tikzpicture}
[scale = 0.8]
\draw [-, thick] (4.5,0) -- (-4.5,0);
\draw [-, thick] (4.5,0) -- (4.2,2.8);
\draw [-, thick] (4.5,0) -- (4.8,2.5);
\draw [-, thick] (4.5,0) -- (4.2,-2.8);
\draw [-, thick] (4.5,0) -- (4.8,-2.5);
\draw [-, thick] (4.25,2.48) -- (4.8,2.5);
\draw [-, thick] (4.25,-2.48) -- (4.8,-2.5);
\draw [-, thick] (-4.5,2.5) -- (4.2,2.8);
\draw [-, thick, dotted] (-4.5,2.2) -- (4.8,2.5);
\draw [-, thick] (-4.5,-2.5) -- (4.2,-2.8);
\draw [-, thick, dotted] (-4.5,-2.2) -- (4.8,-2.5);
\draw [-, thick] (4,0) to [out=150, in=30] (2,0);
\draw [-, thick] (1.5,0) to [out=150, in=30] (-1,0);
\draw [-, thick] (-1.5,0) to [out=150, in=30] (-3.5,0);
\filldraw[fill=gray, opacity=.1] 
(4,0) to [out=150, in=30] (2,0) -- (4,0);
\filldraw[fill=gray, opacity=.1] 
(1.5,0) to [out=150, in=30] (-1,0) -- (1.5,0);
\filldraw[fill=gray, opacity=.1] 
(-1.5,0) to [out=150, in=30] (-3.5,0) -- (-1.5,0);
\fill (-1,1.6) coordinate (A) circle (0pt) node [right] {$R_k$};
\fill (4.5,0) coordinate (A) circle (0pt) node [right] {$E_{k-1}$};
\fill (4.2,2.8) coordinate (A) circle (0pt) node [above left] {$E_k$};
\fill (0.25,-0.05) coordinate (A) circle (0pt) node [below] {$e_{k,\alpha}$};
\fill (0.25,0.3) coordinate (A) circle (0pt) node [above] {$W_{k,\alpha}$};
\fill (1.5,0) coordinate (A) circle (2pt) node [below] {$x_{k,\alpha}'$};
\fill (-1,0) coordinate (A) circle (2pt) node [below] {};
\fill (-1,-0.1) coordinate (A) circle (0pt) node [below] {$x_{k,\alpha}$};
\fill (-4.5,0) coordinate (A) circle (0pt) node [left] {$C$};
\end{tikzpicture}
\end{center}

We have the following explicit formula for $\omega(C)$. 

\begin{thm} \label{thm:explicit} 
Assume $C\subset S_{12}$. Then we have
\begin{align} \label{eq:explicit=omega(C)}
\omega(C)= &\sum_{k=1}^m \tau_{R_k}(C) 
      -\sum_{k=3}^m \left\{ \tau_{E_{k-1}\cap R_k}(C) -\theta_{E_{k-1}\cap R_k}(C)\right\}
\end{align}
\end{thm}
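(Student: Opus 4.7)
The proof will proceed by induction on $k$ from $2$ to $m$, establishing inductively that $\omega^{E_k}(C) = \omega_{k,\infty}(C)$ agrees with the partial sum through index $k$ in \eqref{eq:explicit=omega(C)}. Before starting, I would verify that $\omega^X(C)=\omega^{E_m}(C)$: this uses locality of the curvature measure (a consequence of Theorem \ref{thm:conv-omega} applied to admissible neighborhoods $D\supset C$ shrinking to $C$), together with the fact that a full neighborhood of the open arc $C$ in $X$ is captured by $E_m$, since $\ca S(X)\cap B(p,r)$ consists of finitely many singular curves meeting the ruled surfaces at $p$ in the direction $v$.

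The base case $k=2$ is immediate: $E_2=S_{12}=R_1\cup R_2$ is a topological $\CAT(\kappa)$-disk, along $C$ it is a topological $2$-manifold, and Proposition \ref{prop:tau+tau} gives $\omega^{E_2}(C)=\tau_{R_1}(C)+\tau_{R_2}(C)$. For the inductive step the key identity to prove is
\[
 \omega^{E_k}(C)=\omega^{E_{k-1}}(C)+\tau_{R_k}(C)-\tau_{E_{k-1}\cap R_k}(C)+\theta_{E_{k-1}\cap R_k}(C).
\]
I would establish this via the $\e_n$-surgery approximations of Section \ref{sec:Polyhedral}: construct compatible polyhedral $\CAT(\kappa)$-spaces $E_{k-1,n}$, $R_{k,n}$, $E_{k,n}$ with $E_{k,n}=E_{k-1,n}\cup R_{k,n}$ converging respectively to $E_{k-1}$, $R_k$, $E_k$, and verify the identity at the level of the polyhedral curvature measures $\omega^{E_{k,n}}_{BB}$. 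In the polyhedral setting $\omega_{BB}$ is given on an edge by $\sum_i \tau_{H_i}$ (Definition \ref{defn:omega}(2)) and at a vertex by the standard formula (Definition \ref{defn:omega}(3)), so the identity becomes a local check along an approximating broken geodesic $C_n$ separating three kinds of points: $(i)$ interior points of $C_n\setminus\bigcup_\alpha \bar e_{k,\alpha,n}$, where $R_{k,n}$ contributes a genuinely new half-disk and hence adds $\tau_{R_{k,n}}$ to $\omega^{E_{k-1,n}}_{BB}\vert_{C_n}$; $(ii)$ interior points of $\mathring e_{k,\alpha,n}$, where the new half-disk from $R_{k,n}$ coincides with the already-counted wing $\bar W_{k,\alpha,n}$ in $E_{k-1,n}$, so $\tau_{R_{k,n}}$ overcounts and must be cancelled by $-\tau_{\bar W_{k,\alpha,n}}(\mathring e_{k,\alpha,n})$; $(iii)$ the endpoints $x_{k,\alpha,n},x'_{k,\alpha,n}$, which become new $\ca S(E_{k,n})$-vertices whose $\omega_{BB}$-contribution, computed from $\pi(2-\chi(\Sigma))-L(\Sigma)$, reduces after telescoping adjacent inner-angle relations to exactly $\theta_{k,\alpha}+\theta'_{k,\alpha}$ (with the boundary modification at $\alpha=\beta,\beta'$ absorbing the fact that at $p$ the wing of $R_k$ degenerates to the direction $v$, giving $\theta_{k,\beta}=0$).

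After the polyhedral identity is in hand, the final step is to pass to the limit $n\to\infty$. The left side converges by Theorem \ref{thm:conv-omega}. For the right side, convergence of $\tau_{R_{k,n}}(C_n)\to\tau_{R_k}(C)$ follows from Theorem \ref{thm:bdd-turn(Si)}, which provides finite turn variation of $C$ from the side $R_k$ and legitimizes the approximation. Convergence of the series $\sum_\alpha\tau_{\bar W_{k,\alpha,n}}(\mathring e_{k,\alpha,n})\to\tau_{E_{k-1}\cap R_k}(C)$ and of $\theta_{E_{k-1,n}\cap R_{k,n}}(C)\to\theta_{E_{k-1}\cap R_k}(C)$ requires absolute convergence estimates; these come from combining Lemma \ref{lem:tree-meas} (which bounds the tree-diameters $L(\Omega_j(x))$ at every vertex $x\in C$ by the curvature measures on the constituent ruled-surface disks) with Theorem \ref{thm:bdd-turn(Si)} (which bounds the total turn variation of $C$ from each side).

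The main obstacle will be the last of these: controlling the possibly countably infinite family $\{W_{k,\alpha}\}_{\alpha\in A_I}$ of Lemma \ref{lem:Wi}. The arcs $e_{k,\alpha}$ may accumulate, and a priori the sum $\theta_{E_{k-1}\cap R_k}(C)=\sum_\alpha(\theta_{k,\alpha}+\theta'_{k,\alpha})$ (with the endpoint modification) has no obvious finiteness. The resolution is that each $x_{k,\alpha}\in C$ is a topological vertex of $\ca S(X)$, the inner angles $\theta_{k,\alpha}$ equal (up to a controlled error) the branch-angles in $\Sigma_{x_{k,\alpha}}(\ca S(X))$, and Lemma \ref{lem:tree-meas} together with the absolute convergence of $\sum_x|\omega^{E_{ij}}_{AZ}(x)|$ (a consequence of Sublemma \ref{slem:omega+(B)} and $|\omega|=2\omega^+-\omega$) produces the required bound. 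Once absolute convergence is granted, the uniform-in-$n$ polyhedral identity passes to the limit termwise and closes the induction.
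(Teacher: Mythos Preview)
Your inductive setup, the base case, and the target recursion
\[
 \omega^{E_k}(C)=\omega^{E_{k-1}}(C)+\tau_{R_k}(C)-\tau_{E_{k-1}\cap R_k}(C)+\theta_{E_{k-1}\cap R_k}(C)
\]
match the paper's Claim \ref{claim:omega=omega} exactly. The paper, however, does \emph{not} try to verify this identity pointwise on a polyhedral approximant $C_n$ and then pass to the limit. It instead works with admissible domains $D_\e\supset C$ in $E_k$, applies Gauss--Bonnet to their approximants $D_{\e,n}\subset E_{k,n}$ (via Lemma \ref{lem:constDn}), obtains $\omega_{k,\infty}(C)=2\pi-\lim_{\e\to 0}\tau_{D_\e}(\partial D_\e)$, and then decomposes $\tau_{D_\e}(\partial D_\e)$ into the piece $\tau_{D'_\e}(\partial D'_\e)$ with $D'_\e=D_\e\cap E_{k-1}$ plus explicit angle contributions from the quadrilateral regions $Q_\alpha\subset R_k$ bounded by $\gamma_{\e,k}$, $C$, and the boundary arcs of the wings $W_{k,\alpha}$. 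All the $\theta_{k,\alpha}$, $\eta_{k,\alpha}$ and $\tau_{R_k}$ terms fall out of Gauss--Bonnet on the $Q_\alpha$'s, not from a direct vertex computation of $\omega_{BB}$.

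Your route has a genuine gap at the limit step. You invoke Theorem \ref{thm:conv-omega} for ``the left side converges'', but that theorem gives $\omega^{E_{k,n}}_{BB}(D_n)\to\omega_{k,\infty}(D)$ only for admissible \emph{domains}; weak convergence of signed Radon measures does not imply $\omega^{E_{k,n}}_{BB}(C_n)\to\omega_{k,\infty}(C)$ for a one--dimensional set $C_n\to C$. Indeed $\omega^{E_{k,n}}_{BB}$ is largely concentrated on the singular graph, so this is exactly the situation where evaluation on thin sets can fail to pass to the limit. The paper avoids this by never evaluating $\omega_{BB}$ on $C_n$: it evaluates on $D_{\e,n}$, passes $n\to\infty$ via Lemma \ref{lem:constDn}, and only then lets $\e\to0$ using outer regularity of the Radon measure $\omega_{k,\infty}$.

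Two further points you would need to address if you pursued your approach. First, the assertion that one can build \emph{compatible} polyhedral approximations with $E_{k,n}=E_{k-1,n}\cup R_{k,n}$ is not automatic: the $\e_n$-surgeries on $E_k$ are performed at singular vertices of $\ca S(E_k)$, and the surgery pieces $K(x,v)$ need not respect the decomposition $E_{k-1}\cup R_k$. Second, your step (iii) is more delicate than stated: the change $\omega^{E_k}(x_{k,\alpha})-\omega^{E_{k-1}}(x_{k,\alpha})$ involves both $L(\Sigma_{x_{k,\alpha}})$ and $\chi(\Sigma_{x_{k,\alpha}})$, and the Euler--characteristic increment depends on whether the direction of $\partial W_{k,\alpha}\setminus C$ at $x_{k,\alpha}$ is already a branch vertex in $\Sigma_{x_{k,\alpha}}(E_{k-1})$. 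The paper's Gauss--Bonnet--on--$Q_\alpha$ computation sidesteps this combinatorial bookkeeping entirely.
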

\psmall

In the below, we see that the second term of 
the RHS of \eqref{eq:explicit=omega(C)} is 
the correction term for the duplication in the sum of the first term. 

For the proof of Theorem \ref{thm:explicit},
we define 
\begin{align}  \label{eq:omgak(C)}
\omega_k(C):=  \omega_{k-1}(C)+\tau_{R_k}(C) -\{ \tau_{E_{k-1}\cap R_k}(C) -\theta_{E_{k-1}\cap R_k}(C)\},
\end{align}
and prove

\begin{claim} \label{claim:omega=omega}
$\omega_k(C)=\omega_{k,\infty}(C)$.
\end{claim}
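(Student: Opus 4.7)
The plan is to prove Claim \ref{claim:omega=omega} by induction on $k$. For the base case $k=2$, since $E_2=S_{12}$ is a $\CAT(\kappa)$-disk by Theorem \ref{thm:ruled} and $R_1,R_2$ are its two closed halves meeting along the open arc $C$, Proposition \ref{prop:tau+tau} applied to the surface $S_{12}$ gives $\omega_{2,\infty}(C)=\tau_{R_1}(C)+\tau_{R_2}(C)=\omega_2(C)$ directly.

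For the inductive step, assuming $\omega_{k-1,\infty}(C)=\omega_{k-1}(C)$, I would use Lemma \ref{lem:Wi} to decompose the overlap $(E_{k-1}\cap R_k)\setminus C$ into its disk components $\{W_{k,\alpha}\}$ with boundary arcs $e_{k,\alpha}=\bar W_{k,\alpha}\cap C$ and endpoints $\{x_{k,\alpha}, x'_{k,\alpha}\}$, and partition $C$ as $C_0 \sqcup \bigsqcup_\alpha \mathring e_{k,\alpha} \sqcup \{x_{k,\alpha}, x'_{k,\alpha}\}_\alpha$ with $C_0:=C\setminus\bigcup_\alpha \bar e_{k,\alpha}$. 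I would then compute $\omega_{k,\infty}-\omega_{k-1,\infty}$ separately on each piece. On $C_0$, the surface $R_k$ meets $E_{k-1}$ only along $C$ locally, so forming $E_k=E_{k-1}\cup R_k$ adds exactly one new half-disk adjacent to $C_0$; by the additive formula $\omega|_e=\sum_H\tau_H$ from Definition \ref{defn:omega}(2), this yields $(\omega_{k,\infty}-\omega_{k-1,\infty})|_{C_0}=\tau_{R_k}|_{C_0}$. On each $\mathring e_{k,\alpha}$ the surface $R_k$ coincides locally with $\bar W_{k,\alpha}\subset E_{k-1}$, so no new half-disk is contributed, $(\omega_{k,\infty}-\omega_{k-1,\infty})|_{\mathring e_{k,\alpha}}=0$, and the local identification gives $\tau_{R_k}|_{\mathring e_{k,\alpha}}=\tau_{\bar W_{k,\alpha}}|_{\mathring e_{k,\alpha}}$, so summing yields $\tau_{R_k}(\bigcup_\alpha\mathring e_{k,\alpha})=\tau_{E_{k-1}\cap R_k}(C)$.

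The endpoint analysis is where the correction term $\theta_{E_{k-1}\cap R_k}(C)$ enters. At $x\in\{x_{k,\alpha}, x'_{k,\alpha}\}$ with $\alpha\neq\beta,\beta'$, I would apply the vertex formula $\omega(\{x\})=\pi(2-\chi(\Sigma_x))-L(\Sigma_x)$: since $\Sigma_x(E_k)=\Sigma_x(E_{k-1})\cup\Sigma_x(R_k)$ with intersection equal to the arc $\Sigma_x(\bar W_{k,\alpha})$ (of length $\theta_{k,\alpha}$) together with the isolated direction of $\Sigma_x(C)$ lying on the opposite side of $\bar W_{k,\alpha}$, the intersection has $\chi=2$. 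Mayer--Vietoris then gives $\chi(\Sigma_x(E_k))=\chi(\Sigma_x(E_{k-1}))-1$ and $L(\Sigma_x(E_k))=L(\Sigma_x(E_{k-1}))+\eta_{k,\alpha}$ where $\eta_{k,\alpha}=\angle_x(R_k)-\theta_{k,\alpha}$, whence
\[
\omega_{k,\infty}(\{x\})-\omega_{k-1,\infty}(\{x\}) = \pi - \eta_{k,\alpha} = (\pi-\angle_x(R_k))+\theta_{k,\alpha} = \tau_{R_k}(\{x\})+\theta_{k,\alpha}.
\]
For $\alpha=\beta,\beta'$ only one of the two endpoints is interior to $C$, and the convention $\theta_{k,\beta}=0$ (since $\Sigma_p(\bar W_{k,\beta})=\{v\}$) keeps the count correct. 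Adding the three kinds of contributions and regrouping $\tau_{R_k}$ across $C$ produces $\omega_{k,\infty}(C)-\omega_{k-1,\infty}(C)=\tau_{R_k}(C)-\tau_{E_{k-1}\cap R_k}(C)+\theta_{E_{k-1}\cap R_k}(C)$, which matches \eqref{eq:omgak(C)} and closes the induction. The main obstacle I anticipate is that some endpoints $x$ may lie in $V_*(\ca S(E_k))$, where the vertex formula of Definition \ref{defn:omega} need not reproduce the limit measure $\omega^{E_k}$ (cf.\ Example \ref{ex:vertex=positivem}); for those points I would pass to the polyhedral approximations $E_{k,n}$ where the recursion holds exactly by the Burago--Buyalo formulas, and take the limit via Theorem \ref{thm:conv-omega}, with convergence of the countably many overlap terms controlled by the local finite turn variation of $\ca S(X)$ (Theorem \ref{thm:bdd-turn(Si)}).
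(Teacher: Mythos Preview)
Your direct computation has a genuine gap at exactly the place where the claim carries content. The edge formula $\omega|_e=\sum_H\tau_H$ and the vertex formula $\omega(x)=\pi(2-\chi(\Sigma_x))-L(\Sigma_x)$ from Definition~\ref{defn:omega} reproduce the limit measure $\omega_{k,\infty}$ only on $E_k\setminus V_*(\ca S(E_k))$ (Definition~\ref{defn:curv-meas2}). On $C\cap V_*(\ca S(E_k))$ they need not agree with $\omega_{k,\infty}$, as Example~\ref{ex:vertex=positivem} makes explicit; yet this set can be an uncountable Cantor set carrying nonzero $\omega_{k,\infty}$-mass, and it is precisely where Theorem~\ref{thm:explicit} is meant to give new information. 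Your partition $C=C_0\sqcup\bigsqcup_\alpha\mathring e_{k,\alpha}\sqcup\{x_{k,\alpha},x'_{k,\alpha}\}$ ignores the accumulation set of the endpoints and the possibly positive-measure part of $C_0$ lying in $V_*$, so the piecewise identity you assemble is an identity between the naive $\omega'$ of Remark~\ref{rem:Omega-Edge}, not between the limit measures.

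You recognise this at the end and propose to fall back on the polyhedral approximations $E_{k,n}$ via Theorem~\ref{thm:conv-omega}. But Theorem~\ref{thm:conv-omega} gives convergence on admissible \emph{domains}, not on Borel subsets of $C$; to exploit it you must enclose $C$ in shrinking admissible domains $D_\e^k\subset E_k$, express $\omega_{k,\infty}(C)=2\pi-\lim_{\e\to 0}\tau_{D_\e^k}(\pa D_\e^k)$ through Gauss--Bonnet, and then compare the boundary turns of $D_\e^k$ and $D_\e^{k-1}$. That is precisely the route the paper takes: it builds $\pa D_\e^k$ from $\pa D_\e^{k-1}$ by adding a broken geodesic $\gamma_{\e,k}$ in $R_k$, tracks the angles $\eta_\alpha,\zeta_\alpha,\rho_\alpha$ at the crossings with $\pa W_{k,\alpha}$ (distinguishing the cases $x'_{k,\alpha-1}\neq x_{k,\alpha}$ and $x'_{k,\alpha-1}=x_{k,\alpha}$), and passes to the limit using Theorem~\ref{thm:bdd-turn(Si)} to control $\tau_{R_k}(K_\e)\to\tau_{R_k}(K)$. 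Your sketch does not carry out any of this, and your pointwise recursion on $E_{k,n}$ is not formulated---the surgery construction alters the local overlap structure, so it is not clear your decomposition even transfers to $E_{k,n}$.
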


Theorem \ref{thm:explicit} immediately follows 
from the cases $k=3, \dots, N$ in Claim \ref{claim:omega=omega}.

\begin{proof}[Proof of Claim \ref{claim:omega=omega}]
Take an admissible domain $D_\e=D_\e^k$  in $E_k$
with $C\subset D_\e\subset B(C,\e)$ such that 
\begin{itemize}
\item $\pa D_\e$ consists of broken $R_i$-geodesics $\gamma_{\e,i}$ \,$(1\le i\le k)$ joining
$p$ and $q$, 
transversally meeting $\bar C$ at $p,q$
 such that 
$$
\lim_{\e\to 0}\angle_p(\gamma_{\e,i},\bar C)=0,\quad
\lim_{\e\to 0}\angle_q(\gamma_{\e,i},\bar C)=0\,;
$$
\item $\gamma_{\e,i}=\gamma_{\e,j}$ on $R_i\cap R_j$ for all $1\le i\neq j\le k\,;$ 
\item $\gamma_{\e,k,\alpha}:=\gamma_{\e,k}\cap W_{k,\alpha}$ is an arc transversally meeting 
$\pa  W_{k,\alpha}$
if it is nonempty for each $\alpha\in A$.
\end{itemize}
This is certainly possible if one slightly deforms 
$\pa D_\e^{k-1}$ when needed.
Let $\{ y_{k,\alpha}, y'_{k,\alpha}\}
:=\pa \gamma_{\e,k,\alpha}$ with 
$|p,y_{k,\alpha}|<|p,y'_{k,\alpha}|$.

Since $\gamma_{\e,k}$ meets 
only finitely many $W_{k,\alpha}$,
we line up them as $\{ W_{k,\alpha}\}_{\alpha=1}^M$ in order.
Let $A_\e:=\{ 1,\ldots,M\}\subset A$.
Let $A_\e'$ be the set of $\alpha\in A_\e$
such that $x_{k,\alpha-1}'\neq x_{k,\alpha}$,
and $A_\e'':=A_\e\setminus A_\e'$.
We may assume that $A_{\e_1}''\subset A_{\e_2}''$ whenever $\e_2<\e_1$.

For $\alpha\in A_\e$,
let $C_{k,\alpha}$ (resp. $C'_{k,\alpha-1}$)
 be the arc of $\pa W_{k,\alpha}$ between
 $x_{k,\alpha}$ and $y_{k,\alpha}$
 (resp of $\pa W_{k,\alpha-1}$ between
 $x'_{k,\alpha-1}$ and $y'_{k,\alpha-1}$).
Let $\gamma_{\e,k,\alpha-1,\alpha}$ denote 
the arc of $\gamma_{\e,k}$ between $y_{k,\alpha-1}'$ and $y_{k,\alpha}$.
Let  $\zeta_{k,\alpha}$, $\zeta'_{k,\alpha-1}$ denote
the angles between 
$\gamma_{\e,k,\alpha-1,\alpha}$ and $C_{k,\alpha}$, $C_{k,\alpha-1}'$ at
$y_{k,\alpha}$, $y_{k,\alpha-1}'$ respectively.

\begin{center}
\begin{tikzpicture}
[scale = 1]
\draw [-, thick] (4.5,0) -- (-4.5,0);
\draw [-, thick] (4,0) to [out=130, in=50] (1,0);
\draw [-, thick] (-1,0) to [out=130, in=50] (-4,0);
\draw [-, thick] (0,0.3) -- (0.5,0.2);
\draw [-, thick] (0.5,0.2) -- (1,0.3);
\draw [-, thick] (1,0.3) -- (1.5,0.3);
\draw [-, thick] (1.5,0.3) -- (2,0.2);
\draw [-, thick] (2,0.2) -- (2.5,0.3);
\draw [-, thick] (2.5,0.3) -- (3,0.2);
\draw [-, thick] (3,0.2) -- (3.5,0.3);
\draw [-, thick] (3.5,0.3) -- (4,0.3);
\draw [-, thick] (4,0.3) -- (4.5,0.2);
\draw [-, thick] (0,0.3) -- (-0.5,0.2);
\draw [-, thick] (-0.5,0.2) -- (-1,0.3);
\draw [-, thick] (-1,0.3) -- (-1.5,0.3);
\draw [-, thick] (-1.5,0.3) -- (-2,0.2);
\draw [-, thick] (-2,0.2) -- (-2.5,0.3);
\draw [-, thick] (-2.5,0.3) -- (-3,0.2);
\draw [-, thick] (-3,0.2) -- (-3.5,0.3);
\draw [-, thick] (-3.5,0.3) -- (-4,0.3);
\draw [-, thick] (-4,0.3) -- (-4.5,0.2);
\filldraw[fill=gray, opacity=.1] 
(4,0) to [out=130, in=50] (1,0) -- (4,0);
\filldraw[fill=gray, opacity=.1] 
(-1,0) to [out=130, in=50] (-4,0) -- (-1,0);
\fill (1,0) coordinate (A) circle (2pt) node [below] {$x_{k,\alpha}$};
\fill (-1,0) coordinate (A) circle (2pt) node [below] {$x_{k,\alpha-1}'$};
\fill (-2.5,1) coordinate (A) circle (0pt) node [] {$W_{k,\alpha-1}$};
\fill (2.5,1) coordinate (A) circle (0pt) node [] {$W_{k,\alpha}$};
\fill (-4.5,0) coordinate (A) circle (0pt) node [left] {$C$};
\fill (4.5,0.3) coordinate (A) circle (0pt) node [right] {$\gamma_{\epsilon,k}$};
\fill (0,0.3) coordinate (A) circle (0pt) node [above] 
{$\gamma_{\epsilon,k,\alpha-1,\alpha}$};
\end{tikzpicture}
\end{center}
 
For  $\alpha\in A_\e'$, we may assume  
the following:
\beq \label{eq:angle=yka}
\begin{aligned}
&\angle_{y_{k,\alpha}}(W_{k,\alpha}\cap\bar D_\e)=\pi-\theta_{k,\alpha},\quad \zeta_{k,\alpha}=\pi-\eta_{k,\alpha},
\\
&\angle_{y'_{k,\alpha-1}}(W_{k,\alpha-1}\cap\bar D_\e)
=\pi-\theta'_{k,\alpha-1},\quad \zeta'_{k,\alpha-1}=\pi-\eta'_{k,\alpha-1},
\end{aligned}
\eeq
Note that 
$\angle_{y_{k,\alpha}}(R_k\cap\bar D_\e)
=2\pi-\theta_{k,\alpha}-\eta_{k,\alpha}$, 
$\angle_{y'_{k,\alpha-1}} (R_k\cap\bar D_\e)
=2\pi-\theta'_{k,\alpha-1}-\eta'_{k,\alpha-1}$.
Since $\pa W_{k,\alpha}\setminus \mathring{e}_{k,\alpha}$ has definite directions everywhere and 
$y_{k,\alpha}$, $y'_{k,\alpha-1}$ are close to
$x_{k,\alpha}$, $x'_{k,\alpha-1}$ respectively, 
we can choose $D_\e$ satisfying the above 
conditions.

For  $\alpha\in A_e''$, set 
$\rho_{k,\alpha}:=\angle_{x_{k,\alpha}}(R_k)-\theta_{k,\alpha}-\theta_{k,\alpha-1}'$.
We may assume  
the following.
\begin{itemize}
\item[(a)] If $\rho_{k,\alpha}<\pi$, then the arc 
 $\gamma_{\e,k,\alpha-1,\alpha}$
 is the $R_k$-minimal geodesic $\,;$
\item[(b)] If  $\rho_{k,\alpha}\ge \pi$, then $\gamma_{\e,k,\alpha-1,\alpha}$ 
is a broken $R_k$-geodesic with a unique break point
such that $\zeta_{k,\alpha}<\e/M$ and 
$\zeta'_{k,\alpha-1}<\e/M$. 
\end{itemize}

For $D_\e$, choose  an admissible domain 
$D_{\e,n}$ in $E_{k,n}$ as in Lemma \ref{lem:constDn}.
Then by Gauss-Bonnet Theorem for 
$D_{\e,n}$ together with Lemma \ref{lem:constDn},
we have
\beq \label{eq:omega=infty}
\begin{aligned}
 \omega_{k,\infty}(C)
 &= \lim_{\e\to 0} \omega_{k,\infty}(D_\e) \\
& =\lim_{\e\to 0} 
     \lim_{n\to\infty} \omega_{BB}^{E_{k,n}}(D_{\e,n}) \\
&=2\pi - \lim_{\e\to 0} \tau_{D_\e}(\pa D_\e).
\end{aligned}
\eeq
Set $D'_\e:= D_\e\cap E_{k-1}$, which is an  admissible domain of $E_{k-1}$. Then we have
$\pa D'_\e= \pa D_\e\cap E_{k-1}$.

From now, we often omit the subscript $k$
when causing no confusion.
For $\alpha\in A_\e$,
let $C_{x'_{\alpha-1},x_\alpha}$ denote the 
subarc of $C$ between  
$x'_{\alpha-1}$ and $x_\alpha$, which is of course
the single point $\{ x_\alpha\}$ if $\alpha\in A_\e''$. 
Let $Q_\alpha$ be the region in $D_\e\cap R_k$ bounded by $\gamma_{\e,k,\alpha-1,\alpha}$,
 $C_{x_{\alpha-1}', x_\alpha}$, $C'_{\alpha-1}$ and 
 $C_\alpha$. 
From \eqref{eq:defn-tauD(paD)}, we obtain    

\begin{align*} 
\tau_{D_\e}(\pa D_\e) &= \tau_{D'_\e}(\pa D'_\e)
+\tau_{D_\e\cap R_k}(\pa D_\e\setminus \pa D'_\e)   
 \\
&-\sum_{\alpha\in A_\e}
\bigl(\angle_{y_{\alpha}}(Q_\alpha)+
    \angle_{y'_{\alpha-1}}(Q_\alpha)\bigr)
  \\
&- \angle_p(\gamma_{\e,k},\bar C)
-\angle_q(\gamma_{\e,k},\bar C),
\end{align*}
which implies 
\beq
\begin{aligned} \label{eq:turn(paD)}
2\pi-\tau_{D_\e}&(\pa D_\e) =
\,2\pi-\tau_{D'_\e}(\pa D'_\e)
-\tau_{D_\e\cap R_k}(\pa D_\e\setminus \pa D'_\e)    \\
&+\sum_{\alpha\in A'_\e}(2\pi-\eta_{\alpha}
-\eta'_{\alpha-1}) \\
&+\sum_{\alpha\in A''_\e}(\zeta_\alpha+\zeta'_{\alpha-1}) + \angle_p(\gamma_{\e,k},\bar C)
+\angle_q(\gamma_{\e,k},\bar C).
\end{aligned}
\eeq
By induction, we have
\begin{align} \label{eq:omega-infty}
\omega_{k-1,\infty}(C)=2\pi - \lim_{\e\to 0} \tau_{D_\e\cap E_{k-1}}(\pa D'_\e) = \omega_{k-1}(C).
\end{align}

Now we are going to calculate the third term of the RHS in \eqref{eq:turn(paD)}.
For $\alpha\in A_\e'$,
from Gauss-Bonnet Theorem for $Q_\alpha$
 together with \eqref{eq:angle=yka}, we have
\beq \label{eq:GBF=A1}
\begin{aligned}
 \omega(Q_\alpha)= & -\tau_{Q_\alpha}(\gamma_{\e,k,\alpha-1,\alpha}) - \tau_{Q_\alpha}(\mathring{C}_{x'_{\alpha-1}, x_\alpha})\\
  & \,\,-\tau_{Q_\alpha} (C_\alpha)-\tau_{Q_\alpha} (C'_{\alpha-1}).
\end{aligned}
\eeq

For $\alpha\in A_\e''$,
from Gauss-Bonnet Theorem for $Q_\alpha$,
we have
\beq \label{eq:GBF=A2}
\begin{aligned}
 \omega(Q_\alpha)= & -\tau_{Q_\alpha}(\gamma_{\e,k,\alpha-1,\alpha}) 
  -\tau_{Q_\alpha} (C_\alpha)-\tau_{Q_\alpha} (C'_{\alpha-1})  \\
  &+ \zeta_\alpha+\zeta_{\alpha-1}'+\rho_\alpha-\pi.
\end{aligned}
\eeq

Taking $y_\alpha$ and $y'_{\alpha-1}$ 
sufficiently close to $x_\alpha$ and $x'_{\alpha-1}$ respectively, we may assume
$|\tau_{Q_\alpha}(C_\alpha)|<\e/M$ and 
$|\tau_{Q_\alpha}(C'_{\alpha-1})|<\e/M$.
For $\alpha\in A_e'$, this is certainly possible since
the deformation is only locally performed near
$y_\alpha$ and $y'_{\alpha-1}$.
For $\alpha\in A_e''$, this is also possible since
this new requirement does not interfere with  the other parts of $D_\e\setminus D_\e'$.

Observe that
\begin{align*}
&\pi-\eta_\alpha=\tau_{R_k}(x_\alpha)+\theta_\alpha,
\quad
\pi-\eta'_{\alpha-1}=\tau_{R_k}(x'_{\alpha-1})+\theta'_{\alpha-1},\\
&\pi-\rho_\alpha=\tau_{R_k}(x_\alpha)+\theta_\alpha+
\theta'_{\alpha-1}.
\end{align*}
It follows that 
\begin{align}\label{eq:Sum=2pi-eta}
\lim_{\e\to 0}\sum_{\alpha\in A'_\e}
   (2\pi-\eta_\alpha-\eta'_{\alpha-1})
=\lim_{\e\to 0}\sum_{\alpha\in A'_\e}
(\tau_{R_k}(x_\alpha)+\tau_{R_k}(x'_{\alpha-1})
+\theta_\alpha + \theta'_{\alpha-1}).
\end{align}
Since $\lim_{\e\to 0}\sum_{\alpha\in A_\e} |\omega|(Q_\alpha)=0$,  from \eqref{eq:GBF=A2} we have

\begin{align} \label{eq:SumA2=tau}
 \lim_{\e\to 0}\sum_{\alpha\in A_\e''} \bigl\{
 -\tau_{Q_\alpha}(\gamma_{\e,k,\alpha-1,\alpha}) 
  +\zeta_\alpha+\zeta_{\alpha-1}'\bigr\}
 =\sum_{\alpha\in A''} (\tau_{R_k}(x_\alpha)
 +\theta_\alpha+\theta'_{\alpha-1}),
\end{align}
where $A'':=\bigcup_{\e>0}A_\e''$.
Since 
\begin{align}\label{eq:sum=gamma-alpha}
-\lim_{\e\to 0} & \tau_{D_\e\cap R_k}(\pa D_\e\setminus \pa D'_\e) 
=-\lim_{\e\to 0}\sum_{\alpha\in A_\e}
\tau_{Q_\alpha}(\gamma_{\e,k,\alpha-1,\alpha}),
\end{align}
combining \eqref{eq:omega=infty},
\eqref{eq:turn(paD)},
\eqref{eq:omega-infty} and \eqref{eq:sum=gamma-alpha} 
  we have
\begin{align*}
\omega_{k,\infty}(C)&=\omega_{k-1}(C)+\lim_{\e\to 0}
 \sum_{\alpha\in A_\e'} \bigl\{-\tau_{Q_\alpha}(\gamma_{\e,\alpha-1,\alpha}) +
 (2\pi-\eta_\alpha-\eta'_{\alpha-1})\bigr\} \\
 & 
 + \lim_{\e\to 0} \sum_{\alpha\in A_\e''} \bigl\{
  -\tau_{Q_\alpha}(\gamma_{\e,\alpha-1,\alpha}) 
  + (\zeta_\alpha +\zeta'_{\alpha-1})\bigr\}.
\end{align*} 
It follows from  \eqref{eq:Sum=2pi-eta} and \eqref{eq:SumA2=tau} that
\begin{align*}
\omega_{k,\infty}(C)&=
\omega_{k-1}(C)  \\
&+\lim_{\e\to 0}
 \sum_{\alpha\in A_\e'} \bigl\{\tau_{Q_\alpha}(C_{x'_{\alpha-1},x_\alpha}) +
 \tau_{R_k}(x_\alpha)+\tau_{R_k}(x'_{\alpha-1})
 +\theta_\alpha + \theta'_{\alpha-1}
\bigr\} \\
& 
 + \lim_{\e\to 0} \sum_{\alpha\in A_\e''} 
 (\tau_{R_k}(x_\alpha)
 +\theta_\alpha + \theta'_{\alpha-1}).
 \end{align*}

Let $K_\e$ be the union of $\bigcup_{\alpha\in A_\e'}  \bar C_{x'_{\alpha-1},x_\alpha}$ and 
$\bigcup_{\alpha\in A_\e''} \{ x_\alpha \}$.
Set $K:=\bigcup_{\e>0}K_\e$.
Together with Theorem \ref{thm:bdd-turn(Si)}, we conclude
\begin{align*}
\omega_{k,\infty}(C)&=\omega_{k-1}(C)+\lim_{\e\to 0}
 \tau_{R_k}(K_\e) + \theta_{E_{k-1}\cap R_k}(C) \\
 & 
 = \omega_{k-1}(C) +\tau_{R_k}(K)+
    \theta_{E_{k-1}\cap R_k}(C) \\
&=\omega_{k-1}(C) + \tau_{R_k}(C)-\tau_{E_{k-1}\cap R_k}(C)  + \theta_{E_{k-1}\cap R_k}(C).
 \end{align*}
This completes the proof of Claim \ref{claim:omega=omega}.
\end{proof}

\pmed
\setcounter{equation}{0}

\section{Characterization Theorem} \label{sec:converse}
\psmall
In this section, we prove Theorem \ref{thm:converse}.

\pmed\n
\subsection{Examples and preliminary argument} \label{ssec:prelim-character}

First, we exhibit several examples which show
the conditions in Theorem \ref{thm:converse} are optimal. 
It is easy to construct counterexamles
when one drops the condition (1) or (2) among all the conditions in Theorem \ref{thm:converse}. Therefore we shall focus on the conditions
(3) and (4).
\pmed

\begin{ex} \label{ex:gluing-disk} \upshape
Let $D\subset\R^2$ be a closed metric ball,
and let $X$ be the gluing of three copies of $D$
along their boundaries. Let $\ca S$ be the subset of $X$
corresponding to $\pa D$.
Then $(X,\ca S)$
satisfies all the conditions except $(3)$ in Theorem \ref{thm:converse}.
\end{ex}

\begin{ex}  \label{ex:attaching}\upshape
On the plane $\mathbb R^2$, choose a sequence
 $\{ B_i\}_{i=1}^\infty$
of mutually disjoint closed balls converging to a point
$p\in\mathbb R^2$.
Removing  $\mathring{B}_i$  from $\mathbb R^2$ and gluing with punctured tori $T_i$ with suitably chosen metrics along their boundaries for all $i$,
we have a space $X$ with the topological singular point set
$\ca S=\{ p\}$
such that $(X,\ca S)$ 
satisfies all the conditions  except $(4)$ in Theorem \ref{thm:converse}.
\end{ex}

\begin{ex} \label{ex:counter-inv}\upshape
On the $xy$-plane $\R^2$, let $C_+$ (resp. $C_-$) be the union of the ray $\{ (0,y)\,|\,y\le 0\}$ and the graph $y=\sqrt{x}$\,\,$(x\ge 0)$ (resp. $y=\sqrt{|x|}$ \,\,$(x\le 0)$).
Let $D_+$ and $D_-$ be the closed convex domains
bounded by $C_+$ and $C_-$ respectively.
Take three copies $\R^2_i$\,$(i=0,1,2)$ of $\R^2$, and
denote by $D_+^i$ and $D_-^i$ be domains on $\R^2_i$
corresponding to $D_+$ and $D_-$.
In the union $\R^2_0\cup\R^2_1\cup\R^2_2$, we  
glue $D_+^i$ and $ D_-^{i+1}$ \,$({\rm mod}\,3)$
by the isometry between them. The result $X$ of this gluing
is not $\CAT(0)$ since there are arbitrary short geodesic
loops near the origin of $X$. 
Let $\ca S$ be the subset of $X$ corresponding to 
the union $\pa D^i_+ \cup\pa D^i_+$ for all $i$.
Then $(X, \ca S)$ satisfies all the conditions except $(4)$ in Theorem \ref{thm:converse}.
\end{ex}

\psmall
Next, we provide some fundamental local properties of $X$ derived from the conditions 
$(1) - (4)$ in Theorem \ref{thm:converse}.

Let $(X, \ca S)$ be as in Theorem \ref{thm:converse}.
In what follows, $\angle^X$ denotes the upper angle 
in $X$.
More precisely, the upper angle between two $X$-shortest geodesics
$\gamma$ and $\sigma$ starting from $x$ is defined as 
\begin{align}\label{eq:upper-angleX}
  \angle^X(\gamma,\sigma):=  \limsup_{t,s\to 0} 
    \tilde\angle^X \gamma(t) x \sigma(s).
\end{align}
Then $\Sigma_x(X)$ is defined as the $\angle^X$-completion of the set of all $X$-geodesics starting from $x$.
\psmall

Let $p\in \ca S$, and $r=r_p>0$ be sufficiently small 
so as to satisfy the condition (3).
From now, we work on $B(p,r)$. 

For any fixed $v_0\in V(\Sigma_p(X))$, let $N_0:=N_{v_0}$ be the branching number of the graph $\Sigma_p(X)$ at $v_0$. 
For some $\delta_0<\min\{ \angle(v_0, w)\,|\,w\neq v_0 \in V(\Sigma_p(X))\,\}/10$, choose $\zeta_1,\ldots,\zeta_{N_0}\in\Sigma_p(X)$ such that 
\begin{itemize}
\item \text{ $\angle(\zeta_i, v) = \delta_0$ and
$\angle(\zeta_i, \zeta_j) = 2\delta_0$ for $1\le i\neq j\le N_0\,;$}
\item a $\CAT(\kappa)$-sector surface $S_{ij}$ at $p$ is provided by the condition (3) for the arc 
$[\zeta_i,\zeta_j]$ in $\Sigma_p(X)$ so as to satisfy 
$\iota(\Sigma_p(S_{ij}))=[\zeta_i,\zeta_j]$.
\end{itemize}

Let $\gamma_i$ be an $X$-geodesic
starting from $p$ in the direction $\zeta_i$.
We may assume that 
$S_{ij}$ is bounded by $\gamma_i,\gamma_j$ and $S(p,r)$. 

We set 
\begin{align} \label{eq:S(v)}
     E(v_0):=\bigcup_{1\le i<j\le N_0} S_{ij}.
\end{align}

\begin{lem} \label{lem:metric-circle}
We have the following for each small enough $t>0\,:$
\begin{enumerate}
 \item $S(p,t)\cap E(v_0)$ is a tree. 
 \item Each component of $S(p,t)\setminus \bigcup_{v\in V(\Sigma_p(X))} E(v)$ is an arc,
 where $E(v)$ is defined as in \eqref{eq:S(v)} for
 $v$.
\end{enumerate}
\end{lem}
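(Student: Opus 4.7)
The approach is to exploit the cone structure of $E(v_0)$ at $p$ together with the tree-like neighborhood of $v_0$ in $\Sigma_p(X)$, and then to invoke condition (4) of Theorem \ref{thm:converse} for the global decomposition in (2).

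First I would verify that for each pair $i<j$ and each small $t>0$, the set $\alpha_{ij}:=S_{ij}\cap S(p,t)$ is a single simple arc joining $\gamma_i(t)$ to $\gamma_j(t)$. Since $S_{ij}$ is a $\CAT(\kappa)$-disk bounded by $\gamma_i,\gamma_j$ and an arc of $S(p,r)$, with $\iota(\Sigma_p(S_{ij}))=[\zeta_i,\zeta_j]$ a simple arc through the unique interior vertex $v_0$, the $X$-geodesics from $p$ realize $S_{ij}\cap B(p,t)$ as the topological cone $p*\alpha_{ij}$, which forces $\alpha_{ij}$ to be simple. Consequently $S(p,t)\cap E(v_0)=\bigcup_{i<j}\alpha_{ij}$ is a finite connected one-dimensional subcomplex of $S(p,t)$. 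For (1) I would then show that this subcomplex is homotopy equivalent to the star graph $T_{v_0}:=\bigcup_{i<j}[\zeta_i,\zeta_j]\subset\Sigma_p(X)$, which consists of the $N_0$ arcs $[v_0,\zeta_i]$ joined at $v_0$ and is therefore a tree. Every point of $E(v_0)\cap(B(p,t)\setminus\{p\})$ lies on an $X$-geodesic from $p$ whose initial direction belongs to $T_{v_0}$, so the radial map $(x,s)\mapsto\gamma_{p,x}(s|p,x|/t)$ yields a strong deformation retraction of $E(v_0)\cap(B(p,t)\setminus\{p\})$ onto $S(p,t)\cap E(v_0)$, while passing to initial directions gives a homotopy equivalence with $T_{v_0}$. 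Since any connected finite one-dimensional CW-complex homotopy equivalent to a tree is itself a tree, (1) follows.

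For (2), I would apply condition (4) globally to $S(p,t)$: since $S(p,t)$ is a graph with the homotopy type of $\Sigma_p(X)$, and under the radial identification the pieces $S(p,t)\cap E(v)$ correspond to the star-neighborhoods $T_v$ at the vertices $v\in V(\Sigma_p(X))$, deleting all $E(v)$ decomposes $S(p,t)$ into components that correspond bijectively to the open edges of $\Sigma_p(X)$. Each open edge is an open arc, so each such component is an arc.

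The main obstacle will be making the cone description in (1) rigorous: the sector surfaces $S_{ij}$ and $S_{ik}$ overlap along nontrivial wings bounded by singular curves from $p$ in the direction $v_0$, so one must verify that the radial retraction is consistently defined on these overlaps and that the induced map to the tangent cone really is a homotopy equivalence of pairs. Once this cone description is in place, both conclusions follow from elementary algebraic topology combined with condition (4).
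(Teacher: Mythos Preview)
Your plan for (2) is essentially the paper's argument, but your plan for (1) has a genuine circularity that the paper deliberately avoids.

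The issue is your first step, where you assert that $\alpha_{ij}=S_{ij}\cap S(p,t)$ is a single simple arc because ``the $X$-geodesics from $p$ realize $S_{ij}\cap B(p,t)$ as the topological cone $p*\alpha_{ij}$''. At this stage of Section~\ref{sec:converse} nothing of the sort is known. The sector $S_{ij}$ is a $\CAT(\kappa)$-disk, so the $S_{ij}$-geodesics from $p$ give it a cone structure and the level set $\{d_p^{S_{ij}}=t\}$ is an arc; but $S(p,t)$ is the level set of $d_p^X$, and the comparison $d_p^X\approx d_p^{S_{ij}}$ on $S_{ij}$ is exactly Lemma~\ref{lem:SE-geom-distance}, which comes later and whose proof chain uses Lemma~\ref{lem:metric-circle}. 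Likewise the assertion that every point of $E(v_0)$ lies on an $X$-geodesic from $p$ with direction in $T_{v_0}$ is Lemma~\ref{lem:SE-geom}(1), also downstream. The paper even flags this dependency explicitly in Remark~\ref{rem:S(t)capS}: the fact that $S(p,t)\cap S_{ij}$ is an arc is a \emph{consequence} of Lemma~\ref{lem:SE-geom}, not an input to Lemma~\ref{lem:metric-circle}. So your ``main obstacle'' is not the overlap compatibility you identify but the more basic fact that the radial picture with respect to $d_p^X$ is simply unavailable here.

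The paper's proof sidesteps all of this by arguing globally from condition~(4) alone. One knows only that $S(p,t)\cap E(v)$ is a connected subgraph of the graph $S(p,t)$ containing the $N_v$ endpoints $\gamma_i(t)$, and that the complementary pieces are paths joining such endpoints for adjacent vertices. Since $S(p,t)$ has the homotopy type of $\Sigma_p(X)$ by condition~(4), comparing the two decompositions (vertex-stars and edges in $\Sigma_p(X)$ versus the pieces $S(p,t)\cap E(v)$ and complementary arcs in $S(p,t)$) forces each $S(p,t)\cap E(v)$ to be contractible, hence a tree, and each complementary component to be an arc rather than a circle. This is a pure Euler-characteristic/homotopy bookkeeping argument and needs no control on $X$-geodesics inside the sectors.
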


\begin{proof} The conditions (3) and (4) imply that 
$S(p,t)\cap E(v)$ is a connected graph containing 
$N_v$ endpoints on the boundaries of the sector surfaces for each $v\in V(\Sigma_p(X))$. 
Each component $A$ of $S(p,t)\setminus \bigcup_{v\in V(\Sigma_p(X))} E(v)$ 
is a path joining an endpoint of the graph $S(p,t)\cap E(v_1)$
to an endpoint of $S(p,t)\cap E(v_2)$ for 
arbitrary adjacent vertices $v_1, v_2\in V(\Sigma_p(X))$.
The condition (4) then implies that $S(p,t)\cap E(v)$ 
must be 
contractible for each $v\in V(\Sigma_p(X))$ and $A$ is an arc. This completes the proof.
\end{proof}

Lemma \ref{lem:metric-circle} implies that 
$E(v_0)$ is the closed domain bounded by 
$\gamma_1,\ldots,\gamma_{N_0}$ and $S(p, r)\cap E(v_0)$.

For a closed subset $A$ with $p\in A$,
we denote by $\Sigma_p(A)$  the set of 
all $\xi\in\Sigma_p(X)$ which can be written as 
$\xi=\lim_{n\to\infty}\dot\gamma^X_{x,x_n}(0)$ with
$x_n\in A$ and $x_n\to x$.
\psmall
\begin{lem}\label{lem:regular-sector}
For every $p\in \ca S$ and every 
$\xi\in \Sigma_p(X)\setminus V(\Sigma_p(X))$, we have the following:
\begin{enumerate}
\item There is an $X$-geodesic $\gamma$ starting from 
$p$ in the direction $\xi\,;$
 \item For every 
closed arc $I$ in $\Sigma_p(X)\setminus V(\Sigma_p(X))$, 
 there is a $\CAT(\kappa)$-sector surface $S$ at $p$ 
 such that $\Sigma_p(S)= I$ and $S\cap\ca S=\{ p\}$.
\end{enumerate}
\end{lem}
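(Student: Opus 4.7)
I would prove part (2) first, then derive part (1). Given a closed arc $I=[\xi_0,\xi_1]\subset\Sigma_p(X)\setminus V(\Sigma_p(X))$, connectedness forces $I$ to lie in the interior of a single open edge $e$ of the graph $\Sigma_p(X)$; let $v\in V(\Sigma_p(X))$ be a vertex at one endpoint of $e$ and $v'$ the vertex at the other. The plan is to extend $I$ to a larger closed arc $\tilde I$ satisfying $I\subset\mathrm{int}(\tilde I)$ and $\tilde I\cap V(\Sigma_p(X))=\{v\}$ with $v$ interior to $\tilde I$, by extending past $v$ onto an adjacent edge on one side and extending past $\xi_1$ slightly further inside $e$ (stopping short of $v'$) on the other side. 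Condition (3) then furnishes a $\CAT(\kappa)$-sector surface $\tilde S$ at $p$ with an isometric embedding $\iota_{\tilde S}:\Sigma_p(\tilde S)\to\Sigma_p(X)$ whose image is $\tilde I$.

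Inside $\tilde S$, I would carve out a sub-sector $S$ bounded by the two $\tilde S$-geodesics $\sigma_0,\sigma_1$ from $p$ in the directions $\iota_{\tilde S}^{-1}(\xi_0),\iota_{\tilde S}^{-1}(\xi_1)$ (well-defined and unique since $\tilde S$ is a $\CAT(\kappa)$-disk and these are interior directions of the arc $\Sigma_p(\tilde S)$) together with the portion of $\tilde S\cap S(p,r)$ between their endpoints on the sphere. Being bounded by geodesic rays from $p$ and an outer arc, $S$ is locally convex in $\tilde S$, hence itself a $\CAT(\kappa)$-disk, and the restriction $\iota:=\iota_{\tilde S}|_{\Sigma_p(S)}$ yields the required isometric embedding with image $I$. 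The essential remaining point is to verify that $S\cap\ca S=\{p\}$ and that $\sigma_0,\sigma_1$ are $X$-geodesics (not merely $\tilde S$-geodesics). For this I would show that any arc of $\ca S$ emanating from $p$ must start in a direction belonging to $V(\Sigma_p(X))$. The argument combines condition (1), which near $p$ presents $\ca S$ as a locally finite union of arcs whose complement is a $\CAT(\kappa)$-surface, with condition (4), which forces each such arc to meet $S(p,r)$ at a topological vertex of the graph $S(p,r)$; the homotopy equivalence $S(p,r)\simeq\Sigma_p(X)$ in (4) then matches these graph-vertices with $V(\Sigma_p(X))$. Since $I$ avoids $V(\Sigma_p(X))$ by assumption, the wedge of directions $\iota_{\tilde S}^{-1}(I)$ misses all $\ca S$-branches at $p$, so $S\setminus\{p\}\subset X\setminus\ca S$; by (1) this is a locally $\CAT(\kappa)$-surface in which $\tilde S$-shortest paths are locally $X$-shortest, giving the geodesic property of $\sigma_0,\sigma_1$.

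Part (1) then follows immediately: given $\xi\in\Sigma_p(X)\setminus V(\Sigma_p(X))$, apply part (2) to a closed arc $I\subset\Sigma_p(X)\setminus V(\Sigma_p(X))$ containing $\xi$ in its interior, and extract the $S$-geodesic in direction $\iota^{-1}(\xi)$; this geodesic lies in $S\setminus\{p\}\subset X\setminus\ca S$ and so coincides with an $X$-geodesic by the same surface-uniqueness argument. The main obstacle I anticipate is precisely the claim that arcs of $\ca S$ initiate at $p$ only in vertex directions of $\Sigma_p(X)$: the counterexamples in Examples~\ref{ex:gluing-disk}--\ref{ex:counter-inv} show that this can fail if any of (1)--(4) is dropped, so the proof must genuinely intertwine all four hypotheses, using (2)--(3) to constrain how curves of $\ca S$ sit relative to $\Sigma_p(X)$ and (1), (4) to force their directions into $V(\Sigma_p(X))$.
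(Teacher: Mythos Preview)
Your approach differs from the paper's and contains a genuine gap. The crux of your argument is the claim that every arc of $\ca S$ emanating from $p$ starts in a direction belonging to $V(\Sigma_p(X))$; from this you deduce $S\cap\ca S=\{p\}$ and then that the $\tilde S$-geodesics $\sigma_0,\sigma_1$ are $X$-geodesics. But this claim is precisely Lemma~\ref{lem:S=Vertex} (that $\Sigma_p(\ca S)=V(\Sigma_p(X))$), which in the paper's logical order is deduced \emph{from} Lemma~\ref{lem:regular-sector} via Lemma~\ref{lem:Cijk}. Your attempted direct proof does not stand on its own: condition~(1) says only that $X\setminus\ca S$ is a locally $\CAT(\kappa)$-surface, not that $\ca S$ is a locally finite union of arcs (that structure is established later, in Lemmas~\ref{lem:Cijk} and~\ref{lem:number=Nv-2}); and the homotopy equivalence $S(p,r)\simeq\Sigma_p(X)$ in condition~(4) gives no canonical map carrying graph-vertices to graph-vertices, so you cannot conclude that a hypothetical $\ca S$-arc meeting $S(p,r)$ at a branch point corresponds to a vertex direction in $\Sigma_p(X)$. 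Without $S\setminus\{p\}\subset X\setminus\ca S$, you also lose the step identifying $\tilde S$-geodesics with $X$-geodesics, since condition~(3) only guarantees that the \emph{boundary} rays of $\tilde S$ (in directions $\partial\tilde I$) are $X$-geodesics.

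The paper sidesteps this by never invoking condition~(3). It picks $X$-geodesics $\gamma_1,\gamma_2$ from $p$ in directions $\xi_1',\xi_2'$ slightly wider than $I$ (using the density of geodesic directions in $\Sigma_p(X)$) which may be taken to miss $\ca S$, forms the quadrilateral region $\Delta(t,s)$ bounded by $\gamma_1,\gamma_2$ and the chords $\gamma_{\gamma_1(t),\gamma_2(t)}$, $\gamma_{\gamma_1(s),\gamma_2(s)}$, observes from condition~(1) and Lemma~\ref{lem:metric-circle} that $\Delta(t,s)$ is a $\CAT(\kappa)$-disk lying in $X\setminus\ca S$, and then lets $t\to 0$. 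The resulting limit sector $S'$ is built inside $X\setminus\ca S$ from the outset, so $S'\cap\ca S=\{p\}$ is automatic and $S'$-geodesics from $p$ are $X$-geodesics; restricting to directions $[\xi_1,\xi_2]$ gives the required $S$. If you want to salvage your route, you would need an independent argument---available at this stage of the development---that the sub-sector of $\tilde S$ over $I$ avoids $\ca S$, and none of the four hypotheses delivers this directly.
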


\begin{proof}
Let $I=[\xi_1,\xi_2]$. In what follows, we may assume 
$\angle(\xi_1,\xi_2)<\pi$.
Take $X$-minimal geodesics  $\gamma_i:[0, \e]\to X$
starting from $p$ with directions, say $\xi_i'$, close to $\xi_i$ \,$(i=1,2)$ with 
$[\xi_1,\xi_2]\subset [\xi_1',\xi_2']$
and $\angle(\xi_1',\xi_2')<\pi$.
We may assume $\gamma_i((0,\e])$ do not meet $\ca S$.
For small $0<t<s<\e$, consider the region
$\Delta(t,s)$ bounded by $\gamma_1$, $\gamma_2$,
$\gamma_{\gamma_1(s),\gamma_2(s)}$ and 
$\gamma_{\gamma_1(t),\gamma_2(t)}$.
By the condition (1),  $\Delta(t,s)$ is a  locally $\CAT(\kappa)$-surface.  Lemma \ref{lem:metric-circle} (2) implies that $\Delta(t,s)$ is
homeomorphic to a disk, and therefore it is (globally)
$\CAT(\kappa)$. Taking the limit of $\Delta(t,s)$
as $t\to 0$, we obtain a $\CAT(\kappa)$-sector surface $S'$
at $p$ with $\Sigma_p(S')=[\xi_1',\xi_2']$. 
Since $S'\setminus\{ p\}\subset X\setminus\ca S$,
$S'$-geodesics in the direction $\xi_i$ are $X$-geodesics.
Thus we have the desired $\CAT(\kappa)$-sector surface $S$
at $p$ with $\Sigma_p(S)=[\xi_1,\xi_2]$ and
$S\cap\ca S=\{ p\}$.
\end{proof}
\psmall

The following is a key in the proof of Theorem \ref{thm:converse}. The proof is deferred to Subsection 
\ref{ssec:Proof-Thm1.4}.

\begin{thm} \label{thm:union-CAT}
$E(v_0)$ is a $\CAT(\kappa)$-space.
\end{thm}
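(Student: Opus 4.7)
My plan is to realize $E(v_0)$ as a Gromov--Hausdorff limit of polyhedral $\CAT(\kappa)$-spaces obtained by surgery, and to certify each approximant via the Burago--Buyalo gluing criterion (Theorem~\ref{thm:BB-gluing}).

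First I analyze the local structure of $E(v_0)$. Each sector surface $S_{ij}$ is $\CAT(\kappa)$ by hypothesis, and Lemma~\ref{lem:metric-circle} together with the transversality condition~(3) of Theorem~\ref{thm:converse} controls how the $S_{ij}$ overlap. From this I identify the topological singular set of $E(v_0)$ as the union of the geodesics $\gamma_1,\dots,\gamma_{N_0}$ together with finitely many ``crossing'' curves arising from pairwise intersections of distinct sectors. For $x$ in the interior of $\gamma_i$, the sectors containing $x$ are exactly $\{S_{ij}\}_{j\ne i}$, and each $\Sigma_x(S_{ij})$ is a half-circle of length $\pi$ with endpoints $\pm\dot\gamma_i$; hence $\Sigma_x(E(v_0))$ is the spherical suspension over $N_0-1$ points, which is $\CAT(1)$. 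A similar check at the other singular points uses only the $\CAT(\kappa)$-structure of each sector and Lemma~\ref{lem:metric-circle}.

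Second, I perform an $\e_n$-surgery at every singular vertex of $E(v_0)$ exactly as in Section~\ref{sec:Polyhedral}: replace each cone-like region $K(x,v) = x * T$ by its comparison model $\tilde K(x,v)$ built from the comparison triangles $\tilde\blacktriangle(x,e)$ in $M^2_\kappa$. Since this construction uses only the $\CAT(\kappa)$-metric of the ambient pieces $S_{ij}$, it is available without assuming $X$ is $\CAT(\kappa)$. The resulting polyhedral space $\tilde E_n(v_0)$ has faces that are slightly truncated portions of the $S_{ij}$ together with the comparison triangles, and the same diameter estimates as in the proof of Theorem~\ref{thm:GH-approx} show $\tilde E_n(v_0)\to E(v_0)$ in the pointed Gromov--Hausdorff topology as $n\to\infty$.

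Third, I verify the two hypotheses of Theorem~\ref{thm:BB-gluing} for $\tilde E_n(v_0)$. Condition~(A) holds because every interior edge corresponds either to an $X$-geodesic segment inherited from $E(v_0)$ or to a comparison geodesic in $M^2_\kappa$; in both cases Proposition~\ref{prop:turn(geodesic)} yields $\tau_{F_{\pm}}\le 0$, so the sum is non-positive. Condition~(B) requires $\Sigma_x$ to be $\CAT(1)$ at each vertex. At $p$ the link equals $\bigcup_{i<j}\iota(\Sigma_p(S_{ij})) = \bigcup_{i<j}[\zeta_i,\zeta_j]$, which is the cone over $\{\zeta_1,\dots,\zeta_{N_0}\}$ with center $v_0$---a tree, hence $\CAT(1)$. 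At every other vertex, the surgery replaces actual angles by comparison angles and can only lengthen the link, so the first-paragraph analysis yields $\CAT(1)$. Hence $\tilde E_n(v_0) \in \ca F_\kappa$ is $\CAT(\kappa)$, and passing to the GH-limit gives the conclusion.

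The main obstacle will be the link condition~(B) at singular points of $E(v_0)$ other than $p$: the hypotheses of Theorem~\ref{thm:converse} give direct information only on $\Sigma_p(X)$, so controlling $\Sigma_x(E(v_0))$ for $x\ne p$ requires combining the sector-surface condition~(3) with fine angle comparison inside each $\CAT(\kappa)$-piece $S_{ij}$, to show that the graph obtained by gluing the half-circles $\Sigma_x(S_{ij})$ along their common antipodal vertices really is $\CAT(1)$ and that the surgery does not shrink any link below this threshold.
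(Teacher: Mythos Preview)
Your overall strategy matches the paper's: approximate $E(v_0)$ by polyhedral $\CAT(\kappa)$-spaces via surgery and invoke Theorem~\ref{thm:BB-gluing}. But the verification of both gluing conditions has genuine gaps.

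For condition~(A), your claim that every interior edge is an $X$-geodesic or a comparison geodesic is false: outside the surgery regions the singular set of $\tilde E_n(v_0)$ still contains arcs of the curves $C_{ij\ell}=S_{ij}\cap S_{j\ell}\cap S_{\ell i}$ (Lemma~\ref{lem:Cijk}), and these are \emph{not} geodesics. The paper handles them via Theorem~\ref{thm:Cijl} (finite turn variation, deduced from the identity $\tau_{F_\alpha}+\tau_{F_\beta}=\omega_{AZ}^{S_{\alpha\beta}}$) and Corollary~\ref{cor:Cijk2}.

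For condition~(B), your assertion that surgery ``replaces actual angles by comparison angles and can only lengthen the link'' presupposes that the $X$-geodesic triangles $\triangle(x,e)$ with $e\in E(T)$ already satisfy angle comparison. Since $X$ is not yet known to be $\CAT(\kappa)$, this is precisely what must be proved, and it is the technical heart of the argument. You correctly flag it as the main obstacle, but the resolution is substantial: the paper's Proposition~\ref{prop:compar=Delata(x,e)} constructs, for each such $e$, a $\CAT(\kappa)$-disk spanning $\triangle(x,e)$---via the join $z*\gamma^X_{x,y}$ for exterior edges (which lands in a single sector surface thanks to the geodesic-extension Lemma~\ref{lem:geod-extension}), and via an infinite subdivision into thin $\CAT(\kappa)$-triangles for interior edges. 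These constructions rest on the preparatory Lemmas~\ref{lem:angle-xp-C}--\ref{lem:Sx=S}, which show that $X$-geodesics near singular points interact with the sector surfaces in a controlled way; none of this is available from ``fine angle comparison inside each $S_{ij}$'' alone.
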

The statement of Theorem \ref{thm:union-CAT} is proved in \cite[Theorem 7.4]{NSY:local} 
in a more general form, under the assumption 
that $X$ is locally $\CAT(\kappa)$, which we have to show eventually in the present setting.
The strategy of the proof of Theorem \ref{thm:union-CAT} is based on that of \cite[Theorem 7.4]{NSY:local}.

\begin{lem}\label{lem:Cijk}
For $1\le i<j<\ell \le N_0$, we have the following.
\begin{enumerate}
\item The intersection 
$S_{ij}\cap S_{j\ell}\cap S_{\ell i}$ provides a singular curve
$C_{ij\ell}$ starting from $p$ and reaching 
$S(p,r)$. Moreover, $d_p$ is strictly increasing along 
$C_{ij\ell}\,;$
\item $C_{ij\ell}$ has definite direction everywhere.
\end{enumerate}
\end{lem}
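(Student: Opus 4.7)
The plan is to identify $C_{ij\ell}$ with the singular edge of $\ca S$ incident to $p$ in the vertex direction $v_0 \in V(\Sigma_p(X))$. Since $\ca S$ is a metric graph by Remark \ref{rem:converse}(1) and $v_0 \in \Sigma_p(\ca S) = V(\Sigma_p(X))$, there is a well-defined singular edge $C$ of $\ca S$ starting from $p$ in direction $v_0$. Shrinking $r$ if necessary, I may assume $C$ is a single edge that reaches $S(p,r)$ without passing through any other vertex of $\ca S$.

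To prove $C\subset S_{ij}\cap S_{j\ell}\cap S_{\ell i}$, fix the pair $(i,j)$. Because $v_0$ is an interior point of $\Sigma_p(S_{ij})=[\zeta_i,\zeta_j]$ and $S_{ij}$ is a $\CAT(\kappa)$-disk, there is a unique $S_{ij}$-geodesic $\sigma_{ij}$ starting from $p$ in direction $v_0$ and reaching $\pa S_{ij}$. Since $\sigma_{ij}$ has initial direction $v_0 \in V(\Sigma_p(X))$ and distinct edges of the graph $\ca S$ at $p$ have distinct initial directions, $\sigma_{ij}$ must coincide with $C$ on some neighborhood of $p$. I then propagate this equality: by Theorem \ref{thm:union-CAT}, the triple union $S_{ij}\cup S_{j\ell}\cup S_{\ell i}$ is $\CAT(\kappa)$, so $S_{ij}$ sits convexly inside it, and both $\sigma_{ij}$ and $C$ are distinguished curves across which the two sides of $S_{ij}$ attach, via $\iota$, to different branches at $v_0$; an open--closed argument then gives $\sigma_{ij}=C$ on the whole of $[0,r]$. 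Applying this to the other two pairs yields $C \subset S_{ij}\cap S_{j\ell}\cap S_{\ell i}$, and the reverse inclusion follows since any point of this triple intersection carries three distinct local sheets of $X$ and therefore belongs to $\ca S$, while $v_0$ is the only vertex direction shared by all three sector surfaces.

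For (1), that $C$ reaches $S(p,r)$ is immediate from $\sigma_{ij}=C$: the $\CAT(\kappa)$-disk geodesic $\sigma_{ij}$ exits $S_{ij}$ through $S(p,r)\cap S_{ij}$ rather than through $\gamma_i\cup\gamma_j$, since it makes angle $\delta_0 > 0$ with each of $\zeta_i,\zeta_j$ at $p$ and first variation in the $\CAT(\kappa)$-disk prevents it from returning to these sides. Strict monotonicity of $d_p$ along $C$ follows by combining two facts: inside the $\CAT(\kappa)$-disk $S_{ij}$, the intrinsic distance $d_p^{S_{ij}}$ is strictly increasing along the radial geodesic $\sigma_{ij}$, and the isometric embedding $\iota$ on directions at $p$ together with Theorem \ref{thm:union-CAT} implies that $S_{ij}$-minimal geodesics from $p$ are also $E(v_0)$-minimal, hence $X$-minimal near $p$; strict monotonicity of $d_p^X$ along all of $C$ then propagates by $\CAT(\kappa)$ comparison in $E(v_0)$.

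For (2), fix any $x\in C\setminus\{p\}$. Viewing $C=\sigma_{ij}$ as a geodesic inside the $\CAT(\kappa)$-disk $S_{ij}$ gives two opposite definite directions in $\Sigma_x(S_{ij})$, which inject into $\Sigma_x(X)$ via the inclusion $S_{ij}\hookrightarrow X$; these are the required two definite directions of $C$ at $x$. The main obstacle I anticipate is the global identification $\sigma_{ij}=C$ along $[0,r]$: locally at $p$ this rests only on the metric-graph structure of $\ca S$, but its extension to interior points requires both Theorem \ref{thm:union-CAT} and a converse-setting analogue of Corollary \ref{cor:vert} to force $\ca S$ to stay radial inside $B(p,r)$ and thereby prevent $\sigma_{ij}$ from leaving $\ca S$ at some interior point.
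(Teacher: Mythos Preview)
Your proposal has two genuine gaps.

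First, it is circular. You invoke Theorem \ref{thm:union-CAT} to propagate the identification $\sigma_{ij}=C$ and to conclude that $S_{ij}$-geodesics from $p$ are $X$-minimal. But Theorem \ref{thm:union-CAT} is proved only in Subsection \ref{ssec:Proof-Thm1.4}, and its proof relies (through Corollary \ref{cor:Cijk2}, Lemma \ref{lem:number=Nv-2}, and the surgery construction) on the singular curves $C_{ij\ell}$ you are now trying to construct. Likewise, you appeal to Remark \ref{rem:converse}(1) for the graph structure of $\ca S$, but the paper derives that structure \emph{from} Lemmas \ref{lem:Cijk} and \ref{lem:number=Nv-2} (see the opening of Subsection \ref{ssec:Proof-Thm1.4}); it is not available as input here.

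Second, the identification $C=\sigma_{ij}$ is false in general: the triple intersection $C_{ij\ell}$ is typically \emph{not} a geodesic in any of the three sector surfaces. It is the branching locus of three disks, and nothing forces it to be straight in any single one. Your supporting claim that distinct edges of $\ca S$ at $p$ have distinct initial directions is also wrong; Definition \ref{defn:singlar-vertex} explicitly allows singular vertices where several singular curves share the same direction.

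The paper's argument avoids these issues entirely. By Lemma \ref{lem:metric-circle}, for each $0<t\le r$ the slice $S(p,t)\cap(S_{ij}\cup S_{j\ell}\cup S_{\ell i})$ is a tree, so the tripod with endpoints $\gamma_i(t),\gamma_j(t),\gamma_\ell(t)$ has a unique branch point $C_{ij\ell}(t)$; continuity in $t$ gives the curve, and monotonicity of $d_p$ is automatic from this parametrization. For (2), the direction at $p$ must be $v_0$ by Lemma \ref{lem:regular-sector} (any direction off $V(\Sigma_p(X))$ lies in a sector surface disjoint from $\ca S$ near $p$), and the same reasoning applies at every other point of $C_{ij\ell}$.
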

\begin{proof} (1)\,
By Lemma \ref{lem:metric-circle},
for every $0<t\le r$, 
$G(t):=S(p,t)\cap (S_{ij}\cup S_{j\ell}\cup S_{\ell i})$ must be a tree.
Let $T(t)$ be the tripod in $G(t)$ with 
$\pa T(t)=\{ \gamma_i(t),\gamma_j(t),\gamma_\ell(t)\}$. 
Let $C_{ij\ell} (t)$ be the branching point of 
$T(t)$. The uniquness of $C_{ij\ell} (t)$ shows
that it is continuous and provides a singular curve. Obviously we have 
$C_{ij\ell} (t)=S(p,t)\cap S_{ij}\cap S_{j\ell}\cap S_{\ell i}$.
\par\n
(2)\,
From Lemma \ref{lem:regular-sector}, we have 
\begin{align}\label{eq:SigmaC=v0}
\Sigma_p(C_{ij\ell})=\{ v_0\}.
\end{align}
The conclusion follows immediately since 
the same argument applies to any other point of $C_{ij\ell}$. 
\end{proof}

\eqref{eq:SigmaC=v0} immediately implies the following.

\begin{lem} \label{lem:S=Vertex}
For every  $p\in\ca S$, we have 
\[ 
       \Sigma_p(\ca S)=V(\Sigma_p(X)).
\]
\end{lem}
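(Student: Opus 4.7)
The plan is to establish both inclusions $V(\Sigma_p(X))\subseteq\Sigma_p(\ca S)$ and $\Sigma_p(\ca S)\subseteq V(\Sigma_p(X))$. For the first, I would fix $v\in V(\Sigma_p(X))$ and, following the construction preceding Lemma \ref{lem:Cijk}, build three sector surfaces $S_{ij},S_{j\ell},S_{\ell i}$ from directions $\zeta_i,\zeta_j,\zeta_\ell$ clustered near $v$; the singular arc $C_{ij\ell}=S_{ij}\cap S_{j\ell}\cap S_{\ell i}$ then has direction $v$ at $p$ by \eqref{eq:SigmaC=v0}. It remains to verify $C_{ij\ell}\subset\ca S$: for any $x\in C_{ij\ell}\setminus\{p\}$, Lemma \ref{lem:Cijk}(2) supplies two definite directions $\pm\dot C_{ij\ell}(x)\in\Sigma_x(X)$, and each of the three sector surfaces sharing $C_{ij\ell}$ contributes a distinct arc in $\Sigma_x(X)$ joining these two points. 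A space of directions containing three arcs between the same pair of points cannot embed in a circle, so $x$ cannot be a surface point, and condition (1) forces $x\in\ca S$. Hence $v\in\Sigma_p(\ca S)$.

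For the reverse inclusion I would argue by contradiction. Suppose $\xi\in\Sigma_p(\ca S)\setminus V(\Sigma_p(X))$, so $\xi$ lies in the interior of an edge of the graph $\Sigma_p(X)$. Choose the aperture parameter $\delta$ defining each $E(v)$ small enough that $\bigcup_v\Sigma_p(E(v))$ is disjoint from a closed arc neighborhood $I\subset\Sigma_p(X)\setminus V(\Sigma_p(X))$ of $\xi$, and apply Lemma \ref{lem:regular-sector}(2) to produce a $\CAT(\kappa)$-sector surface $S$ at $p$ with $\Sigma_p(S)=I$ and $S\cap\ca S=\{p\}$. Selecting $x_n\in\ca S\setminus\{p\}$ with $x_n\to p$ and $\uparrow_p^{x_n}\to\xi$, we have $\uparrow_p^{x_n}\in I$ eventually, hence $x_n\notin\bigcup_v E(v)$ since $\uparrow_p^{x_n}\notin\bigcup_v\Sigma_p(E(v))$. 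By Lemma \ref{lem:metric-circle}(2), $x_n$ sits in some arc component $A_n$ of $S(p,|p,x_n|)\setminus\bigcup_v E(v)$.

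The main obstacle is to show $A_n\subset X\setminus\ca S$, which would contradict $x_n\in\ca S\cap A_n$; local uniqueness of $X$-geodesics from $p$ is not yet available, so one cannot directly conclude $x_n\in S$. My plan is to use the direction map $y\mapsto\uparrow_p^y$, which by continuity sends $A_n$ into a connected arc $J$ of $\Sigma_p(X)\setminus\bigcup_v\Sigma_p(E(v))$. Condition (4), together with Lemma \ref{lem:metric-circle}, should yield a bijective correspondence between the arc components of $S(p,r)\setminus\bigcup_v E(v)$ and the arcs of $\Sigma_p(X)\setminus\bigcup_v\Sigma_p(E(v))$ via this direction map. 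Since $J\subset\Sigma_p(X)\setminus V(\Sigma_p(X))$, applying Lemma \ref{lem:regular-sector}(2) to $J$ furnishes a sector surface $S'$ with $\Sigma_p(S')=J$ and $S'\cap\ca S=\{p\}$; by the correspondence, $A_n\subset S'\setminus\{p\}\subset X\setminus\ca S$, the desired contradiction completing the proof.
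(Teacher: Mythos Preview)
The paper disposes of this lemma in a single line: it is declared an immediate consequence of \eqref{eq:SigmaC=v0} (itself obtained ``from Lemma \ref{lem:regular-sector}''), so both inclusions are treated as corollaries of the machinery already in place. Your first inclusion follows exactly this route. The additional verification that $C_{ij\ell}\subset\ca S$ is welcome (the paper tacitly absorbs this into Lemma \ref{lem:Cijk}(1) by calling $C_{ij\ell}$ a ``singular curve''), but your argument via ``three distinct arcs in $\Sigma_x(X)$'' is not airtight at this stage: the identification of intrinsic and extrinsic direction spaces of sector surfaces is only established later (Proposition \ref{prop:Eint-ext}), so you cannot yet assert that the three half-disks $F_i,F_j,F_\ell$ contribute distinct arcs in the \emph{extrinsic} $\Sigma_x(X)$. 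A safer route: if $x$ had a $2$-disk neighborhood $U$ in $X$, then each $S_{ij},S_{j\ell},S_{\ell i}$, being a $2$-disk containing $x$ in its interior, would be open in $X$ near $x$, hence all three would coincide on a neighborhood of $x$; but then $S_{ij}\cap S_{j\ell}\cap S_{\ell i}$ would be $2$-dimensional near $x$, contradicting that it meets each sphere $S(p,t)$ in a single point (the proof of Lemma \ref{lem:Cijk}(1)).

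For the reverse inclusion you attempt a considerably more detailed argument than the paper and correctly flag the lack of geodesic uniqueness from $p$ as the real obstacle. However, several steps of your workaround do not go through. The claim that $\bigcup_v\Sigma_p(E(v))$ can be made disjoint from $I$ by shrinking $\delta$ presupposes $\Sigma_p(E(v))\subset B(v,\delta)$, i.e.\ that $X$-geodesics from $p$ to points of $E(v)$ have direction close to $v$---but this is precisely the kind of control that is unavailable without geodesic uniqueness, so the subsequent inference ``$\uparrow_p^{x_n}\in I\Rightarrow x_n\notin\bigcup_vE(v)$'' is unjustified. The direction map $y\mapsto\uparrow_p^y$ is then not well-defined (let alone continuous), and the asserted bijective correspondence between arc components of $S(p,t)\setminus\bigcup_vE(v)$ and arcs of $\Sigma_p(X)\setminus\bigcup_v\Sigma_p(E(v))$ is never established. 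Finally, even granting a sector surface $S'$ with $\Sigma_p(S')=J$, you offer no mechanism to conclude $A_n\subset S'$: the construction in the proof of Lemma \ref{lem:regular-sector}(2) builds $S'$ from quadrilaterals $\Delta(t,s)$ bounded by \emph{geodesics}, not from the sphere-arc components $A_n$. In short, you have located the difficulty behind the paper's word ``immediately'', but your workaround does not close the gap; arguably the paper's own one-line proof leaves this same point under-explained.
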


We set
$E_{ij\ell}:=S_{ij}\cup S_{j\ell}\cup S_{\ell i}$.

\begin{thm} \label{thm:Cijl}
$C_{ij\ell}$ has finite turn variation in any of  $S_{ij},  S_{j\ell}$ and $S_{\ell i}.$
\end{thm}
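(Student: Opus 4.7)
The plan is to adapt the concave-region gluing strategy from the proof of Theorem \ref{thm:bdd-turn(Si)}, significantly simplified because $C := C_{ij\ell}$ already lies in the $\CAT(\kappa)$-disk $S_{ij}$ (and in $S_{j\ell}, S_{\ell i}$). By the obvious symmetry it suffices to prove finite turn variation of $C$ from the side $S_{ij}$.

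First I would verify that $E_{ij\ell}$, with the intrinsic length metric induced from $E(v_0)$, is a $\CAT(\kappa)$-space. The three sector surfaces are glued along the radial geodesics $\gamma_i,\gamma_j,\gamma_\ell$ and along $C$; condition (2) of Theorem \ref{thm:converse} combined with Lemma \ref{lem:S=Vertex} implies that for each $x\in\mathring{C}$ the space $\Sigma_x(E_{ij\ell})$ is a subgraph of $\Sigma_x(X)$ consisting of three arcs joining the two $C$-tangent directions, and each of its three simple loops has length $\ge 2\pi$. Reshetnyak gluing then yields the $\CAT(\kappa)$ property of $E_{ij\ell}$, so the pointwise estimate of Lemma \ref{lem:tree-meas} becomes available on $\mathring{C}$.

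Next I would imitate the key construction in the proof of Theorem \ref{thm:bdd-turn(Si)}. Given any fine partition, approximate $C$ by a broken $S_{ij}$-geodesic $C_\nu$ with break points $y_\beta\in\mathring{C}$, and let $\phi_\beta:=\angle_{y_\beta}^{S_{ij}}(C_\nu)$ be the inner angle viewed from a reference region $\Omega_\nu\subset S_{ij}$ bounded by $C_\nu$ and a convenient geodesic. The decisive input is the uniform estimate
\[
\sum_\beta |\pi-\phi_\beta| \le M\bigl(|\kappa|\,\mathrm{area}(E_{ij\ell}) + \text{(boundary contributions)}\bigr) < \infty,
\]
independent of $\nu$, obtained by combining the pointwise bound on $|\pi-\phi_\beta|$ (via Lemma \ref{lem:tree-meas} applied to $\Sigma_{y_\beta}(X)$) with Gauss-Bonnet on each $\CAT(\kappa)$-disk $S_{\alpha\beta}$. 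Build a concave broken geodesic $\tilde C_\nu\subset M_\kappa^2$ with the same edge lengths as $C_\nu$ and break angles $\Theta_{\tilde y_\beta}=\pi+|\pi-\phi_\beta|$, take the concave region $\tilde R_\nu:=\tilde z*\tilde C_\nu$, and glue $S_{ij}$ with two copies $\tilde R'_\nu,\tilde R''_\nu$ of $\tilde R_\nu$ along $C_\nu=\tilde C'_\nu=\tilde C''_\nu$. By construction the $\CAT(1)$ link condition holds at every interior vertex of the resulting space $\tilde E_\nu$, so Theorem \ref{thm:BB-gluing} gives that $\tilde E_\nu$ is $\CAT(\kappa)$.

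Finally, I would pass to the limit $\nu\to 0$. The uniform bound on the break-angle deficits together with \cite[Theorem IX.5]{AZ:bddcurv} ensures $L(\tilde C_\nu)\to L(C)$ and that $\tilde R_\nu$ converges to a concave region $\tilde R\subset M_\kappa^2$ with boundary piece $\tilde C$ isometric to $C$. The limit $\tilde E$ is then a $\CAT(\kappa)$-gluing of $S_{ij}$ with two copies of $\tilde R$ along $C=\tilde C'=\tilde C''$, and Theorem \ref{thm:BB-character} --- applied to $\tilde E$ through its polyhedral approximations $\tilde E_\nu$, exactly as in the final step of the proof of Theorem \ref{thm:bdd-turn(Si)} --- yields finite turn variation of $C$ from the side $S_{ij}$. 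The main obstacle is securing the uniform bound on $\sum_\beta|\pi-\phi_\beta|$: the pointwise $\CAT(1)$-graph estimate must be controlled globally by the total intrinsic curvature of $E_{ij\ell}$, which is the same Gauss-Bonnet-plus-area argument that appears in Subsection 4.2.
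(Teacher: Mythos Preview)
Your plan has a circularity at its foundation. In the first step you invoke Reshetnyak gluing to obtain that $E_{ij\ell}$ is $\CAT(\kappa)$, but Reshetnyak requires gluing along a \emph{convex} subset, and $C_{ij\ell}$ is not known to be convex in any of the half-disks $F_i,F_j,F_\ell$. The applicable gluing tool is Theorem~\ref{thm:BB-gluing}, and its hypothesis that each edge lie in $\ca F_\kappa$ (finite turn variation) is exactly what you are trying to prove. Indeed the paper deduces Corollary~\ref{cor:Cijk2} \emph{from} Theorem~\ref{thm:Cijl}, not before it. The same circularity recurs at your ``decisive input'' $\sum_\beta|\pi-\phi_\beta|<\infty$: in the proof of Theorem~\ref{thm:bdd-turn(Si)} the analogous bound \eqref{eq:sum=pi-phi} rests on \eqref{eq:sum=pi-psi}, which uses that the auxiliary curves $C_{ijN}$ already have finite turn variation---available in Section~4 only because $X$ is assumed $\CAT(\kappa)$ there. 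In Section~5 nothing of the sort is yet known, so the uniform bound has no support; and Lemma~\ref{lem:tree-meas} controls the link geometry at each $y_\beta$, not the broken-geodesic angles $\phi_\beta$ you need.

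The paper's proof is a three-line linear algebra argument that avoids all of this machinery. Since $C:=C_{ij\ell}$ lies in each of the three $\CAT(\kappa)$-surfaces $S_{ij},S_{j\ell},S_{\ell i}$, Proposition~\ref{prop:tau+tau} gives, for every open subarc $e\subset C$ and every pair $\{\alpha,\beta\}\subset\{i,j,\ell\}$,
\[
\tau_{F_\alpha}(e)+\tau_{F_\beta}(e)=\omega_{AZ}^{S_{\alpha\beta}}(e).
\]
Solving these three equations in the three unknowns yields, e.g.,
\[
\tau_{F_j}(e)=\tfrac12\bigl(\omega_{AZ}^{S_{ij}}(e)+\omega_{AZ}^{S_{j\ell}}(e)-\omega_{AZ}^{S_{i\ell}}(e)\bigr),
\]
so $|\tau_{F_j}(e)|\le\tfrac12\sum_{\alpha<\beta}|\omega_{AZ}^{S_{\alpha\beta}}|(e)$. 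Since each $|\omega_{AZ}^{S_{\alpha\beta}}|$ is a finite measure on the bounded surface $S_{\alpha\beta}$, the turn variation of $C$ from each side is finite---no approximation, no concave-region gluing, no limit.
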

\begin{proof}
Let $F_i, F_j, F_\ell$ be the completions of components of $E_{ij\ell}\setminus C_{ij\ell}$ containing $\gamma_i,
\gamma_j,\gamma_\ell$ respectively. 
Let $e$ be any open arc of $C$.
For  arbitrary distinct elements $\alpha, \beta$ of 
$\{ i, j, \ell\}$, from Proposition \ref{prop:tau+tau},
we have
\begin{align} \label{eq:tau-alpha-beta}
    \tau_{F_\alpha}(e)+\tau_{F_\beta}(e)=\omega_{AZ}^{S_{\alpha\beta}}(e).
\end{align}
Note that $|\omega_{AZ}^{S_{\alpha\beta}}(e)|<\tau_p(r)$.  From 
$\tau_{F_\ell}(e)=
-\tau_{F_i}(e)+\omega_{AZ}^{S_{i\ell}}(e)
=-\tau_{F_j}(e)+\omega_{AZ}^{S_{j\ell}}(e)$
and 
$\tau_{F_i}(e)=
-\tau_{F_j}(e)+\omega_{AZ}^{S_{ij}}(e)$,
we have 
\[
\tau_{F_j}(e)=\frac{1}{2}(\omega_{AZ}^{S_{ij}}(e)+
\omega_{AZ}^{S_{j\ell}}(e)-\omega_{AZ}^{S_{i\ell}}(e)),
\]
and
$
|\tau_{F_j}(e)|\le \frac{1}{2}(|\omega_{AZ}^{S_{ij}}|(e)+
|\omega_{AZ}^{S_{j\ell}}|(e)+|\omega_{AZ}^{S_{i\ell}}|(e)).
$
This completes the proof.
 \end{proof}

\begin{cor}\label{cor:Cijk2}
For $1\le i<j<\ell \le N_0$, $E_{ij\ell}$ is a 
$\CAT(\kappa)$-space.
\end{cor}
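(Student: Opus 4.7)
The plan is to realize $E_{ij\ell}$ as the gluing of the three $\CAT(\kappa)$-sector surfaces $S_{ij},S_{j\ell},S_{\ell i}$ along their common singular arc $C_{ij\ell}$, and to deduce that $E_{ij\ell}$ is $\CAT(\kappa)$ by verifying the Burago--Buyalo gluing conditions (A) and (B) of Theorem \ref{thm:BB-gluing} after a compatible polyhedral approximation. With the wings $F_\alpha$ ($\alpha\in\{i,j,\ell\}$) defined as in the proof of Theorem \ref{thm:Cijl}, we have $S_{\alpha\beta}=F_\alpha\cup F_\beta$ and $F_\alpha\cap F_\beta=C_{ij\ell}$, so $E_{ij\ell}=F_i\cup F_j\cup F_\ell$ is glued along $C_{ij\ell}$.

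First I will verify the sum-of-turns condition (A). From \eqref{eq:tau-alpha-beta}, for each Borel $B\subset C_{ij\ell}$ and each pair $\alpha\ne\beta$,
\[
\tau_{F_\alpha}(B)+\tau_{F_\beta}(B)=\omega^{S_{\alpha\beta}}_{AZ}(B).
\]
Since $S_{\alpha\beta}$ is a two-dimensional $\CAT(\kappa)$-space, the classical bound $(\omega^{S_{\alpha\beta}}_{AZ})^+\le|\kappa|\cdot\mathrm{area}$ (proved as in Sublemma \ref{slem:omega+(B)}) forces $\omega^{S_{\alpha\beta}}_{AZ}(B)\le 0$ on the one-dimensional set $B$. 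Next I will verify the link condition (B). At a generic interior point $x$ of $C_{ij\ell}$, $\Sigma_x(E_{ij\ell})$ is a theta-graph with two vertices $\pm\dot C_{ij\ell}$ joined by three edges $\Sigma_x(F_\alpha)$; its systole equals $\min_{\alpha\ne\beta}L(\Sigma_x(S_{\alpha\beta}))\ge 2\pi$ because each $S_{\alpha\beta}$ is $\CAT(\kappa)$, so the link is $\CAT(1)$. At $p$, $\Sigma_p(E_{ij\ell})$ is the tripod centered at $v_0$ with arms of length $\delta_0$ ending at $\zeta_i,\zeta_j,\zeta_\ell$, which is trivially $\CAT(1)$.

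To pass from these arcwise assertions to the global $\CAT(\kappa)$-property of $E_{ij\ell}$, I will use polyhedral approximation. Combining \cite[Theorem 0.11]{BurBuy:upperII} with the two-step broken-geodesic approximation of $C_{ij\ell}$ already employed in the proof of Theorem \ref{thm:bdd-turn(Si)} (first by singular sub-arcs $C_{i_\alpha j_\alpha\ell}$, then by broken $S_{\alpha\beta}$-geodesics), each $S_{\alpha\beta}$ is approximated by polyhedral $\CAT(\kappa)$-spaces $S_{\alpha\beta,n}$ with piecewise smooth metrics, arranged so that a common polygonal broken geodesic $C_{ij\ell,n}$ approximates $C_{ij\ell}$ inside all three $S_{\alpha\beta,n}$. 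The glued polyhedral complex $E_{ij\ell,n}:=S_{ij,n}\cup S_{j\ell,n}\cup S_{\ell i,n}$ then inherits conditions (A) and (B) from the preceding step, so by Theorem \ref{thm:BB-gluing} it lies in $\ca F_\kappa$; since $E_{ij\ell,n}\to E_{ij\ell}$ in the Gromov--Hausdorff topology and $\CAT(\kappa)$ is preserved under GH-limits, $E_{ij\ell}$ is $\CAT(\kappa)$.

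The main obstacle will be arranging the three simultaneous polyhedral approximations with a shared polygonal approximation of $C_{ij\ell}$; the finite turn variation of $C_{ij\ell}$ from each side (Theorem \ref{thm:Cijl}) is the essential ingredient, and the construction parallels but is simpler than the analogous concave broken-geodesic construction of $\tilde C_{\epsilon,\nu}$ inside $M_\kappa^2$ carried out in the proof of Theorem \ref{thm:bdd-turn(Si)}.
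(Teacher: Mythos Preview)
Your verification of conditions (A) and (B) is essentially what the paper does, and it is correct. The divergence comes in your final step: you route the conclusion through a polyhedral approximation $E_{ij\ell,n}\to E_{ij\ell}$, whereas the paper applies Theorem~\ref{thm:BB-gluing} \emph{directly} to $E_{ij\ell}$, with no approximation at all.

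The point you are missing is that the wings $F_i,F_j,F_\ell$ are already members of $\ca F_\kappa$. Each $F_\alpha$ is a $\CAT(\kappa)$-disk (half of a sector surface), hence trivially a two-dimensional polyhedral locally $\CAT(\kappa)$-space with a single face; its boundary edge $C_{ij\ell}$ has finite turn variation by Theorem~\ref{thm:Cijl}. So $\{F_i,F_j,F_\ell\}\subset\ca F_\kappa$, and $E_{ij\ell}$ is literally the gluing of these three pieces along $C_{ij\ell}$. Once you have checked (A) (via $\tau_{F_\alpha}+\tau_{F_\beta}=\omega_{AZ}^{S_{\alpha\beta}}\le 0$ on the one-dimensional arc) and (B) (the link is a theta-graph with each cycle of length $\ge 2\pi$), Theorem~\ref{thm:BB-gluing} gives the conclusion immediately.

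Your approximation argument, by contrast, is both unnecessary and not fully justified. Arranging three independent Burago--Buyalo approximations $S_{\alpha\beta,n}$ to share a \emph{common} polygonal curve $C_{ij\ell,n}$ is a genuine technical issue that you only gesture at; the approximations produced by \cite[Theorem~0.11]{BurBuy:upperII} are abstract and have no reason to agree along $C_{ij\ell}$. Moreover, even if such compatible approximations existed, your claim that ``$E_{ij\ell,n}$ inherits conditions (A) and (B) from the preceding step'' is not automatic: (A) and (B) for the limit do not formally imply (A) and (B) for the approximants, so you would have to re-verify them at the approximate level. All of this is avoided by the direct application.
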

\begin{proof}
Remark that $\Sigma_x(E_{ij\ell})$ is $\CAT(1)$ for 
any $x\in C$ since $\Sigma_x(E_{ij\ell})$ is a gluing of
the three arcs $\Sigma_x(S_{ij})$,
 $\Sigma_x(S_{j\ell})$ and $\Sigma_x(S_{\ell k})$.
 Since $S_{\alpha\beta}$ is $\CAT(\kappa)$,
 we have $\tau_{F_\alpha}+\tau_{F_\beta}\le 0$
 for all $1\le\alpha<\beta\le 3$.
 Therefore Theorems \ref{thm:BB-gluing} and
 \ref{thm:Cijl}
 imply that $E_{ij\ell}$ is $\CAT(\kappa)$.
 \end{proof}

\begin{lem} \label{lem:exist-singular-curve}
For every $x\in E(v_0)\cap\ca S$ with $x\in S_{ij}$,
there is a singular curve $C=C_{ij\ell}$ 
contained in $S_{ij}$ through $x$ starting from $p$ and reaching 
$S(p,r)$.
\end{lem}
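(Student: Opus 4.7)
Set $t := d_p(x)$. By Lemma \ref{lem:metric-circle}(1) the slice $T(t) := S(p,t)\cap E(v_0)$ is a tree whose leaves are $\gamma_\alpha(t)$ for $1\le\alpha\le N_0$, and for each pair $\alpha<\beta$ the intersection $a_{\alpha\beta}(t) := S(p,t)\cap S_{\alpha\beta}$ is an arc in $T(t)$ realising the unique path in $T(t)$ from $\gamma_\alpha(t)$ to $\gamma_\beta(t)$. The point $x$ lies on $a_{ij}(t)$. The plan is to produce an index $\ell\in\{1,\ldots,N_0\}\setminus\{i,j\}$ with $x\in a_{i\ell}(t)\cap a_{j\ell}(t)$; then $x\in S_{ij}\cap S_{i\ell}\cap S_{j\ell}=C_{ij\ell}$, and Lemma \ref{lem:Cijk}(1) delivers the required singular curve from $p$ to $S(p,r)$ through $x$ (it passes through $x$ at the unique parameter value $t$, by monotonicity of $d_p$ along $C_{ij\ell}$).

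The key step is to show that $x$ is a branching vertex of $T(t)$ of degree $\ge 3$. Granted this, $x$ already carries two edge-germs along $a_{ij}(t)$, so at least one further edge-germ at $x$ lies in a subtree of $T(t)\setminus\{x\}$ disjoint from $a_{ij}(t)\setminus\{x\}$; this subtree contains some leaf $\gamma_\ell(t)$ with $\ell\ne i,j$. Since $T(t)$ is a tree, the unique path from $\gamma_i(t)$ to $\gamma_\ell(t)$ equals $a_{i\ell}(t)$ and must pass through $x$, so $x\in S_{i\ell}$; the same reasoning applied to $\gamma_j(t)$ gives $x\in S_{j\ell}$.

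To establish that $x$ is a branching vertex, I argue by contradiction. Suppose not; then a neighbourhood of $x$ in $T(t)$ is a subarc of $a_{ij}(t)$. By Lemma \ref{lem:metric-circle}(2), $S(p,t)$ decomposes as the disjoint union of the slice-trees $S(p,t)\cap E(v)$ over $v\in V(\Sigma_p(X))$, together with finitely many connecting arcs in $S(p,t)\setminus\bigcup_v E(v)$. Since $x$ is interior to $a_{ij}(t)$, it is not an endpoint of any such connecting arc, so for some small $\eta>0$ one has $B(x,\eta)\cap S(p,t)\subset a_{ij}(t)\subset S_{ij}$. By continuity of the $\CAT(\kappa)$-metric spheres inside the $\CAT(\kappa)$-disk $S_{ij}$, the analogous inclusion holds for $S(p,t')$ whenever $t'$ is near $t$, with the same ambient ball; the family of local sphere arcs $\{S(p,t')\cap B(x,\eta)\}_{t'\text{ near }t}$ then foliates a disk neighbourhood of $x$ in $X$, so $\Sigma_x(X)$ is a circle and $x\notin\ca S$, contradicting the hypothesis.

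The main obstacle is the foliation step in the last paragraph: one must verify that every point of $X$ near $x$ sits on one of the local sphere arcs $S(p,t')\cap B(x,\eta)$, i.e., that no piece of $X$ near $x$ is invisible to the radial foliation from $p$. This relies on the disjointness of the trees $S(p,t)\cap E(v)$ for distinct $v\in V(\Sigma_p(X))$ (coming from Lemma \ref{lem:metric-circle}(2)) together with conditions (3) and (4) of Theorem \ref{thm:converse}, which together ensure that the full local structure of $X$ at $x$ is captured by its sphere slices from $p$; once the foliation is in place, $X$ locally resembles a product and hence is a surface at $x$.
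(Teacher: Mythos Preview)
Your argument is correct and follows the same route as the paper: both show that $x$ is a branching vertex of the tree $T=S(p,|p,x|_X)\cap E(v_0)$ lying on the segment $(a_i,a_j)$, and then select $\ell\neq i,j$ so that $x$ is the tripod branch point for $a_i,a_j,a_\ell$, giving $x\in C_{ij\ell}$. The paper simply asserts that $x\in\ca S$ forces $x$ to be an interior vertex of $T$ and proceeds; your contradiction argument via the radial foliation supplies the justification the paper leaves implicit, and the obstacle you flag is exactly that unproved step.
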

\begin{proof}
Let $T:= S(p,|p,x|_X)\cap E(v_0)$, which is a tree
having $N$ endpoints $\{ a_i\}_{i=1}^N$ with $a_i\in\gamma_i$. Note that $x\in S_{ij}\cap\ca S$ is an interior vertex 
of $T$ contained in the segment $(a_i,a_j)\subset T$. Therefore we can choose some $\ell\neq
i,j$ such that $x$ is the unique branch point of 
the sub-tree of $T$ containing $a_i, a_j, a_\ell$.
Thus we have $x\in C_{ij\ell}$.
\end{proof}

\psmall\n
\subsection{Sector surfaces} \label{ssec:geom-S}

In this subsection, we obtain some fundamental 
properties of sector surfaces, and apply them to 
get geometric properties of $X$.

 For every $x\in E(v_0)\setminus \{ p\}$, the following is easily checked:
\begin{align} \label{eq:SigmaX=Union}
     \Sigma_x(X)=\bigcup_{1\le i<j\le N_0, x\in S_{ij}} \Sigma_x(S_{ij}),
\end{align}
where $\Sigma_x(S_{ij})\subset\Sigma_x(X)$ is the 
extrinsic space of directions of $S_{ij}$ at $x$,
and  two distinct $\Sigma_x(S_{ij})$ and 
$\Sigma_x(S_{i'j'})$ are glued based on 
$\Sigma_x(\ca S)$.

Here is a remark on our notation.
In Theorem \ref{thm:converse}, 
we also use $\Sigma_p(S)$ to denote 
the intrinsic space of directions of a sector surface
$S$. To distinguish it from the extrinsic one, we use the symbol $\Sigma^{\rm int}_p(S)$ 
to denote the intrinsic space of directions. 
Now we show  that they coincide.

We use the symbol $o_p(x)$ to denote a positive
constant depending on $p,x$ such that
$\lim_{x\to p} o_p(x)=0$.

\begin{prop} \label{prop:Eint-ext}
For every $x\in S_{ij}$, $\Sigma^{\rm int}_x(S_{ij})$ is isometric to $\Sigma_x(S_{ij})$.
\end{prop}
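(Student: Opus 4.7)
The plan is to reduce the assertion to the local metric equivalence
\[
 d^{\mathrm{int}}_{S_{ij}}(y,z) = d_X(y,z)\,(1 + o(1)) \quad \text{as } y, z \to x \text{ within } S_{ij}.
\]
Since $d_X|_{S_{ij}} \le d^{\mathrm{int}}_{S_{ij}}$, the natural map
$\iota_x \colon \Sigma^{\mathrm{int}}_x(S_{ij}) \to \Sigma_x(S_{ij}) \subset \Sigma_x(X)$,
sending the $S_{ij}$-direction of an $S_{ij}$-geodesic from $x$ to its $X$-direction, is automatically $1$-Lipschitz; the content is the opposite estimate, which precisely says that shortcuts through $X$ cannot beat the intrinsic $S_{ij}$-distance to first order.

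I would argue by case analysis on the position of $x$. The case $x = p$ is directly the content of condition (3) of Theorem~\ref{thm:converse}. When $x \in S_{ij} \setminus (\ca S \cup \pa S_{ij})$, condition (1) provides a neighborhood of $x$ in $X$ that is a locally $\CAT(\kappa)$-surface; two-dimensionality then forces $S_{ij}$ to contain an open $X$-neighborhood of $x$, so the two metrics coincide on a neighborhood and there is nothing to prove. The boundary cases $x \in (\gamma_i \cup \gamma_j) \setminus \{p\}$ and $x \in S(p,r) \cap S_{ij}$ reduce to the previous one, since our choice $\zeta_i \notin V(\Sigma_p(X))$ places the open radial rays $\gamma_i, \gamma_j$ into $X \setminus \ca S$.

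The essential case is $x \in S_{ij} \cap \ca S$ with $x \ne p$. By Lemma~\ref{lem:exist-singular-curve}, $x$ lies on some singular curve $C_{ij\ell}$, and by Corollary~\ref{cor:Cijk2} the union $E_{ij\ell} = S_{ij} \cup S_{j\ell} \cup S_{\ell i}$ is $\CAT(\kappa)$. By Lemma~\ref{lem:near-vert}, $\Sigma_x(X)$ is close to a spherical suspension over the branches of $\ca S$ at $x$, and $\Sigma_x(S_{ij})$ is a ``meridian'' arc of this suspension; intrinsically, $\Sigma^{\mathrm{int}}_x(S_{ij})$ is an arc of the same combinatorial type. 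Any candidate $X$-shortcut between points $y, z \in S_{ij}$ close to $x$ that beats $d^{\mathrm{int}}_{S_{ij}}(y,z)$ must leave $S_{ij}$ by crossing $C_{ij\ell}$ into an adjacent sector ($S_{i\ell}$ or $S_{j\ell}$) and return. I would estimate the length deficit of such a detour inside $E_{ij\ell}$ using its $\CAT(\kappa)$-structure together with the near-radial behavior of $C_{ij\ell}$ furnished by Lemma~\ref{lem:sing-arc}, and conclude that the deficit is $o(|y,z|)$.

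The main obstacle will be this case, particularly when $x$ accumulates other singular vertices, i.e.\ $x \in V_*(\ca S(X))$: then the local structure of $X$ around $x$ may be genuinely non-trivial at every scale, and the detour analysis must be robust under rescaling. I expect to handle this by a blow-up argument at $x$, showing that the rescaled pointed limits of $(X, d_X, x)$ and $(S_{ij}, d^{\mathrm{int}}_{S_{ij}}, x)$ are the respective tangent cones, and that the $\CAT(\kappa)$-control on $E_{ij\ell}$ passes to the limit and forces the intrinsic and extrinsic meridian distances on $\Sigma_x(S_{ij})$ to agree, which yields the required $(1 + o(1))$-comparison back in the original space.
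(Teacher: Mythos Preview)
Your reduction to the first-order distance comparison is a reasonable route in principle, but the argument you sketch has genuine gaps in this setting. First, Lemmas~\ref{lem:near-vert} and~\ref{lem:sing-arc} are stated under the standing hypothesis of Section~\ref{sec:prelim} that $X$ is already locally $\CAT(\kappa)$; in Section~\ref{sec:converse} that is precisely the conclusion we are working toward, so invoking them for $\Sigma_x(X)$ or for $X$-radiality of $C_{ij\ell}$ is circular. You may apply their analogues inside the $\CAT(\kappa)$-space $E_{ij\ell}$ granted by Corollary~\ref{cor:Cijk2}, but that only controls $E_{ij\ell}$-intrinsic quantities. Second, and more seriously, you assume that an $X$-shortcut between $y,z\in S_{ij}$ near $x$ stays inside $E_{ij\ell}$ for a single $\ell$. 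There is no reason for this: by \eqref{eq:SigmaX=Union} a neighborhood of $x$ in $X$ is the union of \emph{all} sector surfaces $S_{i'j'}$ containing $x$, and an $X$-geodesic may cross several singular curves in succession (compare the later Lemma~\ref{lem:geod-extension}); your one-sector detour estimate does not bound $d_X(y,z)$ from below. Third, the blow-up argument presupposes that $(X,d_X,x)$ has a well-behaved tangent cone under rescaling, which is again a consequence of a curvature bound you do not yet have on $X$.

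The paper bypasses all of this with a sharper and more elementary observation: for directions $\xi_1,\xi_2$ in the \emph{same} component of $\Sigma_x(S_{ij})\setminus\Sigma_x(\ca S)$, short $X$-geodesics $\mu_1,\mu_2$ in those directions lie in $S_{ij}$, and the $X$-minimal geodesic between $\mu_1(t_1)$ and $\mu_2(t_2)$ cannot meet $\ca S$ either, hence also lies in $S_{ij}$. Thus $|\mu_1(t_1),\mu_2(t_2)|_X=|\mu_1(t_1),\mu_2(t_2)|_{S_{ij}}$ \emph{exactly}, giving $\angle^X=\angle^{S_{ij}}$ on each such component without any asymptotic estimate. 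One then checks surjectivity of the induced local isometry and matches the finitely many singular directions via Lemma~\ref{lem:S=Vertex}. The idea you are missing is that by restricting the test directions away from $\Sigma_x(\ca S)$ you never have to control an $X$-geodesic that leaves $S_{ij}$ at all.
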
 

\begin{proof}
Obviously, we may assume $x\in \ca S$. 
If $x=p$, the conclusion is assured by the condition (3).  Therefore we only have to consider the 
case $x\neq p$.
We first check that each component $\Sigma$ of 
$\Sigma_x(S_{ij})\setminus \Sigma_x(\ca S)$
is isometrically embedded in $\Sigma^{\rm int}_x(S_{ij})$.
For $\xi_1, \xi_2\in \Sigma$ with $|\xi_1,\xi_2|<\pi$,
let $\mu_n$ be an $X$-geodesic with
$\dot\mu_n(0)=\xi_n$ \,$(n=1,2)$. Then for small $\e$, we have  
 $\mu_n([0,\e])\subset S_{ij}$. 
 Note that the $X$-minimal geodesic 
 $\gamma^X_{\mu_1(t_1),\mu_2(t_2)}$ for any 
 $0<t_1, t_2\le \e$  
 does not meet $\ca S$,
 and hence $\gamma^X_{\mu_1(t),\mu_2(t)}$
  is contained in   $S_{ij}$.
 This implies that $\angle^X(\xi_1,\xi_2)= \angle^{S}(\xi_1,\xi_2)$ and the existence of  a locally isometric embedding   
 $\varphi:\Sigma_x(S_{ij})\setminus \Sigma_x(\ca S)
\to \Sigma^{\rm int}_x(S_{ij}\setminus (\ca S\setminus \{ x\}))$.

We show that $\varphi$ is surjective.
For every $\xi\in\Sigma^{\rm int}_x(S_{ij}\setminus (\ca S\setminus \{ x\}))$,
take an arc $I\subset \Sigma^{\rm int}_x(S_{ij}\setminus (\ca S\setminus \{ x\}))$
containing $\xi$ in its interior.
Choose $\xi_1,\xi_2,\eta_1, \eta_2$ in the interior of $I$ 
such that $\xi\in (\xi_1,\xi_2)\subset [\xi_1,\xi_2]\subset (\eta_1,\eta_2)$.
Set $\gamma_\alpha:=\gamma^{S_{ij}}_{\xi_\alpha}$ and 
$\sigma_\alpha:=\gamma^{S_{ij}}_{\eta_\alpha}$
\,$(\alpha=1,2)$.
From the choice of $I$, the $S_{ij}$-geodesics
$\gamma^{S_{ij}}_{\sigma_1(t),\sigma_2(t)}$
joining $\sigma_1(t)$ and 
$\sigma_2(t)$ do not meet 
$\ca S$ except $x$ for all small enough $t>0$,
 and therefore form a disk domain $S$
 in $S_{ij}\setminus (\ca S\setminus \{ x\})$.
 This implies that 
 $\gamma^{S_{ij}}_{\sigma_1(t),\sigma_2(t)}$,
 $\gamma_1$ and $\gamma_2$ 
 are local $X$-geodesics, and hence
 by Lemma \ref{lem:regular-sector},
 $\dot\gamma_1(0)$ and $\dot\gamma_2(0)$
 provide the elements, say
 $\dot\gamma^X_1(0)$ and $\dot\gamma^X_2(0)$
  of $\Sigma_x(S_{ij})$
 with $[\dot\gamma^X_1(0),\dot\gamma^X_2(0)]
 \subset \Sigma_x(S_{ij})\setminus\Sigma_p(\ca S)$. Thus we conclude $\xi\in [\dot\gamma_1(0),\dot\gamma_2(0)] =\varphi([\dot\gamma^X_1(0),\dot\gamma^X_2(0)])$.
 
 From Lemma \ref{lem:S=Vertex},
 $\Sigma^{\rm int}_x(S_{ij}\cap\ca S)
=\bigcup_{1\le \ell\neq i,j\le N_0} \Sigma^{\rm int}_x(C_{ij\ell})$ is identified with
 $\Sigma_x(S_{ij}\cap\ca S)=\bigcup_{1\le \ell\neq i,j\le N_0} \Sigma_x(C_{ij\ell})$.
This completes the proof.
\end{proof}
 
From now on, by Proposition \ref{prop:Eint-ext}, we make an identification 
$\Sigma^{\rm int}_x(S_{ij})=\Sigma_x(S_{ij})$.

\psmall
\begin{lem} \label{lem:almost-suspension}
For every $x\in E(v_0)\cap \ca S\setminus \{ p\}$, the following hold:
\begin{enumerate}
\item For every $S_{ij}$ containing $x$, 
$\Sigma_x(S_{ij})$ is a circle of 
length $\in [2\pi, 2\pi+o_p(x))\,;$
\item $\diam(\Sigma_{x,\pm}(\ca S))<o_p(x)$,
 where 
 $\Sigma_{x,-}(\ca S):=\Sigma_x(\ca S\cap B(p,d^X_p(x)))$,
 $\Sigma_{x,+}(\ca S):=\Sigma_x(\ca S\cap (X\setminus \mathring{B}(p,d^X_p(x))))\,;$
\item $\Sigma_x(X)$  is 
$o_p(x)$-close to a metric suspension
over finitely many points with respect to the 
Gromov-Hausdorff distance.
\end{enumerate}
\end{lem}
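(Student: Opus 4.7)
The plan is to treat the three assertions together by exploiting that each sector surface $S_{ij}$ is a $\CAT(\kappa)$ two-disk and each tripod subcomplex $E_{ij\ell}$ is $\CAT(\kappa)$ (Corollary \ref{cor:Cijk2}), so that the comparison Lemma \ref{lem:comparison} and Gauss--Bonnet are both available on them. The governing picture is that the tangent cone $T_p S_{ij}$ is a flat Euclidean sector of angle $2\delta_0$, in which every interior point has space of directions isometric to the unit circle and all singular loci emanate radially from the cone vertex; as $x$ approaches $p$, the local geometry of $S_{ij}$ near $x$ is thus nearly flat.

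For (1), fix $S_{ij}\ni x$. By Lemma \ref{lem:exist-singular-curve} the point $x$ lies in the topological interior of the disk $S_{ij}$, so $\Sigma_x(S_{ij})$ is a $\CAT(1)$ topological circle, which already yields $L(\Sigma_x(S_{ij}))\ge 2\pi$. For the matching upper bound I would apply Gauss--Bonnet to the subdisk $D:=S_{ij}\cap \bar B(p,2d_p(x))$. Its boundary decomposes into: the cone vertex at $p$ with inner angle $2\delta_0$ (turn $\pi-2\delta_0$); the two geodesic sides along $\gamma_i,\gamma_j$, whose one-sided turn in $S_{ij}$ is non-positive by Proposition \ref{prop:turn(geodesic)}; two distance-sphere corners with inner angles $\pi/2+o_p(x)$; and the arc of $S(p,2d_p(x))$ inside $S_{ij}$, whose turn from inside equals $2\delta_0+o_p(x)$. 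The last two estimates come from comparing $S_{ij}$ with the flat sector model via Lemma \ref{lem:comparison} together with the area bound $\mathrm{area}(D)=O(d_p(x)^2)$. Summing, $\tau_D(\pa D)\le 2\pi+o_p(x)$, and combining Gauss--Bonnet with the inequality $\omega^+(D)\le |\kappa|\,\mathrm{area}(D)$ valid on any $\CAT(\kappa)$ surface (compare Sublemma \ref{slem:omega+(B)}) yields
\[
   0\;\le\;\sum_{y\in D\cap\ca S}\bigl(L(\Sigma_y(S_{ij}))-2\pi\bigr)\;\le\;|\kappa|\,\mathrm{area}(D)+o_p(x)\;=\;o_p(x),
\]
which in particular controls the single term $L(\Sigma_x(S_{ij}))-2\pi$.

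For (2), Lemma \ref{lem:exist-singular-curve} shows that every point of $\ca S\cap B(p,r)$ lies on a singular curve $C_{ij\ell}$ contained in some $\CAT(\kappa)$ subspace $E_{ij\ell}$. Inside $E_{ij\ell}$ one can rerun the argument of Lemma \ref{lem:sing-arc}: combining $\Sigma_p(C_{ij\ell})=\{v_0\}$ from \eqref{eq:SigmaC=v0}, the $d_p$-monotonicity along $C_{ij\ell}$, and Lemma \ref{lem:comparison}, one deduces $L(C_{ij\ell})\le(1+\tau_p(r))r$, $|d_p(a)-d_p(b)|/|a,b|\ge 1-\tau_p(r)$, and $\angle pxy>\pi-\tau_p(r)$ whenever $y$ follows $x$ on the curve. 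Applied to an arbitrary $y\in\ca S\cap B(x,\rho)$ with $d_p(y)>d_p(x)$, this forces $\angle(\uparrow_x^y,-\nabla d_p(x))<\tau_p(r)=o_p(x)$, hence $\diam(\Sigma_{x,+}(\ca S))<o_p(x)$; the case $d_p(y)<d_p(x)$ is symmetric.

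Part (3) is then immediate from (1), (2) and the decomposition \eqref{eq:SigmaX=Union}: $\Sigma_x(X)$ is a finite union of circles $\Sigma_x(S_{ij})$ each of length $2\pi+o_p(x)$, glued along the vertex set $\Sigma_x(\ca S)$ which by (2) splits into two $o_p(x)$-clusters near $\pm\nabla d_p(x)$; such a glued graph is $o_p(x)$-Gromov--Hausdorff close to the spherical suspension over finitely many points, with suspension poles $\pm\nabla d_p(x)$. The main obstacle of the plan lies in the sharp Gauss--Bonnet bookkeeping of (1)---verifying that the two distance-sphere corners and the arc on $S(p,2d_p(x))$ have turns within $o_p(x)$ of their flat-sector values---which requires the uniform validity of comparison estimates on $S_{ij}$ near $p$ and constitutes the core technical content.
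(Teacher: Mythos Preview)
There are two real gaps.

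For (1), the upper bound via Gauss--Bonnet does not close. With only an \emph{upper} curvature bound on $S_{ij}$, the turn from the inside of the arc $S(p,2d_p(x))\cap S_{ij}$ has no a priori upper bound: metric circles in a $\CAT(\kappa)$ surface may grow arbitrarily fast, so $\tau_D(\text{arc})$ can far exceed $2\delta_0$, and $\tau_D(\pa D)\le 2\pi+o_p(x)$ is simply unavailable. Lemma~\ref{lem:comparison} is a pointwise comparison-angle estimate and does not control integrated turn along a curve, so it cannot rescue this step; you correctly flag it as the crux, but it is an obstruction rather than a technicality. The paper's route is different and much shorter: Proposition~\ref{prop:Eint-ext} gives that $\Sigma_x(S_{ij})$ is a circle of length $\ge 2\pi$, and the upper bound is not argued inside $S_{ij}$ at all but is obtained together with (2)(3) from the $\CAT(1)$ hypothesis on $\Sigma_x(X)$ and the decomposition~\eqref{eq:SigmaX=Union}.

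For (2), the step ``applied to an arbitrary $y\in\ca S\cap B(x,\rho)$'' overreaches. The conclusion $\angle pxy>\pi-\tau_p(r)$ of the rerun of Lemma~\ref{lem:sing-arc} in $E_{ij\ell}$ requires $x$ and $y$ to lie on the \emph{same} curve $C_{ij\ell}$; and even then your reference direction ``$-\nabla d_p(x)$'' is only defined inside each $E_{ij\ell}$ separately. Nothing in your argument shows that the various $\dot\gamma^{E_{ij\ell}}_{x,p}(0)$ coincide (or are close) in $\Sigma_x(X)$, so you cannot conclude that all of $\Sigma_{x,+}(\ca S)$ clusters near one point. The paper bypasses this by arguing globally in the $\CAT(1)$ graph $\Sigma_x(X)$: via~\eqref{eq:SigmaX=Union} it is a union of the circles $\Sigma_x(S_{ij})$, glued along the vertex set $\Sigma_x(\ca S)$, and the $\CAT(1)$ constraint on this gluing is what forces the near-suspension structure. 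Your derivation of (3) from (1) and (2) is fine.
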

\begin{proof}
(1) is an immediate consequence of 
Proposition \ref{prop:Eint-ext}.

(2)(3)\,
In view of \eqref{eq:SigmaX=Union}, 
the condition (2) that $\Sigma_x(X)$ is $\CAT(1)$ implies 
that $\diam(\Sigma_{x,\pm}(\ca S))<o_p(x)$ and 
$\Sigma_x(X)$ is $o_p(x)$-close to a suspension
over finitely many points. 
\end{proof}

\begin{lem} \label{lem:infinitesimal-ray}
For every $x\in S_{ij}$ and every singular curve $C$ contained
in $S_{ij}$ with $x\in C$, $C$ is almost gradient-like for 
$d_x^{S_{ij}}$. Namely, setting
$C_{x,-}:=C\cap B(p,d^X_p(x))$ and 
$C_{x,+}:=C\cap (X\setminus \mathring{B}(p,d^X_p(x))$, 
we have
\[
          \frac{||x,y|_{S_{ij}} - |x,z|_{S_{ij}}|}{|y,z|_{S_{ij}}}>1-\tau_x(\max\{ |x,y|_{S_{ij}},|x,z|_{S_{ij}}\}),
\]
for all $y,z\in C_{x,\pm}$.
\end{lem}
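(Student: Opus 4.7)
The strategy is to show that, in the $\CAT(\kappa)$-disk $S_{ij}$, the triangle $\triangle^{S_{ij}} xyz$ has angle close to $\pi$ at the intermediate vertex, which by the $\CAT(\kappa)$-comparison in $S_{ij}$ together with the law of cosines in $M^2_\kappa$ yields the desired inequality. By symmetry, reduce to the case $y, z \in C_{x,+}$. Since $d_p$ is strictly monotone along $C$ (Lemma \ref{lem:Cijk}(1)), assume WLOG $d_p(y) \le d_p(z)$, so that $x \in C_{y,-}$ and $z \in C_{y,+}$. Set $r := \max\{|x,y|_{S_{ij}},|x,z|_{S_{ij}}\}$.

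Granting the angle bound $\angle^{S_{ij}}_y(x,z) \ge \pi - \tau_x(r)$, the inequality $\angle^{S_{ij}} \le \tilde\angle^{\kappa,S_{ij}}$ valid in the $\CAT(\kappa)$-surface $S_{ij}$ yields $\tilde\angle^{\kappa,S_{ij}} xyz \ge \pi - \tau_x(r)$ for the comparison angle. The $M^2_\kappa$-law of cosines then gives
\[
|x,z|_{S_{ij}} \ge |x,y|_{S_{ij}} + |y,z|_{S_{ij}} - \tau_x(r)\cdot |y,z|_{S_{ij}},
\]
which rearranges to the desired estimate
\[
\frac{||x,y|_{S_{ij}} - |x,z|_{S_{ij}}|}{|y,z|_{S_{ij}}} \ge 1 - \tau_x(r).
\]

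The angle estimate is read off from Lemma \ref{lem:almost-suspension} applied at $y$: by part (3), $\Sigma_y(X)$ is $o_p(y)$-close to a metric suspension whose two poles are approximated by the small clusters $\Sigma_{y,\pm}(\ca S)$, each of diameter $o_p(y)$ by part (2). By part (1) together with Proposition \ref{prop:Eint-ext}, $\Sigma_y(S_{ij})$ is a metric circle of length close to $2\pi$, embedded isometrically in $\Sigma_y(X)$, with the two clusters approximately antipodal. Since $x \in C_{y,-}\cap \ca S$ and $z \in C_{y,+}\cap \ca S$, the directions $\uparrow_y^{S_{ij},x}$ and $\uparrow_y^{S_{ij},z}$ should lie in the lower and upper clusters, respectively, yielding angle $\ge \pi - o_p(y) \ge \pi - \tau_x(r)$ between them.

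The main obstacle is making rigorous the claim that $\uparrow_y^{S_{ij},x}$ and $\uparrow_y^{S_{ij},z}$ truly lie near their respective clusters instead of landing in one of the two long ``regular'' arcs of $\Sigma_y(S_{ij})$ of length close to $\pi$ separating the clusters. A priori the $S_{ij}$-minimal geodesic from $y$ to $x$ could initially take a detour through a regular component of $S_{ij}\setminus \ca S$. My plan is to combine (i) the $\CAT(1)$-structure of $\Sigma_y(X)$ from condition (2) of Theorem \ref{thm:converse} with the near-suspension structure, so that any direction out of $y$ reaching into $B(p,d_p(y))$ is confined close to the south-pole cluster (and analogously for the complement), with (ii) a $\CAT(\kappa)$-minimality argument in $S_{ij}$ ruling out detour geodesics whose length would exceed $|y,x|_{S_{ij}}$ or $|y,z|_{S_{ij}}$.
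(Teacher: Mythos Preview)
Your approach diverges from the paper's, which is a two-line reduction: Corollary~\ref{cor:Cijk2} shows that $E_{ij\ell}=S_{ij}\cup S_{j\ell}\cup S_{\ell i}$ is a $\CAT(\kappa)$-space, and then \cite[Lemma~6.1]{NSY:local} (restated here as Lemma~\ref{lem:sing-arc}) applies verbatim to the $\CAT(\kappa)$-space $E_{ij\ell}$ with $x$ playing the role of the base point $p$. That black-box lemma already contains, via Corollary~\ref{cor:vert}, the very angle control you are trying to synthesize, and it delivers the correct error $\tau_x(r)$ that shrinks as $y,z\to x$.

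Your direct route has a genuine gap at exactly the spot you flag, and the proposed fix does not close it. Item~(i) is based on a wrong intuition: a direction $\xi\in\Sigma_y(X)$ whose geodesic initially enters $B(p,d_p(y))$ need only satisfy $\angle(\xi,-\nabla d_p(y))<\pi/2$, which fills roughly half of the near-suspension $\Sigma_y(X)$, not a small neighbourhood of the south-pole cluster; so nothing forces $\uparrow_y^{S_{ij},x}$ near $\Sigma_{y,-}(\ca S)$ from this alone. Item~(ii) is too vague to assess. There is also a second, independent problem with your error term: Lemma~\ref{lem:almost-suspension} at $y$ yields an error $o_p(y)$, controlled by the distance $|p,y|$, not by $|x,y|$; this does \emph{not} tend to zero as $y\to x$ for fixed $x\neq p$, so the inequality $o_p(y)\le\tau_x(r)$ you write is unjustified and the resulting bound is of the wrong form. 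Both issues evaporate once one passes to the $\CAT(\kappa)$-space $E_{ij\ell}$ and re-centres at $x$, which is precisely the paper's move.
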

\begin{proof}  
The conclusion follows immediately from 
 Corollary \ref{cor:Cijk2} and 
 \cite [Lemma 6.1]{NSY:local}. 
\end{proof}

\begin{lem} \label{lem:number=Nv-2}
$E(v_0)\cap \ca S$ can be written as a union of at most $N_{0}-2$ singular curves starting from $p$ and reaching $S(p,r)$. 
\end{lem}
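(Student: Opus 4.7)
My plan is to identify $E(v_0)\cap\ca S$ as a rooted tree at $p$ whose leaves on $S(p,r)$ number at most $N_0-2$, and then to take the root-to-leaf paths as the required singular curves.

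First, by Lemma \ref{lem:metric-circle}, the set $T_r:=S(p,r)\cap E(v_0)$ is a tree having exactly the $N_0$ leaves $\gamma_1(r),\ldots,\gamma_{N_0}(r)$. Every interior vertex $x$ of $T_r$ has degree at least three: since several sector-surface arcs $S_{ij}\cap S(p,r)$ meet at $x$, Lemmas \ref{lem:S=Vertex} and \ref{lem:Cijk} place $x$ in $S_{ij}\cap S_{j\ell}\cap S_{\ell i}$ for some triple. Consequently the standard identity $\sum_v(\deg v - 2) = -2$ for finite trees yields at most $N_0-2$ interior vertices, and by Lemma \ref{lem:S=Vertex} these are exactly the points of $\Lambda := S(p,r)\cap\ca S\cap E(v_0)$, so $\#\Lambda\le N_0-2$. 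Writing $\Lambda=\{x_1,\ldots,x_k\}$, for each $x_\alpha$ I pick indices $i_\alpha,j_\alpha,\ell_\alpha$ lying in three distinct components of $T_r\setminus\{x_\alpha\}$ and invoke Lemma \ref{lem:Cijk} to obtain a singular curve $C_\alpha := C_{i_\alpha j_\alpha\ell_\alpha}$ from $p$ to $x_\alpha$ along which $d_p$ is strictly increasing.

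Next I must verify the covering $\bigcup_{\alpha=1}^k C_\alpha\supseteq E(v_0)\cap\ca S$. Given $y\in E(v_0)\cap\ca S$ lying in some $S_{ij}$, Lemma \ref{lem:exist-singular-curve} supplies a singular curve $C_{ij\ell}$ through $y$ reaching $S(p,r)$ at some $x_\beta\in\Lambda$. It therefore suffices to show that the arc of this $C_{ij\ell}$ from $p$ to $y$ coincides with the corresponding sub-arc of $C_\beta$, equivalently that the backward singular arc from $x_\beta$ to $p$ along $\ca S\cap E(v_0)$ is uniquely determined by its endpoint. In other words, I need $E(v_0)\cap\ca S$ to contain no loop.

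The hard part will be this loop-freeness statement. My plan is to argue by contradiction: a loop in $E(v_0)\cap\ca S$ would attain its $d_p$-maximum at some $y^*$, at which two distinct singular arcs arrive from below. Their initial directions at $y^*$ both lie in $\Sigma_{y^*,-}(\ca S)$, whose diameter is at most $o_p(y^*)$ by Lemma \ref{lem:almost-suspension}(2), so the two backward directions are nearly equal and both nearly antipodal to $\nabla d_p(y^*)$. Combining this near-coincidence of initial directions with the strict $d_p$-monotonicity of Lemma \ref{lem:infinitesimal-ray} and the local $\CAT(\kappa)$-structure provided by Corollary \ref{cor:Cijk2} applied to an $E_{ij\ell}$ containing both arcs near $y^*$, I expect to force the two backward arcs to coincide in a punctured neighborhood of $y^*$, contradicting their distinctness. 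Once loop-freeness is established, the curves $\{C_\alpha\}_{\alpha=1}^k$ are precisely the root-to-leaf paths of the tree $E(v_0)\cap\ca S$ and together exhaust it, yielding the bound $k\le N_0-2$.
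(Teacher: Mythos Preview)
Your covering step has a genuine gap. The loop-freeness argument you sketch does not go through: having nearly coincident initial directions in $\Sigma_{y^*,-}(\ca S)$ does not force two singular arcs to agree near $y^*$. None of the ingredients you cite (monotonicity of $d_p$, small diameter of $\Sigma_{y^*,\pm}(\ca S)$, the $\CAT(\kappa)$ structure of a single $E_{ij\ell}$) excludes two distinct singular curves meeting at $y^*$ with zero angle between them; this is exactly the phenomenon that the paper elsewhere isolates as a \emph{singular vertex} (Definition~\ref{defn:singlar-vertex}), and Remark~\ref{rem:noswinging} explicitly notes that singular curves can meet and twist past one another repeatedly. Concretely, the combinatorial type of the tree $T_t=S(p,t)\cap E(v_0)$ can change with $t$: a degree-$4$ vertex can split into two degree-$3$ vertices and later re-merge, so $E(v_0)\cap\ca S$ need not be a tree. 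When that happens, several distinct curves $C_{ij\ell}$ land at the same $x_\beta\in\Lambda$, and your selection of a single curve per endpoint misses points on the unchosen ones. A related failure appears already at level $r$: if some $x_\beta$ has degree $\ge 4$ in $T_r$ then $\#\Lambda<N_0-2$, and you have chosen too few curves from the outset.

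The paper's proof takes an entirely different route that makes no assumption on the global shape of $E(v_0)\cap\ca S$. It sets $E_k=\bigcup_{1\le i<j\le k}S_{ij}$ and argues by induction on $k$: writing $E_k=E_{k-1}\cup_{C_k}F_k$, where $F_k$ is the component of $E_k\setminus\ca S(E_k)$ containing $\gamma_k$, the frontier $C_k$ is a single singular curve from $p$ to $S(p,r)$ and $\ca S(E_k)=\ca S(E_{k-1})\cup C_k$. Starting from $\ca S(E_2)=\emptyset$ this gives $\ca S(E_{N_0})=C_3\cup\cdots\cup C_{N_0}$, a union of $N_0-2$ curves which are allowed to overlap. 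No tree structure and no uniqueness of backward arcs is needed.
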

\begin{proof} 
For $2\le k\le N_0$, let $E_k\subset E(v_0)$ be the union of all 
sector surfaces $S_{ij}$ at $p$ in the direction $v_0$ with $1\le i<j\le k$ as in \eqref{eq:S(v)}.
Let $\ca S(E_k)$ be the set of all points of 
$E_k$ having no neighborhoods homeomorphic to 
an open disk/an open half disk. 
We show that $\ca S(E_k)$ can be written as a union of at most $k-2$ singular curves
by the induction on $k$.
This is certainly true for $k=3$.
Suppose it for $k-1$.
Notice that $E_k$ is written as a union 
$E_k=E_{k-1}\bigcup_{C_k} F_k$ 
via a singular curve $C_k$ starting from $p$ and reaching $S(p,r)$, where 
$F_k$ is a component of $E_k\setminus \ca S(E_k)$ containing $\gamma_k$.
This completes the proof.
\end{proof}

\begin{lem} \label{lem:angle-xp-C}
For every 
$x\in E(v_0)\cap \ca S\setminus \{ p\}$, we have
\[
    \angle^X(\dot\gamma^X_{x,p}(0),\Sigma_x(\ca S))
       <o_p(x).
\]
\end{lem}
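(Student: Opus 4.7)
First, by Lemma \ref{lem:exist-singular-curve} the point $x$ lies on a singular curve $C=C_{ij\ell}\subset S_{ij}$ joining $p$ to a point of $S(p,r)$; let $\xi_-$ denote the tangent direction at $x$ of the subarc $C_-\subset C$ going from $x$ toward $p$, so that $\xi_-\in\Sigma_{x,-}(\ca S)\subset\Sigma_x(\ca S)$. My plan is to carry out the main angle estimate inside the $\CAT(\kappa)$ subspace $E_{ij\ell}:=S_{ij}\cup S_{j\ell}\cup S_{\ell i}$ provided by Corollary \ref{cor:Cijk2}, whose intrinsic singular locus coincides with $C$, and then transport the resulting bound back to the ambient $X$.

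Since $E_{ij\ell}$ is a two-dimensional $\CAT(\kappa)$-space in which both $p$ and $x$ are singular points connected by the singular curve $C=\ca S(E_{ij\ell})$, the local structure results of Subsection \ref{ssec:lcal-str} (in particular Corollary \ref{cor:vert}) apply internally to give
\[
 \Sigma^{E_{ij\ell}}_x(C)\ \subset\ B_{\tau_p(|x,p|_X)}\!\left(\pm\nabla^{E_{ij\ell}}d^{E_{ij\ell}}_p(x)\right).
\]
Because $\xi_-$ points along $C$ toward $p$, it is forced into the $-\nabla$ half, so
\[
 \angle^{E_{ij\ell}}\bigl(\xi_-,\,\dot\gamma^{E_{ij\ell}}_{x,p}(0)\bigr)<\tau_p(|x,p|_X).
\]
Since $x\in C=S_{ij}\cap S_{j\ell}\cap S_{\ell i}$, the $E_{ij\ell}$-minimal geodesic $\mu$ from $x$ to $p$ lies wholly in one of the three sectors, say $S_{ij}$. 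I would next show that $\mu$ is actually $X$-minimal, using the isometric embedding $\iota:\Sigma^{\rm int}_p(S_{ij})\to\Sigma_p(X)$ of condition (3) of Theorem \ref{thm:converse}, the fact that the boundary curves of $S_{ij}$ are $X$-geodesics, and Proposition \ref{prop:Eint-ext} (which extends the identification of intrinsic and extrinsic directions of $S_{ij}$ beyond $p$). This yields $|x,p|_X=|x,p|_{S_{ij}}=|x,p|_{E_{ij\ell}}$, so $\mu$ simultaneously realizes $\dot\gamma^X_{x,p}(0)$ and $\dot\gamma^{E_{ij\ell}}_{x,p}(0)$ under the natural embedding $\Sigma_x(E_{ij\ell})\hookrightarrow\Sigma_x(X)$.

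For small angles in $\Sigma_x(S_{ij})\subset\Sigma_x(E_{ij\ell})$, shortest paths in $\Sigma_x^{E_{ij\ell}}$ between nearby directions remain inside the single sector $\Sigma_x(S_{ij})$, and Proposition \ref{prop:Eint-ext} identifies $\angle^{S_{ij}}$ with $\angle^X$ there, so
\[
 \angle^X\bigl(\dot\gamma^X_{x,p}(0),\Sigma_x(\ca S)\bigr)\ \le\ \angle^X\bigl(\dot\gamma^X_{x,p}(0),\xi_-\bigr)\ =\ \angle^{E_{ij\ell}}\bigl(\dot\gamma^{E_{ij\ell}}_{x,p}(0),\xi_-\bigr)\ <\ \tau_p(|x,p|_X)=o_p(x).
\]
The main obstacle will be the identification $|x,p|_X=|x,p|_{S_{ij}}$: condition (3) only pins down the infinitesimal structure of $S_{ij}$ at $p$, so upgrading this to a global equality of $S_{ij}$- and $X$-minimizers from $p$ requires exploiting both the $\CAT(\kappa)$ rigidity of $S_{ij}$ and the $X$-geodesic property of its bounding curves $\gamma_i,\gamma_j$. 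If that identification proves too delicate to run in this generality, an alternative route is to avoid it entirely and argue by first variation: combining Lemma \ref{lem:infinitesimal-ray} with a short comparison-triangle estimate on the triangle $pxy$ for $y\in C_-$ close to $x$, one shows that $C_-$ decreases $d^X_p$ at nearly unit rate, forcing $\cos\angle^X(\xi_-,\dot\gamma^X_{x,p}(0))\approx 1$ directly.
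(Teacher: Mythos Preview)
Your main approach has a genuine circularity. You correctly flag the identification $|x,p|_X=|x,p|_{S_{ij}}$ as the obstacle, but this identification is \emph{not available} at this point in the development, and establishing it would essentially require Lemma~\ref{lem:ab-thin-ruled}, whose proof in turn uses Lemma~\ref{lem:angle-xp-C}. Condition~(3) and Proposition~\ref{prop:Eint-ext} only control the \emph{infinitesimal} structure of $S_{ij}$ (the spaces of directions); they say nothing about whether an $S_{ij}$-minimizing segment from $p$ to an interior point $x$ is $X$-minimizing. The $X$-geodesic $\gamma^X_{x,p}$ may well exit $S_{ij}$ through the interior singular curves (not through the $X$-geodesic boundary $\gamma_i\cup\gamma_j$), and this is precisely the phenomenon that must be analyzed.

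Your alternative first-variation route has the same issue one level down: Lemma~\ref{lem:infinitesimal-ray} gives gradient-like behavior of $C$ for $d_p^{S_{ij}}$, not for $d_p^X$, and passing from one to the other again needs the unavailable metric comparison. The paper's proof avoids this entirely by a contradiction argument that \emph{follows} the $X$-geodesic $\gamma^X_{x_n,p}$: on each maximal subarc disjoint from $\ca S$ it lies in a single sector surface, and the estimate \eqref{eq:S1-Sigma}---which works inside $E_{i_1j_1\ell_1}$ using its known $\CAT(\kappa)$ structure and the $E_{i_1j_1\ell_1}$-geodesic to $p$, without ever claiming that geodesic is $X$-minimizing---forces each successive crossing of $\ca S$ to be transversal with a definite angle. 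Combined with the uniform bound Lemma~\ref{lem:number=Nv-2} on the number of singular curves, this yields a contradiction after finitely many steps. The point is that the paper never needs the $S_{ij}$- and $X$-metrics to agree; it only needs the local estimate \eqref{eq:S1-Sigma} piecewise along the actual $X$-geodesic.
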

\begin{proof}
If the conclusion does not hold, there would be a sequence $x_n$ 
of $\ca S$ converging to $p$ such that for a positive constant $\theta$ independent of $n$,
\begin{align}  
&\angle^X(\dot\gamma^X_{x_n,p}(0),\Sigma_{x_n}(\ca S))> \theta.
               \label{eq:XE-radient}
\end{align}
Set $\gamma_n:=\gamma^X_{x_n,p}$.
Then,
$\gamma_n((0,t_1))\subset X\setminus \ca S$
for some $t_1>0$. 
Let us take such a maximal $t_1$, and suppose 
$t_1<|x_n,p|_X$.
Then $\gamma_n(t_1)\in C_1$ for some 
singular curve $C_1$ in $\ca S$.

Now one can take a $\CAT(\kappa)$-sector surface  $S_1:=S_{i_1j_1}$ containing $\gamma_n([0,t_1])$. 
By Lemma \ref{lem:exist-singular-curve}, choose a singular curve
$C_{i_1j_1\ell_1}$ containing $x$.
It follows from Lemma \ref{lem:almost-suspension},
\cite[Lemma 4.30]{NSY:local} and Lemma \ref{cor:vert} that 
\beq \label{eq:S1-Sigma}
\begin{aligned}
\angle^{S_1}&(\dot\gamma^{S_1}_{x,p}(0),  \Sigma_x(\ca S\cap S_1))  \\
& \le \angle^{S_1}(\dot\gamma^{S_1}_{x,p}(0),\Sigma_x(C_{i_1j_1\ell_1}))
+\angle^{S_1}(\Sigma_x(C_{i_1j_1\ell_1}),  \Sigma_x(\ca S\cap S_1)) \\
& \le\angle^{E_{i_1j_1\ell_1}}(\dot\gamma^{E_{i_1j_1\ell_1}}_{x,p}(0),  \Sigma_x(C_{i_1j_1\ell_1})) +o_p(x)
<o_p(x).
\end{aligned}
\eeq
 Therefore, considering a comparison triangle
of $\triangle^{S_1} px_n\gamma_n(t_1)$, we have 
$\angle^{S_1} x_n \gamma_n(t_1) p<\pi-\theta+\e_n$
with $\lim_{n\to\infty}\e_n=0$.
This implies 
\begin{align}  
&\angle^X(\dot\gamma^X_{\gamma_n(t_1),p}(0),\Sigma_{\gamma_n(t_1)}(\ca S)) > \theta-\e_n.
               \label{eq:XEradx}
\end{align}
 Namely,
$\gamma_n$ transversally meets $C_1$ at $t=t_1$ with angle $>\theta-\e_n$.
Note  that
$$
|d^{S_1}_p(\gamma_n(0))-d^{S_1}_p(\gamma_n(t_1))|<c(\theta)
       |\gamma_n(0), \gamma_n(t_1)|_X,
$$ where
$0<c(\theta)<1$ is a constant depending only 
on $\theta$.
If $\gamma_n(t_1)$ is on the same singular curve
containing $x_n$, this is a contradiction since  two points $y,z$ nearby $p$ in a singular curve from $p$, the ratio $\frac{|d^{S_1}_p(y)-d^{S_1}_p(z)|}{|y,z|_{S_1}}$
must be close to $1$ (see Lemma \ref{lem:infinitesimal-ray}).
Therefore  $C_1$ is different from the singular curve containing $x_n$.

We repeat this argument from 
$\gamma_n(t_1)$, and obtain $t_2>t_1$ that is maximal with respect to the property: 
$\gamma_n((t_1, t_2))\subset X\setminus \ca S$.
It follows  that
$$
|d^{S_2}_p(\gamma_n(t_1))-d^{S_2}_p(\gamma_n(t_2))|<c(\theta)
       |\gamma_n(t_1), \gamma_n(t_2)|_X,
$$
where  $S_2:=S_{i_2j_2}$ containing $\gamma_n([t_1,t_2])$.
Therefore  the singular curve $C_2$ containing 
$\gamma_n(t_2)$
 is different from $C_1$.

\begin{center}
\begin{tikzpicture}
[scale = 1]
\draw [-, thick] (-4,0) -- (4,0);
\draw [-, thick] (4.5,0.5) to [out=180, in=0] (3.5,-0.5);
\draw [-, thick] (2.5,0.5) to [out=180, in=0] (1.5,-0.5);
\draw [-, thick] (0.5,0.5) to [out=180, in=0] (-0.5,-0.5);
\fill (-4,0) coordinate (A) circle (2pt) node [left] {$p$};
\fill (4,0) coordinate (A) circle (2pt) node [below right] {$x_n$};
\fill (2,0) coordinate (A) circle (2pt) node [below right] {$\gamma_n(t_1)$};
\fill (0,0) coordinate (A) circle (2pt) node [below right] {$\gamma_n(t_2)$};
\fill (4,0.5) coordinate (A) circle (0pt) node [above right] {$\mathcal{S}$};
\fill (2,0.5) coordinate (A) circle (0pt) node [above right] {$\mathcal{S}$};
\fill (0,0.5) coordinate (A) circle (0pt) node [above right] {$\mathcal{S}$};
\end{tikzpicture}
\end{center}

After repeating this procedure at most $(N_{0}-2)$ times, we have $t_1<t_2<\cdots <t_T<|x_n,p|_X$ 
with $T\le N_{0} -2$
such that 
\begin{itemize}
\item $\gamma_n$ transversally meets $\ca S$ at each $t=t_j$ with angle $>\theta-j \e_n\,:$ 
\item if $C_j$  denotes singular curves containing $\gamma_n(t_j)$, then 
$C_i\neq C_j$ for all $i\neq j\,;$
\item $\gamma_n((t_T, |x_n,p|_X))$ never meets $\ca S$. 
\end{itemize}
If $S_T$ is a sector surface at $p$  containing 
$\gamma_n([t_T, |x_n,p|_X])$, then 
$\gamma_n|_{[t_T, |x_n,p|_X]}$ is an $S_T$-geodesic with
\[
\angle^{S_T}
(\dot\gamma^{S_T}_{\gamma_n(t_T),p},
\Sigma_{\gamma_n(t_T)}(\ca S)> \theta/2.
\]
This is a contradiction.
\end{proof}

\psmall
\begin{lem} \label{lem:infinitesimal-rayX}
For every singular curve $C$ starting from $p$, we have
\[
          \frac{||p,x|_X - |p,y|_X|}{|x,y|_X}>1-\tau_p(\max\{ |p,x|_X,|p,y|_X\}),
\]
for all $x,y\in C$. 
\end{lem}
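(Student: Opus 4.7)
The plan is to reduce to the known gradient-like property of $C$ in the sector surface $S_{ij}$ by first establishing an angle estimate at each $z\in C$ for the $X$-direction toward $p$, then integrating along $C$ via a first-variation argument valid in any geodesic space. By the structural results preceding the lemma, any singular curve $C$ starting at $p$ is locally a piece of some $C_{ij\ell}$, and hence lies in $E_{ij\ell}=S_{ij}\cup S_{j\ell}\cup S_{\ell i}$, which is $\CAT(\kappa)$ by Corollary \ref{cor:Cijk2}. Fix $x,y\in C$ with $r:=d^X_p(y)$ small and $d^X_p(x)\le d^X_p(y)$, and write $\dot C^{\pm}(z)$ for the two tangent directions of $C$ at $z$ (available by Lemma \ref{lem:Cijk}(2)), with $\dot C^-(z)$ pointing along $C$ toward $p$.

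The key step is the angle estimate: at every $z\in C\cap B(p,r)$,
\[
\angle^X\bigl(\uparrow_z^p,\,\dot C^-(z)\bigr)<\tau_p(r).
\]
By Lemma \ref{lem:angle-xp-C}, $\uparrow_z^p$ lies within $o_p(z)$ of some $\zeta\in\Sigma_z(\ca S)=V(\Sigma_z(X))$ (Lemma \ref{lem:S=Vertex}); by Lemma \ref{lem:almost-suspension}(2)(3), $\Sigma_z(\ca S)$ splits into the two $o_p(z)$-small clusters $\Sigma_{z,\pm}(\ca S)$, with $\dot C^{\pm}(z)\in\Sigma_{z,\pm}(\ca S)$. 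To exclude $\zeta\in\Sigma_{z,+}(\ca S)$, I use the following first-variation bound, valid in any geodesic space: for a unit-speed rectifiable curve $\alpha$ starting at $z$ with initial direction $\xi$,
\[
\limsup_{h\to 0^+}\frac{|p,\alpha(h)|_X-|p,z|_X}{h}\,\le\,-1+2\sin\bigl(\angle^X(\xi,\uparrow_z^p)/2\bigr),
\]
which follows from the triangle inequality $|p,\alpha(h)|_X\le|p,\gamma(h)|_X+|\gamma(h),\alpha(h)|_X$ applied to $\gamma=\gamma_{z,p}^X$ parametrized by arc length, the cosine law, and the definition of upper angle as the $\limsup$ of comparison angles. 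Applying this with $\xi=\dot C^+(z)$ under the hypothesis $\zeta\in\Sigma_{z,+}(\ca S)$ would force $d^X_p$ to decrease locally along $C$ in the $\dot C^+$ direction, contradicting the strict increase of $d^X_p$ on $C_{ij\ell}$ given by Lemma \ref{lem:Cijk}(1). Hence $\zeta\in\Sigma_{z,-}(\ca S)$, and the claim follows from $\diam\Sigma_{z,-}(\ca S)<o_p(z)$.

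With the angle estimate in hand, parametrize $C|_{[x,y]}$ by $X$-arc length backward from $y$: $\alpha\colon[0,L]\to C$ with $\alpha(0)=y$, $\alpha(L)=x$, $L=L_X(C|_{[x,y]})$. Then $\dot\alpha(s)=\dot C^-(\alpha(s))$ a.e., and the first-variation bound combined with the angle estimate gives
\[
D^+\bigl(s\mapsto|p,\alpha(s)|_X\bigr)(s)\,\le\,-1+\tau_p(r)
\]
uniformly in $s\in[0,L]$, since $o_p(\alpha(s))\le\tau_p(r)$ for $\alpha(s)\in B(p,r)$. As $s\mapsto|p,\alpha(s)|_X$ is $1$-Lipschitz (hence absolutely continuous), the Newton--Leibniz formula yields
\[
|p,x|_X-|p,y|_X\,\le\,-(1-\tau_p(r))\,L\,\le\,-(1-\tau_p(r))\,|x,y|_X,
\]
since $L\ge|x,y|_X$. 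This is the desired inequality.

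The main technical step is the first-variation upper bound and its uniform application along $C$; the seeming circularity in using it both to rule out $\zeta\in\Sigma_{z,+}(\ca S)$ and to carry out the final integration is resolved by noting that this bound is an unconditional statement about geodesic metric spaces, using only the upper angle and the existence of $X$-geodesics, so it is available before any gradient-like conclusion is established.
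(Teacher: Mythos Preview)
Your argument is correct and follows the same route as the paper: establish the angle estimate $\angle^X(\uparrow_z^p,\dot C^-(z))<o_p(z)$ at each $z\in C$, then integrate the resulting first-variation inequality for $d_p^X\circ C$; the paper invokes \cite[Lemma II.2]{AZ:bddcurv} where you sketch the bound directly. Your explicit exclusion of $\zeta\in\Sigma_{z,+}(\ca S)$ via the monotonicity of $d_p^X$ on $C$ (Lemma~\ref{lem:Cijk}(1)) is a welcome addition, since the paper simply asserts $\theta_-(t_0)<o_p(y)$ from Lemma~\ref{lem:angle-xp-C} without separating the two clusters of $\Sigma_z(\ca S)$.
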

\begin{proof} 
We assume $|p,x|_X < |p,y|_X$, and consider  the Lipschitz function $f(t):=d^X(p,C(t))$, where  $C(t)$ is an arc-length parameter from $x$ to $y$. 
Assume that $f$ is differentiable at $t_0$, and set
$$
\theta_-(t_0):=\angle^X(\dot\gamma^X_{C(t_0),p}(0), C_-(t_0)), \quad 
\theta_+(t_0):=\angle^X(\dot\gamma^X_{C(t_0),p}(0), C_+(t_0)),
$$
where $C_-(t_0)=C\cap B(p, d_p^X(C(t_0)))$,
$C_+(t_0)=C\cap X\setminus {\rm int}\,B(p, d_p^X(C(t_0)))$.
Lemma \ref{lem:angle-xp-C} shows that 
$\theta_-(t_0)<o_p(y)$ and $\theta_+(t_0) >\pi-o_p(y)$.
Using \cite[Lemma II.2]{AZ:bddcurv},
we have the following estimates for the left and right derivatives of $f$:
$$
  f_+'(t_0)\le -\cos \theta_+(t_0)\le1-o_p(y),\,\,
  f_-'(t_0)\ge \cos \theta_-(t_0)\ge1-o_p(y).
$$
The conclusion follows immediately.
\end{proof}

\psmall

The following lemma 
gives a new information on the local geometry 
of two-dimensional locally compact 
geodesically complete locally $\CAT(\kappa)$-space, which was not obtained in 
\cite{NSY:local}.

\begin{lem} \label{lem:ab-thin-ruled}
There is $0<r_1\le r$ such that 
if $t, t'\in (0,r_1]$, then $\gamma^X_{\gamma_i(t),\gamma_j(t')}$ is contained in $S_{ij}$.
\end{lem}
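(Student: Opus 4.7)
The plan is to identify the $X$-minimizing geodesic with the unique $S_{ij}$-minimizing geodesic $\mu=\mu_{t,t'}$ from $\gamma_i(t)$ to $\gamma_j(t')$, for $t,t'$ sufficiently small, by combining the $\CAT(\kappa)$-structure of $S_{ij}$ and of $E(v_0)$ (Theorem \ref{thm:union-CAT}) with a tangent-cone argument at $p$.

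First I would verify that $\mu$ is a local $X$-geodesic. By Proposition \ref{prop:Eint-ext} the intrinsic space of directions $\Sigma^{\rm int}_x(S_{ij})$ agrees isometrically with the extrinsic $\Sigma_x(S_{ij})\subset\Sigma_x(X)$ at every $x\in S_{ij}$; since $\mu$ is an $S_{ij}$-geodesic, its two tangent directions $\xi_{\pm}$ at each interior point $x$ satisfy $\angle^X(\xi_-,\xi_+)=\angle^{S_{ij}}(\xi_-,\xi_+)=\pi$, hence $\mu$ is a local $X$-geodesic. Since $\mu\subset E(v_0)$ and $E(v_0)$ is $\CAT(\kappa)$, for small $t,t'$ the local geodesic $\mu$ is the unique $E(v_0)$-minimizer; in particular $|\gamma_i(t),\gamma_j(t')|_{E(v_0)}=|\mu|$, and $|\gamma_i(t),\gamma_j(t')|_X\le|\mu|$.

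Next, to show $|\gamma_i(t),\gamma_j(t')|_X=|\mu|$, I argue by contradiction: suppose there exist $t_n,t'_n\to 0$ with $X$-minimizing geodesics $\sigma_n$ not lying in $S_{ij}$ (so $\sigma_n\neq\mu_{t_n,t'_n}$, and $\sigma_n$ contains points $x_n\notin S_{ij}$). Rescale by $\lambda_n=\max(t_n,t'_n)$. Condition (2) gives that $T_pX:=C(\Sigma_p(X))$ is $\CAT(0)$. Combining Theorem \ref{thm:union-CAT} applied to $E(w)$ for each $w\in V(\Sigma_p(X))$ with Lemma \ref{lem:regular-sector} covering the remaining regular arcs of $\Sigma_p(X)$, each piece has an individually well-defined tangent cone at $p$ which embeds isometrically into $T_pX$ by condition (3), so the rescaled sequence $(X,p,\lambda_n^{-1}d_X)$ converges in the pointed Gromov--Hausdorff sense to $(T_pX,o)$. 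The rescaled $\sigma_n$ converge to a path in $T_pX$ from $(\alpha,\zeta_i)$ to $(\beta,\zeta_j)$ of length at most $|\mu_\infty|$. By $\CAT(0)$-uniqueness of geodesics, this path lies in the cone $C([\zeta_i,\zeta_j])=T_pS_{ij}$; but points $x_n\in\sigma_n\setminus S_{ij}$ rescale to limit points outside $T_pS_{ij}$, a contradiction. Consequently $|\gamma_i(t),\gamma_j(t')|_X=|\mu|$ for small $t,t'$, and the $X$-minimizer has the same length as, and shares endpoints with, the unique $E(v_0)$-minimizer $\mu$; by $\CAT(\kappa)$-uniqueness in $E(v_0)$ the two coincide, giving $\gamma^X_{\gamma_i(t),\gamma_j(t')}=\mu\subset S_{ij}$.

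The main obstacle is the tangent-cone convergence $(X,p,\lambda_n^{-1}d_X)\to (T_pX,o)$, since $X$ itself is not yet known to be locally $\CAT(\kappa)$. The remedy is to cover $X$ near $p$ by the finitely many $\CAT(\kappa)$-sector surfaces $\{E(w)\}_{w\in V(\Sigma_p(X))}$ together with sector surfaces supplied by Lemma \ref{lem:regular-sector} for the intermediate regular arcs of $\Sigma_p(X)$; each such sector surface has an individually defined tangent cone, and condition (3) ensures these cones glue consistently along their boundary rays to realize $T_pX=C(\Sigma_p(X))$.
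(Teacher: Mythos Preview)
Your argument is circular. You invoke Theorem \ref{thm:union-CAT} to conclude that $E(v_0)$ is $\CAT(\kappa)$, and use this both to promote the local $X$-geodesic $\mu$ to the unique $E(v_0)$-minimizer and to set up the tangent-cone blow-up. But Theorem \ref{thm:union-CAT} is proved only in Subsection \ref{ssec:Proof-Thm1.4}, and that proof uses the present lemma: Lemma \ref{lem:z*S} appeals to Lemma \ref{lem:ab-thin-ruled} to certify that certain segments are $X$-shortest, and Lemma \ref{lem:thin-est} does likewise. At this point in the development the only $\CAT(\kappa)$ pieces available are the individual sector surfaces $S_{ij}$ and the triple unions $E_{ij\ell}$ of Corollary \ref{cor:Cijk2}; you do not yet know that $E(v_0)$, let alone a neighbourhood of $p$ in $X$, is $\CAT(\kappa)$. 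For the same reason the step ``$\angle^X(\xi_-,\xi_+)=\pi$, hence $\mu$ is a local $X$-geodesic'' is unjustified: without a local upper curvature bound on $X$, angle $\pi$ at every point does not force local minimality.

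There is a second, independent gap in the blow-up. Your proposed remedy glues tangent cones of the sector surfaces, but pointed GH-convergence of $(\lambda_n^{-1}X,p)$ to $C(\Sigma_p(X))$ requires control of the $X$-distance between points in different sectors, which is precisely what the lemma is about; the patching of individually convergent pieces does not supply this. And even granting the convergence, the contradiction fails: all the $S_{i'j'}$ share the vertex direction $v_0$, so a point $x_n\in E(v_0)\setminus S_{ij}$ close to the singular locus can perfectly well rescale to a point on the ray through $v_0$, which lies inside $T_pS_{ij}=C([\zeta_i,\zeta_j])$. The paper avoids blow-ups entirely. It follows the $X$-geodesic $\mu_n$ to its first exit point $y_n\in\ca S$ from $S=S_{ij}$, uses Lemma \ref{lem:angle-xp-C} to force a re-entry point $z_n\in\ca S$, and builds a singular curve $C_{\sigma_n}\subset S$ joining $y_n$ to $z_n$ along level sets of $d_p^X$. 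The almost-gradient property of singular curves (Lemmas \ref{lem:infinitesimal-ray} and \ref{lem:infinitesimal-rayX}) then makes $|y_n,z_n|_S$ and $|y_n,z_n|_X$ comparable, and a comparison triangle in $S$ at $y_n$, where the transversality angle is at least $\delta$, gives the contradiction.
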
 
\begin{proof}
We put  $S:=S_{ij}$ for simplicity.
If the lemma does not hold, then there are sequences 
$t_n\to 0$ and $t'_n\to 0$ such that  
$\mu_n:=\gamma^X_{\gamma_i(t_n),\gamma_j(t'_n)}$
are not contained in the surface $S$ for all $n$.
We may assume $t_n\ge t_n'$. 
Let $s_n$ be the largest $s\in
(0,|\gamma_i(t_n),\gamma_j(t_n')|)$ such that $\mu_n([0,s])\subset S$.
Set $y_n:=\mu_n(s_n)\in \ca S$.
Since $\lim_{n\to\infty}\angle y_n p \gamma_i(t_n)=\delta$, we have
\begin{align}\label{eq:angle Sayp>}
\tilde\angle^S py_n\gamma_i(t_n)<
\pi -\delta+\e_n,
\end{align}
with $\lim_{n\to\infty} \e_n=0$. 
If there is a sequence in 
$\ca S\setminus \{ y\}$ converging to $y_n$, then 
$\dot\mu(s_n)\in\Sigma_{y_n}(\ca S)$,
which contradicts Lemma \ref{lem:angle-xp-C}.
Therefore we have a point 
$z_n\in\mu_n\cap\ca S$ such that the segment $\sigma_n:[0,|y_n,z_n|_X]\to X$ of $\mu_n$  from $y_n$ to $z_n$ is away from $S$ except the endpoints.

\begin{center}
\begin{tikzpicture}
[scale = 1]
\draw [-, thick] (0,0) -- (8,0);
\draw [-, thick] (0,0) -- (6,3);
\draw [-, thick] (3,1.5) -- (6,0);
\fill (6,0) coordinate (A) circle (2pt) node [below right] {$\gamma_i(t_n)$};
\fill (3,1.5) coordinate (A) circle (2pt) node [above left] {$\gamma_j(t_n')$};
\fill (4.5,0.75) coordinate (A) circle (2pt) node [below left] {$y_n$};
\fill (3.75,1.125) coordinate (A) circle (2pt) node [below left] {$z_n$};
\fill (0,0) coordinate (A) circle (2pt) node [left] {$p$};
\fill (8,0) coordinate (A) circle (0pt) node [right] {$\gamma_i$};
\fill (6,3) coordinate (A) circle (0pt) node [right] {$\gamma_j$};
\draw [-, thick] (4.5,0.75) to [out=90, in=45] (3.75,1.125);
\fill (4.25,1.15) coordinate (A) circle (0pt) node [above] {$\sigma_n$};
\end{tikzpicture}
\end{center}

For each $s\in [0,|y_n,z_n|_X]$,
consider the tree
$T(p, s):=S^X(p, d_p^X(\sigma_n(s)))\cap E(v_0)$.
The path in $T(p,s)$ starting from the point 
$\sigma_n(s)$ 
meets $S$ at a unique point, say $\zeta_n(s)$, which provides
a singular curve $\zeta_n=C_{\sigma_n}$ 
between $y_n$ and $z_n$.

By Lemmas \ref{lem:infinitesimal-ray} and 
\ref{lem:infinitesimal-rayX},
\begin{align}\label{eq:Ca-almost-grad}
\text{$C_{\sigma_n}$ is almost gradient for $d^S_p$ and $d^X_p$}.\hspace{4cm}
\end{align}
This implies 
$\frac{L(C_{\sigma_n})}{|y_n,z_n|_S}<1+\e_n$
and 
$\frac{L(C_{\sigma_n})}{|y_n,z_n|_X}<1+\e_n$,
and therefore, 
\begin{align}\label{eq:quat=yz}
(1-\e_n)|y_n,z_n|_S < |y_n,z_n|_X.
\end{align}
Let $w_n$ be the point of $[\gamma_i(t_n),y_n]\subset \mu_n$ with
$|w_n,y_n|_S=|y_n,z_n|_S$.
From \eqref{eq:quat=yz}, we have 
\begin{align*}
|w_n,z_n|_S&
\ge |w_n,z_n|_X=|w_n,y_n|_X+|y_n,z_n|_X \\
& \ge |w_n,y_n|_S+(1-\e_n)|y_n,z_n|_S
    = (2-\e_n)|w_n,y_n|_S.
\end{align*}
On the other hand, \eqref{eq:angle Sayp>}  implies
$\tilde\angle^S z_n y_n w_n<\pi -\delta/2$
yielding a contradiction to the above
inequality. 
\end{proof}

\psmall
From now on, by Lemma \ref{lem:ab-thin-ruled},
taking smaller $r=r_p>0$ if necessary,
we assume that 
\begin{align} \label{eq:gammatt'S}
\gamma^X_{\gamma_i(t),\gamma_j(t')}\subset S_{ij},
\end{align}
for all $t, t'\in (0,r]$ and for all $1\le i<j\le N$.

As an immediate consequence of \eqref{eq:gammatt'S}, we have

\begin{prop} \label{prop:thin-ruled}
Every sector surface $S_{ij}$ is a thin ruled surface as described in \cite[Section 4]{NSY:local}.
Namely, $S_{ij}$ is formed by 
$\gamma^X_{\gamma_i(t),\gamma_j(t)}$ for all 
$0\le t\le r$.

Moreover,  $\gamma^X_{\gamma_i(t),\gamma_j(t')}$
is uniquely determined for all $t, t'\in (0,r]$.
\end{prop}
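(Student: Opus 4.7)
The plan is to first establish uniqueness of the connecting geodesics via the $\CAT(\kappa)$ structure of $S_{ij}$, and then to identify $S_{ij}$ with the ruled region swept out by the family $\{\gamma^X_{\gamma_i(t), \gamma_j(t)}\}_{t\in[0,r]}$.

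For uniqueness, by \eqref{eq:gammatt'S} any $X$-geodesic joining $\gamma_i(t)$ and $\gamma_j(t')$ lies in the $\CAT(\kappa)$-disk $S_{ij}$. After possibly shrinking $r$, $\diam(S_{ij})$ falls below the $\CAT(\kappa)$ radius $\pi/\sqrt{\max\{\kappa,0\}}$, so geodesics between any pair of points of $S_{ij}$ are unique inside $S_{ij}$. Since the induced length metric on $S_{ij}$ agrees with $d^X$ along any $X$-geodesic contained in $S_{ij}$ (the $X$-geodesic is an admissible curve in $S_{ij}$ realizing the intrinsic distance), the geodesic $\gamma^X_{\gamma_i(t), \gamma_j(t')}$ must coincide with the unique $S_{ij}$-geodesic between its endpoints and is therefore uniquely determined.

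For the ruled structure, I would consider the continuous map $\Phi : [0, r] \times [0, 1] \to S_{ij}$ defined by $\Phi(t, s) := \gamma^X_{\gamma_i(t), \gamma_j(t)}(s \cdot |\gamma_i(t), \gamma_j(t)|_X)$; continuity follows from the uniqueness just proved together with the standard continuous dependence of $\CAT(\kappa)$-geodesics on their endpoints. A standard geodesic bigon argument inside the $\CAT(\kappa)$-disk $S_{ij}$ shows that distinct members $\gamma^X_{\gamma_i(t_1), \gamma_j(t_1)}$ and $\gamma^X_{\gamma_i(t_2), \gamma_j(t_2)}$ cannot meet in the interior of $S_{ij}$ for sufficiently small $r$. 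Hence $\Phi$ is injective on the interior and a homeomorphism onto its image $\tilde S_{ij}$; the inclusion $\tilde S_{ij} \subseteq S_{ij}$ is immediate from \eqref{eq:gammatt'S}.

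The main obstacle will be verifying $\tilde S_{ij} = S_{ij}$. The image $\tilde S_{ij}$ is a topological subdisk of $S_{ij}$ whose Jordan boundary consists of $\gamma_i|_{[0,r]}$, the chord $\gamma^X_{\gamma_i(r), \gamma_j(r)}$, and $\gamma_j|_{[0,r]}$; the delicate point is to identify this chord with the outer portion of $\pa S_{ij}$. For this I would argue by contradiction: a point $x \in S_{ij} \setminus \tilde S_{ij}$ would produce a radial $S_{ij}$-geodesic from $p$ through $x$ whose initial direction, transferred to $\Sigma_p(X)$ via the embedding $\iota$ and Proposition \ref{prop:Eint-ext}, would have to escape the arc $I = [\zeta_i, \zeta_j]$, contradicting the hypothesis $\iota(\Sigma_p(S_{ij})) = I$. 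Here Lemma \ref{lem:regular-sector} provides the $X$-geodesics in each interior direction of $I$, and Lemma \ref{lem:metric-circle}(2) controls how the metric spheres intersect $E(v_0)$. Once $S_{ij} \setminus \tilde S_{ij} = \emptyset$ is established, we conclude $S_{ij} = \tilde S_{ij}$, which is precisely the thin ruled surface structure, and the uniqueness claim for $\gamma^X_{\gamma_i(t), \gamma_j(t')}$ follows from the first part.
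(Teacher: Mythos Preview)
The paper gives no proof at all: it simply records the proposition as ``an immediate consequence of \eqref{eq:gammatt'S}''. So your uniqueness argument is already more than the paper supplies, and it is correct: once \eqref{eq:gammatt'S} forces every $X$-geodesic between $\gamma_i(t)$ and $\gamma_j(t')$ into the $\CAT(\kappa)$-disk $S_{ij}$, uniqueness of $\CAT(\kappa)$-geodesics gives the second assertion. That, together with the containment of all ruling chords in $S_{ij}$, is really the content being used downstream (the remark after the proposition about essential uniqueness of the sector surface).

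Your surjectivity argument for $\tilde S_{ij}=S_{ij}$, however, has a gap. Consider the ``lens'' between the terminal chord $\gamma^X_{\gamma_i(r),\gamma_j(r)}$ and the outer arc $S(p,r)\cap S_{ij}$; in the flat model this is a nonempty circular segment. A point $x$ in this lens lies in $S_{ij}\setminus\tilde S_{ij}$, yet the radial $S_{ij}$-geodesic from $p$ through $x$ has direction perfectly well inside $I=[\zeta_i,\zeta_j]$, so your proposed contradiction (``the direction would have to escape $I$'') does not fire. In other words, the family $\{\gamma^X_{\gamma_i(t),\gamma_j(t)}\}_{0\le t\le r}$ sweeps out only the geodesic triangle with outer edge the chord, not the full sector bounded by the metric circle.

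This is not a flaw in your reasoning so much as a mismatch with the literal phrasing of the proposition. The paper's preliminary Section~\ref{ssec:lcal-str} describes the ruled surface of \cite{NSY:local} as ``bounded by $\gamma_i$, $\gamma_j$ and $S(p,r)$'', so the intended assertion is that the sector surface $S_{ij}$ from condition~(3) coincides with the ruled-surface object of \cite[Section 4]{NSY:local}; the sentence ``formed by $\gamma^X_{\gamma_i(t),\gamma_j(t)}$'' is a shorthand for that construction, not a set-theoretic equality with the union of equal-parameter chords. What makes this immediate from \eqref{eq:gammatt'S} is simply that the ruling $X$-geodesics exist, are unique, and lie in $S_{ij}$ --- exactly what you proved in your first paragraph.
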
 

Proposition  \ref{prop:thin-ruled} shows that 
the sector surface $S$ in the condition (3) is essentially uniquely determined.

\pmed

For any fixed $x\in E(v_0)\cap \ca S$ and for every  $v\in V(\Sigma_x(X))$,
choose $\delta=\delta_x>0$  and 
$\e=\e_x>0$ such that $\e\ll \delta<\min\{ \angle(v, w)\,|\,w\neq v \in V(\Sigma_x(X))\,\}/10$.
Let 
\begin{align}\label{eq:E(x,v)}
   E_x(v)=E_x(v,\delta,\e),
\end{align}
be defined as in \eqref{eq:S(v)} for $x$ in place of $p$.
More precisely, 
let $m:=N_{v}$ be the branching number of the graph $\Sigma_x(X)$ at $v$, and 
choose $\nu_1,\ldots,\nu_m\in\Sigma_x(X)$ such that  for all $1\le i\neq j\le m$
\begin{itemize}
\item \text{ $\angle(\nu_i, v) = \delta$ and
$\angle(\nu_i, \nu_j) = 2\delta\,;$}
\item  $\CAT(\kappa)$-sector surfaces $R_{ij}(x)$ at $x$  provided by the condition (3) for the arc 
$[\nu_i,\nu_j]\subset \Sigma_x(X)$ are
all properly embedded in $B(x,\e)$.
\end{itemize}
Here we may assume that
\begin{align}\label{eq:BxsubsetE}
    B(x,\e)\subset \mathring{E}(v_0).
\end{align}
Set $\sigma_i:=\gamma^X_{\nu_i}$.
Then $E_x(v)$ is the domain bounded  by
$\sigma_1, \ldots,\sigma_m$ and $S(x,\e)$.
In other words, 
$$
   E_x(v)=\bigcup_{1\le i<j\le m} R_{ij}(x).
$$

Since $x$ is fixed, 
in what follows, we simply write as 
$R_{ij}:=R_{ij}(x)$.

\begin{lem} \label{lem:SE-geom-distance}
For each $R_{ij}$ and for every $y\in R_{ij}\setminus \{ x\}$, we have 
\begin{align*}
     \frac{|x,y|_{R_{ij}}}{|x,y|_X} <1+\tau_x(|x,y|_X).
\end{align*}
\end{lem}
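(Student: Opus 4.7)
The plan is to establish the nontrivial direction $|x,y|_{R_{ij}} \le (1+\tau_x(|x,y|_X))|x,y|_X$ by a simultaneous blow-up of $R_{ij}$ and of $X$ at $x$, using the $\CAT(\kappa)$ structure of $R_{ij}$ (and of $E(v_0)$ via Theorem \ref{thm:union-CAT}) to control tangent cones, and Proposition \ref{prop:Eint-ext} to identify the intrinsic and extrinsic spaces of directions.

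First, $|x,y|_{R_{ij}} \ge |x,y|_X$ holds trivially, since any $R_{ij}$-curve is an $X$-curve. For the upper bound, suppose for contradiction that there exist $\epsilon_0>0$ and $y_n \in R_{ij}\setminus\{x\}$ with $r_n:=|x,y_n|_X \to 0$ and $|x,y_n|_{R_{ij}} \ge (1+\epsilon_0) r_n$. Rescaling $R_{ij}$ by $1/r_n$ produces a sequence of $\CAT(\kappa r_n^2)$-disks which, after extracting a subsequence, converges in pointed Gromov--Hausdorff topology to the tangent cone $T_xR_{ij}=C(\Sigma^{\rm int}_x(R_{ij}))$. By Proposition \ref{prop:Eint-ext}, this coincides with the planar sector $C([\nu_i,\nu_j])$ of angle $2\delta$, and $y_n$ converges to a point $y_\infty$ at intrinsic cone-distance at least $1+\epsilon_0$ from the vertex.

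Second, I control the $X$-side. Using \eqref{eq:BxsubsetE} and Theorem \ref{thm:union-CAT}, I first verify that on a small enough neighborhood of $x$ the $X$-metric coincides with the intrinsic metric of $E(v_0)$. Then the corresponding rescalings of $X$ at $x$ by $1/r_n$ converge to $T_xE(v_0)=C(\Sigma_x(E(v_0)))$, and $y_n$ converges in this limit to the image of $y_\infty$ at vertex-distance exactly $1$ (since $|x,y_n|_X/r_n=1$). By the isometric identification in Proposition \ref{prop:Eint-ext}, the inclusion $\Sigma^{\rm int}_x(R_{ij})\hookrightarrow\Sigma_x(E(v_0))$ is isometric on length spaces, so the induced map on tangent cones $T_xR_{ij}\hookrightarrow T_xE(v_0)$ is an isometric embedding of metric cones. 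In particular, the vertex-distance of $y_\infty$ is the same in both limits, forcing $1+\epsilon_0\le 1$, a contradiction.

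The main obstacle will be the local coincidence of the $X$-metric and the intrinsic $E(v_0)$-metric near $x$ --- equivalently, the local convexity of $E(v_0)$ in $X$ at $x$. I would prove this by showing that any sufficiently short $X$-geodesic between two points of $B(x,\epsilon/2)$ is contained in $E(v_0)$: any departure from $E(v_0)$ would have to cross $S(p,r)\cap E(v_0)$, which is impossible since $B(x,\epsilon)\subset\mathring E(v_0)$, or it would contradict the thin ruled structure together with the analog of Lemma \ref{lem:ab-thin-ruled} applied at $x$. A secondary technical point is the convergence of the rescaled $X$-balls to the tangent cone, which follows from the $\CAT(\kappa)$ property of $E(v_0)$ once the metric identification is in hand.
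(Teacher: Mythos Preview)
Your argument has a genuine circularity. You invoke Theorem~\ref{thm:union-CAT} (that $E(v_0)$ is $\CAT(\kappa)$) to control the rescaled $X$-balls, but the proof of Theorem~\ref{thm:union-CAT} in Subsection~\ref{ssec:Proof-Thm1.4} relies on Lemma~\ref{lem:thin-est}, whose proof explicitly uses Lemma~\ref{lem:SE-geom-distance}. So at this point in the logical development you are not allowed to assume that $E(v_0)$, or even a neighborhood of $x$ in $X$, carries an upper curvature bound; in particular you cannot appeal to the existence of a tangent cone for $X$ at $x$, nor to Gromov--Hausdorff convergence of rescalings of $(X,x)$ to a cone.

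The paper's own proof simply cites \cite[Sublemma~4.31, Lemma~7.10]{NSY:local}, which work entirely inside the single $\CAT(\kappa)$-sector $R_{ij}$ and use only the definition of the upper angle in $X$ together with Proposition~\ref{prop:Eint-ext}. The point is that one side of your blow-up argument, namely $\bigl(\tfrac{1}{r_n}R_{ij},x\bigr)\to C([\nu_i,\nu_j])$, is legitimate because $R_{ij}$ is already known to be $\CAT(\kappa)$; what fails is the parallel claim for $X$. A non-circular route is to compare $|x,y|_X$ directly against the $R_{ij}$-geodesic $\gamma^{R_{ij}}_{x,y}$ via the upper-angle definition \eqref{eq:upper-angleX}: since $\dot\gamma^{R_{ij}}_{x,y}(0)\in[\nu_i,\nu_j]\subset\Sigma_x(X)$ by Proposition~\ref{prop:Eint-ext}, the comparison angle $\tilde\angle^X\,\gamma^{R_{ij}}_{x,y}(s)\,x\,\gamma^{R_{ij}}_{x,y}(t)$ is small for small $s,t$, and this forces $|x,y|_X$ to be close to $|x,y|_{R_{ij}}$ without ever invoking a curvature bound on $X$ itself. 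Your local-convexity observation that $B(x,\e)\subset\mathring E(v_0)$ implies $X$-geodesics between nearby points stay in $E(v_0)$ is correct, but it does not by itself give tangent-cone convergence.
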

\begin{proof}
The proof is identical with \cite[Sublemma 4.31, Lemma 7.10]{NSY:local}.
\end{proof}

\begin{lem} \label{lem:SE-geom}
For a fixed  $x\in E(v_0)$, we have for every $y\in R_{ij}\setminus \{ x\}$
\begin{align*}
    & (1)\,\,  \angle^X \left(\dot\gamma^{R_{ij}}_{x,y}(0), 
         \dot\gamma^X_{x,y}(0)\right) <\tau_x(|x,y|_X),\\
   & (2)\,\,  \angle^X \left(\dot\gamma^{R_{ij}}_{y,x}(0), 
         \dot\gamma^X_{y,x}(0)\right) <\tau_x(|x,y|_X).
\end{align*}
\end{lem}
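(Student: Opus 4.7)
The plan is to bound the $\kappa$-comparison angle $\tilde\angle^X$ in an auxiliary triangle chosen via the $R_{ij}$-midpoint, and then convert this into an upper-angle bound by induction on scale.

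Set $d:=|x,y|_X$ and $d':=|x,y|_{R_{ij}}$. Lemma \ref{lem:SE-geom-distance} gives $d\le d'\le(1+\tau_x(d))d$, and applied at the pair $(x,z)$ with $z:=\gamma^{R_{ij}}_{x,y}(d'/2)$ the $R_{ij}$-midpoint, it also yields $(1-\tau_x(d))d'/2\le|x,z|_X\le d'/2$. Combining $|z,y|_X\le|z,y|_{R_{ij}}=d'/2$ with $|z,y|_X\ge d-|x,z|_X$ places $|z,y|_X$ likewise within $(1\pm\tau_x(d))d/2$. Substituting these three side lengths into the $\kappa$-cosine rule for $\tilde\angle^X zxy$ gives $\cos\tilde\angle^X zxy\ge 1-C\tau_x(d)$, hence $\tilde\angle^X zxy\le\tau_x(d)$, and by a symmetric computation at $y$, also $\tilde\angle^X zyx\le\tau_x(d)$.

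I then establish $(1)$ and $(2)$ simultaneously by induction on the scale $d$. For $(1)$, the midpoint $z$ satisfies $|x,z|_X<d$ and lies on the same $R_{ij}$-geodesic as $y$, so $\dot\gamma^{R_{ij}}_{x,z}(0)=\xi$, where $\xi:=\dot\gamma^{R_{ij}}_{x,y}(0)$. The inductive hypothesis gives $\angle^X(\xi,\dot\gamma^X_{x,z}(0))\le\tau_x(d)$. Writing $\eta:=\dot\gamma^X_{x,y}(0)$ and using the triangle inequality for the upper-angle metric on the $\CAT(1)$-graph $\Sigma_x(X)$ furnished by condition $(2)$,
\[
\angle^X(\xi,\eta)\le\angle^X(\xi,\dot\gamma^X_{x,z}(0))+\angle^X(\dot\gamma^X_{x,z}(0),\eta),
\]
and the second term is controlled via the bound $\tilde\angle^X zxy\le\tau_x(d)$: since both $\gamma^X_{x,z}$ and $\gamma^X_{x,y}$ are genuine $X$-geodesics from $x$, for all small $s,t>0$ the triangle inequality and the computed side-length bounds give $\tilde\angle^X\gamma^X_{x,z}(s)\,x\,\gamma^X_{x,y}(t)\le\tau_x(d)$, whence $\angle^X(\dot\gamma^X_{x,z}(0),\eta)\le\tau_x(d)$ by the upper-angle definition \eqref{eq:upper-angleX}. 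Part $(2)$ is obtained by exchanging the roles of $x$ and $y$ in the above argument, using the symmetric bound $\tilde\angle^X zyx\le\tau_x(d)$ together with the inductive hypothesis for $(2)$ applied to $z$ (which, from $y$, has $|y,z|_X<d$).

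The main obstacle is the conversion from the tilde-angle bound at the midpoint to the upper-angle bound between the two $X$-geodesic directions at $x$: since $X$ is not yet known to satisfy $\CAT(\kappa)$, the usual monotonicity of tilde angles along geodesics is unavailable. This is resolved by combining the inductive scale argument with the upper-angle triangle inequality in $\Sigma_x(X)$, ensuring that errors add rather than compound and that the final bound is of the required form $\tau_x(|x,y|_X)$.
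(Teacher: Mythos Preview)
Your argument has a genuine gap at the step where you pass from the single comparison-angle bound $\tilde\angle^X zxy \le \tau_x(d)$ to the upper-angle bound $\angle^X(\dot\gamma^X_{x,z}(0), \dot\gamma^X_{x,y}(0)) \le \tau_x(d)$. The justification offered---that ``for all small $s,t>0$ the triangle inequality and the computed side-length bounds give $\tilde\angle^X\gamma^X_{x,z}(s)\,x\,\gamma^X_{x,y}(t)\le\tau_x(d)$''---does not hold: your side-length estimates concern only the three distances $|x,z|_X$, $|x,y|_X$, $|z,y|_X$, and say nothing about $|\gamma^X_{x,z}(s), \gamma^X_{x,y}(t)|_X$ for small $s,t$. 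Without $\CAT(\kappa)$ monotonicity (precisely what is unavailable here), a small comparison angle for the outer triangle $\triangle zxy$ does not control the $\limsup$ defining the upper angle at $x$. This is exactly the obstacle you flag in your final paragraph, and the proposed inductive fix does not resolve it: even granting that step, the recurrence becomes $f(d)\le f(d/2)+\tau_x(d)$, whose iteration produces $\sum_{k\ge 0}\tau_x(d/2^k)$, with no reason to be of order $\tau_x(d)$ for a general $\tau_x$. For part~(2) there is a further problem: the inductive hypothesis for~(2) at the midpoint $z$ bounds the angle \emph{at $z$} between $\dot\gamma^{R_{ij}}_{z,x}(0)$ and $\dot\gamma^X_{z,x}(0)$, which gives no information about the angle at $y$; and ``exchanging the roles of $x$ and $y$'' would at best produce a bound of the form $\tau_y(\cdot)$, not the required $\tau_x(\cdot)$.

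The paper proceeds quite differently, by contradiction: if a bad sequence $x_n\to x$ existed, then either $\gamma^X_{x,x_n}\setminus\{x\}$ avoids $\ca S$ (forcing $\gamma^X_{x,x_n}=\gamma^{R_{ij}}_{x,x_n}$), or it crosses $\ca S$, and the behaviour at these crossings is controlled by Lemma~\ref{lem:angle-xp-C} together with the identification $\Sigma_x^{\rm int}(R_{ij})=\Sigma_x(R_{ij})$ of Proposition~\ref{prop:Eint-ext}. Your approach attempts to bypass this geometric input and rely solely on the distance estimate of Lemma~\ref{lem:SE-geom-distance}; that lemma alone is not strong enough to yield the angle bounds.
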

\begin{proof}

We assume $x\in \mathring{E}(v_0)$.
The other case is similar.
If (1) does not hold, there would be a sequence $x_n$ 
of $R_{ij}\setminus \{ x\}$ converging to $x$ such that for a uniform positive constant $c$,
\begin{align*}  
&\angle^X(\dot\gamma^{R_{ij}}_{x,x_n}(0), \dot\gamma^X_{x,x_n}(0)) > c.
\end{align*} 
Suppose $\gamma^X_{x,x_n}\setminus \{ x\}$ does not meet $\ca S$ for some $n$. Then by continuity, $\gamma^X_{x,x_n}\subset R_{ij}$, 
and hence
$\gamma^X_{x,x_n}=\gamma^{R_{ij}}_{x,x_n}$. This is a contradiction.
Therefore we may assume that 
 $\gamma^X_{x,x_n}\setminus \{ x\}$ meets $\ca S$ for each $n$. 
Passing to a subsequence, we may assume that 
$\dot\gamma^X_{x,x_n}(0)$ (resp. $\dot\gamma^{R_{ij}}_{x,x_n}(0)$)
converges to a direction 
$\xi^X\in V(\Sigma_x(X))$
(resp. $\xi^{R_{ij}}\in\Sigma_x(R_{ij})$).
From the proof of Proposition \ref{prop:Eint-ext},
we see that  $\xi^{R_{ij}}= \xi^X$,
which is a contradiction.

If (2) does not hold, there would be a sequence $x_n$ 
of $E(v_0)$ converging to $x$ such that for a positive constant $\theta$ independent of $n$,
\begin{align}  &\angle^X(\dot\gamma^{R_{ij}}_{x_n,x}(0), \dot\gamma^X_{x_n,x}(0)) > \theta.
               \label{eq:XE-gradx}
\end{align}
Suppose $x_n\in \ca S$ for any large $n$.
By Lemma \ref{lem:angle-xp-C}, we have 
\begin{align}\label{eq:angle-ES>theta}
\angle^X(\dot\gamma^{R_{ij}}_{x_n, x}(0), \Sigma_{x_n}(\ca S))>\theta-\e_n.
\end{align}
In the proof of Lemma \ref{lem:angle-xp-C}, 
replacing $\gamma^X_{x_n,x}$ by 
$\gamma^{R_{ij}}_{x_n,x}$,
we can proceed as in the proof of Lemma \ref{lem:angle-xp-C}
to obtain a contradiction.
  
Suppose $x_n\in X\setminus \ca S$, and choose
a sector surface 
$R_n:=R_{i_nj_n}$ containing $x_n$. 
From \eqref{eq:XE-gradx}, we may assume that
$\gamma^X_{x_n,x}$ first leave $R_n$ at a point 
$y_n\in R_n\cap\ca S$. 
By Lemma \ref{lem:angle-xp-C},
$\dot\gamma^X_{y_n,x}(0)$ makes an angle 
with $\ca S$ less than $\e_n$.
Passing to a subsequence, we may assume that 
$R_n$ is a fixed sector surface, say $R$ containing $x_n$ for all $n$. Thus, 
\eqref{eq:S1-Sigma} implies that 
$\dot\gamma^{R}_{y_n,x}(0)$ also makes an angle 
with $\ca S$ less than $\e_n$.
Thus we have 
$
\angle^X(\dot\gamma^X_{y_n,x}(0),\dot\gamma^{R}_{y_n,x}(0))<\e_n.
$
Together with Lemma \ref{lem:comparison},
this implies
\beq
\begin{aligned}
\angle^{R} xx_ny_n &\le \tilde\angle^{R} xx_ny_n\\
&\le\pi- \tilde\angle^{R} x_ny_n x-\tilde\angle^{R} y_nx x_n +  \e_n \\
&<\pi- \angle^{R} x_ny_n x <3\e_n.
\end{aligned}
\eeq
It follows from \eqref{eq:XE-gradx} that  
$$
\angle^X(\dot\gamma^{R_{ij}}_{x_n,x}(0),\dot\gamma^{R}_{x_n,x}(0))
>\theta-3\e_n.
$$
Let $\gamma^{R_{ij}}_{x_n,x}$ first leaves $R$ at a point 
$z_n\in R\cap \ca S$. Then we have
\begin{align*}
\angle^X(\dot\gamma^{R_{ij}}_{z_n,x}(0), &\dot\gamma^{R}_{z_n,x}(0)) > \pi - \angle^{R} x z_nx_n -\e_n \\
&\ge \pi - (\pi - \angle^{R} x x_n z_n +\e_n) -\e_n
   \ge \theta-5\e_n,
\end{align*}
which implies 
$$
\angle^X(\dot\gamma^{R_{ij}}_{z_n, x}(0), \Sigma_{z_n}(\ca S))>\theta-6\e_n.
$$
As in \eqref{eq:angle-ES>theta},  
in the same manner as the proof of Lemma 
\ref{lem:angle-xp-C},
we  obtain a contradiction.
This completes the proof.
\end{proof}

\begin{center}
\begin{tikzpicture}
[scale = 1]
\draw [-, thick] (0,0) -- (8,0);
\fill (0,0) coordinate (A) circle (2pt) node [left] {$x$};
\fill (8,0) coordinate (A) circle (2pt) node [right] {$x_n$};
\fill (6,0) coordinate (A) circle (2pt) node [below left] {$y_n$};
\fill (4.86,1.14) coordinate (A) circle (2pt) node [above right] {$z_n$};
\draw [-, thick] (0,0) to [out=30, in=150] (8,0);
\draw [-, thick] (6,0) -- (4.5,1.5);
\draw [-, thick] (6,0) -- (6.25,-0.25);
\fill (6.15,-0.15) coordinate (A) circle (0pt) node [below right] {$\mathcal{S}$};
\fill (4,0) coordinate (A) circle (0pt) node [below] {$\gamma_{x_n,x}^X$};
\fill (6.25,0.8) coordinate (A) circle (0pt) node [above] {$\gamma_{x_n,x}^{R_{ij}}$};
\end{tikzpicture}
\end{center}

\begin{rem} \label{rem:S(t)capS} \upshape
Lemma \ref{lem:SE-geom}(2) implies that 
$S(p,t)\cap S_{ij}$ is an arc in the situation of 
 Lemma \ref{lem:metric-circle}.
\end{rem}

Next, we investigate the geodesic extension property of $X$.

\begin{lem} \label{lem:geod-extension}
For arbitrary distinct points $y,z\in E(v_0)\setminus \{ p\}$, let  
$\gamma:=\gamma^X_{y,z}:[a,b]\to X$
be any $X$-minimal geodesic between them such that 
\begin{align} \label{eq:anglexxp}
\angle^X(\dot\gamma^X_{y,p}(0), \dot\gamma(0))
\in (\pi/10, 9\pi/10).
\end{align}
Then $\gamma$
extends to an $X$-minimal geodesic $\hat\gamma$ in the 
both directions, and it reaches some 
$\gamma_i$ and $\gamma_j$ \,$(1\le i\neq j\le N_0)$.
Furthermore, we have $\hat\gamma\subset S_{ij}$.
\end{lem}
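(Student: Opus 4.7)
The plan is to first localize the problem by identifying a unique sector surface containing a neighborhood of $y$ along $\gamma$, then propagate this containment globally, and finally extend $\hat\gamma$ within this sector surface up to its radial boundary arcs.

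First I would identify the sector $S_{ij}$. If $y\notin \ca S$ then $y$ is a manifold point and lies in the interior of exactly one $S_{ij}$. If $y\in \ca S$, Lemma \ref{lem:angle-xp-C} shows $\Sigma_y(\ca S)$ is contained in an $o_p(y)$-neighborhood of $\pm\dot\gamma^X_{y,p}(0)$; combining this with \eqref{eq:anglexxp} forces $\dot\gamma(0)$ into the interior of a unique arc component of $\Sigma_y(X)\setminus V(\Sigma_y(X))$, and hence into the interior of $\Sigma_y(S_{ij})$ for a unique pair $(i,j)$. By Lemma \ref{lem:SE-geom}, a small initial subarc of $\gamma$ coincides with the $S_{ij}$-geodesic issuing from $y$ in direction $\dot\gamma(0)$.

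Next I would show $\gamma\subset S_{ij}$. At any point $x\in\gamma\cap \ca S$, the $X$-geodesic condition forces $\dot\gamma(t^+)$ to be antipodal in $\Sigma_x(X)$ to $-\dot\gamma(t^-)\in \Sigma_x(S_{ij})$. By Lemma \ref{lem:almost-suspension} together with Proposition \ref{prop:Eint-ext}, $\Sigma_x(S_{ij})$ is a circle of length in $[2\pi,2\pi+o_p(x))$ isometrically embedded in $\Sigma_x(X)$, so the antipode of $-\dot\gamma(t^-)$ lies in $\Sigma_x(S_{ij})$; any other candidate antipode in $\Sigma_x(X)$ would be at distance $>\pi$ from $-\dot\gamma(t^-)$ (using that $\Sigma_x(X)$ is a $\CAT(1)$-graph built from sector circles glued along the finite set $V(\Sigma_x)$). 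Therefore $\dot\gamma(t^+)\in\Sigma_x(S_{ij})$, and $\gamma$ continues inside $S_{ij}$.

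Having shown $\gamma\subset S_{ij}$, I would then extend $\gamma$ maximally as an $S_{ij}$-geodesic. Since $S_{ij}$ is a $\CAT(\kappa)$-disk, such an extension $\hat\gamma$ exists uniquely and reaches $\pa S_{ij}=\gamma_i\cup\gamma_j\cup(S(p,r)\cap S_{ij})$ in each direction. By the local $X$-minimality statement of Lemma \ref{lem:SE-geom} and the antipode argument at singular crossings, $\hat\gamma$ is a local $X$-geodesic, and agrees with $\gamma$ on $[a,b]$. To rule out the spherical boundary, I would use \eqref{eq:anglexxp}: the direction $\dot\gamma(0)$ has tangential component (relative to $\nabla d_p$) of magnitude at least $\sin(\pi/10)$, while $S_{ij}$ is the thin sector associated to the pair $(\nu_i,\nu_j)$ with $\angle(\zeta_i,\zeta_j)=2\delta_0\ll \pi/10$; comparison with the model sector in $M^2_\kappa$ (via the thin-ruled-surface Proposition \ref{prop:thin-ruled}) shows that such a geodesic must cross both radial edges $\gamma_i$ and $\gamma_j$ in $S_{ij}$ before reaching the outer arc $S(p,r)\cap S_{ij}$.

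The hard part will be Step 2: verifying rigorously that the $X$-geodesic does not jump out of $S_{ij}$ at a singular point $x$. This requires not just the existence of an antipode inside $\Sigma_x(S_{ij})$ but the uniqueness of the continuation as an $X$-minimal geodesic; the key input is the isometric embedding $\Sigma_x(S_{ij})\subset \Sigma_x(X)$ established in Proposition \ref{prop:Eint-ext}, combined with the $\CAT(1)$-graph structure of $\Sigma_x(X)$ which forces any alternative antipode onto a different circle of length $\geq 2\pi$ sharing only the singular vertices with $\Sigma_x(S_{ij})$.
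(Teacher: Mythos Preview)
Your Step 2 contains a genuine gap. At a singular crossing $x$, you argue that the outgoing direction $\dot\gamma(t^+)$ is the unique point of $\Sigma_x(X)$ at angle $\pi$ from $-\dot\gamma(t^-)$, and that this point lies in $\Sigma_x(S_{ij})$. But $\Sigma_x(X)$ is $o_p(x)$-close to a spherical suspension over $m\ge 3$ points (Lemma \ref{lem:almost-suspension}), and in such a graph a non-vertex direction $\xi$ has one antipode on \emph{each} of the other $m-1$ arcs, all at distance exactly $\pi$. So the continuation of an $X$-minimal geodesic is free to leave $S_{ij}$ and enter any adjacent wing $F_m$; your claim that ``any other candidate antipode would be at distance $>\pi$'' is simply false. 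The isometric identification $\Sigma_x^{\rm int}(S_{ij})=\Sigma_x(S_{ij})$ of Proposition \ref{prop:Eint-ext} does not help here, since the other arcs of $\Sigma_x(X)$ are not part of $\Sigma_x(S_{ij})$ at all.

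The paper does not attempt to keep the sector fixed. Instead it cuts $\gamma$ at its (finitely many, by Lemma \ref{lem:number=Nv-2}) singular crossings $y=y_0<\cdots<y_K=z$ and runs an induction in which the pair of indices is allowed to change: if $[y,y_{k-1}]\subset S_{ij}$ and $[y_{k-1},y_k]$ lies in the wing $F_m$, then one works inside the triple union $E_{ijm}$, which is already known to be $\CAT(\kappa)$ by Corollary \ref{cor:Cijk2}. In $E_{ijm}$ the concatenation is a genuine geodesic from a point of $\gamma_i$ to a point of $\gamma_m$, and Lemma \ref{lem:ab-thin-ruled} together with the uniqueness in Proposition \ref{prop:thin-ruled} forces this geodesic to be the (unique) $X$-minimal geodesic lying in $S_{im}$. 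Thus the inductive hypothesis is restored with indices $(i,m)$. This also supplies the $X$-minimality of the final extension $\hat\gamma$, which your Step 3 tries to get from ``local $X$-geodesic'' --- an inference that is unavailable since $X$ is not yet known to be locally $\CAT(\kappa)$.
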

\begin{proof}
In what follows, we assume $y,z\in\ca S$.
The other cases are similarly discussed.
From \eqref{eq:anglexxp}, we can take 
$y_1\in\gamma\cap\ca S\setminus \{ y\}$
such that the open subarc $(y,y_1)$ of $\gamma$ does not meet
$\ca S$.
Take a $\CAT(\kappa)$-sector surface $S$ containing $[y,y_1]$.
Since $|\angle^S py_1y+ \angle^S pyy_1-\pi|
 < \tau_p(r)$,
 Lemma \ref{lem:SE-geom} implies
$$
\angle^X(\dot\gamma^X_{y_1,p}(0), 
\dot\gamma_{y_1,z}(0)) 
\in  (\pi/10-\tau_p(r), 9\pi/10+\tau_p(r)).
$$
Therefore we can repeat the same argument
for $\gamma_{y_1,z}$ in place of $\gamma$.
From Lemma \ref{lem:number=Nv-2},
we obtain 
$$
   \sharp (\gamma\cap\ca S)\le N_0-2.
$$
Let $y=y_0<y_1<\cdots<y_K=z$
be the elements of $\gamma\cap\ca S$
with $K\le N_0-2$.
As in the previous argument based on Lemma \ref{lem:SE-geom}, taking smaller $r$ if necessary, 
we have for each $1\le k\le K$
\begin{align*}
&\angle^X py_ky_{k-1}, \quad \angle^X py_{k-1}y_{k}
\in (\pi/11, 10\pi/11), \\
&|\angle^X py_ky_{k-1}+ \angle^X py_{k-1}y_{k}-\pi|
 < \tau_p(r).
\end{align*}
We show the following by the inducion on $k$:
\begin{itemize}
\item
$[y,y_k]\subset S_{ij}\,;$ 
\item  $[y,y_k]$ extends to 
an $X$-minimal geodesic joining points of 
$\gamma_i$ and $\gamma_j$.
\end{itemize}
First we may assume that $[y,y_1]\subset S_{ij}$
for some $i<j$. Extend  $[y,y_1]$ in $S_{ij}$
in the both directions. Then it reaches 
$\gamma_i$ and $\gamma_j$.
Then the extension $\hat\gamma$ is  $S_{ij}$-shortest, and hence is $X$-shortest by Lemma \ref{lem:ab-thin-ruled}.
Suppose that $[y,y_{k-1}]\subset S_{ij}$
and an $S_{ij}$-geodesic extension of $[y_{k-1},y]$ beyond $y$ 
reaches a point  $a$ of $\gamma_i$.
Suppose also $[y_{k-1},y_k]$ is contained in some $\CAT(\kappa)$-sector surface $S_{mn}$
and an $S_{mn}$-geodesic extension of $[y_{k-1},y_k]$ beyond $y_k$ 
reaches a point $b$ of $\gamma_m$.
Note that the arcs $[a,y_{k-1}]$ and $[y_{k-1},b]$
meet at $y_{k-1}$ with angle $\pi$ in the 
$\CAT(\kappa)$-space $E_{ijm}$
(Corollary \ref{cor:Cijk2}),
transversally to $C_{ijm}$.
This implies  $[y,y_{k}]\subset S_{im}$.
The case $k=K$ completes the proof.
\end{proof} 

Next we show the compatibility of the structure of the sector surfaces in $E_x(v)$ and those in $E(v_0)$.

\begin{lem} \label{lem:Sx=S}
For arbitrary $1\le i<j\le m$, there are
$1\le i'<j'\le N_{0}$ such that 
$R_{ij}(x)\subset S_{i'j'}$.
\end{lem}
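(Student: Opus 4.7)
The plan is to realize $R_{ij}(x)$ as a $2$-parameter family of $X$-minimal rulings from $\sigma_i$ to $\sigma_j$, to extend each ruling via Lemma \ref{lem:geod-extension} into some sector $S_{i'j'}$ at $p$, and then to show by connectedness that a single index pair $(i',j')$ accommodates every ruling. By the analogue of Proposition \ref{prop:thin-ruled} at $x$ in place of $p$ (its proof carries over verbatim, since Lemmas \ref{lem:ab-thin-ruled} and \ref{lem:SE-geom} are local), for each $(t,t')\in(0,\e]\times(0,\e]$ the ruling $\gamma^X_{\sigma_i(t),\sigma_j(t')}$ is uniquely determined, lies inside $R_{ij}(x)\subset \mathring{E}(v_0)$, and
\[
R_{ij}(x)=\overline{\bigcup\nolimits_{t,t'}\gamma^X_{\sigma_i(t),\sigma_j(t')}}.
\]

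First I would verify the angular hypothesis \eqref{eq:anglexxp} of Lemma \ref{lem:geod-extension} for $\gamma:=\gamma^X_{y,z}$ with $y=\sigma_i(t)$, $z=\sigma_j(t')$. By Corollary \ref{cor:vert} the vertex $v\in V(\Sigma_x(X))$ is within $\tau_p(r)$ of $\pm\nabla d_p(x)$, so $\dot\sigma_i(0)=\nu_i$ is within $\delta+\tau_p(r)$ of $\pm\nabla d_p(x)$; transporting this estimate along $\sigma_i$ using Lemma \ref{lem:angle-xp-C} shows $\dot\gamma^X_{y,x}(0)=-\dot\sigma_i(t)$ is within $o_p(x)+\delta$ of $\mp\dot\gamma^X_{y,p}(0)$. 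On the other hand, the isoceles-triangle comparison in $M_\kappa^2$ (vertex angle $2\delta$ at $\tilde x$, equal sides of length $\le\e$) yields $\angle^{R_{ij}(x)}(\dot\gamma^{R_{ij}(x)}_{y,x}(0),\dot\gamma^{R_{ij}(x)}_{y,z}(0))$ within $O(\delta)$ of $\pi/2$, and Lemma \ref{lem:SE-geom}, applied at $x$, upgrades this to $\angle^X(\dot\gamma^X_{y,x}(0),\dot\gamma^X_{y,z}(0))$ within $\tau_x(\e)+O(\delta)$ of $\pi/2$. Combining these estimates places $\angle^X(\dot\gamma^X_{y,p}(0),\dot\gamma^X_{y,z}(0))$ safely in $(\pi/10,9\pi/10)$ once $\e,\delta,r$ are small enough. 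Lemma \ref{lem:geod-extension} then produces an extension $\hat\gamma_{t,t'}\subset S_{i'(t,t')j'(t,t')}$ reaching $\gamma_{i'(t,t')}$ and $\gamma_{j'(t,t')}$.

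Second, I would show that $(i'(t,t'),j'(t,t'))$ is independent of $(t,t')\in(0,\e]^2$. As $(t,t')$ varies continuously, so do the endpoints of the ruling and hence the initial segment $\gamma^X_{y,z}$; once this segment is known to lie in the CAT($\kappa$)-space $S_{i_0'j_0'}$, its extension $\hat\gamma_{t,t'}$ is uniquely determined inside $S_{i_0'j_0'}$ by \eqref{eq:gammatt'S} and Corollary \ref{cor:Cijk2}, and its two hit points on the boundary geodesics $\gamma_{i_0'},\gamma_{j_0'}$ depend continuously on $(t,t')$. Since the geodesics $\gamma_1,\dots,\gamma_{N_0}$ are pairwise separated by angle $\ge 2\delta_0$ at $p$, the pair $(i',j')$ is locally constant on $(0,\e]^2$, and connectedness forces it to be globally constant. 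Taking closures then yields $R_{ij}(x)\subset S_{i'j'}$.

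The main obstacle is continuity of the extension $\hat\gamma_{t,t'}$ across the transversal crossings of $\ca S$ inside $S_{i'j'}$: by Lemma \ref{lem:number=Nv-2} there are at most $N_0-2$ such crossings, and by Lemma \ref{lem:angle-xp-C} each is transversal, but at every singular crossing one must verify that the local $X$-geodesic continuation (within the CAT($\kappa$)-piece $S_{i'j'}$) is uniquely determined by the entering direction. This is handled by the non-branching of local geodesics in the CAT($\kappa$)-surface $S_{i'j'}$, combined with the fact---already exploited in the proof of Lemma \ref{lem:geod-extension}---that an $X$-minimal geodesic passing through a point of $\ca S\cap S_{i'j'}$ under the angular bound \eqref{eq:anglexxp} continues uniquely into the opposite sector along the adjacent singular curve.
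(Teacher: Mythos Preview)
Your approach is the same as the paper's in spirit---apply Lemma~\ref{lem:geod-extension} to the rulings of $R_{ij}(x)$---but you supply considerably more detail than the paper does. The paper's proof is extremely terse: it fixes a \emph{single} parameter $t_0\in(0,\e]$, extends the one ruling $\gamma^X_{\sigma_i(t_0),\sigma_j(t_0)}$ via Lemma~\ref{lem:geod-extension} to obtain $\hat\gamma\subset S_{i'j'}$ hitting $\gamma_{i'}$ and $\gamma_{j'}$, and then simply declares the proof complete. It neither verifies the angular hypothesis~\eqref{eq:anglexxp} explicitly nor explains why a single ruling determines the containment of all of $R_{ij}(x)$. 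Your two-parameter sweep together with the connectedness argument that $(i',j')$ is locally (hence globally) constant is a legitimate way to fill that gap; an alternative would be to argue that $\sigma_i((0,\e])$ and $\sigma_j((0,\e])$ avoid $\ca S$ and hence stay inside $S_{i'j'}$ once one point is known to lie there, after which Proposition~\ref{prop:thin-ruled} applied at $p$ forces every ruling $\gamma^X_{\sigma_i(t),\sigma_j(t')}$ into $S_{i'j'}$.

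One correction: you invoke Corollary~\ref{cor:vert}, but that result is stated for spaces already known to be locally $\CAT(\kappa)$, which is precisely what Section~\ref{sec:converse} is trying to establish. The results you actually need (and which are available at this point) are Lemma~\ref{lem:S=Vertex}, Lemma~\ref{lem:almost-suspension}(2)(3), and Lemma~\ref{lem:angle-xp-C}; together these give that $v\in V(\Sigma_x(X))=\Sigma_x(\ca S)$ lies within $o_p(x)$ of $\dot\gamma^X_{x,p}(0)$ or its antipode, which is what your angular estimate requires. With that substitution your verification of~\eqref{eq:anglexxp} goes through.
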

\begin{proof}
Fix $t_0\in (0,\e]$, and  
consider an 
$X$-minimal geodesic $\gamma$
joining $\sigma_i(t_0)$ to $\sigma_j(t_0)$.
Using Lemma \ref{lem:geod-extension},
extend $\gamma$ in the both directions as an $X$-geodesic $\hat\gamma$ until $\hat\gamma$ 
reaches $\gamma_{i'}$ and $\gamma_{j'}$ for some 
$1\le i'<j'\le N_{0}$.
Lemma \ref{lem:geod-extension} also shows that
$\hat\gamma$ is $X$-shortest and is contained in 
$S_{i'j'}$. 
This completes the proof.  
\end{proof}

 \psmall
\subsection{Proof of Theorem \ref{thm:converse}}
\label{ssec:Proof-Thm1.4}
 
  From Lemmas   \ref{lem:Cijk} and 
  \ref{lem:number=Nv-2} (see also  Lemma \ref{lem:infinitesimal-rayX}),  $\ca S$ has the graph structure 
 in the sense of \cite[Definition 6.7]{NSY:local}
 whose branching  orders at vertices are locally 
 uniformly bounded.

\psmall \n
{\bf Setting}.\,
 By Lemmas \ref{lem:almost-suspension}(3)
and \ref{lem:angle-xp-C}, we may assume 
the inequalities corresponding to  \eqref{eq:10-10} 
for all $x\in \ca S\cap (B(p, r)\setminus \{ p\})$,

We set 
$r(x):=d^X_p(x)$ for simplicity.
\psmall

The basic idea of the proof of Theorem \ref{thm:union-CAT} is to approximate 
$E(v_0)$ by a polyhedral $\CAT(\kappa)$-space
$\tilde E(v_0)$ as in Section \ref{sec:Polyhedral}
using the surgery method (see \cite[Section 7]{NSY:local} for the details).

Let $\e_0$ be any positive number.
We fix any $x\in E(v_0)\cap V_{\sing}(\ca S)$ for a moment, and assume that the singularity of $x$ occurs from 
the positive direction. Namely, there is $v\in \Sigma_{x,+}^{\rm sing}(\ca S)$.
The other case $v\in \Sigma_{x,-}^{\rm sing}(\ca S)$ is similarly discussed.
Let $\ca S(v)$ denote the union of singular curve in $\ca S$  starting at $x$ in the direction $v$.
Let $m$ be the branching number of $\Sigma_x(X)$ at $v$.
Let $E_x(v)$ be as in \eqref{eq:E(x,v)}.
Lemma \ref{lem:metric-circle} implies that 
$E_x(v)\cap S(p, r(x)+\e)$ is a tree, say $T_0(x,v)$. 
Replacing each edge $e_0$ of $T_0(x,v)$ by an $X$-geodesic segment $e$ between the endpoints,  we obtain the $X$-geodesic tree $T(x,v)$. 
By Lemma \ref{lem:geod-extension}, $e$ is 
uniquely determined.
Let $K(x,v)$ be a closed domain of $E_x(v)$  bounded by 
$T(x,v)$ and the $X$-geodesic segments
$\sigma_1,\ldots,\sigma_m$
between $x$ and the endpoints of the tree $T(x,v)$.

Choosing smaller  $\delta=\delta_x>0$  and 
$\e=\e_x>0$ if necessary,  we may assume the following:
\beq  \label{eq:non-meeting-VC}
\begin{cases}
\begin{aligned}
 &\text{$\{ B^{\Sigma_x(X)}(v,2\delta)\}$\, 
 $(v\in\Sigma_x^{\sing}(\ca S))$ is mutually disjoint\,$;$} \\
 &\text{$E_x(v)$
  covers 
 $\ca S(v)\cap B_+(x,\e)\,;$} \\
 &\text{$\tilde\angle^{X} yxy' <\e_0$ for all $y,y'\in
    T(x,v)$\,;} 
    \\
 &\text{$E_x(v)\cap S(p,r(x)+\e)$ does not meet $V(\ca S)$,}     
\end{aligned}
\end{cases}
\eeq
where $B_+(x,\e):=B(x,\e)\setminus\mathring{B}(p,r(x))$.

\pmed
\n
{\bf Angle comparison and intermediate comparison objects}.
\pmed
In what follows, we set $T:=T(x,v)$.

For each edge 
$e\in E(T)$, let 
$\triangle(x,e)$ denote an $X$-geodesic triangle with vertices $x$ and $\pa e$,
and let $\tilde\blacktriangle(x,e)$ be the  comparison triangle region on
$M^2_\kappa$ for $\triangle(x,e)$.

\begin{prop}\label{prop:compar=Delata(x,e)}
The triangle comparison holds for
$\triangle(x,e)$ for any $e\in E(T)$.
\end{prop}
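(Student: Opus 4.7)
The plan is to show that each triangle $\triangle(x,e)$ is contained in a single CAT($\kappa$)-sector surface $R_{ij}(x)\subset E_x(v)$, from which the desired angle comparison follows immediately from condition (3) of Theorem \ref{thm:converse}.

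First I would analyze the structure of the tree $T_0(x,v)=E_x(v)\cap S(p,r(x)+\e)$. By the analog at $x$ of Lemmas \ref{lem:Cijk} and \ref{lem:S=Vertex} together with Lemma \ref{lem:metric-circle}, the branch points of $T_0(x,v)$ occur precisely at the intersection points $C_{ij\ell}(x)\cap S(p,r(x)+\e)$ of three sector surfaces at $x$, and its endpoints lie on $\sigma_1,\dots,\sigma_m$. Consequently, each edge $e_0\in E(T_0)$ is contained in a single sector surface $R_{ij}(x)$, with its two endpoints $y,y'$ lying either on the boundary arcs $\sigma_i,\sigma_j$ or on singular curves $C_{ij\ell}(x)\subset R_{ij}(x)$. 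After straightening (replacing $e_0$ by the $X$-geodesic $e$ between $y$ and $y'$), this $e$ inherits the same sector-surface assignment.

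Next I would verify that all three $X$-geodesic sides of $\triangle(x,e)$ actually lie in this $R_{ij}(x)$. The two radial sides $\gamma^X_{x,y}$ and $\gamma^X_{x,y'}$ are handled case-by-case: if an endpoint lies on $\sigma_i$ or $\sigma_j$, the side is literally $\sigma_i$ or $\sigma_j$; if the endpoint lies on a singular curve $C_{ij\ell}(x)$, then by Lemma \ref{lem:SE-geom}(2) and the direction identification $\Sigma_x(R_{ij}(x))\subset\Sigma_x(X)$, together with an extension argument mimicking Lemma \ref{lem:geod-extension} but applied at $x$ instead of $p$, the $X$-geodesic to this endpoint coincides with the $R_{ij}(x)$-geodesic. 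For the transverse side $e$ itself, one applies the analog of Lemma \ref{lem:ab-thin-ruled} at the vertex $x$ of $R_{ij}(x)$: since both endpoints are close to $x$ in $R_{ij}(x)$ and the angular spread at $x$ is at most $2\delta<\e_0$ by \eqref{eq:non-meeting-VC}, the $X$-geodesic between $y$ and $y'$ cannot leave $R_{ij}(x)$.

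With $\triangle(x,e)\subset R_{ij}(x)$ established, the proposition follows: since $R_{ij}(x)$ is a CAT$(\kappa)$-disk by condition (3) of Theorem \ref{thm:converse}, every geodesic triangle in $R_{ij}(x)$ with perimeter less than $2\pi/\sqrt{\kappa_+}$ admits angle comparison, and $\triangle(x,e)$ certainly has tiny perimeter for small enough $\e$. The $R_{ij}(x)$-comparison angles equal the $X$-comparison angles since the sides are common.

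The main obstacle will be the second step, specifically ruling out that $\gamma^X_{y,y'}$ crosses a singular curve of $\ca S$ and passes transiently into an adjacent sector $R_{i'j'}(x)$ before returning. This is the content of the geodesic-extension / thin-ruled-surface argument. One must combine the transversality statement of Lemma \ref{lem:angle-xp-C} (so any crossing of $\ca S$ would be transverse, hence rigid) with the sphere-structure Lemma \ref{lem:metric-circle} applied at $x$ and the identification of directions in Lemma \ref{lem:SE-geom}, mirroring the sequence of arguments that culminated in Lemma \ref{lem:ab-thin-ruled}. Once that containment is in hand, triangle comparison is automatic.
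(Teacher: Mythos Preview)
Your plan has a real gap at the second step, and it is precisely the step the paper works hardest on. The issue is the two ``radial'' sides $\gamma^X_{x,y}$ and $\gamma^X_{x,y'}$ when $y,y'$ are interior vertices of $T$, i.e.\ branch points lying on singular curves $C_{ij\ell}(x)$. You propose to show these $X$-geodesics stay inside a single sector $R_{ij}(x)$ by ``an extension argument mimicking Lemma \ref{lem:geod-extension} but applied at $x$''. That lemma, however, requires the geodesic in question to be \emph{transverse} to the radial direction (the hypothesis \eqref{eq:anglexxp}: the angle with $\dot\gamma^X_{\cdot,p}(0)$ must lie in $(\pi/10,9\pi/10)$). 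The radial geodesics $\gamma^X_{x,y}$ go in directions close to the vertex direction $v\in V(\Sigma_x(X))$, which is shared by \emph{all} sectors $R_{ij}(x)$; the transversality hypothesis fails and the mechanism of Lemma \ref{lem:geod-extension} does not engage. There is no reason these geodesics should prefer one sector over another, and in fact the paper's own Lemma \ref{lem:z*S} (Case A) shows that for $w$ ranging over $\gamma^X_{x,y}$ one must pick \emph{varying} sectors $R_{im}$ containing $w$.

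The paper therefore does not attempt to trap $\triangle(x,e)$ in a single $R_{ij}(x)$. For exterior edges it builds the $\CAT(\kappa)$-disk $z*\gamma^X_{x,y}$ as a join, piecing together different sectors along $\gamma$. For interior edges it goes further: it locates the last common point $x_*$ of $\gamma^X_{x,y}$ and $\gamma^X_{x,y'}$, and then assembles a (possibly infinite) gluing $\blacktriangle_*(x,e)$ of small transverse $\CAT(\kappa)$-triangles $\blacktriangle_i,\blacktriangle'_i$ between the two radial geodesics, using the angle control \eqref{eq:condition=decompose2} at each stage. The comparison inequalities \eqref{eq:angle(yx*y)} and \eqref{eq:angle(yxy)} then come from this glued object, not from any single sector surface. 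Your transverse side $e$ is fine (Lemma \ref{lem:geod-extension} does apply there), but that alone does not give the triangle comparison; you still need a $\CAT(\kappa)$ fill spanning the two uncontrolled radial sides, and that is exactly what the paper constructs.
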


\begin{proof}
In what follows,  to prove Proposition \ref{prop:compar=Delata(x,e)}, we 
show that $\triangle(x,e)$ spans
a ``$\CAT(\kappa)$-disk domain''
in a generalized sense.

\pmed
\n
{\bf Case A)}. $e\in E_{\rm ext}(T)$.
\psmall
First consider the case when $e$ is an 
exterior edge $e\in E_{\rm ext}(T)$ with
$\{ y,z\}:=\pa e$, $z\in \pa T$.
Let $\gamma:=\gamma^X_{x,y}$.

\begin{lem} \label{lem:z*S}
The $X$-join $S:=z*\gamma$ is well-defined
and is a $\CAT(\kappa)$-disk.
\end{lem}

\begin{proof}
Let us assume $z\in\sigma_m$.
For each $w\in\gamma\setminus \{ x\}$,
choose $1\le i\le m-1$ such that $w\in R_{im}$.
Let $\mu$ be an $R_{im}$-geodesic from 
$z$ through $w$ that finally reaches 
a point of $\sigma_i$.
By Lemma \ref{lem:ab-thin-ruled},
$\mu$ is $X$-shortest.
This shows the first half of the conclusion. 

As in Lemma \ref{lem:Si=CAT},
consider a subdivision $\Delta$ of $\gamma$ such that there exists an associated $\CAT(\kappa)$-disk $S_\Delta$ approximating $S$. 
In a way similar to Lemma \ref{lem:Si=CAT},
letting $|\Delta|\to 0$, 
we conclude that $S$ is a $\CAT(\kappa)$-disk.
\end{proof}

By Lemma \ref{lem:z*S}, we have
\[
\angle^X xzy\le\angle^S xzy\le
\tilde\angle^S xzy= \tilde\angle^X xzy.
\]
Similarly we have 
$\angle^X xyz\le \tilde\angle^X xyz$ and 
$\angle^X yzx\le \tilde\angle^X yzx$. 

\pmed
\n
{\bf Case B)}. $e\in E_{\rm int}(T)$.
\psmall
Next we consider the case of $e\in E_{\rm int}(T)$.
In this case, 
we subdivide 
$\triangle(x,e)$ as follows:
Let  $\{ y,y'\}:=\pa e$, and 
set $\gamma:=\gamma^X_{x,y}$ and $\gamma':=\gamma^X_{x,y'}$.
Let $0\le t_*<|x,y|$ be the largest $t$ satisfying  
$\gamma(t)=\gamma'(t)$. 
Note that we do not yet know whether  
$\gamma([0,t_*])=\gamma'([0,t_*])$ holds.
Let $x_*:=\gamma(t_*)=\gamma'(t_*)$, $\ell:=|x_*,y|$ and 
$\ell'_*=|x_*,y'|$. 

Now we choose (possibly infinite)
decompositions of both 
$[t_*,|x,y|]$ and $[t_*,[x,y'|]$:
\begin{align*}
& t_*<\cdots<t_i<t_{i-1}<\cdots t_1 <t_0=|x,y|, \\
& t_*<\cdots<t_i'<t_{i-1}'<\cdots <t'_1<t_0'=|x,y'|,
\end{align*} 
with $\lim_{i\to\infty} t_i=\lim_{i\to\infty} t_i'=t_*$ 
such that letting $u_i:=\gamma(t_i)$, $u'_i:=\gamma'(t'_i)$,
we have for 
every $i\ge 1$, 
\beq
\begin{aligned}\label{eq:condition=decompose2}
\begin{cases}
 & |\angle u_{i-1}u_{i-1}' u'_{i}-\pi/2|<\pi/4, \quad 
    |\angle u_{i-1}u_{i}' u'_{i-1}-\pi/2|<\pi/4, \\
  & |\angle u_{i}'u_{i-1} u_{i}-\pi/2|<\pi/4, \quad 
     |\angle u_{i}' u_i u_{i-1}-\pi/2|<\pi/4.\\ 
\end{cases}
\end{aligned}
\eeq
By Lemma \ref{lem:geod-extension}, we can define 
the $X$-geodesic joins $\blacktriangle_i:=u_{i-1}*[u'_{i-1},u'_i]$ and 
$\blacktriangle'_i:=u'_i*[u_{i-1},u_{i}]$\,$(i\ge 1)$
by $X$-geodesics, where $[u_{i-1},u_{i}]$ and 
$[u'_{i-1},u'_i]$ are the subarcs of $\gamma$ and 
$\gamma'$ respectively. 
As in Case A), we see that 
$\blacktriangle_i$ and 
$\blacktriangle'_i$ are $\CAT(\kappa)$-disks.
Let $\blacktriangle_*(x_*,e)$
 be the closure of the gluing of 
of the union 
$$
\blacktriangle_{1}\cup
\blacktriangle'_{1}\cup\cdots\cup
\blacktriangle_{i}
\cup
\blacktriangle'_{i}\cup\cdots
$$
in an obvious way.
Note that 
$\blacktriangle_*(x_*,e)$
is a $\CAT(\kappa)$-disk.
Let  $\angle^{\blacktriangle_*(x_*,e)}
  y  x_* y'$
denote the angle of 
$\blacktriangle_*(x_*,e)$
at the point corresponding to $x_*$.
Since $\blacktriangle_*(x_*,e)$
is $\CAT(\kappa)$,
from construction we have
\begin{align} \label{eq:angle(yx*y)}
 \angle^X y x_* y' 
\le \angle^{\blacktriangle_*(x_*,e)}
  y  x_* y'
 \le \tilde\angle y x_* y'.
\end{align}

\begin{center}
\begin{tikzpicture}
[scale = 1]
\draw [-, thick] (-1.5,0) -- (3,0);
\draw [-, thick] (3,0) -- (8,1.5);
\draw [-, thick] (3,0) -- (8,-1.5);
\draw [-, thick] (8,1.5) -- (8,-1.5);
\draw [-, thick] (4,0.3) -- (4,-0.3);
\draw [-, thick] (5,0.6) -- (5,-0.6);
\draw [-, thick] (6,0.9) -- (6,-0.9);
\draw [-, thick] (7,1.2) -- (7,-1.2);
\draw [-, thick] (5,-0.6) -- (4,0.3);
\draw [-, thick] (6,-0.9) -- (5,0.6);
\draw [-, thick] (7,-1.2) -- (6,0.9);
\draw [-, thick] (8,-1.5) -- (7,1.2);
\fill (-1.5,0) coordinate (A) circle (2pt) node [below] {$x$};
\fill (3,0) coordinate (A) circle (2pt) node [below] {$x_{\ast}$};
\fill (4,0.3) coordinate (A) circle (2pt) node [above] {};
\fill (5,0.6) coordinate (A) circle (2pt) node [above] {$u_i'$};
\fill (6,0.9) coordinate (A) circle (2pt) node [above] {$u_{i-1}'$};
\fill (7,1.2) coordinate (A) circle (2pt) node [above] {$u_1'$};
\fill (8,1.5) coordinate (A) circle (2pt) node [above] {$y'$};
\fill (4,-0.3) coordinate (A) circle (2pt) node [above] {};
\fill (5,-0.6) coordinate (A) circle (2pt) node [below] {$u_i$};
\fill (6,-0.9) coordinate (A) circle (2pt) node [below] {$u_{i-1}$};
\fill (7,-1.2) coordinate (A) circle (2pt) node [below] {$u_1$};
\fill (8,-1.5) coordinate (A) circle (2pt) node [below] {$y$};
\fill (8,0) coordinate (A) circle (0pt) node [right] {$e$};
\fill (4,-1.2) coordinate (A) circle (0pt) node [below] {$\triangle(x,e)$};
\fill (5.65,0.33) coordinate (A) circle (0pt) node [] {$\blacktriangle_i$};
\fill (5.25,-0.33) coordinate (A) circle (0pt) node [] {$\blacktriangle_i'$};
\filldraw[fill=gray, opacity=.1] 
(5,0.6) -- (6,0.9) -- (6,-0.9) -- (5,-0.6) -- (5,0.6);
\end{tikzpicture}
\end{center}

In what follows, we show that 
$\gamma=\gamma'$ on the segment 
$[x,x_*]$. Otherwise, we have 
$t_1^*<t_2^*$ in $[0,t_*]$ such that 
$x_i^*:=\gamma(t_i^*)=\gamma'(t_i^*)$ \,$(i=1,2)$
such that $\gamma((t_1^*,t_2^*))$ does not 
meet $\gamma'((t_1^*,t_2^*))$.
In a way similar to the above
construction of 
$\blacktriangle_*(x_*,e)$,
we obtain a $\CAT(\kappa)$-disk
$\blacktriangle_*(x_1^*,x_2^*)$
bounded by two concave polygonal
curves joining $x_1^*$ and 
$x_2^*$.
Since these curves are geodesics in 
$\blacktriangle_*(x_1^*,x_2^*)$,
this is a contradiction.

Now we define
$$
\blacktriangle_*(x,e)
:=[x, x_*]\cup
\blacktriangle_*(x_*,e).
$$
Since $\blacktriangle_*(x,e)$ is $\CAT(\kappa)$,
it follows that 
\begin{align} \label{eq:angle(yxy)}
 \angle^X y x y' 
\le \angle^{\blacktriangle_*(x,e)}
  y  x y'
 \le \tilde\angle y x y'.
\end{align}
Similarly, we have 
$\angle^X  xy y'\le  \tilde\angle  xy y'$
and $\angle^X  xy' y\le  \tilde\angle  xy' y$.
This completes the proof of Proposition
\ref{prop:compar=Delata(x,e)}.
\end{proof}

\pmed
Gluing all the triangle regions in 
$\{ \tilde\blacktriangle(x,e)\,|\,e\in 
E(T)\}$ naturally, we obtain 
a polyhedral space $\tilde K(x,v)$ corresponding to 
$K(x,v)$. In $E(v_0)$, we do a surgery by removing 
$\mathring{K}(x,v)$ from $E(v_0)$, and gluing 
$E(v_0)\setminus \mathring{K}(x,v)$ and $\tilde K(x,v)$ along their isometric boundaries to get a new space, say 
$\tilde W_{x,v}$.  

\begin{lem}\label{lem:decompose-triangle}
For each vertex $\tilde y$ of $\tilde K(x,v)$,
$\Sigma_{\tilde y}(\tilde W_{x,v})$ is $\CAT(1)$.
\end{lem}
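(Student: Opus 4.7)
The plan is to verify the $\CAT(1)$ property of $\Sigma_{\tilde y}(\tilde W_{x,v})$ by case analysis on the three kinds of vertices arising in $\tilde K(x,v)$: (a) the apex $\tilde y=\tilde x$; (b) an interior vertex of $\tilde T$, corresponding to a branching point $y$ of the geodesic tree $T$; (c) a boundary vertex $\tilde y\in\pa\tilde T\subset\pa\tilde K(x,v)$, where $\tilde K(x,v)$ is glued to $E(v_0)\setminus\mathring K(x,v)$. In each case $\Sigma_{\tilde y}$ is a $1$-dimensional piecewise spherical graph, so the $\CAT(1)$ condition reduces to showing that every embedded closed loop has length $\ge 2\pi$.

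At the apex $\tilde x$, the space $\Sigma_{\tilde x}(\tilde K(x,v))$ is the metric graph whose edges correspond to edges of $\tilde T$ with lengths equal to the comparison angles at $\tilde x$ of $\tilde\blacktriangle(x,e)$. Proposition \ref{prop:compar=Delata(x,e)} gives $\tilde\angle^X\triangle(x,e)\ge \angle^X\triangle(x,e)$, so $\Sigma_{\tilde x}(\tilde K(x,v))$ dominates $\Sigma_x(K(x,v))$ edgewise. Gluing with the untouched outer part $\Sigma_x(X)\setminus \Sigma_x(\mathring K(x,v))$ along the isometric boundary directions gives $\Sigma_{\tilde x}(\tilde W_{x,v})$; any embedded closed loop there projects to an embedded closed loop in $\Sigma_x(X)$ of no greater length, and the $2\pi$-bound follows from condition~(2) that $\Sigma_x(X)$ is a $\CAT(1)$-graph. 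For a boundary vertex of type (c), a parallel argument works: $\Sigma_{\tilde y}(\tilde W_{x,v})$ is the gluing of a spherical subgraph from $\tilde K(x,v)$ with the unchanged $\Sigma_y(E(v_0))\setminus \Sigma_y(\mathring K(x,v))$, and the $\tilde K$-side edge lengths can only increase under the comparison, so loops remain at least as long as in $\Sigma_y(X)$, which is $\CAT(1)$.

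The main obstacle lies in case (b), the interior branching vertices of $\tilde T$. There the geometry is controlled by \eqref{eq:10-10} together with Lemmas \ref{lem:near-vert} and \ref{eq:L(C)/L(e)}, which force $\Sigma_y(X)$ to be a near-spherical-suspension over $V(\Sigma_y(\ca S))$ with polar angles within $\tau_x(\e)$ of $\pi/2$. The same suspension structure passes, via Proposition \ref{prop:compar=Delata(x,e)} applied to the pair of triangles $\triangle(x,e)$, $\triangle(x,e')$ incident to $y$, to $\Sigma_{\tilde y}(\tilde K(x,v))$ with edges no shorter than their $K(x,v)$-counterparts. Any essential loop in $\Sigma_{\tilde y}(\tilde W_{x,v})$ decomposes into an arc in the $\tilde K$-part plus an arc in the outside part, and we compare it with the corresponding loop in the original $\Sigma_y(X)$, whose length is $\ge 2\pi$ by condition~(2); the edge-length inequality from Proposition \ref{prop:compar=Delata(x,e)} shows that the replacement $K\to\tilde K$ only enlarges the relevant edges, so the $2\pi$ loop-length bound transfers and yields $\CAT(1)$.
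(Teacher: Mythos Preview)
Your three-case structure and the use of Proposition~\ref{prop:compar=Delata(x,e)} together with condition~(2) match the paper's proof. Two issues deserve attention.

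First, the references to \eqref{eq:10-10}, Lemma~\ref{lem:near-vert} and Lemma~\ref{eq:L(C)/L(e)} come from Sections~2--3, where $X$ is already assumed locally $\CAT(\kappa)$; in Section~5 that is what is being proved. The paper instead uses \eqref{eq:non-meeting-VC} (whose last clause ensures an interior tree vertex $y$ lies in $\ca S\setminus V(\ca S)$, so $\Sigma_y(X)$ is an \emph{exact} suspension) and Lemma~\ref{lem:almost-suspension}. The Setting paragraph does establish the analogues of \eqref{eq:10-10}, so the content you invoke is available; only the citations need adjusting. Note also that at an interior vertex $y$ of degree $d\ge 3$ in $T$ there are $d$ incident comparison triangles, not a ``pair''.

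Second, the apex argument has a genuine gap. You assert that $\Sigma_{\tilde x}(\tilde K(x,v))$ dominates $\Sigma_x(K(x,v))$ ``edgewise'' and that an embedded loop ``projects to an embedded closed loop in $\Sigma_x(X)$''. But $\Sigma_{\tilde x}(\tilde K(x,v))$ is a tree combinatorially isomorphic to $T$, whereas $\Sigma_x(K(x,v))$ is the star $B^{\Sigma_x(X)}(v,\delta)$ with centre $v$ and $m$ edges of length $\delta$; there is no edgewise correspondence. Moreover, an embedded loop in $\Sigma_{\tilde x}(\tilde W_{x,v})$ may traverse the tree part along several vertex-disjoint subpaths, and replacing each by the star path through $v$ yields a closed curve in $\Sigma_x(X)$ passing through $v$ repeatedly, hence not embedded. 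The repair is straightforward: for any path in the tree between boundary directions $\nu_i,\nu_j$, Proposition~\ref{prop:compar=Delata(x,e)} together with the triangle inequality in the $\CAT(1)$-graph $\Sigma_x(X)$ gives that its length is at least $\sum\angle^X\ge\angle^X(\nu_i,\nu_j)=2\delta$; so the replaced curve in $\Sigma_x(X)$ is no longer than the original, is homotopically nontrivial, and therefore has length at least the systole of $\Sigma_x(X)$, which is $\ge 2\pi$. The paper does not spell this out either but defers to \cite[Lemma~7.16]{NSY:local}.
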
 

\begin{proof}
The conclusion is immediate for any $\tilde y=\tilde z$ with $z\in \pa T$
from Proposition \ref{prop:compar=Delata(x,e)}
since $\Sigma_z(X)$ is $\CAT(1)$ from the 
condition (2).
 
If $y\in V_{\rm int}(T)$,
from the condition \eqref{eq:non-meeting-VC},  $\Sigma_y(X)$ is topologically a
suspension whose vertices consist of the directions of $\ca S$ at $y$.
From Proposition \ref{prop:compar=Delata(x,e)},
it is easily verified that there is an 
expanding map $\Sigma_y(K(x,v))\to \Sigma_{\tilde y}(\tilde K(x,v))$ for each
$y\in V(T)$, which implies that 
$\Sigma_{\tilde y}(\tilde W_{x,v})$ is 
$\CAT(1)$ (see \cite[Claim 7.15]{NSY:local}).

For the point $x$, we can conclude that 
$\Sigma_{\tilde x}(\tilde W_{x,v})$ is $\CAT(1)$
in the same way as \cite[Lemma 7.16]{NSY:local}.

This completes the proof of Lemma \ref{lem:decompose-triangle}.   
 \end{proof}

Note that in  $\tilde W_{x,v}$,  the arcs $[x,y]$ in
$\ca S$ joining $x$ and $y$ are replaced by the geodesic 
$[\tilde x,\tilde y]:=\gamma_{\tilde x,\tilde y}$.
We consider the graph structure of the singular locus 
$\tilde{\ca S}_{x,v}$ of $\tilde W_{x,v}$
inherited from $\ca S$
except that $\tilde x, \tilde y\in V(\tilde{\ca S}_{x,v})$
and $(\tilde x,\tilde y)\in E(\tilde{\ca S}_{x,v})$.

Now we do such surgeries finitely many times 
around points of $V_{\rm sing}(\ca S)$ so that the surgery parts
cover $V_{\rm sing}(\ca S)$ as in \cite[Section 7]{NSY:local}.
Let $\tilde E(v_0)$ be the result of those surgeries,
and let $\tilde{\ca S}$ be the singular locus of 
$\tilde E(v_0)$, with graph structure $V(\tilde{\ca S})$,
$E(\tilde{\ca S})$ defined as above.
Note that $V(\tilde{\ca S})$ is discrete,
which implies that $\tilde E(v_0)$ is  
polyhedral.

\begin{lem} \label{lem:tildeE-CAT}
$\tilde E(v_0)$ is a $\CAT(\kappa)$-space.
\end{lem}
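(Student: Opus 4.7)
The plan is to invoke Burago--Buyalo's gluing criterion (Theorem \ref{thm:BB-gluing}) applied to $\tilde E(v_0)$. First I would present $\tilde E(v_0)$ as a gluing of the finite collection $\{ F_i\}$ consisting of (i) the metric completions of the connected components of $E(v_0)\setminus(\ca S\cup\bigcup_\alpha K(x_\alpha,v_\alpha))$ lying outside the surgery parts, together with the $\CAT(\kappa)$-disks $\blacktriangle_*(x,e)$ produced in the proof of Proposition \ref{prop:compar=Delata(x,e)}, and (ii) the comparison regions $\tilde\blacktriangle(x_\alpha,e)\subset M^2_\kappa$ filling up each $\tilde K(x_\alpha,v_\alpha)$. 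Faces of type (i) embed in some sector surface $S_{ij}$ (cf.\ Lemma \ref{lem:Sx=S} and Proposition \ref{prop:thin-ruled}), so they are two-dimensional polyhedral $\CAT(\kappa)$-spaces and their boundary edges -- the singular curves $C_{ij\ell}$ -- have finite turn variation by Theorem \ref{thm:Cijl}. Faces of type (ii) lie isometrically in $M^2_\kappa$ and their edges are $M^2_\kappa$-geodesics. Hence $\{F_i\}\subset\ca F_\kappa$.

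Next I would verify condition (B) (link $\CAT(1)$). At each $\tilde y$ inside a surgery part this is exactly Lemma \ref{lem:decompose-triangle}. At a vertex $\tilde y\notin\bigcup\tilde K(x_\alpha,v_\alpha)$ one has $\Sigma_{\tilde y}(\tilde E(v_0))=\Sigma_y(E(v_0))$, which is a subgraph of $\Sigma_y(X)$; every simple loop in a subgraph of a $\CAT(1)$-graph again has length $\ge 2\pi$, so it is $\CAT(1)$.

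The heart of the proof is condition (A). Partition the edges of $\tilde E(v_0)$ into three types: (a) arcs of the original singular locus $\ca S$ disjoint from all $K(x_\alpha,v_\alpha)$; (b) the boundary $X$-geodesics $\sigma_i$ and edges of the trees $T(x_\alpha,v_\alpha)$ separating surgery from non-surgery; (c) the internal $M^2_\kappa$-edges $[\tilde x_\alpha,\tilde y]$ of $\tilde K(x_\alpha,v_\alpha)$ for $\tilde y\in V(\tilde T)$. For type (a), the arc lies on the common boundary of at most three adjacent faces, each sitting inside some $S_{ij}$; the corresponding union of three sector surfaces $E_{ij\ell}$ is $\CAT(\kappa)$ by Corollary \ref{cor:Cijk2}, so Theorem \ref{thm:BB-character} applied to $E_{ij\ell}$ yields $\tau_{F_i}+\tau_{F_j}\le 0$ on any Borel subset. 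For type (c), both adjacent faces are comparison regions in $M^2_\kappa$ meeting along a geodesic, so both turns vanish. For type (b), the edge is an $X$-geodesic (by Lemma \ref{lem:geod-extension} and the construction of $T(x_\alpha,v_\alpha)$): on the surgery side it is a geodesic boundary of a region in $M^2_\kappa$, hence has nonpositive turn by Proposition \ref{prop:turn(geodesic)}; on the non-surgery side it bounds a region that embeds in a sector surface $S_{ij}$ where it remains a geodesic, so again Proposition \ref{prop:turn(geodesic)} gives nonpositive turn.

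The main obstacle I expect is type (b), because the $X$-geodesic edges produced by the surgery bound, on the non-surgery side, a region whose ambient curvature has not yet been certified to be $\le\kappa$ (that is precisely what we are proving). The resolution is to observe that this region is contained in an already-available $\CAT(\kappa)$-sector surface via Proposition \ref{prop:thin-ruled} and Lemma \ref{lem:Sx=S}, so Proposition \ref{prop:turn(geodesic)} can be applied there without circularity. Once (A) and (B) are established, Theorem \ref{thm:BB-gluing} identifies $\tilde E(v_0)$ as a member of $\ca F_\kappa$, which gives the desired $\CAT(\kappa)$ property.
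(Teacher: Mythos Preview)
Your strategy coincides with the paper's: verify conditions (A) and (B) of Theorem \ref{thm:BB-gluing} for a face decomposition of $\tilde E(v_0)$, with Lemma \ref{lem:decompose-triangle} supplying (B) at the surgery vertices; the paper simply compresses this into two lines and then triangulates. Your treatment of edges of types (b) and (c) is correct.

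Two details need repair. First, the objects $\blacktriangle_*(x,e)$ from the proof of Proposition \ref{prop:compar=Delata(x,e)} are auxiliary ``generalized disks'' built only to prove the inequality $\angle^X\le\tilde\angle$; they are not subsets of $\tilde E(v_0)$ and do not belong in the face list. The type-(i) faces are simply the completions of the components of $E(v_0)\setminus(\ca S\cup\bigcup_\alpha K(x_\alpha,v_\alpha))$; each such component is a piece of a wing $F_i$ and hence already sits inside a sector surface $S_{ij}$, which gives both the $\CAT(\kappa)$ property and finite turn variation of its boundary (Theorem \ref{thm:Cijl}). Second, your claim that a type-(a) edge borders ``at most three'' faces is false once $N_0\ge 4$: the number $m(e)$ of wings meeting along an edge of $\ca S\cap E(v_0)$ can be anything from $3$ up to $N_0$ (think of $N_0$ half-planes along a common line). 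The remedy is to argue pairwise: for any two adjacent wings $F_\alpha,F_\beta$, their union near $e$ lies in the $\CAT(\kappa)$ sector surface $S_{\alpha\beta}$, whence $\tau_{F_\alpha}(B)+\tau_{F_\beta}(B)=\omega_{AZ}^{S_{\alpha\beta}}(B)\le 0$ for every Borel $B\subset e$ (the last inequality because $B$ has zero area). Equivalently, choose any third adjacent wing $F_\gamma$ and run your $E_{\alpha\beta\gamma}$ argument for the pair $(F_\alpha,F_\beta)$; either way condition (A) follows.
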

\begin{proof}
Using Lemma \ref{lem:decompose-triangle} 
finitely many times, we have 
\begin{itemize}
 \item for every edge $e$ in 
 $E(\tilde{\ca S})$, the condition (A) in 
 Subsection \ref{ssub:polyhed-prelim}
 holds and $e$ has finite turn variation\,$;$
  \item $\Sigma_{\tilde y}(\tilde E(v_0))$ is $\CAT(1)$ for 
    every $\tilde y\in V(\tilde{\ca S})$ . 
\end{itemize}
Consider any triangulation of $\tilde E(v_0)$ extending
$V(\tilde{\ca S})$ and $E(\tilde{\ca S})$ by adding 
geodesic edges if necessary.
Now, we are ready to apply
Theorem \ref{thm:BB-gluing} to this triangulation to conclude   
that  $\tilde E(v_0)$ is $\CAT(\kappa)$.
\end{proof}

\psmall \n
{\bf GH-convergence}.\,
We have to check that $\tilde E(v_0)$ converges to $E(v_0)$ as 
$\e_0\to 0$ with respect to the Gromov-Hausdorff
distance.

\begin{lem} \label{lem:thin-est}
$(1)$\,Let $y, y'$ be arbitrary endpoints of $T$.
For arbitrary $z\in\gamma^X_{x,y}$ and $z' \in\gamma^X_{x,y'}$,
assuming $|x,z|_X\le |x,z'|_X$, we have 
   \[
     ||z,z'|_{X}-|\tilde z, \tilde z'||< \tau(\e_0)|x,z|_X,
   \]
where $\tilde z, \tilde z'\in\pa\tilde K(x,v)$ are the points corresponding to $z,z'$ respectively. 
\par\n
$(2)$\, For arbitrary $y\in T$
and $z\in \pa K(x,v)$, we have 
   \[
     ||y,z|_{X}-|\tilde y, \tilde z||< \tau(\e_0)|x,y|_X.
   \]
where $\tilde y, \tilde z$ are the points of 
$\pa\tilde K(x,v)$ corresponding to $y, z$.
\end{lem}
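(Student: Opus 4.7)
The plan is to exploit the near-degeneracy of the tree $T$ at $x$ encoded in \eqref{eq:non-meeting-VC}: every pair of points $w, w' \in T$ satisfies $\tilde\angle^X w x w' < \e_0$, and because $T$ has at most $N_v - 1$ edges, the total angular spread of the comparison fan $\tilde K(x,v) = \bigcup_{e \in E(T)} \tilde\blacktriangle(x,e)$ at $\tilde x$ is bounded by $\tau(\e_0) := C(N_v)\e_0$. As $\e_0 \to 0$ this forces both $K(x,v)$ and $\tilde K(x,v)$ to collapse to a one-dimensional ``radial ray'' from $x$, and both $|z,z'|_X$ and $|\tilde z,\tilde z'|$ must therefore pinch toward $||x,z|_X - |x,z'|_X|$.

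For part (1), I would take the unique path $y = y_0, y_1, \ldots, y_m = y'$ in $T$ connecting the two endpoints, with $m \le N_v - 1$ and consecutive edges $e_i = [y_{i-1}, y_i]$. First I would develop the portion $\bigcup_{i=1}^m \tilde\blacktriangle(x, e_i) \subset \tilde K(x,v)$ isometrically into $M_\kappa^2$ by successively unfolding along the shared edges $[\tilde x, \tilde y_i]$. The resulting planar fan has apex angle $\Theta := \sum_{i=1}^m \tilde\angle^X y_{i-1} x y_i < \tau(\e_0)$; for $\e_0$ small the $M_\kappa^2$-segment joining the developed images of $\tilde z$ and $\tilde z'$ lies entirely in the fan, so $|\tilde z, \tilde z'|$ equals the $M_\kappa^2$-distance between these developed images. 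The law of cosines then pinches $|\tilde z, \tilde z'|$ between $||x,z|_X - |x,z'|_X|$ and $||x,z|_X - |x,z'|_X| + C\,\tau(\e_0)\,|x,z|_X$. I would obtain the matching pinching for $|z,z'|_X$ by iterating the triangle comparison from Proposition \ref{prop:compar=Delata(x,e)} along the same chain, using the CAT($\kappa$)-regions $\blacktriangle_*(x, e_i)$ from its proof together with the resulting bound $\angle^X y_{i-1} x y_i \le \tilde\angle^X y_{i-1} x y_i < \e_0$, and combining with the trivial lower bound $|z,z'|_X \ge ||x,z|_X - |x,z'|_X|$. The two pinchings then yield the desired bound on $||z,z'|_X - |\tilde z,\tilde z'||$.

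Part (2) splits naturally into two subcases. When $z \in T$, both $\tilde z$ and $\tilde y$ lie on the tree $\tilde T \subset \tilde K(x,v)$; since the edges of $\tilde T$ have the same lengths as those of $T$ by construction of the comparison triangles, the estimate reduces to a direct comparison along tree paths, with the same small-angle law-of-cosines argument controlling the error. When $z$ lies on one of the radial geodesics $\sigma_i$, this is part (1) with one of $y,y'$ played by the endpoint of $T$ on $\sigma_i$ and the corresponding ``inner'' point taken at distance $|x,z|_X$ along $\sigma_i$.

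The main obstacle I expect is sharpening the asymmetric error bound $\tau(\e_0)|x,z|_X$ in terms of the smaller of the two radial distances, rather than the crude $\tau(\e_0)\sqrt{|x,z|_X |x,z'|_X}$ that the law of cosines produces directly in the regime $|x,z|_X \ll |x,z'|_X$. The key extra input is that $|z,z'|_X$ and $|\tilde z,\tilde z'|$ are simultaneously upper-bounded by the common broken-path length through the tree vertices $y_i$ --- which is identical in $X$ and in $\tilde K(x,v)$ by construction --- and lower-bounded by $||x,z|_X - |x,z'|_X|$, so that a second-order expansion in $\Theta$ eliminates the amplification factor $\sqrt{|x,z'|_X/|x,z|_X}$ and delivers the sharper asymmetric error.
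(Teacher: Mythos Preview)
Your pinching strategy—squeeze both $|z,z'|_X$ and $|\tilde z,\tilde z'|$ against $||x,z|_X-|x,z'|_X|$—is what the paper does, but the paper's execution is one line where yours is a page. Rather than developing the fan and iterating Proposition~\ref{prop:compar=Delata(x,e)}, the paper introduces a single intermediate point $\hat z\in\gamma^X_{x,z'}$ with $|x,\hat z|_X=|x,z|_X$. Then $|\hat z,z'|_X=|x,z'|_X-|x,z|_X$ exactly, and the triangle inequality gives $\bigl||z,z'|_X-(|x,z'|_X-|x,z|_X)\bigr|\le|z,\hat z|_X$. The triangle $xz\hat z$ is isosceles with both legs equal to $|x,z|_X$, so once one knows $\tilde\angle^X zx\hat z<\e_0$ the bound $|z,\hat z|_X\le\tau(\e_0)|x,z|_X$ is immediate, and the asymmetry in $|x,z|_X$ falls out for free—dissolving your flagged obstacle without any second-order expansion. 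The small comparison angle comes from \eqref{eq:non-meeting-VC} via Lemmas~\ref{lem:ab-thin-ruled} and~\ref{lem:SE-geom-distance}: since $z,\hat z$ lie on $\sigma_i,\sigma_j$, the relevant $X$-distances agree with distances in the $\CAT(\kappa)$ sector surface $R_{ij}$, where monotonicity of comparison angles is available. The identical argument runs in $\tilde K(x,v)$; part~(2) is then referred back to \cite[Lemma~7.12]{NSY:local}.

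Two remarks on your route. First, the obstacle you worry about is a phantom even within your framework: with $a=|x,z|_X\le b=|x,z'|_X$ and apex angle $\Theta$, the law of cosines gives $c-(b-a)=2ab(1-\cos\Theta)/(c+(b-a))$; when $b\ge 2a$ the denominator is at least $2(b-a)\ge b$, giving $c-(b-a)\le 2a(1-\cos\Theta)$, and when $b<2a$ the denominator is at least $c\ge\sqrt{2ab(1-\cos\Theta)}\ge a\sqrt{2(1-\cos\Theta)}$, giving $c-(b-a)\le 2a\Theta$. Either way the asymmetric bound holds directly, so the broken-path detour is unnecessary. Second, your $X$-side plan via iterating Proposition~\ref{prop:compar=Delata(x,e)} along the tree chain yields only \emph{angle} control at $x$; converting that into an $X$-distance bound on $|z,z'|_X$ for points interior to the radial geodesics still needs the sector-surface $\CAT(\kappa)$ structure, since $X$ itself is not yet known to be $\CAT(\kappa)$ at this stage of Section~\ref{sec:converse}. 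That is precisely the input the paper isolates in the two cited lemmas; once you invoke them, the $\hat z$ trick makes the chain redundant.
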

\begin{proof}
(1)\, 
Let $\hat z\in\gamma^{X}_{x,z'}$ be the point such 
that $|x,\hat z|_{X}=|x,z|_{X}$. 
\eqref{eq:non-meeting-VC} together with 
Lemmas \ref{lem:ab-thin-ruled}, \ref{lem:SE-geom-distance}
implies 
$\tilde\angle^X zx\hat z<\e_0$, and hence
$|z,\hat z|_X\le \tau(\e_0)|x,z|_X$.
By triangle inequality, we have
$$
||z,z'|_{X}-|z',\hat z|_{X}|\le |\hat z,z|_{X}\le\tau(\e_0)|x,z|_X.
$$

Since $\angle \tilde z\tilde x \tilde z'<\tau(\e_0)$, 
similarly we have 
$||\tilde z,\tilde z'|-(|x,z'|_{X}-|x,z|_{X})|\le \tau(\e_0)|x,z|_{X}$.
Combining the last two inequalities, we obtain the 
required inequality.
\par\n

The proof of (2) is the same as that of \cite[Lemma 7.12]{NSY:local}. 
\end{proof}

 \psmall
Now we are ready to show the Gromov-Hausdorff convergence $\tilde E(v_0)\to E(v_0)$ as $\e_0\to 0$.
Define $\varphi:\tilde E(v_0)\to E(v_0)$ as follows.
Let $\varphi$ be identical outside the surgery part.
For every surgery piece  $K(x,v)$ and $\tilde K(x,v)$,
define $\varphi:\tilde K(x,v)\to K(x,v)$ to be any map.
 Since ${\rm diam}(K(x,v))<\tau(\e_0)$,
the image of thus defined map $\varphi$ is $\tau(\e_0)$-dense in 
$E(v_0)$.
We can show that  $\varphi:\tilde E(v_0)\to E(v_0)$ is a 
$\tau(\e_0)$-approximation as in \cite[Section 7]{NSY:local}.
Thus we conclude that $E(v_0)$ is a $\CAT(\kappa)$-space.
This completes the proof of Theorem \ref{thm:union-CAT}.
\qed

\begin{proof}[Proof of Theorem \ref{thm:converse}]
For each $p\in\ca S$, take $r>0$ such that for every $v\in V(\Sigma_p(X))$,  the union $E(v)$  of all
$\CAT(\kappa)$-sector surfaces at $p$ in the direction $v$ 
is properly embedded in $B(p,r)$. For $\e_0>0$,
let $\tilde E(v)$ be the polyhedral $\CAT(\kappa)$-space that is $\tau(\e_0)$-close to $E(v)$ for the
Gromov-Hausdorff distance, as constructed in the proof of 
Theorem \ref{thm:union-CAT}.
Let $W$ denote the union of all $E(v)$\, $(v\in V(\Sigma_p(X))$, and let $\tilde W$ denote the one-point union of those 
$\tilde E(v)$\, $(v\in V(\Sigma_p(X))$ attached at 
$\tilde p$.
Consider any geodesic triangulation of the closure of
$B(p,r)\setminus W$ 
extending those of $\tilde W$.
Let $\tilde B(p,r)$ be the gluing of the closure of
$B(p,r)\setminus W$ and $\tilde W$ along their isometric boundaries.
As in the proof of Lemma \ref{lem:decompose-triangle}, 
we can verify that 
$\Sigma_p(\tilde B(p,r))$ is $\CAT(1)$.
Theorem \ref{thm:BB-gluing} then implies that $\tilde B(p,r)$ is 
$\CAT(\kappa)$. Thus, letting $\e_0\to 0$, we conclude that 
$B(p,r)$ is $\CAT(\kappa)$.
\end{proof}

\pmed\n


\end{document}